\newtheorem{thm}{Theorem}[section]
\newtheorem{prop}[thm]{Proposition}
\newtheorem{defn}[thm]{Definition}
\newtheorem{rmk}[thm]{Remark}
\newtheorem{lem}[thm]{Lemma}
\renewenvironment{proof}{\begin{trivlist}
      \item[]\hspace{0cm}{\bf Proof.}
      \hspace{0cm}}{\hfill $\blacksquare$
      \end{trivlist}}
\newenvironment{proofof}[1]{\begin{trivlist}
      \item[]\hspace{0cm}{\bf Proof of #1.}
      \hspace{0cm}}{\hfill $\blacksquare$
      \end{trivlist}}
\def\bn{{\boldsymbol\nsf}}
\def\R{\mathbb R}
\def\N{\mathbb N}
\def\Rn{\R^n}
\def\Rnp{\R^n_+}
\def\Rnm{\R^n_-}
\def\Bn#1{\cB(0,#1)}
\def\Bnp#1{\cB^+(0,#1)}
\def\Bnm#1{\cB^-(0,#1)}
\def\cB{\mathcal B}
\def\cO{\mathcal O}
\def\cU{\mathcal U}
\def\cV{{\cal V}}
\def\fE{{\mathfrak E}}
\def\fF{{\mathfrak F}}
\def\fG{{\mathfrak G}}
\def\fL{{\mathfrak L}}
\def\fM{{\mathfrak M}}
\def\fN{{\mathfrak N}}
\def\fU{{\mathfrak U}}
\def\fV{{\mathfrak V}}
\def\sB{{\mathscr B}}
\def\sC{{\mathscr C}}
\def\sL{{\mathscr L}}
\def\sR{{\mathscr R}}
\def\sV{{\mathscr V}}
\def\Ca#1{\sC^{#1,\alpha}}
\newcommand{\px}{\mathsf{x}}
\newcommand{\pt}{\mathsf{X}}
\newcommand{\ps}{\mathsf{Y}}
\newcommand{\py}{\mathsf{y}}
\def\msigma{\mathsf{\varsigma}}
\def\p{\mathsf p}
\newcommand{\nsf}{\mathsf{n}}
\def\se{\mathsf e}
\def\ssei{{\mathsf s}_{\sharp}}
\def\sse{{\mathsf s}_{e}}
\def\ssi{{\mathsf s}_{i}}
\def\G{{G}}
\def\eps{\varepsilon}
\def\e#1{\eps_{#1}}
\def\ea{{\eps^{\rm ad}}}
\def\ear#1{{\eps^{#1}}}
\def\be{{\boldsymbol\varepsilon}}
\def\bea{{\be^{\rm ad}}}
\def\Ie#1{{]0,\eps^{#1}[}} 
\def\Jea{{]-\ea,\ea[}} 
\def\Je#1{{]-\eps^{#1},\eps^{#1}[}} 
\def\ec2{\e2^{\star}}
\def\b0{{\boldsymbol{0}}}
\def\bIea{{]{\b0},\bea[}} 
\def\bIep{{]{\b0},\be'[}} 
\def\bIes{{]{\b0},\be''[}} 
\def\bIe#1{{]{\b0},\be^{#1}[}} 
\def\bJea{{]-\bea,\bea[}} 
\def\bJep{{]-\be',\be'[}} 
\def\bJes{{]-\be'',\be''[}} 
\def\bJe#1{{]-\be^{#1},\be^{#1}[}} 
\def\bm{{\boldsymbol\mu}}
\def\bde{{\boldsymbol \delta}}
\def\bfi{{\boldsymbol \phi}}
\def\domgen{\mathcal D}
\def\domeps{\Omega_{\be}}
\def\dome{\Omega_{\eps}}
\def\dom{\Omega}
\def\incl{\omega}
\def\incleps{\omega_{\be}}
\def\incle{\omega_{\eps}}
\def\dO{\partial\dom}
\def\dincl{\partial\incl}
\def\dOeps{\partial\domeps}
\def\d0O{\partial_{0}\dom}
\def\dpO{\partial_{+}\dom}
\def\go{g^{\mathrm o}}
\def\gi{g^{\mathrm i}}
\def\ugi{\underline{g}^{\mathrm i}}
\def\uzero{u_0}
\def\ueps{u_\be}
\def\ue{u_\eps}
\def\uue{\underline{u}_\be}
\newcommand{\Be}{\color{blue}}
\title{A Dirichlet problem for the Laplace operator in a domain with a small hole close to the boundary}
\date{}
\author[1]{Virginie Bonnaillie-No\"el\thanks{bonnaillie@math.cnrs.fr}}
\author[2,3]{Matteo Dalla Riva\thanks{matteo.dallariva@gmail.com}}
\author[4]{Marc Dambrine\thanks{Marc.Dambrine@univ-pau.fr}}
\author[3]{Paolo Musolino\thanks{musolinopaolo@gmail.com}}
\affil[1]{D\'epartement de math\'ematiques et applications, \'Ecole normale sup\'erieure, CNRS, PSL Research University, 45 rue d'Ulm, 75005 Paris, France}
\affil[2]{Department of Mathematics, The University of Tulsa, 800 South Tucker Drive, Tulsa, Oklahoma 74104, USA}
\affil[3]{Department of Mathematics, Aberystwyth University, Ceredigion SY23 3BZ, Wales, UK}
\affil[4]{CNRS, Univ Pau \& Pays Adour,  Laboratoire de math\'ematiques et de leur applications de Pau - F\'ed\'eration IPRA, UMR 5142, Pau, France}
\begin{document}
\maketitle

\begin{abstract}
We study the Dirichlet problem in a domain with a small hole close to the boundary.
 To do so, for each pair $\be = (\e1 ,\e2 )$ of positive parameters, we consider a perforated
 domain $\domeps$ obtained by making a small hole of size $\e1 \e2 $ in an open regular
 subset $\dom$ of $\mathbb{R}^n$ at distance $\e1$ from the boundary $\partial\Omega$.
 As $\e1 \to 0$, the perforation shrinks to a point and, at the same time, approaches the boundary.
 When $\be \to (0,0)$, the size of the hole shrinks at a faster rate than its approach to the boundary.
We denote by $\ueps$ the solution of a Dirichlet problem for the Laplace equation in $\domeps$.
For a space dimension $n\geq 3$, we show that the function mapping $\be$
to $\ueps$ has a real analytic continuation in a neighborhood  of $(0,0)$. By contrast,
for $n=2$ we consider two different regimes: $\be$ tends to $(0,0)$, and $\e1$ tends
to $0$ with $\e2$ fixed.
When $\be\to(0,0)$, the solution $\ueps$ has a logarithmic behavior;
when only $\e1\to0$ and $\e2$ is fixed, the asymptotic behavior of  the solution can be
described in terms of real analytic functions of $\e1$. We also show that for $n=2$,
the energy integral and the total flux on the exterior boundary have different limiting  values
in the two regimes. We prove these results by using functional analysis methods
in conjunction with certain special layer potentials.
\end{abstract}

\noindent
{\bf Keywords:} Dirichlet problem; singularly perturbed perforated domain; Laplace operator;
real analytic continuation in Banach space; asymptotic expansion\par
\vspace{9pt}

\noindent
{{\bf 2010 Mathematics Subject Classification:}}  35J25; 31B10; 45A05; 35B25; 35C20.

\section{Introduction}\label{introd}

Elliptic boundary value problems in domains where a small part has been removed arise in the study
of mathematical models for bodies with small perforations or inclusions, and are of interest not only
for their mathematical aspects, but also for their applications to elasticity, heat conduction,
fluid mechanics, and so on.  They play a central role in the treatment of inverse problems
(see, {\it e.g.}, Ammari and Kang \cite{AmKa07}) and in the computation of the so-called
`topological derivative', which is a fundamental tool in shape and topological optimization (see, {\it e.g.},
Novotny and Soko\l owsky \cite{NoSo13}).  Owing to the difference in size between the small removed part
and the whole domain, the application of standard numerical methods requires the
use of highly nonhomogeneous meshes that often lead to inaccuracy and instability.
This difficulty can be overcome and the validity of the chosen numerical strategies
can be guaranteed only if adequate theoretical studies are first conducted on the problem.

In this paper, we consider the case of the Dirichlet problem for the Laplace equation
in a domain  with a small hole  `moderately close' to the boundary, {\it i.e.}, a hole that
approaches the outer boundary of the domain at a certain rate, while shrinking
to a point at a faster rate.
In two-dimensional space, we also consider the case where the size of the hole and its distance
from the boundary are comparable. It turns out that the two types of asymptotic behavior in this setup
are different: the first case gives rise to logarithmic behavior, whereas the second one
generates a real analytic continuation result. Additionally, the energy integral and the total flux of the solution
on the outer boundary may have different limiting values.

We begin by describing the geometric setting of our problem. We take $n \in \mathbb{N}\setminus \{0,1\}$
and, without loss of generality, we place the problem in the upper half space, which we denote by $\Rnp$.
More precisely, we define
\[
\Rnp \equiv\{\px=(x_1,\dots,x_n)\in\mathbb{R}^n\;:\; x_n>0\}\, .
\]
We note that the boundary $\partial\Rnp$ coincides with the hyperplane $x_n=0$. 
Then we fix a domain $\dom$ such that
\begin{equation}\tag{$H_{1}$} \label{Omega}
\text{$\dom$ is an open bounded connected subset of $\Rnp$ of class $\Ca{1}$,}
\end{equation}
where $\alpha\in]0,1[$ is a regularity parameter. The definition of functions and sets of the usual Schauder classes
$\Ca{k}$ ($k=0,1$) can be found, for example, in  Gilbarg and Trudinger~\cite[\S6.2]{GiTr83}.
We denote by $\dO$ the boundary of $\dom$. In this paper, we assume that a part
of $\dO$ is flat and that the hole is approaching it (see Figure~\ref{fig.geom}). This is described by setting
$$\d0O\equiv\dO\cap\partial\mathbb{R}^n_{+},\qquad \dpO\equiv\dO\cap\mathbb{R}^n_{+},$$
and assuming that
\begin{align}\tag{$H_{2}$}
\label{DOmega}
&\text{$\d0O$ is an open neighborhood of $0$ in $\partial\mathbb{R}^n_{+}$.}
\end{align}
The set $\dom$ plays the role of the `unperturbed' domain. To define the hole, we consider another set $\incl$ satisfying the following assumption:
\[
\text{$\incl$ is a bounded open connected subset of $\Rn$ of class $\Ca{1}$ such that $0\in \incl$.}
\]
The set $\incl$ represents the shape of the perforation. Then we fix a point
\begin{equation}\label{p}
\p=(p_1,\dots,p_n)\in\Rnp,
\end{equation}
and define the inclusion $\incleps $ by
\[
\incleps \equiv\e1 \p+\e1 \e2 \incl\,,\qquad\forall \be\equiv(\e1 ,\e2) \in\R^2\,.
\]
We adopt the following notation. If  $\be'\equiv(\e1',\e2'),\be''\equiv(\e1'',\e2'')\in\R^2$, then we write $\be'\leq\be''$ (respectively, $\be'<\be''$)
if and only if $\e{j}'\leq\e{j}''$ (respectively, $\e{j}'<\e{j}''$), for $j=1,2$, and denote by $]\be',\be''[$
the open rectangular domain of $\be\in\R^2$ such that $\be'<\be<\be''$. We also set $\b0\equiv(0,0)$.
Then it is easy to verify that there is $\bea\in]0,+\infty[^2$ such that
\[  \overline{\incleps }\subseteq\dom, \qquad \forall \be\in\bIea. \]
In addition, since we are interested in the case where the vector
$(\varepsilon_{1},\varepsilon_{1}\varepsilon_{2})$ is close to $\b0$, we may assume without loss of generality that
\[ \text{$\e{1}^{\rm ad}<1$ and $1<\e{2}^{\rm ad}<1/\e{1}^{\rm ad}$.} \]
Hence, $\e{1}\e{2}<1$ for all $\be\in\bIea$.
This technical condition allows us to deal with the function $1/\log(\e{1}\e{2})$
as in Section \ref{n=2}, and to consider the case where $\e{2}=1$ in Section \ref{e2=1}.

In a certain sense, $\bIea$ is a set of admissible parameters for which we can define the perforated domain
$\domeps$ obtained by removing from the unperturbed domain $\dom$ the closure $\overline\incleps$
of $\incleps $,  {\it i.e.},
\[ \domeps \equiv\dom\setminus\overline{\incleps},\qquad \forall \be\in \bIea. \]
We remark that, for all $\be\in\bIea$, $\domeps $ is a bounded connected open domain of class $\Ca{1}$
with boundary $\dOeps$ consisting of two connected components: $\dO$ and
$\partial \incleps =\e1 \p+\e1 \e2 \dincl$.  The distance of the hole $\incleps $ from the boundary $\dO $
is controlled by $\e1$, while its size  is controlled by the product $\e1 \e2 $.
Clearly, as the pair $\be\in\bIea$ approaches the singular value $(0,\e2^*)$, both the size of the cavity
and its distance from the boundary $\dO$ tend to $0$. If $\e2^*=0$, then the ratio of the size of the hole
to its distance from the boundary tends to $0$, and we can say that the size tends to zero `faster'
than the distance. If, instead, $\e2^*>0$, then the size of the hole and its distance from the boundary
tend to zero at the same rate.
Figure~\ref{fig.geom} illustrates our geometric setting.

\begin{figure}[h!t]
\begin{center}
\includegraphics[scale=1]{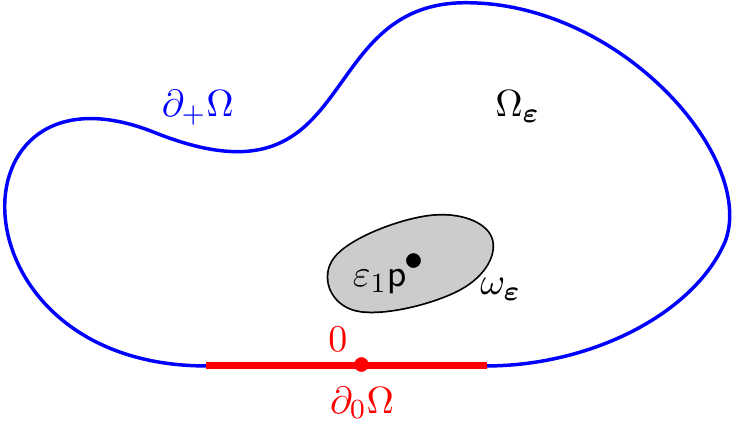}
\caption{Geometrical setting.\label{fig.geom}}
\end{center}
\end{figure}

On the $\be$-dependent domain $\domeps $, for $\be\in\bIea$ fixed we now consider the Dirichlet problem
\begin{equation}
\label{bvpe}
\begin{cases}
\Delta u(\px)=0, & \forall \px \in \domeps \,,\\
u(\px)=\go(\px), & \forall \px \in \dO\, ,\\
u(\px)=\gi\Big(\frac{\px-\e1 \p}{\e1 \e2 }\Big), & \forall \px \in \partial \incleps \, ,
\end{cases}
\end{equation}
where $\go \in \Ca{1}(\dO)$ and $\gi \in \Ca{1}(\dincl)$ are prescribed functions.
As is well known, \eqref{bvpe} has a unique solution in $\Ca{1}(\overline{\domeps })$.
To emphasize the dependence of this solution on $\be$, we denote it by $\ueps$. The aim of this paper is
to investigate the behavior of $\ueps $ when the parameter $\be=(\e1,\e2)$ approaches
the singular value $\b0\equiv(0,0)$. In two-dimensional space we also consider the case where
$\e1\to 0$ with $\e2>0$ fixed, and show that this leads to a specific asymptotic behavior.  Namely, in such regime, there are no logarithmic terms appearing in the asymptotic behavior of the solution, in contrast with what happens when $\be=(\e1,\e2) \to (0,0)$ in dimension two  (cf. Subsections \ref{beto0sec} and \ref{fixed} below). In higher dimension, instead, the case where
$\e1\to 0$ with $\e2>0$ fixed does not present specific differences and the tools developed for analyzing the situation when $\be=(\e1,\e2) \to (0,0)$ can be exploited by keeping $\e2>0$ ``frozen''. Then the corresponding results on the macroscopic and microscopic behavior would follow. For this reason, we confine here to analyze  the case where
$\e1\to 0$ with $\e2>0$ fixed only in dimension two.

We remark that every point $\px\in\dom$ stays
in $\domeps$ for $\e1$ sufficiently close to $0$. Accordingly,  if we fix a point $\px\in\dom$,
then $ \ueps (\px)$ is well defined for $\e1$ sufficiently small and we may ask
the following question:
\begin{equation}\label{eq:question}
\text{What can be said about the map $\be \mapsto \ueps (\px)$ for $\be>\b0$ close to ${\bf 0}$?}
\end{equation}
We mention that here we do not consider the case where $\e2$ is close to $0$ and $\e1$
remains positive. This case corresponds to a boundary value problem in a domain with a hole
that collapses to a point  in its interior, and has already been studied in the literature.

\subsection{Explicit computation on a toy problem}\label{toy}

To explain our results, we first consider
a two-dimensional test problem that has an explicit solution. We denote by $\cB(\px,\rho)$
the ball centered at $\px$ and of radius $\rho$, take a function $\gi \in \Ca{1}( \partial \cB(0,1))$,
and, for $\be\in]\b0,(1,1)[$, consider the following Dirichlet problem in the perforated
half space $\R^{2}_+ \setminus \cB((0,\e1),\e1\e2)$:
\begin{equation}
\label{bvpe:modele}
\begin{cases}
\Delta \ueps(\px)=0, & \forall \px \in \R^{2}_+ \setminus \cB((0,\e1),\e1\e2)\\
\ueps(\px)=0, & \forall \px \in \partial \R^{2}_+,\\
\ueps(\px)=\gi\Big(\frac{\px-\e1 \p}{\e1 \e2 }\Big), & \forall \px \in \partial \cB((0,\e1),\e1\e2),\\
\lim_{\px\to\infty}\ueps(\px)=0\,,
\end{cases}
\end{equation}
where $\p=(0,1)$. We also consider the conformal map
$$\varphi_a: ~ z\mapsto \cfrac{z-ia}{z+ia}\,,$$
with inverse
$$\varphi^{-1}_a:~z\mapsto - ia\ \cfrac{z+1}{z-1}\,.$$
When $a \ne 0$ is real, $\varphi_a$ maps the real axis onto the unit
circle.  Moreover, if
$$a(\be)=a(\e1,\e2)= \e1\sqrt{1-\e2^2}\,,$$
then $\varphi_{a(\be)}$ maps the circle centered at $(0,\e1)$ and of radius $\e1\e2$ to the circle
centered at the origin and of radius
$$\rho(\e2)= \sqrt{ \cfrac {1-\sqrt{1-\e2^{2}}} {1+\sqrt{1+\e2^{2}}}}\, .$$
We note that the maps $a:\ (\e1,\e2)\mapsto a(\be)$ and $\rho:\ \e2\mapsto\rho(\e2)$
are analytic. We mention that a similar computation
is performed in Ben Hassen and Bonnetier \cite{BenHassenBonnetier} for the case of two balls removed from an infinite medium.

Since harmonic functions are transformed  into harmonic functions by a conformal map, we can now
transfer problem \eqref{bvpe:modele} onto the annular domain  $\Bn{1} \setminus \Bn{\rho(\e{2})}$
by means of the map $\varphi_{a(\be)}$
and see that the unknown function $\uue=\ueps\circ\varphi^{-1}_{a(\be)}$ satisfies
\[
\begin{cases}
\Delta \uue=0, & \mbox{ in } \Bn{1} \setminus \Bn{\rho(\e{2})},\\
\uue=0, & \mbox{ on }\partial\Bn{1},\\
\uue(z)=\ugi_{\,\be}(\arg{z}), & \mbox{ for all  }z\in\partial\Bn{\rho(\e{2})},
\end{cases}
\]
and the new boundary condition
$$\ugi_{\,\be}(\theta)=\gi\left(-\cfrac{i}{\e2}\ \left(\sqrt{1-\e2^{2}} \ \cfrac{\rho(\e{2}) e^{i\theta}+1}
{\rho(\e{2}) e^{i\theta}-1} \ +1\right)\right),\qquad \forall \theta\in [0,2\pi[ .$$
To obtain the analytic expression of the solution, we expand $\ugi_{\,\be}$ in the Fourier series
$$\ugi_{\,\be}(\theta)=a_{0}(\ugi_{\,\be})+\sum_{k\geq 1}a_{k}(\ugi_{\,\be})
\cos{k\theta}+b_{k}(\ugi_{\,\be})\sin{k\theta},$$
so that, in polar coordinates,
$$\uue(r,\theta)= a_{0} (\ugi_{\,\be}){\frac {\log r}{\log \rho(\e{2})}} + \sum_{k\geq 1}
\left( a_{k}(\ugi_{\,\be}) \cos{k\theta}+b_{k}(\ugi_{\,\be})\sin{k\theta} \right)
\frac{r^{k}-r^{-k}}{\rho(\e{2})^{k}-\rho(\e{2})^{-k}}.$$
We can then recover $\ueps$ by computing $\ueps=\uue\circ \varphi_{a(\be)}$.  To this end,
we remark that in polar coordinates  we have $\varphi_{a(\be)}(\px)=r_\be(\px)e^{i\theta_\be(\px)}$, with
$$
r_\be(\px)=\left| \cfrac{x_{1}+ix_{2}-ia(\be)} {x_{1}+ix_{2}+ia(\be)} \right|, \qquad
\theta_\be(\px)=\arg\left(\cfrac{x_{1}+ix_{2}-ia(\be)} {x_{1}+ix_{2}+ia(\be)}\right).$$
As an example, if we assume that $\gi=1$, then the  solution of \eqref{bvpe:modele} is
\begin{equation}\label{utoy}
\ueps(\px)=\frac{\log r_\be(\px)}{\log \rho(\e{2})} \ =  \ \cfrac
{\log{ \left(x_1^2+\left(x_{2}-\e1\sqrt{1-\e2^2} \right)^{2} \right)} \ -\   \log{\left(x_1^2+\left(x_{2}
+\e1\sqrt{1-\e2^2} \right)^{2}\right)}}
{\log{ \left(1-\sqrt{1-\e2^{2}} \right)} - \log{\left(1+\sqrt{1+\e2^{2}}\right)} }.
\end{equation}
We note that
{for any  fixed $\px \in \R^{2}_+$ and $\e1$, $\e2$ positive and sufficiently small,
 the map $\be\mapsto \ueps(\px)$ is analytic. When $\be\to \b0$, the function $ \ueps$ tends to $0$
 with a main term of order $\e1|\log\e2|^{-1}$. In addition, for $\e2>0$ fixed,
 the map $\e1\mapsto \ueps(\px)$ has an analytic continuation around $\e1=0$.}
 
In what follows, we intend to prove similar results also for problem \eqref{bvpe}, and thus answer
the question \eqref{eq:question} by investigating the analyticity properties of the function
$\be\mapsto \ueps(\px)$. Furthermore, instead of evaluating $\ueps$ at a point $\px$,
we consider its restriction to suitable subsets of $\dom$ and the restriction of the rescaled function
$\pt\mapsto \ueps(\e{1}\p+\e{1}\e{2}\pt)$ to suitable open subsets of $\R^2\setminus\overline{\incl}$.
This permits us to study functionals related to  $\ueps$, such as the energy integral and the total flux
on $\partial\dom$. Our main results are described in Subsection \ref{mr}, in the next subsection instead we present our strategy.

\subsection{Methodology:  the functional analytic approach}

In the literature, most of the papers dedicated to the analysis of problems with small holes employ expansion
methods to provide asymptotic approximations of the solution.  As an example,  we mention  the  method
of matching asymptotic expansions proposed by Il'in (see, \textit{e.g.}, \cite{Il78, Il92,  Il99}),
the compound asymptotic expansion method of Maz'ya, Nazarov, and Plamenevskij \cite{MaNaPl00}
and of Kozlov, Maz'ya, and Movchan \cite{KoMaMo99}, and the mesoscale asymptotic approximations
presented by Maz'ya, Movchan, and Nieves \cite{MaMoNi13, MaMoNi16}. We also mention the works of Bonnaillie-No\"el, Lacave, and Masmoudi
\cite{BoLaMa}, Chesnel and Claeys \cite{ChCl14}, and Dauge, Tordeux, and Vial \cite{DaToVi10}.
Boundary value problems in domains with moderately close small holes have been
analyzed by means of multiple scale asymptotic expansions by Bonnaillie-No\"el, Dambrine, Tordeux,
and Vial \cite{BoDaToVi07, BoDaToVi09}, Bonnaillie-No\"el and Dambrine \cite{BoDa13},
and Bonnaillie-No\"el, Dambrine, and Lacave \cite{BoDaLa}. 

A different technique,  proposed by Lanza de Cristoforis and referred to as a
`functional analytic approach', aims at expressing the dependence of the solution on perturbation in terms of real analytic functions. This approach has so far
been applied  to the study of various elliptic problems, including
problems with nonlinear conditions.
For problems involving the Laplace operator we refer the reader to the papers of Lanza de Cristoforis (see, {\it e.g.},
 \cite{La07,La10}),  Dalla Riva and Musolino (see, {\it e.g.}, \cite{DaMu12,DaMu15,DaMu}),
 and  Dalla Riva, Musolino, and Rogosin \cite{DaMuRo15}, where the computation
 of the coefficients of the power series expansion of the resulting analytic maps is reduced to the
 solution of certain recursive systems of boundary integral equations.

In the present paper, we plan to exploit the functional analytic approach to represent the map that associates $\be$ with (suitable restrictions of) the solution $\ueps $
in terms of real analytic maps  with values in convenient Banach spaces of functions and of known
elementary functions of $\e{1}$ and $\e{2}$ (for the definition of real
analytic maps in Banach spaces, see Deimling \cite[p.~150]{De85}). Then we can recover
asymptotic approximations similar to those obtainable from the expansion methods. For example,
if we know that, for $\e1 $ and $\e2 $ small and positive, the function in \eqref{eq:question} equals
a real analytic function defined in a whole neighborhood of $(0,0)$, then we know that such a map
can be expanded in a power series for $\e1 $ and $\e2 $ small, and that a truncation of this series is
an approximation of the solution.

To conclude the presentation of our strategy,  we would like to comment on
some novel techniques that we bring into the functional analytic approach for the analysis of our problem.
First, we describe how the functional analytic approach `normally' operates on a boundary
value problem defined on a domain that depends on a parameter $\be$ and degenerates in
some sense as $\be$ tends to a limiting value $\bf 0$. The initial step consists in applying potential
theory techniques to transform the boundary value problem into a system of boundary integral equations.
Then, possibly after some suitable manipulation, this system is written
as a functional equation of the form $\fL[\be,\bm]=0$, where $\fL$ is a (nonlinear) operator acting
from an open subset of a  Banach space $\sR\times\sB_1$  to another Banach  space $\sB_2$.
Here $\sR$ is a neighborhood of $\bf 0$
and the Banach spaces $\sB_1$ and $\sB_2$ are usually the direct product of Schauder spaces on the
boundaries of certain fixed domains.
The next step is to apply the implicit function theorem to the equation $\fL[\be,\bm]=0$ in order to understand
the dependence of $\bm$ on $\be$. Then we can deduce the dependence of the solution of the original
boundary value problem on $\be$.

The strategy adopted in this paper differs from the standard application of the functional analytic approach
in two ways.

\begin{itemize}
\item The first one concerns the potential theory used to transform the problem into a system of
integral equations.  To take care of the special geometry of the problem, instead of the classical
layer potentials for the Laplace operator, we construct layer potentials where the role of the fundamental
solution is taken by the Dirichlet Green's function of the upper half space. Since the hole collapses
on $\partial \Rnp \cap \dO$ as $\be$ tends to $\bf 0$, such a method allows us to eliminate the integral
equation defined on the part of the boundary of $\domeps $ where the boundary of the hole and the
exterior boundary interact {for} $\be={\bf 0}$. In Section \ref{prel}, we collect a number of general
results on such special layer potentials. We remark that if the union of $\Omega$ and its reflection
with respect to $\partial\Rnp$ is a regular domain, then there is no need to introduce special
layer potentials and the problem may be analyzed by means of a technique based on the functional analytic approach
and on a reflection argument (see~Costabel, Dalla Riva, Dauge, and Musolino \cite{CDDM16}).
However, under our assumption, the union of $\Omega$ and its reflection
with respect to $\partial\Rnp$ produces an edge on $\partial\Rnp$ and, thus, is not a regular domain.

\item By using the special layer potentials
mentioned above, we can transform problem \eqref{bvpe} into an equation
of the form $\fL[\be,\bm]=0$, where
the operator $\fL$ acts from an open set $\bJea\times\sB_1$  into a Banach  space $\sB_2$
whose construction is, in a certain sense, artificial. $\sB_2$ is the direct product
of a Schauder space and the image of a specific integral operator (see Propositions \ref{V} and \ref{prop.N}).
In this context, we have to be particularly
careful to check that the image of $\fL$ is actually contained in such a Banach space $\sB_2$,
and that $\fL$ is a real analytic operator (see Proposition \ref{prop.N}). We remark that this step
is instead quite straightforward in previous applications of the functional analytic approach
(see, {\it e.g.},  \cite[Prop.~5.4]{DaMu}). Once this work is completed, we are ready to use
the implicit function theorem and deduce the dependence of the solution on $\be$.
\end{itemize}

\subsection{Main results}\label{mr}

To perform our analysis, in addition to \eqref{Omega}--\eqref{DOmega} we also assume that
$\dom$ satisfies the condition
\begin{equation}\tag{$H_{3}$} \label{D+Omega}
\text{$\overline{\dpO}$ is a compact submanifold with boundary of $\Rn$ of class $\Ca{1}$.}
\end{equation}
In the two-dimensional case, this condition takes the form
\begin{equation}\tag{$H_{3}$} \label{D+Omega2D}
\text{$\overline{\d0O}$ is a finite union of closed disjoint intervals in $\partial\mathbb{R}^2_+$.}
\end{equation}
In particular, we note that assumption \eqref{D+Omega} implies the existence of linear and continuous
extension operators $E^{k,\alpha}$ from $\Ca{k}(\overline{\dpO})$ to $\Ca{k}(\dO)$, for $k=0,1$
(cf.~Lemma \ref{ext} below). This allows us to change from functions defined on $\dpO$ to
functions defined on $\dO$ (and {\it viceversa}), preserving their regularity.

To prove our analyticity result, we consider a regularity condition on the
Dirichlet datum around the origin, namely
\begin{equation}\tag{$H_{4}$}\label{go_analytic}
\text{there exists $r_0>0$ such that the restriction $\go_{|\Bn{r_0}\cap\d0O}$ is real analytic\Be.}
\end{equation}
As happens for the solution to the Dirichlet problem in a domain with a small hole
`far' from the boundary, we show that $\ueps $ converges as $\e1 \to 0$ to a function $\uzero$
that is the unique solution in $\Ca{1}(\overline{\dom})$ of the following Dirichlet problem in the
unperturbed domain $\dom$:
\[
\begin{cases}
\Delta u=0 & \mbox{ in } \dom \,,\\
u=\go & \mbox{ on } \dO\, .\\
\end{cases}
\]

We note that $\uzero$ is harmonic, and therefore  analytic, in the interior of $\dom$.
This fact is useful in the study of the Dirichlet problem in a domain with a hole that shrinks
to an interior point of $\Omega$. If, instead, the hole shrinks to a point on the boundary,
as it does in this paper, then we have to introduce condition \eqref{go_analytic} in order
to ensure that $\uzero$ has an analytic (actually, harmonic) extension around the limit point.
Indeed, by  \eqref{go_analytic} and a classical argument based on the Cauchy-Kovalevskaya Theorem,
we can prove the following assertion (cf.~\ref{app:CK}).
\begin{prop}
\label{Ub}
There is $r_1\in]0,r_0]$ and a function $U_0$ from $\overline{\Bn{r_1}}$ to $\mathbb{R}$
such that $\Bnp{r_1}\subseteq\dom$ and
$$
\begin{cases}
\Delta U_0 =0&\mbox{ in }\Bn{r_1},\\
U_0 =\uzero&\mbox{ in }\overline{\Bnp{r_1}},
\end{cases}
$$
where $\Bnp{r}=\Bn{r}\cap\mathbb{R}^n_+$.
\end{prop}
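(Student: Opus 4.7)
My strategy is to reduce the problem to the homogeneous Dirichlet case through a harmonic lift of the boundary datum, and then to apply Schwarz reflection across the flat portion of $\dO$. The Cauchy--Kovalevskaya theorem enters only through the construction of the harmonic lift; the reflection step is then entirely classical.

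First, by \eqref{go_analytic}, after identifying $\partial\mathbb{R}^n_+$ with $\mathbb{R}^{n-1}$ via the first $n-1$ coordinates, the restriction of $\go$ near $0$ corresponds to a real analytic function $G$ of $x'=(x_1,\dots,x_{n-1})$ on an open neighborhood of $0$ in $\mathbb{R}^{n-1}$. Since $\Delta = \partial_{x_n}^2 + \Delta_{x'}$ and the hyperplane $\{x_n=0\}$ is non-characteristic for $\Delta$, the Cauchy--Kovalevskaya theorem applied to the Cauchy problem
\begin{equation*}
\partial_{x_n}^2 V = -\Delta_{x'} V,\qquad V(x',0) = G(x'),\qquad \partial_{x_n} V(x',0) = 0,
\end{equation*}
produces a real analytic function $G^*$ on some ball $\Bn{r_2}$ with $r_2\in(0,r_0]$ such that $\Delta G^* = 0$ in $\Bn{r_2}$ and $G^* = G$ on $\{x_n=0\}\cap\Bn{r_2}$.

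Second, using \eqref{Omega}, \eqref{DOmega}, and the $\Ca{1}$ regularity of $\dO$, I choose $r_1\in(0,r_2]$ small enough that $\Bnp{r_1}\subseteq\dom$ and $\dO\cap\Bn{r_1}\subseteq\d0O\subseteq\{x_n=0\}$; this is possible because $0$ lies in the interior of $\d0O$ as a relatively open subset of $\partial\mathbb{R}^n_+$, so the curved component $\dpO$ stays at positive distance from $0$. Set $\tilde{u} := \uzero - G^*$ on $\overline{\Bnp{r_1}}$. Then $\tilde{u}$ is harmonic in $\Bnp{r_1}$, continuous up to $\overline{\Bnp{r_1}}$ (since $\uzero\in\Ca{1}(\overline{\dom})$ and $G^*$ is real analytic on a neighborhood of $\overline{\Bn{r_1}}$), and vanishes on $\{x_n=0\}\cap\Bn{r_1}$ because $\uzero=\go=G=G^*$ there.

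Finally, the classical Schwarz reflection principle extends $\tilde{u}$ to a harmonic function $\tilde{U}$ on all of $\Bn{r_1}$ via $\tilde{U}(x',x_n) := -\tilde{u}(x',-x_n)$ for $x_n<0$, and the function $U_0 := \tilde{U} + G^*$ on $\overline{\Bn{r_1}}$ then satisfies every requirement of the statement. The most delicate point of the argument is the joint choice of $r_1$: it must simultaneously respect the radius of convergence of the CK solution $G^*$, ensure that $\dO\cap\Bn{r_1}$ is entirely contained in the flat analytic portion $\d0O$ of the boundary, and guarantee $\Bnp{r_1}\subseteq\dom$. Once these geometric constraints are secured, the harmonicity of $U_0$ and the identity $U_0=\uzero$ on $\overline{\Bnp{r_1}}$ follow immediately from the construction.
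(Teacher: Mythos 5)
Your proof is correct and follows essentially the same strategy as the paper's: apply the Cauchy--Kovalevskaya theorem to build a harmonic extension of the boundary datum near $0$, subtract it from $u_0$ to reduce to a harmonic function vanishing on the flat boundary piece, and then extend by odd (Schwarz) reflection. The only cosmetic difference is that you apply Cauchy--Kovalevskaya once to obtain the lift on a full ball, whereas the paper constructs it separately on each half-ball and glues, and you invoke the classical Schwarz reflection principle as a black box where the paper verifies harmonicity of the reflected function by testing against $C^\infty_c$ functions.
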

 Then, possibly shrinking $\e{1}^{\rm ad}$, we may assume that
\begin{equation}\label{Ubassumption}
\e1 \p+\e1 \e2 \overline{\incl}\subseteq\Bn{r_1},\qquad\forall\be\in \bJea\,.
\end{equation}
We now give our answers to question \eqref{eq:question}. 
We remark that, instead of  the evaluation of $\ueps $ at a point $\px$, we consider  its restriction
to a suitable subset $\Omega'$ of $\Omega$.

\subsubsection{The case $\be\to\b0$ in spaces of dimension $n\ge 3$}

For $\be\to\b0$, the question \eqref{eq:question} is answered differently when $n\geq3$ and $n=2$.
If $n\geq3$, the statement is easier.
\begin{thm}\label{Ue1e2}
Let $\dom'$ be an open subset of $\dom$ such that $0\notin\overline{\dom'}$.
There are {$\be'\in\bIea$} with
$\overline{\incleps }\cap\overline{\dom'}=\emptyset$ for all $\be\in \bJep$
and a real analytic map ${\fU}_{\dom'}$ from $\bJep$ to $\Ca{1}(\overline{\dom'})$ such that
\begin{equation}\label{Ue1e2.eq1}
{\ueps}_{ |\overline{\dom'}}={\fU}_{\dom'}[\be]\qquad\forall \be\in\bIep\,.
\end{equation}
Furthermore,
\begin{equation}\label{Ue1e2.eq2}
{\fU}_{\dom'}[{\bf 0}]=u_{0|\overline{\dom'}}\,.
\end{equation}
\end{thm}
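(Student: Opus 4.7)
The plan is to apply the functional analytic approach sketched in the methodology subsection. First, I represent the solution $\ueps$ of \eqref{bvpe} by means of special layer potentials built on the Dirichlet Green's function $G_+$ of the upper half space $\Rnp$. Since $G_+$ automatically vanishes on $\partial\Rnp$, and in particular on $\d0O$, the Dirichlet conditions only need to be enforced on the curved piece $\dpO$ and on $\partial\incleps$, producing a coupled pair of boundary integral equations for two unknown densities $\mu^o$ on $\dpO$ and $\mu^i$ on $\partial\incleps$. After rescaling the second equation via the change of variable $\px=\e1\p+\e1\e2\pt$ with $\pt\in\dincl$, the system is transferred to the fixed boundaries $\dpO$ and $\dincl$ and takes the form $\fL[\be,\bm]=0$, with $\bm=(\mu^o,\mu^i)$ in a Banach space $\sB_1$ of density pairs and $\fL$ taking values in the specifically designed Banach space $\sB_2$ of Propositions \ref{V} and \ref{prop.N}. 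Proposition \ref{Ub} enters here because it allows $u_0$ to be regarded as a smooth function in a neighborhood of the hole $\incleps$.

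The main obstacle, as emphasized in the methodology subsection, is verifying that $\fL$ is real analytic on an open neighborhood of $(\b0,\bm^\ast)$ in $\bJea\times\sB_1$, where $\bm^\ast$ is the density solving the limit problem at $\be=\b0$. In contrast with more classical applications of the method, this is non-trivial because of the design of $\sB_2$ and because the point $\e1\p$ approaches the hyperplane $\partial\Rnp$ as $\e1\to 0$. Analyticity is established using the explicit decomposition $G_+(\px,y)=S_n(\px-y)-S_n(\px-\bar y)$ (with $S_n$ the standard fundamental solution of the Laplace operator and $\bar y$ the reflection of $y$ across $\partial\Rnp$), together with the observation that, once rescaled, the cavity $\e1\p+\e1\e2\dincl$ and its mirror image remain at a mutual distance uniformly bounded away from zero, and with the fact that the Jacobian factors produced by the change of variables depend analytically on $\be$.

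At $\be=\b0$, the system $\fL[\b0,\bm]=0$ decouples into one equation on $\dpO$ whose unique solution encodes $u_0$, and into an independent equation on $\dincl$ depending only on $\gi$. Using the classical invertibility properties of the boundary integral operators at play, one checks that the partial Fr\'echet differential $\partial_{\bm}\fL[\b0,\bm^\ast]$ is an isomorphism from $\sB_1$ onto $\sB_2$. The implicit function theorem in Banach spaces (see Deimling \cite{De85}) therefore yields a real analytic germ $\be\mapsto\bm[\be]$ defined on an open neighborhood of $\b0$, with $\fL[\be,\bm[\be]]=0$ there.

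Finally, I shrink the parameter rectangle to some $\bJep$ so that $\overline{\incleps}\cap\overline{\dom'}=\emptyset$ for every $\be\in\bJep$; this is possible because $0\notin\overline{\dom'}$ and $\incleps$ collapses to the origin as $\be\to\b0$. For $\px\in\overline{\dom'}$ the Green's function $G_+(\px,\cdot)$ is smooth on the supports of the layer potentials, so the restriction to $\overline{\dom'}$ of the potential representation depends real analytically both on $\be$ and on $\bm[\be]$. This yields the desired real analytic map $\fU_{\dom'}:\bJep\to\Ca{1}(\overline{\dom'})$ satisfying \eqref{Ue1e2.eq1}, and specialization at $\be=\b0$, where the limiting densities reproduce $u_0$ on $\overline{\dom'}$, gives \eqref{Ue1e2.eq2}.
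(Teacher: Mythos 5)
Your overall strategy coincides with the paper's: half-space Green's-function layer potentials (Definition \ref{def.SDG+}), reduction to a functional equation $\fL[\be,\bm]=0$ on fixed boundaries with codomain $\sB_2=\sV^{1,\alpha}(\dpO)\times\Ca{1}(\dincl)$, invertibility of $\partial_{\bm}\fL$ at the limit via the single-layer isomorphisms, and the implicit function theorem followed by a read-off from the potential representation. You also correctly flag the delicate point, namely that $\fL$ must be real analytic into the specially designed codomain. But the argument you give for that analyticity does not actually close the gap. The decomposition $G(\px,\py)=S_n(\px-\py)-S_n(\msigma(\px)-\py)$, the uniform distance between the rescaled cavity and its mirror image, and the analytic Jacobian factors show, via the standard result on integral operators with real analytic kernel and no singularity, that the relevant maps are real analytic into $\Ca{1}(\overline{\dpO})$ and into $\Ca{1}(\dincl)$. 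For the second component that is enough, and for the first component it shows that $\fL_1[\be,\bm]$ \emph{belongs to} $\sV^{1,\alpha}(\dpO)$ for each $(\be,\bm)$ (via Proposition \ref{f=ue}); it does \emph{not} give analytic dependence in the $\sV^{1,\alpha}$ norm.

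By Proposition \ref{V}, $\|f\|_{\sV^{1,\alpha}(\dpO)}$ is the $\Ca{0}_+(\dO)$ norm of the unique preimage density of $f$ under $v_{\G}[\dO,\cdot]_{|\dpO}$. Since the single layer is smoothing, $\sV^{1,\alpha}(\dpO)$ is not a closed subspace of $\Ca{1}(\dpO)$, so its own norm is strictly stronger than the restriction of the $\Ca{1}$ norm; a map that is real analytic into $\Ca{1}(\dpO)$ with range in $\sV^{1,\alpha}(\dpO)$ need not be real analytic into $\sV^{1,\alpha}(\dpO)$. The paper's actual argument (second step of the study of $\fL_1$ in the proof of Proposition \ref{prop.N}) constructs an explicit density $\phi[\be,\mu_2]\in\Ca{0}_+(\dO)$ with $\mathfrak{f}[\be,\mu_2]=v_{\G}[\dO,\phi[\be,\mu_2]]_{|\dpO}$, obtained as the jump of normal derivatives of an interior harmonic extension $u^i[\be,\mu_2]$ (built by solving a Dirichlet problem in $\dom$ and using Lemma \ref{ext}) and an exterior one $u^e[\be,\mu_2]$, and controlled via the Green representation formulas of Lemmas \ref{i-green} and \ref{e-green}; it then verifies that $(\be,\mu_2)\mapsto\phi[\be,\mu_2]$ is real analytic into $\Ca{0}_+(\dO)$. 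This preimage construction is precisely what upgrades ``lands in $\sV^{1,\alpha}$'' to ``analytic into $\sV^{1,\alpha}$'', and it is absent from your proposal. (A minor additional slip: the limit equation on $\dincl$ depends on $\go$ through $\go(0)$ as well as on $\gi$, cf.\ Proposition \ref{00}.)
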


Theorem \ref{Ue1e2} implies that there are $\be''\in]0,\be'[$
and  a family of functions $\{U_{i,j}\}_{i,j\in\N^2}\subseteq \Ca{1}(\overline{\dom'})$  such that
\[
u_{\be}(\px)=\sum_{i,j=0}^\infty U_{i,j}(\px)\;\e1^i\e2^j\,,\qquad\forall\be\in]0,\be''[\,,\;\px\in\overline{\dom'},
\]
with the power series $\sum_{i,j=0}^\infty U_{i,j}\; \e1^i\e2^j$ converging in the norm of
$ \Ca{1}(\overline{\dom'})$ for $\be$ in an open neighborhood of $\b0$. Consequently,
one can compute asymptotic approximations for $\ueps $ whose convergence
is  guaranteed by our preliminary analysis.

A result similar to Theorem \ref{Ue1e2} is expressed in Theorem \ref{Ve1e2}
concerning the behavior of $\ueps$ close to the boundary of the hole, namely,
for the rescaled function  $\pt\mapsto\ueps(\e1\p+\e1\e2\,\pt)$. Later, in Theorems \ref{Enge3}
and \ref{Enge3g}  we present real analytic continuation results also for the energy integral
$\int_{\domeps}|\nabla\ueps|^2d\px$. In particular, we show that the limiting value of the energy integral   for $\be\to\b0$  is the energy of the unperturbed solution $u_0$.

\subsubsection{The case $\be\to\b0$ in two-dimensional space}\label{beto0sec}

Here, we need to introduce a curve $\eta \mapsto \be(\eta)\equiv (\e1(\eta),\e2(\eta))$
that describes the values attained  by the parameter $\be$ in a specific way. The reason is
the presence of the quotient
\begin{equation}\label{eq:gam0}
\frac{\log\e1}{\log(\e1\e2 )},
\end{equation}
which plays an important role in the description of  $\ueps$ for $\be$ small. We remark that the expression
\eqref{eq:gam0} has no limit as $\be\to\b0$. Therefore, we choose a function $\eta\mapsto\be(\eta)$ from $]0,1[$ to
$\bIea$ such that
\begin{equation}\label{eq:gam1}
\lim_{\eta \to 0^+}\be(\eta)=\b0,
\end{equation}
and for which
\begin{equation}\label{eq:gam2}
\lim_{\eta \to 0^+}\frac{\log\e1(\eta)}{\log(\e1(\eta) \e2(\eta) )}\quad\text{ exists and equals }
\lambda\in[0,1[.
\end{equation}
It is also convenient to denote by $\bde$ the function
\begin{equation}\label{eq:bfdelta}
\begin{array}{rcl}
\bde :\quad ]0,1[ &\to&  \mathbb{R}^2,\\
\eta &\mapsto&
\bde(\eta)\equiv \big(\delta_1(\eta),\delta_2(\eta)\big)\equiv \biggl(\frac{1}{\log \big(\e1(\eta)\e2(\eta)\big)},
\frac{\log\e1(\eta) }{\log \big(\e1(\eta)\e2(\eta)\big)}\biggr)\,,
\end{array}\end{equation}
so that
\[
\lim_{\eta\to 0^+}\bde(\eta)=(0,\lambda)\,.
\]
In Section \ref{n=2}, we prove an assertion
that describes $u_{\be(\eta)}$ in terms of a real analytic function of four real variables
evaluated at $(\be(\eta),\bde(\eta))$.

\begin{thm}\label{thm:Ue1gamma-introd}
Let $\lambda\in[0,1[$. Let $\Omega'$ be an open subset of $\Omega$ with $0 \notin \overline{\Omega'}$.
Then there are  $\be'\in\bIea$,
an open neighborhood  $\cV_{\lambda}$ of $(0,\lambda)$ in $\mathbb{R}^2$, and a real analytic map
\[
\fU_{\dom'}:]-\be',\be'[\times \cV_{\lambda}\to\Ca{1}(\overline{\dom'}),
\]
such that
\begin{equation}\label{Ue1gamma.eq1}
u_{\be(\eta)|\overline{\dom'}}=\fU_{\dom'}\bigl[\be(\eta),\bde(\eta)\bigr]\,,\qquad\forall \eta \in]0,\eta'[\,.
\end{equation}
The equality in \eqref{Ue1gamma.eq1} holds for all parametrizations $\eta\mapsto\be(\eta)$ from $]0,1[$ to  $\bIea$
that satisfy  \eqref{eq:gam1} and \eqref{eq:gam2}. The function $\eta\mapsto\bde(\eta)$ is defined
as in \eqref{eq:bfdelta}. The pair  $\be'\in\bIea$ is small enough to yield
\begin{equation}\label{Ue1gamma.eq0}
\overline{\omega_{\be}} \cap \overline{\Omega'}=\emptyset\,,\qquad\forall \be \in ]-\be',\be'[\,,
\end{equation}
and $\eta'$ can be any number in $]0,1[$ such that
\[
\left(\be(\eta),\bde(\eta)\right) \in ]0,\be'[\times \cV_{\lambda}\,,\qquad\forall  \eta \in]0,\eta'[\,.
\]
At the singular point $(\b0, (0,\lambda))$, we have
\begin{equation}\label{Ue1gamma.eq2}
\fU_{\dom'}[\b0, (0,\lambda)]=u_{0|\overline{\dom'}}\,.
\end{equation}
\end{thm}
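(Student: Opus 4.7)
The plan is to reduce the Dirichlet problem \eqref{bvpe} to a system of boundary integral equations via the special layer potentials introduced in Section \ref{prel}, whose kernel is the Dirichlet Green's function $G$ of the upper half space $\Rnp$. Since $G$ vanishes on $\partial\Rnp$, we seek $\ueps$ in the form $\ueps=U_{0}+v_{o}+v_{i}$, where $U_{0}$ is the harmonic extension of Proposition \ref{Ub} (which absorbs the datum $\go$ near the collapse point), and $v_{o}$, $v_{i}$ are $G$-single layer potentials carried respectively by $\dpO$ and $\partial\incleps$ with unknown densities $\mu_{o}$ and $\mu_{i}$. Rescaling the microscopic boundary to $\partial\incl$ through $\pt=(\px-\e1\p)/(\e1\e2)$ converts the two boundary conditions on $\dpO$ and on $\partial\incleps$ into a coupled system of two integral equations for the pair $(\mu_{o},\mu_{i})$.

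The core difficulty of the two-dimensional case is that the rescaling of the singular part of $G$ on the microscopic boundary produces a term proportional to $\log(\e1\e2)$, and, because of the interaction of $\incleps$ with its reflection across $\partial\Rnp$, also a term proportional to $\log\e1$; these obstruct any naive real analytic dependence on $\be$. My strategy is to isolate these factors by passing to the auxiliary variables $(\delta_{1},\delta_{2})$ of \eqref{eq:bfdelta}, multiplying the microscopic equation by $\delta_{1}$, and treating $\be$ and $(\delta_{1},\delta_{2})$ as four \emph{independent} real parameters. The system then takes the form $\fL[\be,(\delta_{1},\delta_{2}),(\mu_{o},\mu_{i})]=0$ for a nonlinear operator $\fL$ defined on an open subset of $\R^{2}\times\R^{2}\times\sB_{1}$ with values in $\sB_{2}$, where $\sB_{1}$ and $\sB_{2}$ are Schauder-type Banach spaces as in the general framework of the paper (with $\sB_{2}$ having the slightly unusual structure alluded to in Proposition \ref{prop.N}). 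The $\delta_{1}$-normalization eliminates every isolated logarithmic factor, so that the kernels entering $\fL$ are real analytic in $(\be,\delta_{1},\delta_{2})$ up to and including $\be=\b0$.

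Next I would check that $\fL$ is indeed a real analytic map and that the partial differential $\partial_{(\mu_{o},\mu_{i})}\fL[\b0,(0,\lambda),\cdot]$ is a linear homeomorphism from $\sB_{1}$ onto $\sB_{2}$. At this singular point the system essentially decouples: the macroscopic equation reduces to the standard $G$-layer Dirichlet integral equation on $\dpO$, which is uniquely solvable by classical potential theory; the microscopic equation becomes a weighted single-layer equation on $\partial\incl$ whose leading logarithmic coefficient depends on $\lambda$ and is nonzero precisely because $\lambda\in[0,1[$. With invertibility in hand, the implicit function theorem in Banach spaces yields, on $]-\be',\be'[\,\times\,\cV_{\lambda}$ for a suitably small $\be'\in\bIea$ and an open neighborhood $\cV_{\lambda}$ of $(0,\lambda)$, a real analytic map $(\be,(\delta_{1},\delta_{2}))\mapsto(\mu_{o}[\be,(\delta_{1},\delta_{2})],\mu_{i}[\be,(\delta_{1},\delta_{2})])$.

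Inserting this map into the representation formula and restricting to $\overline{\dom'}$ --- which stays uniformly away from the collapsing inclusion by \eqref{Ue1gamma.eq0} --- defines the required real analytic $\fU_{\dom'}$ with values in $\Ca{1}(\overline{\dom'})$, and \eqref{Ue1gamma.eq1} along the curve $\eta\mapsto(\be(\eta),\bde(\eta))$ follows from uniqueness of solutions to \eqref{bvpe}. For \eqref{Ue1gamma.eq2} one observes that at $(\b0,(0,\lambda))$ the density $\mu_{o}$ solves exactly the integral equation representing $u_{0}$ through the $G$-layer ansatz on $\dpO$, while the $G$-single layer potential generated by $\mu_{i}$ on the collapsing boundary tends to zero uniformly on $\overline{\dom'}$. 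The principal obstacle is the claim in the second paragraph: proving that the $\delta_{1}$-renormalized operator really does take values in the artificial Banach space $\sB_{2}$ and extends real analytically across $\be=\b0$ --- this is precisely what distinguishes the delicate two-dimensional analysis from the straightforward $n\geq 3$ case handled in Theorem \ref{Ue1e2}.
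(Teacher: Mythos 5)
Your framework --- special $G$-layer potentials, a representation ansatz carried by $\dpO$ and $\partial\incleps$, passage to the auxiliary variables $\bde$, and the implicit function theorem at the singular point $(\b0,(0,\lambda))$ --- is the correct one and matches the paper's strategy. However, the proposal has a genuine gap at its core: the unknown in your ansatz is only the pair $(\mu_o,\mu_i)$ of densities, and the proposed repair for the logarithmic blow-up, ``multiplying the microscopic equation by $\delta_1$,'' does not produce an operator to which the implicit function theorem applies. The trouble is that, after rescaling, the $G$-kernel on $\partial\incleps$ yields $S_2(\pt-\ps)+\frac{1}{2\pi}\log(\e1\e2) - S_2(-2p_2\se_2+\e2(\msigma(\pt)-\ps)) - \frac{1}{2\pi}\log\e1$; the constant $\frac{1}{2\pi}\log\e2\,\int_{\dincl}\mu_i\,d\sigma$ it produces diverges unless $\int_{\dincl}\mu_i\,d\sigma=0$, and $\delta_1$-multiplication of the \emph{whole} microscopic equation sends the leading term $\delta_1 v_{S_2}[\dincl,\cdot]$ to zero as $\delta_1\to 0$. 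At $(\b0,(0,\lambda))$ the partial differential of your $\delta_1$-renormalized operator with respect to $\mu_i$ collapses to the rank-one map $\mu_i\mapsto\frac{1-\lambda}{2\pi}\int_{\dincl}\mu_i\,d\sigma$, which is far from being a homeomorphism from $\Ca{0}(\dincl)$ onto $\Ca{1}(\dincl)$. Conversely, if you restrict to mean-zero $\mu_i$ to kill the logarithm without $\delta_1$-renormalization, Lemma \ref{viso} shows that $v_{S_2}[\dincl,\cdot]:\Ca{0}_\#(\dincl)\to\Ca{1}(\dincl)$ is not surjective; the constant functions are missing from the image. Either way the linearization fails to be an isomorphism, and the implicit function theorem cannot be invoked.

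The paper resolves exactly this obstruction by introducing a \emph{triple} of unknowns $(\mu_1,\mu_2,\xi)\in\Ca{0}_+(\dO)\times\Ca{0}_\#(\dincl)\times\R$: the density on $\partial\incleps$ is split into a mean-zero part $\mu_2$ (for which the logarithmic constant vanishes) plus a constant multiple $\xi$ of $v_G[\partial\incleps,1]$, with the crucial normalization $\rho=(\e1\e2\log(\e1\e2))^{-1}$ (see \eqref{rep} and \eqref{soln2}). This renormalization is what makes the kernels depending on $\xi$ appear multiplied by $\delta_1$ in the macroscopic equation and by $\rho_\incl(1-\delta_2)$ in the microscopic one, both analytic in $(\be,\bde)$; at $(\b0,(0,\lambda))$ the linearization $(\phi_2,\zeta)\mapsto v_{S_2}[\dincl,\phi_2]+\rho_\incl(1-\lambda)\zeta$ is exactly the isomorphism of Lemma \ref{viso}, valid because $\lambda\neq 1$. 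Your proposal has the right intuition --- the ``$1-\lambda\neq 0$'' condition does appear --- but without this $\Ca{0}_\#(\dincl)\times\R$ splitting the argument cannot close. A secondary issue: the ansatz $\ueps=U_0+v_o+v_i$ does not make sense on all of $\domeps$, since $U_0$ of Proposition \ref{Ub} is defined only on $\overline{\Bn{r_1}}$; the paper's representation \eqref{soln2} uses $u_0$ on the macroscopic side and reserves $U_0$ for evaluations at $\e1\p+\e1\e2\pt$, which stay inside $\Bn{r_1}$ by \eqref{Ubassumption}.
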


As a corollary to Theorem \ref{thm:Ue1gamma-introd}, we can write the solution $u_{\be(\eta)}$
in terms of a  power series in $\left(\be(\eta),\bde(\eta)\right)$ for $\eta$ positive and small.
Specifically, there are $\eta''\in]0,\eta']$ and a family of functions
$\{U_{\boldsymbol\beta}\}_{{\boldsymbol\beta}\in\N^4}\subseteq \Ca{1}(\overline{\dom'})$ such that
\[
u_{\be(\eta)}(\px)=\sum_{{\boldsymbol\beta}\in\N^4}U_{\boldsymbol\beta}(\px)\;\e1(\eta)^{\beta_1}
\e2(\eta)^{\beta_2}\delta_1(\eta)^{\beta_3}(\delta_2(\eta)-\lambda)^{\beta_4}\qquad
\forall\eta\in]0,\eta''[\,,\;\px\in\overline{\dom'}\,.
\]
Moreover, the power series $\sum_{{\boldsymbol\beta}\in\N^4}U_{\boldsymbol\beta}\;\e1^{\beta_1}
\e2^{\beta_2}\delta_1^{\beta_3}(\delta_2-\lambda)^{\beta_4}$ converges in the norm of $ \Ca{1}
(\overline{\dom'})$ for $(\e1,\e2,\delta_1,\delta_2)$ in an open neighborhood of $(\b0, (0,\lambda))$.

We emphasize that the map $\fU_{\dom'}$, as well as the coefficients
$\{U_{\boldsymbol\beta}\}_{{\boldsymbol\beta}\in\N^4}$, depends on the limiting value $\lambda$,
but not on the specific curve $\be(\cdot)$ that satisfies \eqref{eq:gam2}.
 A result similar to Theorem \ref{thm:Ue1gamma-introd}  also holds,
which describes the behavior of the solution of problem \eqref{bvpe} close to the hole  (cf.~Theorem \ref{Ve1gamma}), for the energy integral
$\int_{\dome}|\nabla u_\eps|^2\,d\px$ (cf.~Theorem \ref{Enge2}), and for the total flux  through
the outer boundary $\int_{\partial\dom}\bn_\dom\cdot\nabla u_\eps\,d\sigma$ (cf.~Theorem \ref{Fluxe2}). In particular, we show that the limiting value of the energy integral is
\begin{equation}\label{Elimit}
\lim_{\eta\to 0}\int_{\dome}|\nabla u_{\be(\eta)}|^2\,d\px=
\int_{\Omega}|\nabla u_0|^2\,d\px+\int_{\mathbb{R}^2\setminus\incl}\left|\nabla v_0\right|^2\,d\px,
\end{equation}
where $v_0\in \Ca{1}_{\mathrm{loc}}(\mathbb{R}^2\setminus\incl)$ is the unique solution of
\begin{equation}\label{v0}
\left\{
\begin{array}{ll}
\Delta v_0=0&\text{in }\mathbb{R}^2\setminus\overline{\incl}\,,\\
v_0=\gi&\text{on }\dincl\,,\\
\sup_{\R^2\setminus\incl}|v_0|<+\infty\,.&
\end{array}
\right.
\end{equation}
In addition, we show that the flux on $\dO$ satisfies
\[
\lim_{\eta\to0}\int_{\partial\Omega}\bn_\Omega\cdot\nabla u_{\be(\eta)}\,d\sigma=0.
\]

Finally, we remark that the functions $\delta_1$ and $\delta_2$ are not uniquely defined.
For example, we may choose
\[
\tilde\delta_2(\eta)\equiv \frac{\log\e2(\eta) }{\log \big(\e1(\eta)\e2(\eta)\big)}
\]
or other similar alternatives instead of $\delta_2(\eta)$ (we note that $\tilde\delta_2(\eta)=1-\delta_2(\eta)$).
Furthermore, the solution may not depend on the quotient \eqref{eq:gam0}
if we consider problems with a different geometry. For instance, in the toy problem of Subsection \ref{toy},
the solution  \eqref{utoy} can be written as an analytic map of three variables evaluated at
$(\e1,\e2,({\log\e2})^{-1})$.  As we emphasize in a comment at the end of Subsection \ref{sub:L},
the reason for this simpler behavior is that in the  toy problem we do not have
an exterior boundary $\dpO$. It is  worth noting that a quotient similar to \eqref{eq:gam0}  plays a fundamental
role also in the two-dimensional Dirichlet problem with moderately close small holes,
which was investigated in \cite{DaMub} and where
it was shown that an analog of the limiting value $\lambda$ (cf.~\eqref{eq:gam2})
appears explicitly in the second term of the asymptotic expansion of the solution.

\subsubsection{The case  $\e1\to0$ with $\e2>0$ fixed in two-dimensional space}\label{fixed}

We remark that we may restrict our attention to the problem with $\e2=1$. Then the generic case of
$\e2=\e2^*\in]0,\e{2}^{\rm ad}[$  fixed is obtained by rescaling the reference domain
$\incl$ using the factor $\e2^*$.  We also remark that the restricted case is
a one-parameter problem. Consequently, it is convenient to define  $\ea\equiv\e{1}^{\rm ad}$,
$\incle\equiv\omega_{\e1,1}$, $\dome\equiv\Omega_{\e1,1}$, and $\ue\equiv u_{\e1,1}$ for all
$\varepsilon\in \Jea$. The next assertion is proved in Section \ref{e2=1}.

\begin{thm}\label{thm:Ue}
Let $\dom'$ be an open subset of $\dom$ such that $0\notin\overline{\dom'}$.
Then there are $\eps'\in]0,\e{1}^{\rm ad}[$ such that
\begin{equation}\label{Ue.eq0}
\overline{\incle }\cap\overline{\dom'}=\emptyset\qquad\forall\eps\in ]-\eps',\eps'[\,
\end{equation}
and a real analytic map $\fU_{\dom'}$ from $]-\eps',\eps'[$ to $\Ca{1}(\overline{\dom'})$ satisfying
\begin{equation}\label{Ue.eq1}
u_{\eps |\overline{\dom'}}=\fU_{\dom'}[\eps],\qquad\forall \eps\in]0,\eps'[\,.
\end{equation}
Furthermore,
\begin{equation}\label{Ue.eq2}
\fU_{\dom'}[0]=u_{0|\overline{\dom'}}\,.
\end{equation}
\end{thm}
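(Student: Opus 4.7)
The plan is to mirror the functional analytic scheme used for Theorem \ref{Ue1e2}, taking advantage of the fact that in two dimensions the Dirichlet Green's function $G_+$ of $\mathbb{R}^2_+$ is scale invariant, $G_+(\eps\mathbf{x},\eps\mathbf{y})=G_+(\mathbf{x},\mathbf{y})$: the logarithms produced by the fundamental term and by the mirror term cancel every occurrence of $\log\eps$. This is precisely the mechanism whose failure forces the logarithmic behavior treated in Section \ref{n=2} when $\be\to\b0$. First, I would represent $\ue$ as the sum of two $G_+$-single-layer potentials, one with density $\mu$ on $\partial\incle$ and one with density $\nu$ on $\dpO$, plus a harmonic extension of $g^o$, in the spirit of Propositions \ref{V} and \ref{prop.N}. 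Since such potentials vanish on $\partial\mathbb{R}^2_+$, the Dirichlet condition on the flat part $\d0O$ is automatic, and imposing the remaining conditions on $\dpO$ and on $\partial\incle$ produces a system of boundary integral equations for $(\mu,\nu)$.

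Next, I would transport the equation on $\partial\incle$ onto the fixed boundary $\partial(\p+\incl)$ by the change of variable $\mathbf{y}=\eps(\p+\pt)$, with a suitable rescaling of the density. Thanks to the scale invariance of $G_+$, the rescaled integral operator depends real analytically on $\eps$ with no $\log\eps$ factors, so the full system can be written as $\mathcal{L}[\eps,(\mu,\nu)]=0$ with $\mathcal{L}$ a real analytic map from an open neighborhood of $\eps=0$ in $\mathbb{R}$ times a Schauder product space $\mathcal{B}_1$ into the Banach space $\mathcal{B}_2$ alluded to in the introduction; the domain can be taken of the form $]-\eps'',\eps''[\,$ by allowing $\eps$ to be negative, since the $\eps$-dependence of the kernels extends to a full real neighborhood of $0$. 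At $\eps=0$ the system decouples: the $\nu$-equation reduces to the integral equation of the unperturbed Dirichlet problem, which determines $u_0$ uniquely by classical Schauder theory, while the $\mu$-equation becomes the integral equation for the exterior Dirichlet problem in $\mathbb{R}^2_+\setminus(\p+\overline{\incl})$ with zero trace on $\partial\mathbb{R}^2_+$. The latter is uniquely solvable in the Schauder class because $G_+$-layer potentials decay at infinity in two dimensions (in contrast to ordinary logarithmic potentials), so no compatibility or normalization condition at infinity needs to be imposed by hand. Unique solvability of both limit problems makes $\partial_{(\mu,\nu)}\mathcal{L}[0,\cdot]$ a linear homeomorphism, and the real analytic implicit function theorem (see Deimling \cite[p.~150]{De85}) supplies $\eps'\in\,]0,\e{1}^{\rm ad}[\,$ together with a real analytic curve $\eps\mapsto(\mu[\eps],\nu[\eps])$ solving $\mathcal{L}[\eps,(\mu[\eps],\nu[\eps])]=0$ on $]-\eps',\eps'[\,$.

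Substituting $(\mu[\eps],\nu[\eps])$ back into the layer potential representation and restricting to $\overline{\dom'}$, with $\eps'$ shrunk so that \eqref{Ue.eq0} holds, gives the required real analytic map $\fU_{\dom'}$ satisfying \eqref{Ue.eq1}; the decoupling at $\eps=0$ immediately yields \eqref{Ue.eq2}. The main obstacle I anticipate is verifying that $\mathcal{L}$ genuinely extends real analytically across $\eps=0$ into the somewhat artificial target space $\mathcal{B}_2$, and in particular that the mixed off-diagonal blocks of $\mathcal{L}$, which couple $\nu$-densities on $\dpO$ to points of the collapsing boundary $\partial\incle$ and vice versa, admit real analytic rather than merely continuous limits as $\eps\to 0$. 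The scale invariance of $G_+$ makes this step substantially cleaner than in the $\be\to\b0$ regime, where the hole drifts relative to the reflection, but the bookkeeping remains the technical heart of the argument and closely parallels the analogous verification required in the proof of Theorem \ref{Ue1e2}.
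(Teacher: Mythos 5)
Your plan is, in substance, the paper's own: use $G$-layer potentials supported on $\partial\Omega$ and on the rescaled inclusion boundary, exploit the scale invariance $G(\eps\mathbf x,\eps\mathbf y)=G(\mathbf x,\mathbf y)$ in $\mathbb{R}^2$ to kill every $\log\eps$, write everything as $\mathcal{L}[\eps,\cdot]=0$ into the Banach space $\mathcal{V}^{1,\alpha}(\partial_+\Omega)\times C^{1,\alpha}(\partial\omega)$, and invoke the analytic implicit function theorem — this is precisely Sections 5.1–5.3 of the paper (Propositions \ref{M}, \ref{01}, Theorem \ref{analytic01}, Remark \ref{macro_e}). Two points in your sketch are worth sharpening against the paper's proof. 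First, the invertibility of the $\eps=0$ operator on the inclusion side does not follow merely from the fact that $G$-layer potentials decay at infinity: decay gives uniqueness, but surjectivity of the single-layer trace operator onto $C^{1,\alpha}(\partial\omega)$ is what must be proved. The paper does this by reflecting across the shifted hyperplane, working on the doubled domain $\tilde\omega\equiv\omega\cup(\msigma(\omega)-2p_2\se_2)$ with odd densities, and invoking Lemma \ref{vSodd}; your two unknowns $(\mu,\nu)$ correspond to the paper's $(\mu_1,\mu_2,\xi)$ under the identification $\mu=\mu_2+\xi$ (in the $\e2=1$ regime the auxiliary scalar $\xi$ is only a reparametrisation, so this is cosmetic, but the odd-reflection lemma is not). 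Second, with the paper's normalisation — subtracting $v^i_G[\partial\Omega,\bn_\Omega\cdot\nabla u_{0|\partial\Omega}]$ so that $u_0$ appears explicitly in the ansatz — the $\mu_1$-equation at $\eps=0$ reduces to $v_G[\partial\Omega,\mu_1]_{|\partial_+\Omega}=0$, forcing $\mu_1=0$ by Proposition \ref{one-to-one}; it does not "determine $u_0$." These are refinements rather than corrections: your decomposition, the role of scale invariance, and the appeal to the implicit function theorem all match the paper.
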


Theorem \ref{thm:Ue} implies that there are $\eps''\in]0,\epsilon'[$
and  a sequence of functions $\{U_j\}_{j\in\N}\subseteq \Ca{1}(\overline{\dom'})$  such that
\[
u_{\eps}(\px)=\sum_{j=0}^\infty U_j(\px)\,\eps^j\qquad\forall\eps\in]0,\eps''[\,,\;\px\in\overline{\dom'},
\]
with the power series $\sum_{j=0}^\infty U_j\,\eps^j$ converging in the norm of
$ \Ca{1}(\overline{\dom'})$ for $\eps$ in an open neighborhood of $0$.

A result similar to Theorem \ref{thm:Ue} is also established for  the behavior of $u_\eps$ near
the boundary of the hole (cf.~Theorem \ref{Ve}), for the energy integral
$\int_{\dome}|\nabla u_\eps|^2\,d\px$ (cf.~Theorem \ref{Enge}), and for the total flux  through
the outer boundary $\int_{\partial\dom}\bn_\dom\cdot\nabla u_\eps\,d\sigma$ (cf.~Theorem \ref{Fluxe}).
In particular, we show that the limiting value of the energy integral is
\begin{equation}\label{Elimite}
\lim_{\eps\to 0}\int_{\dome}|\nabla u_{\eps}|^2\,d\px=
\int_{\Omega}|\nabla u_0|^2\,d\px+\int_{{\mathbb{R}^2_+}\setminus(\p+\incl)}
\left|\nabla w_\ast\right|^2\,d\px\,,
\end{equation}
and that the limiting value of the total flux is
\begin{equation}\label{Flimite}
\int_{\p+\partial\incl}  \bn_{\p+\incl}\cdot \nabla w_\ast\,d\sigma
\end{equation}
where $w_\ast$ is the unique solution in $\Ca{1}_{\mathrm{loc}}(\overline{\mathbb{R}^2_+}
\setminus(\p+\incl))$ of
\begin{equation}\label{w_*}
\left\{
\begin{array}{ll}
\Delta w_\ast=0&\text{in }{\mathbb{R}^2_+}\setminus(\p+\overline\incl)\,,\\
w_\ast(\pt)=\gi (\pt-\p)&\text{for all }\pt\in\p+\dincl\,,\\
w_\ast=\go(0)&\text{on }\partial{\mathbb{R}^2_+}\,,\\
\lim_{\pt\to\infty}w_\ast(\pt)=\go(0)\,.&
\end{array}
\right.
\end{equation}
We remark that for suitable choices of $\go$ and $\gi$, the limiting value of the energy integral differs
from the one in \eqref{Elimit}, which emphasizes the difference between the two regimes.  Besides, the limit value of the total flux \eqref{Flimite} equals $0$ only for special choices of $\go$, $\gi$.

\subsection{Numerical illustration of the results.}

In our numerical simulations,
the domain $\Omega$ is a `stadium' represented by the union of the rectangle $[-2,2]\times[0,2]$ and
two half-disks. The origin $(0,0)$ is in the middle of a segment of the boundary.  We choose $\p=(1,1)$,
and the inclusion is a small disk as described in Figure \ref{Figure:description}.
The small parameter $\be$ is $\varepsilon_{1}=\left(\frac{2}{3}\right)^{n_{1}}$,
$\varepsilon_{2}=\left(\frac{2}{3}\right)^{n_{2}}$ for integers $1\leq n_{1}\leq 16$,
and $1\leq n_{2}\leq 20$.
\begin{figure}[!ht]
\centering
\begin{subfigure}[b]{.45\textwidth}
 \centering
 \includegraphics[width=\textwidth]{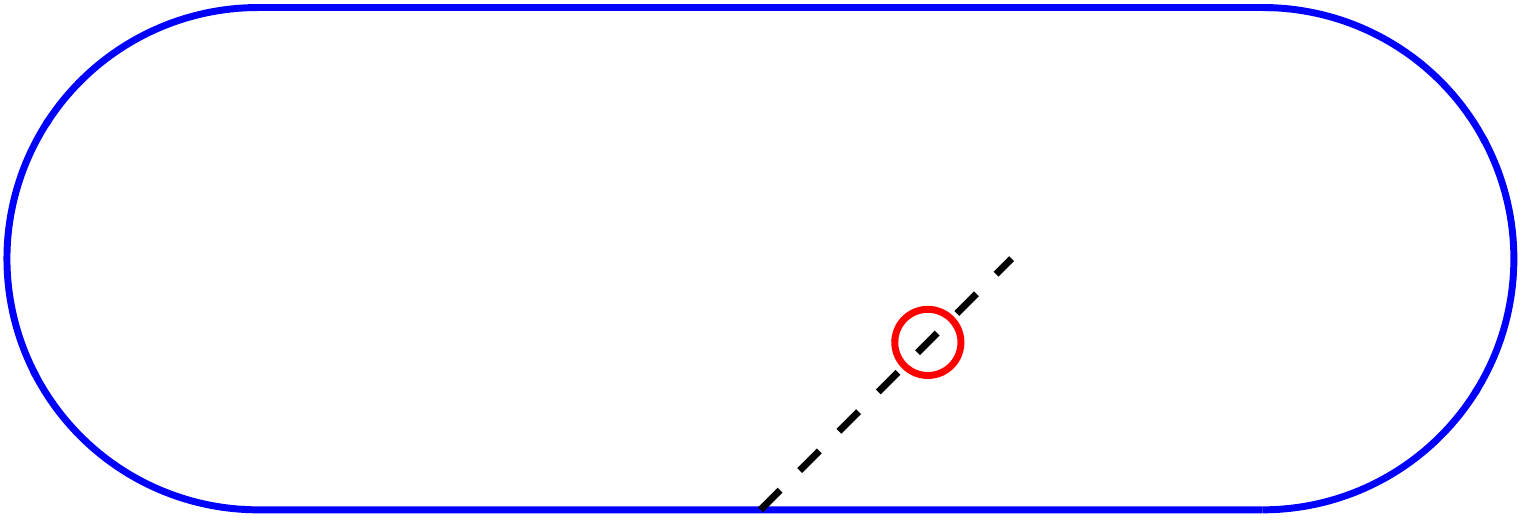}
 \caption{$n_{1}=1$ and $n_{2}=4$, $4312$ triangles.}
\end{subfigure}
\hfill\begin{subfigure}[b]{.45\textwidth}
 \centering
 \includegraphics[width=\textwidth]{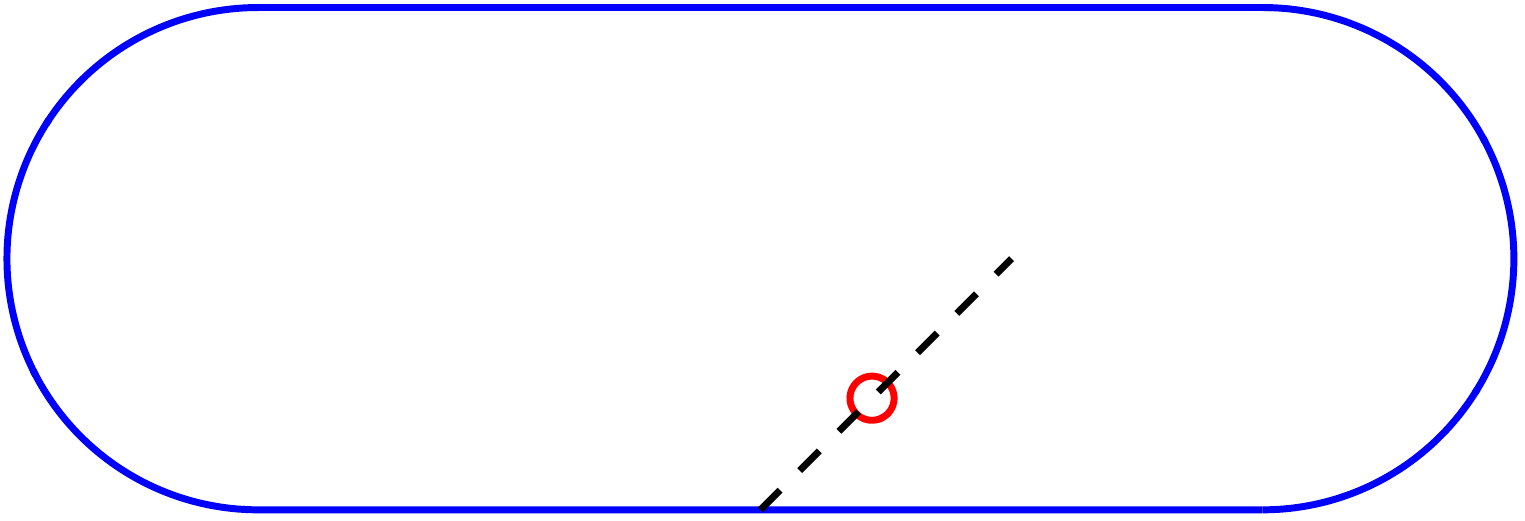}
 \caption{$n_{1}=2$ and $n_{2}=4$, $4364$ triangles.}
\end{subfigure}
\caption{Different computational domains.\label{Figure:description}}
\end{figure}

To approximate the solution $u_{\be}$ of the boundary value problem, we use a $\mathbb{P}^4$
finite element method on an adapted triangular mesh as provided by the Finite Element Library M\'ELINA
(see \cite{Ma07}).  Figures \ref{Figure:energy1}--\ref{Figure:energy3} exhibit the computed square root of the
 energy integral, this is the norm $\|\nabla u_{\be}\|_{\sL^2(\domeps)}$, in the previously
defined configurations.  In Figure \ref{Figure:energy1}, we take $\go=0$ and $\gi=1$, so
the sum in \eqref{Elimit} is $0$ and the limiting energy \eqref{Elimite} is strictly positive (note that with such $g^o$ and $g^i$ the energy coincides with the electrostatic capacity of $\incleps$ in $\dom$).
\begin{figure}[!ht]
\centering
\begin{subfigure}[b]{.45\textwidth}
 \centering
 \includegraphics[width=\textwidth]{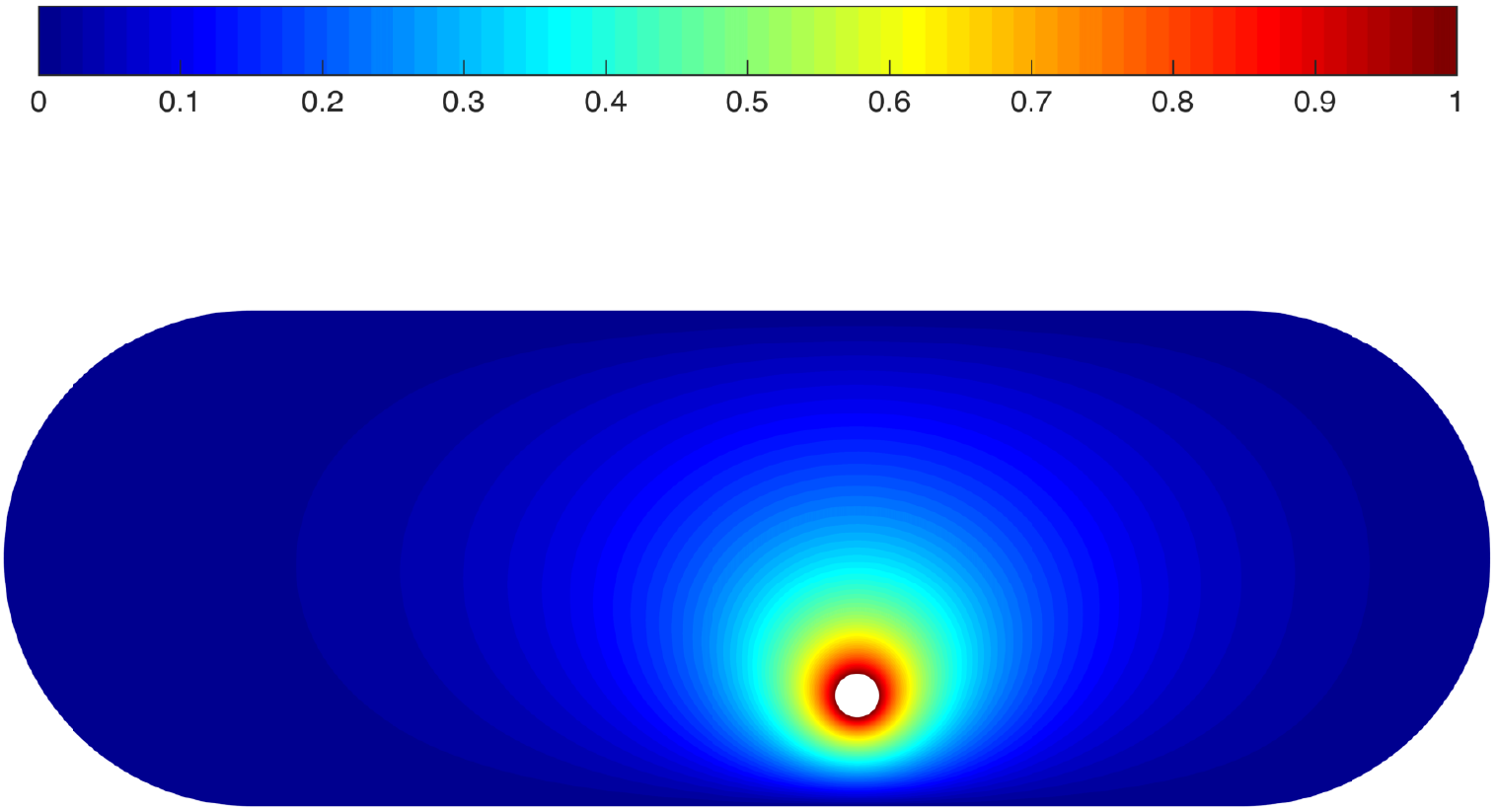}
 \caption{$u_{\be}$ for $n_{1}=2$ and $n_{2}=4$.}
\end{subfigure}
\hfil
\begin{subfigure}[b]{.45\textwidth}
 \centering
\includegraphics[width=\textwidth]{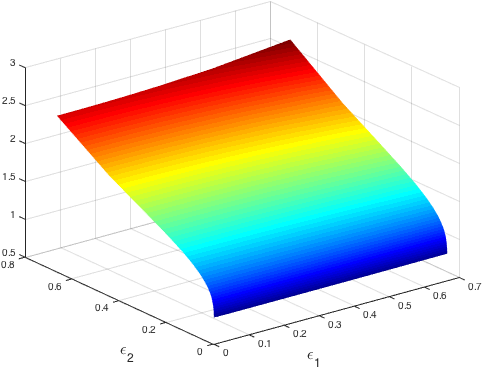}
 \caption{Norm $\|\nabla u_{\be}\|_{\sL^2(\domeps)}$.} 
\end{subfigure}
\caption{Case where $\go=0$ and $\gi=1$.\label{Figure:energy1}}
\end{figure}

\begin{figure}[!ht]
\centering
\begin{subfigure}[b]{.4\textwidth}
 \centering
 \includegraphics[width=\textwidth]{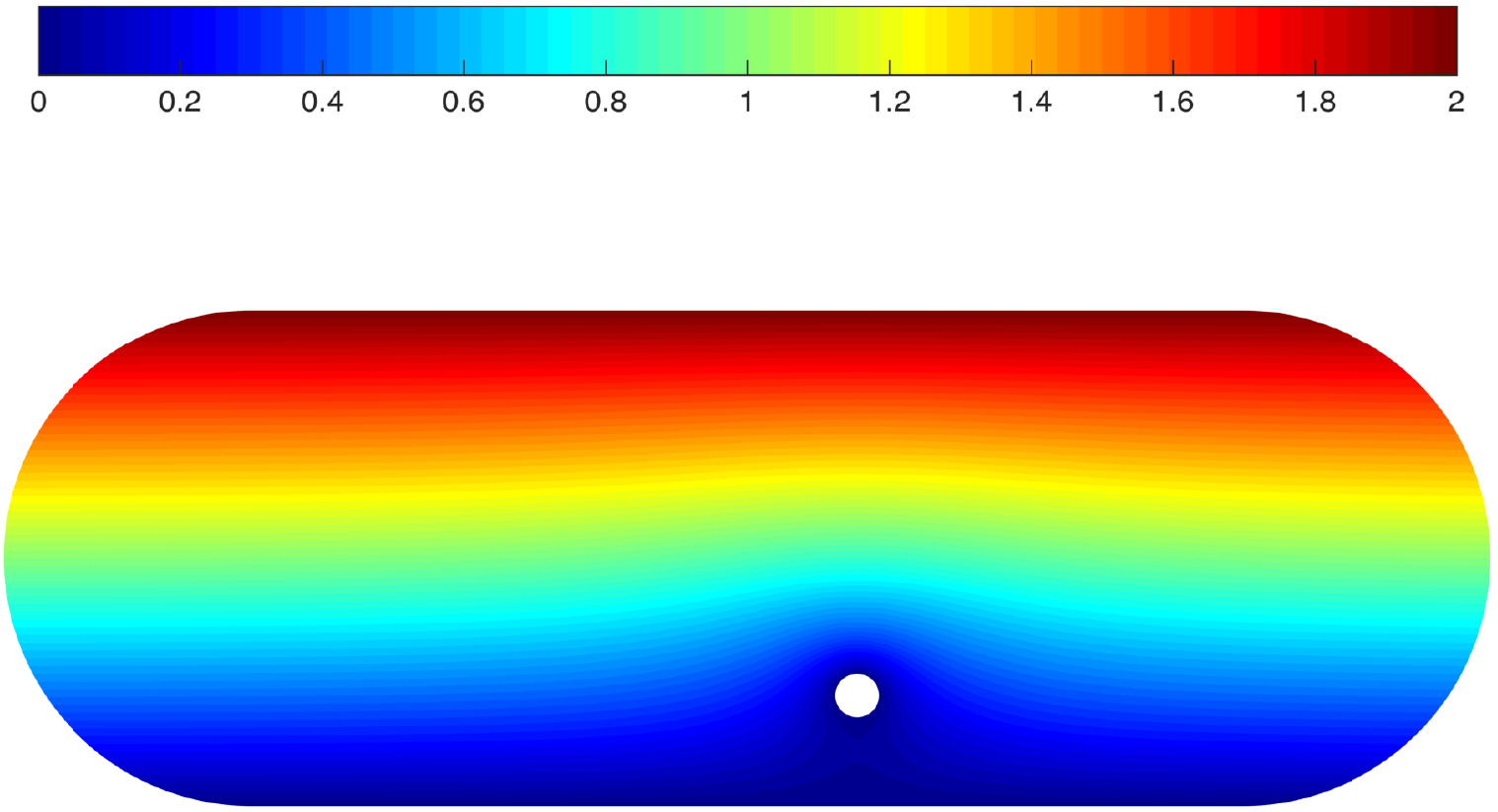}
 \caption{$u_{\be}$ for $n_{1}=2$ and $n_{2}=4$.}
\end{subfigure}
\hfil
\begin{subfigure}[b]{.4\textwidth}
 \centering
\includegraphics[width=\textwidth]{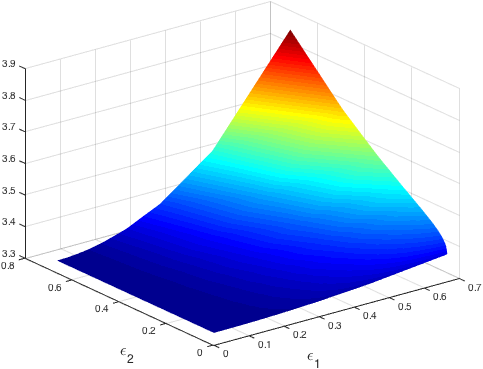}
 \caption{Norm $\|\nabla u_{\be}\|_{\sL^2(\domeps)}$. } 
\end{subfigure}
\caption{Case $\gi=0$ and $\go=x_{2}$.\label{Figure:energy2}}
\end{figure}

\begin{figure}[!ht]
\centering
\begin{subfigure}[b]{.4\textwidth}
 \centering
 \includegraphics[width=\textwidth]{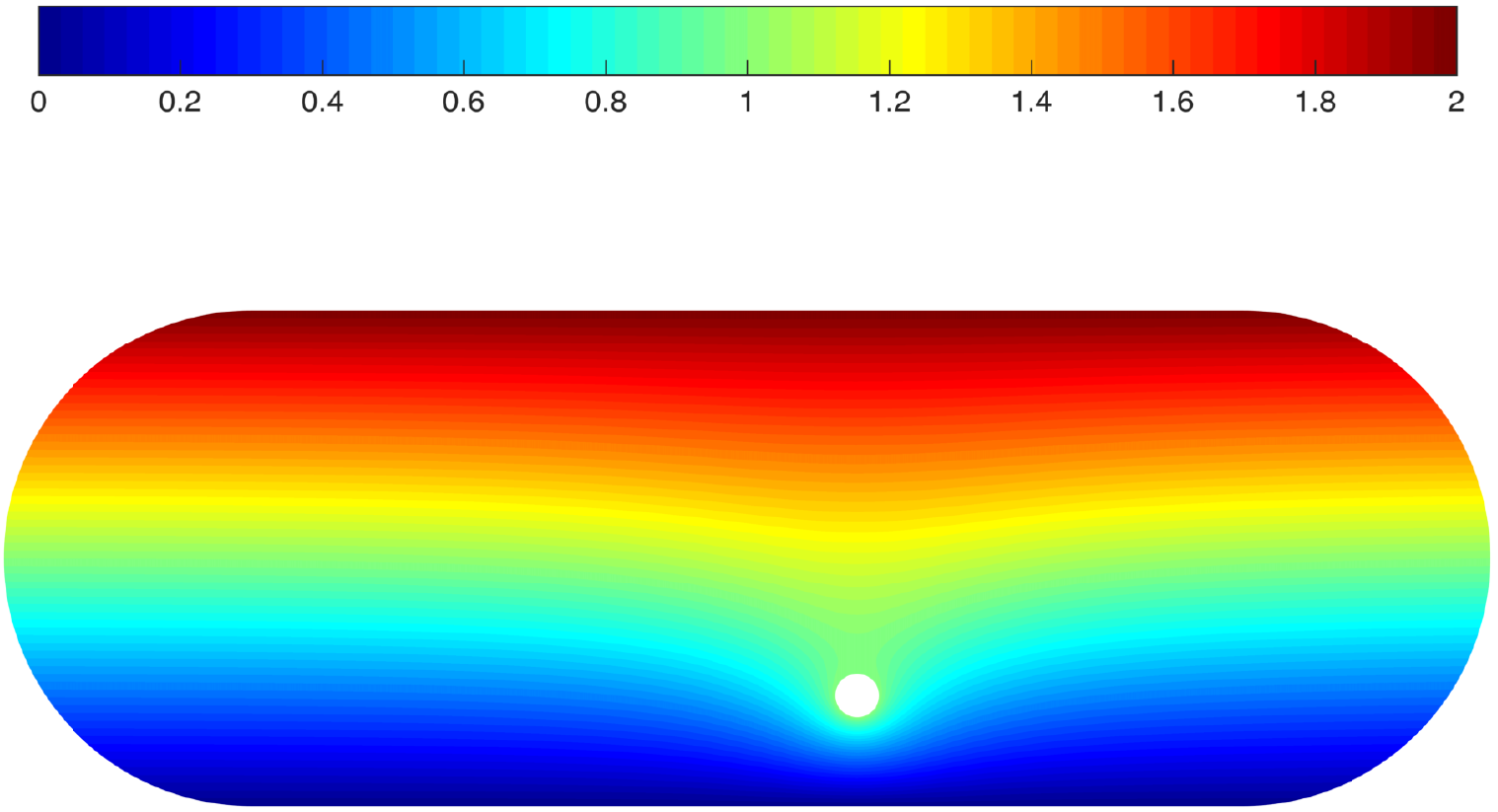}
 \caption{$u_{\be}$ for $n_{1}=2$ and $n_{2}=4$.}
\end{subfigure}
\hfil
\begin{subfigure}[b]{.4\textwidth}
 \centering
\includegraphics[width=\textwidth]{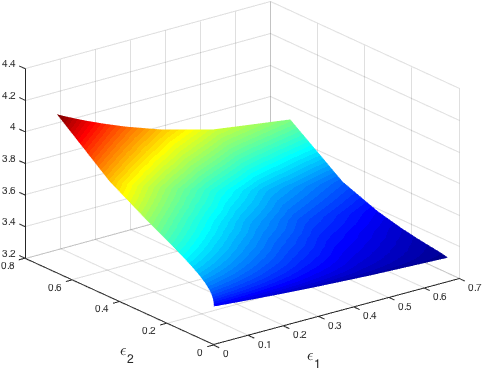}
 \caption{Norm $\|\nabla u_{\be}\|_{\sL^2(\domeps)}$.} 
\end{subfigure}
\caption{Case $\gi=1$ and $\go=x_{2}$.\label{Figure:energy3}}
\end{figure}

To illustrate  the different types of behavior of the energy integral, we now consider
$\go=x_{2}$ and either $\gi=0=\go(0)$ (see Figure \ref{Figure:energy2}),
or $\gi=1\ne \go(0)$ (see Figure \ref{Figure:energy3}).
  Notice that with that choice of $\go$, we have $\|\nabla\uzero\|_{\sL^2(\dom)}=\sqrt{8+\pi}\simeq 3.34$, which is the limiting value observed when $\gi=0$ in Figure \ref{Figure:energy2}.
On the contrary, in the numerical results for $\gi=1$, the energy has a different limiting value
whether  both $\e1$ and $\e2$ tend to $0$ or $\e1$ tends to $0$ with $\e2$ fixed,
in agreement with our expectation when $\gi\neq\go(0)$. When both $\e1$ and $\e2$
tend to $0$,  the limiting value of the energy is the same as in the well-known
case where $\e2\to 0$ with $\e1$  fixed (that is, when the hole shrinks to an interior point of $\dom$).
We notice that in the latter case, the energy appears to converge at a slow
logarithmic rate (see, in particular, Figures \ref{Figure:energy1} and \ref{Figure:energy3}); this is also a well-known fact,
predicted by theoretical analysis (see, {\it e.g.}, Maz'ya, Nazarov, and Plamenevskij \cite{MaNaPl00}).

\subsection{Structure of the paper}

The paper is organised as follows. In Section \ref{prel}, we present some preliminary results in potential
theory and study the layer potentials with integral kernels consisting of the Dirichlet
Green's function of the half space.  Section \ref{n>=3} is devoted to the $n\ge 3$ dimensional case. Here we prove our analyticity result stated in Theorem \ref{Ue1e2}.
In Section  \ref{n=2}, we study the two-dimensional case for $\be\to\b0$. In particular, we  prove Theorem \ref{thm:Ue1gamma-introd}. In Section \ref{e2=1}, we consider the case
where $n=2$ and $\e{1}\to 0$ with $\e{2}=1$ fixed and we prove Theorem \ref{thm:Ue}. Concluding remarks are presented in Section \ref{conclusions}. Some routine technical tools have been placed in the Appendix. Specifically, in \ref{app:pot}
we prove some decay properties of the Green's function and the associated single-layer potential,
and in  \ref{app:CK} we present an extension result based on the Cauchy-Kovalevskaya Theorem.

\section{Preliminaries of potential theory}\label{prel}

In this section, we introduce some technical results and notation.  Most of them deal with the potential theory  constructed with the Dirichlet Green's function of the upper half space. Throughout the section we take
\[
n \in \mathbb{N}\setminus \{0,1\}\, .
\]

\subsection{Classical single and double layer potentials}
As a first step, we  introduce the classical layer potentials for the Laplace equation and thus we introduce the fundamental solution $S_n$ of $\Delta$ defined by
\[
S_n(\px)\equiv
\begin{cases}
\frac{1}{s_{n}}\log|\px|&\text{if }n=2\,,\\[5pt]
 \frac{1}{(2-n)s_n}|\px|^{2-n}&\text{if }n\ge 3\,,
 \end{cases}
 \qquad \forall \px \in \Rn \setminus \{0\}\, ,
\]
where $s_{n}$ is the $(n-1)$-dimensional measure of the boundary of the unit ball in $\R^n$. In the sequel $\domgen$ is a generic open bounded connected subset of $\R^n$ of class $\Ca{1}$.

\begin{defn}[Definition of the layer potentials]
For any $\phi\in \Ca{0}(\partial\domgen)$,  we define
\[
v_{S_n}[\partial\domgen,\phi](\px)\equiv\int_{\partial\domgen}\phi(\py)S_n(\px-\py)\,d\sigma_{\py},\qquad\forall \px\in\Rn\, ,
\]
where $d\sigma$ denotes the area element on $\partial\domgen$. \\
The restrictions of $v_{S_n}[\partial\domgen,\phi]$ to $\overline\domgen$ and to $\Rn\setminus\domgen$ are denoted $v_{S_n}^i[\partial\domgen,\phi]$ and $v_{S_n}^e[\partial\domgen,\phi]$ respectively (the letter `$i$' stands for `interior' while the letter `$e$' stands for `exterior').
\\
For any $\psi\in \Ca{1}(\partial\domgen)$,  we define
\[
w_{S_n}[\partial \domgen,\psi](\px)\equiv-\int_{\partial \domgen}\psi(\py)\;\bn_{\domgen}(\py)\cdot\nabla S_n(\px-\py)\,d\sigma_\py,\qquad\forall \px\in\Rn\,,
\]
where $\bn_\domgen$ denotes the outer unit normal to $\partial\domgen$ and the symbol $\cdot$ denotes the scalar product in $\Rn$.
\end{defn}
To describe the regularity properties of these layer potentials we will need the following definition.
\begin{defn}\label{def.laypot1}
We denote by $\Ca{1}_{\mathrm{loc}}(\Rn\setminus\domgen)$ the space of functions on $\Rn\setminus\domgen$ whose restrictions to $\overline{\cO }$ belong to $\Ca{1}(\overline{\cO })$ for all open bounded subsets $\cO $ of $\Rn\setminus\domgen$.\\
$\Ca{0}_\#(\partial\domgen)$ denotes the subspace of $\Ca{0}(\partial\domgen)$ consisting of the functions $\phi$ with $\int_{\partial\domgen}\phi\, d\sigma=0$.
\end{defn}

Let us now present some well known regularity  properties of the single and double layer potentials.
\begin{prop}[Regularity of layer potentials]\label{clreg}
If $\phi\in\Ca{0}(\partial\domgen)$, then  the function $v_{S_n}[\partial\domgen,\phi]$ is continuous from $\Rn$ to $\mathbb{R}$. Moreover,  the restrictions  $v_{S_n}^i[\partial\domgen,\phi]$ and $v_{S_n}^e[\partial\domgen,\phi]$ belong to $\Ca{1}(\overline{\domgen})$ and to $\Ca{1}_{\mathrm{loc}}(\Rn\setminus\domgen)$, respectively. \\
If $\psi\in\Ca{1}(\partial\domgen)$, then  the restriction $w_{S_n}[\partial\domgen,\psi]_{|\domgen}$ extends to a function $w_{S_n}^i[\partial\domgen,\psi]$ of $\Ca{1}(\overline{\domgen})$ and the restriction $w_{S_n}[\partial\domgen,\psi]_{|\Rn\setminus\overline{\domgen}}$ extends to a function $w_{S_n}^e[\partial\domgen,\psi]$ of $\Ca{1}_{\mathrm{loc}}(\Rn\setminus\domgen)$.
\end{prop}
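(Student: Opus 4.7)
The plan is to prove this by reducing each assertion to well-known mapping properties of the Laplace single and double layer potentials on Schauder spaces over a $\Ca{1}$ boundary. Such properties are standard and may be assembled from classical references (Miranda, Kress); in the precise form required by the functional analytic approach adopted throughout the paper, they have already been collected in earlier works cited in Section~\ref{introd}, most notably Lanza de Cristoforis \cite{La07}.

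For the single layer, the continuity of $v_{S_n}[\partial\domgen,\phi]$ on the whole of $\Rn$ is immediate from the weak singularity of $S_n$: the kernel is integrable on $\partial\domgen$ uniformly for $\px$ in compact sets, which permits passage to the limit under the integral sign. Away from $\partial\domgen$, differentiation under the integral sign shows that $v_{S_n}[\partial\domgen,\phi]$ is smooth and harmonic. To obtain $\Ca{1}$ regularity of the traces up to $\partial\domgen$, one invokes the classical jump formulas: the tangential derivatives of $v_{S_n}[\partial\domgen,\phi]$ have a common H\"older-continuous limit from both sides, while the normal derivatives satisfy
\[
\partial_{\bn_{\domgen}}v_{S_n}^{i/e}[\partial\domgen,\phi]=\mp\tfrac{1}{2}\phi+K^{*}_{\domgen}\phi,
\]
where $K^{*}_{\domgen}$ is the adjoint of the direct value of the double layer operator on $\partial\domgen$. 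Since $\partial\domgen$ is of class $\Ca{1}$, standard estimates for singular integrals with mildly singular kernels yield that $K^{*}_{\domgen}$ maps $\Ca{0}(\partial\domgen)$ continuously into itself; combined with the tangential continuity, this gives the desired membership in $\Ca{1}(\overline{\domgen})$ and $\Ca{1}_{\mathrm{loc}}(\Rn\setminus\domgen)$.

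For the double layer, the key step is to reduce the $\Ca{1}$ regularity of $w_{S_n}^{i/e}[\partial\domgen,\psi]$ up to $\partial\domgen$ to the single-layer statement just established, via the G\"unter--Maue tangential-derivative identity. That identity rewrites each component of the tangential gradient of $w_{S_n}[\partial\domgen,\psi]$ on either side of $\partial\domgen$ as a linear combination of single layer potentials of tangential derivatives of $\psi$, which by assumption belong to $\Ca{0}(\partial\domgen)$; the tangential derivatives of $w_{S_n}^{i/e}$ are therefore H\"older continuous up to the boundary. The classical jump relations for $w_{S_n}$ itself (trace $\pm\tfrac{1}{2}\psi+K_{\domgen}\psi$ from the two sides, and continuity of the normal derivative across $\partial\domgen$ for $\Ca{1}$ densities) then yield the full $\Ca{1}$ regularity. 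The main, entirely classical, subtlety throughout is the boundedness of $K_{\domgen}$ and $K^{*}_{\domgen}$ on the specified H\"older spaces, which in turn rests on the cancellation $(\py-\px)\cdot\bn_{\domgen}(\py)=O(|\py-\px|^{1+\alpha})$ for $\px,\py\in\partial\domgen$, a direct consequence of $\partial\domgen$ being of class $\Ca{1}$. No new ideas beyond those in the classical potential-theoretic references are needed.
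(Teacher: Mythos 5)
The paper provides no proof of this proposition: it is stated as well known, with the jump formulas attributed to Folland \cite{Fo95} immediately afterward and the mapping properties of the single layer attributed elsewhere in the text to Miranda \cite{Mi65} and Lanza de Cristoforis--Rossi \cite{LaRo04}. Your outline follows the standard classical route (weak singularity for global continuity of $v_{S_n}$, boundary regularity of the single layer, G\"unter--Maue reduction for the double layer), so it is consistent with the sources the authors are implicitly relying on.

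One logical compression in the single-layer step should be flagged. You infer $v_{S_n}^i[\partial\domgen,\phi]\in\Ca{1}(\overline{\domgen})$ from (a) continuity of the tangential derivatives across $\partial\domgen$ together with (b) the normal-derivative jump formula and the boundedness of $K^*_{\domgen}$ on $\Ca{0}(\partial\domgen)$. As stated this does not close the proof: knowing that the tangential derivatives have a continuous extension to $\partial\domgen$ and that the normal-derivative \emph{traces} are H\"older continuous on $\partial\domgen$ is not the same as knowing that $\nabla v_{S_n}[\partial\domgen,\phi]$ admits a $\Ca{0}$ extension to the closed domain; one needs uniform H\"older estimates on the gradient in a full collar neighborhood of $\partial\domgen$, not merely control of its boundary trace. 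In the classical treatments (Giraud, Miranda, Kress) those uniform gradient estimates, obtained by kernel-difference arguments exploiting exactly the cancellation $(\py-\px)\cdot\bn_{\domgen}(\py)=O(|\py-\px|^{1+\alpha})$ that you quote, are the primitive result; the jump relation with $K^*_\domgen$ bounded on $\Ca{0}(\partial\domgen)$ is a corollary of them, not a substitute. If you invert the order of the argument so that the uniform $C^{1,\alpha}$-up-to-boundary regularity of $v_{S_n}$ is taken as the classical input, then the remainder of your sketch---including the G\"unter--Maue reduction of the tangential gradient of $w_{S_n}[\partial\domgen,\psi]$ to single layers of tangential derivatives of $\psi$---is correct and matches the route the references take.
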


In the next Proposition \ref{cljump} we  recall the classical jump formulas (see, {\it e.g.}, Folland \cite[Chap.~3]{Fo95}).

\begin{prop}[Jump relations of layer potentials]\label{cljump}
For any $\px\in\partial\domgen$, $\psi\in \Ca{1}(\partial\domgen)$, and $\phi\in \Ca{0}(\partial\domgen)$, we have
\begin{align*}
w_{S_n}^{\sharp}[\partial\domgen,\psi](\px)&=\frac{\ssei}{2}\psi(\px)+w_{S_n}[\partial\domgen,\psi](\px)\,,\\
\bn_\dom(\px)\cdot\nabla v_{S_n}^\sharp[\partial\domgen,\phi](\px)&=-\frac{\ssei}{2}\phi(\px)+\int_{\partial\domgen}\phi(\py)\bn_{\dom}(\px)\cdot\nabla S_n(\px-\py)\,d\sigma_\py\,,
\end{align*}
where $\sharp = i,e$ and $\ssi=1$, $\sse=-1$.
\end{prop}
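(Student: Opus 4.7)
These jump relations are classical and can be found, for instance, in Folland~\cite[Chap.~3]{Fo95}; I sketch only the main ingredients, since the argument is entirely local and does not interact with the rest of the paper's geometry.

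For the double layer, the central ingredient is the \emph{Gauss integral} identity for $w_{S_n}[\partial\domgen,1]$. Applying the divergence theorem to the harmonic function $S_n(\px-\cdot)$ on $\domgen$ punctured by a small ball around $\px$ (when $\px\in\domgen$), and separately on the exterior region with a large ball (when $\px\notin\overline\domgen$), yields that $w_{S_n}[\partial\domgen,1]$ equals a specific constant inside $\domgen$, another constant outside, and---interpreted as a principal value thanks to the $\Ca{1}$ regularity of $\partial\domgen$---the arithmetic mean of the two on $\partial\domgen$. Fixing $\px\in\partial\domgen$ and writing, for $\py$ in a neighbourhood of $\px$,
\[
w_{S_n}[\partial\domgen,\psi](\py) = w_{S_n}[\partial\domgen,\psi-\psi(\px)](\py) + \psi(\px)\, w_{S_n}[\partial\domgen,1](\py),
\]
the second summand realises the jump via the Gauss identity. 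In the first summand, the factor $\psi(\cdot)-\psi(\px)$ vanishes at $\px$; coupled with the Schauder regularity of $\psi$ and of $\partial\domgen$, this renders the integrand weakly singular and the resulting function continuous in $\py$ across $\partial\domgen$, contributing in the limit $\py\to\px$ exactly $w_{S_n}[\partial\domgen,\psi-\psi(\px)](\px)$. Recombining yields the stated identity with $\sigma_i=1$, $\sigma_e=-1$.

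For the normal derivative of the single layer potential I would use the algebraic identity
\[
\bn_\domgen(\px)\cdot\nabla_\px S_n(\px-\py) + \bn_\domgen(\py)\cdot\nabla_\py S_n(\px-\py) = \bigl(\bn_\domgen(\px)-\bn_\domgen(\py)\bigr)\cdot\nabla_\px S_n(\px-\py),
\]
whose right-hand side, on a $\Ca{1}$ surface, is weakly singular in $(\px,\py)$ (because $|\bn_\domgen(\px)-\bn_\domgen(\py)|$ vanishes at a rate that compensates one power of $|\px-\py|^{-1}$ coming from $\nabla S_n$). Consequently the operator defined by integrating this difference against $\phi$ is continuous up to $\partial\domgen$ from either side, and $\bn_\domgen\cdot\nabla v_{S_n}^\sharp[\partial\domgen,\phi]$ inherits its jump from $-w_{S_n}^\sharp[\partial\domgen,\phi]$, the minus sign arising from $\nabla_\py S_n(\px-\py)=-\nabla_\px S_n(\px-\py)$. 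The double layer jump just established then delivers the factor $-\sigma_\sharp/2$ in front of $\phi(\px)$, which is the claimed identity.

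\textbf{Main difficulty.} The only delicate points are: (i) pinning down the Gauss identity at boundary points of $\partial\domgen$ with the correct constant $1/2$, which requires the $\Ca{1}$ regularity of the boundary and careful bookkeeping of the outward-normal orientation and of the sign convention built into the definition of $w_{S_n}$ (in particular the leading minus sign in the definition of the double layer); and (ii) verifying the weak singularity of the compensated kernel $(\bn_\domgen(\px)-\bn_\domgen(\py))\cdot\nabla S_n(\px-\py)$ on a $\Ca{1}$ surface. Both are routine once the regularity hypotheses on $\partial\domgen$ are in place, which is why in practice one simply invokes the classical reference.
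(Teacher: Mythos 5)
The paper offers no proof of Proposition \ref{cljump}; it is recalled as a classical result with a pointer to Folland \cite[Chap.~3]{Fo95}. Your sketch is a correct outline of the standard argument behind that reference: the Gauss integral $w_{S_n}[\partial\domgen,1]$ (equal to $1$ in $\domgen$, $1/2$ on $\partial\domgen$, $0$ in the exterior, with your sign conventions matching the paper's definition of $w_{S_n}$) isolates the double-layer jump after the density split $\psi=(\psi-\psi(\px))+\psi(\px)$, and the compensated-kernel identity $\bn_\domgen(\px)\cdot\nabla_\px S_n(\px-\py)+\bn_\domgen(\py)\cdot\nabla_\py S_n(\px-\py)=(\bn_\domgen(\px)-\bn_\domgen(\py))\cdot\nabla_\px S_n(\px-\py)$ then transfers the jump of the single-layer normal derivative back to $-w_{S_n}^\sharp$, producing the factor $-\sigma_\sharp/2$. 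One small imprecision worth correcting: on a boundary of class $\Ca{1}$ (i.e.\ $C^{1,\alpha}$, the paper's convention), one has $|\bn_\domgen(\px)-\bn_\domgen(\py)|=O(|\px-\py|^\alpha)$, so the compensating factor only gains $\alpha$ of a power, not a full power, against the $|\px-\py|^{1-n}$ singularity of $\nabla S_n$; this is still sufficient to render the kernel weakly singular on an $(n-1)$-dimensional surface, so the conclusion is unaffected.
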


We will exploit the following classical result of potential theory.
\begin{lem}\label{viso}
The map $\begin{array}{rcl} 
& \\
\Ca{0}_\#(\partial\domgen)\times\mathbb{R}&\to&\Ca{1}(\partial\domgen)\\
(\phi,\xi)&\mapsto & v_{S_n}[\partial\domgen,\phi]_{|\partial\domgen}+\xi
 \end{array}$
is an isomorphism.\\ 
Moreover, if $n\ge 3$, then the map 
$\begin{array}{rcl} & \\
\Ca{0}(\partial\domgen)& \to &\Ca{1}(\partial\domgen)\\
\phi&\mapsto &v_{S_n}[\partial\domgen,\phi]_{|\partial\domgen}
\end{array}$ is an isomorphism.
\end{lem}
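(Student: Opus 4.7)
The plan is to establish continuity via Proposition \ref{clreg} and then reduce bijectivity to the well-posedness of the interior and exterior Dirichlet problems combined with the jump formulas of Proposition \ref{cljump}. Continuity is immediate: if $\phi \in \Ca{0}(\partial\domgen)$, then $v_{S_n}^i[\partial\domgen,\phi] \in \Ca{1}(\overline\domgen)$, so its trace belongs to $\Ca{1}(\partial\domgen)$ and depends continuously on $\phi$, while adding $\xi \in \R$ is harmless.

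For injectivity, I would suppose that $v_{S_n}[\partial\domgen,\phi]_{|\partial\domgen} + \xi = 0$ with $(\phi,\xi) \in \Ca{0}_\#(\partial\domgen) \times \R$. Uniqueness of the interior Dirichlet problem gives $v_{S_n}^i[\partial\domgen,\phi] \equiv -\xi$ in $\overline\domgen$. On the exterior, $v_{S_n}^e[\partial\domgen,\phi]$ is harmonic and tends to $0$ at infinity --- automatically from $S_n(\px) = O(|\px|^{2-n})$ when $n \ge 3$, and when $n=2$ thanks to the condition $\int_{\partial\domgen} \phi\, d\sigma = 0$ which cancels the logarithmic leading term of $S_2$ (this is part of the decay properties established in \ref{app:pot}). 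Combined with the constant boundary value $-\xi$, the maximum principle on large annuli forces $\xi = 0$ and $v_{S_n}^e[\partial\domgen,\phi] \equiv 0$. Hence $v_{S_n}[\partial\domgen,\phi] \equiv 0$ on $\Rn$, and Proposition \ref{cljump} applied to the jump of the normal derivative yields $\phi = 0$. The second statement ($n \ge 3$) follows from the same argument with $\xi = 0$ throughout.

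For surjectivity, given $f \in \Ca{1}(\partial\domgen)$, I would let $u^i$ and $u^e$ denote the solutions of the interior and exterior Dirichlet problems in $\domgen$ and $\Rn \setminus \overline\domgen$ with boundary data $f$ (with $u^e \to 0$ at infinity when $n \ge 3$, and $u^e$ bounded with limit $c^\infty \in \R$ at infinity when $n = 2$), and define $\phi := \bn_\domgen \cdot \nabla u^e - \bn_\domgen \cdot \nabla u^i$ on $\partial\domgen$. The divergence theorem in $\domgen$ and a limiting argument on large annuli in the exterior give $\int_{\partial\domgen} \phi\, d\sigma = 0$, so $\phi \in \Ca{0}_\#(\partial\domgen)$. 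Setting $\xi := c^\infty$ (or $\xi := 0$ when $n \ge 3$), the function $w$ defined to be $v_{S_n}[\partial\domgen,\phi] + \xi - u^i$ in $\overline\domgen$ and $v_{S_n}[\partial\domgen,\phi] + \xi - u^e$ in $\Rn \setminus \overline\domgen$ is, by Proposition \ref{cljump} and the choice of $\phi$, continuous with continuous normal derivative across $\partial\domgen$, hence harmonic on all of $\Rn$; being bounded and vanishing at infinity it is identically $0$, which gives $v_{S_n}[\partial\domgen,\phi]_{|\partial\domgen} + \xi = f$.

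The main obstacle I anticipate is the two-dimensional case: the exterior Dirichlet problem in $\R^2$ admits a unique bounded harmonic extension whose limit $c^\infty$ at infinity is determined (not freely prescribable) by the data, and the condition $\int_{\partial\domgen} \phi\, d\sigma = 0$ is essential to neutralise the logarithmic contribution of $S_2$ at infinity. These two features are precisely what motivate restricting the domain to $\Ca{0}_\#(\partial\domgen)$ and supplementing the single layer with the additive constant $\xi$ in the first statement; in dimension $n \ge 3$, neither adjustment is necessary, whence the second statement.
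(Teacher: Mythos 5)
The paper invokes Lemma \ref{viso} as a ``classical result of potential theory'' and gives no proof, so you were supplying one from scratch. Your route --- uniqueness for the interior and exterior Dirichlet problems, the jump relations of Proposition \ref{cljump}, and gluing interior and exterior extensions into an entire bounded harmonic function --- is the standard one, and the continuity step and the gluing argument in surjectivity are fine. But there is a genuine gap affecting the first assertion when $n\ge 3$, in both injectivity and surjectivity, and it stems from the same oversight: in dimension $n\ge 3$ a \emph{nonzero} constant boundary datum does admit a harmonic extension to $\Rn\setminus\overline{\domgen}$ that decays to zero at infinity (a multiple of the equilibrium potential), so decay alone does not ``force $\xi=0$'', nor does the exterior flux through $\partial\domgen$ vanish.

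Concretely, in injectivity the sentence ``the maximum principle on large annuli forces $\xi=0$'' is not correct for $n\ge 3$: the maximum principle only pins $v_{S_n}^e$ between $0$ and $-\xi$. What rescues the conclusion is the hypothesis $\phi\in\Ca{0}_\#(\partial\domgen)$, which you never use at this step. Since $v^i\equiv-\xi$ is constant, $\bn_\domgen\cdot\nabla v^i=0$, so the jump relation gives $\bn_\domgen\cdot\nabla v^e=\phi$ and hence $\int_{\partial\domgen}\bn_\domgen\cdot\nabla v^e\,d\sigma=\int_{\partial\domgen}\phi\,d\sigma=0$; Green's identity in $\Rn\setminus\overline{\domgen}$ (the far-field boundary term vanishes by the decay you established) then yields $\int_{\Rn\setminus\overline{\domgen}}|\nabla v^e|^2\,d\px=\xi\int_{\partial\domgen}\bn_\domgen\cdot\nabla v^e\,d\sigma=0$, so $v^e\equiv0$ and $\xi=0$. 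In surjectivity, your claim that $\int_{\partial\domgen}\phi\,d\sigma=0$ is correct for $n=2$ (where $u^e-c^\infty=O(|\px|^{-1})$ and $\nabla u^e=O(|\px|^{-2})$ kill the far-field flux), but it fails for $n\ge 3$: there $u^e\sim C_0|\px|^{2-n}$ and the flux through large spheres tends to $(2-n)C_0s_n$, which is generically nonzero. Thus your construction with $\xi=0$ proves the second assertion (the isomorphism on all of $\Ca{0}(\partial\domgen)$, $n\ge 3$), but not surjectivity of the map on $\Ca{0}_\#(\partial\domgen)\times\mathbb{R}$ when $n\ge 3$. To close the first assertion in that case, deduce it from the second: the equilibrium density $\phi_0$ with $v_{S_n}[\partial\domgen,\phi_0]_{|\partial\domgen}\equiv1$ satisfies $\int_{\partial\domgen}\phi_0\,d\sigma>0$ (Hopf's lemma applied to the exterior equilibrium potential, or positivity of the capacity), so every $\psi\in\Ca{0}(\partial\domgen)$ decomposes uniquely as $\psi=\phi+\xi\phi_0$ with $\phi\in\Ca{0}_\#(\partial\domgen)$, and then $v_{S_n}[\partial\domgen,\psi]_{|\partial\domgen}=v_{S_n}[\partial\domgen,\phi]_{|\partial\domgen}+\xi$.
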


\subsection{Green's function for the upper half space and associated layer potentials}
As mentioned above, a  key tool for the analysis of problem \eqref{bvpe} are layer potentials constructed with the Dirichlet Green's function of the upper half space instead of the classical fundamental solution $S_n$.  Transforming problem \eqref{bvpe}  by means of these layer potentials will lead us to a system of integral equations with no integral equation on $\partial_0 \dom$, which is the part of the boundary of $\dO$ where the inclusion $\incleps$ collapses for $\be=\bf 0$.

Let us begin by introducing some notation. We denote by $\msigma$ the reflexion with respect to the hyperplane $\partial\Rnp$, so that
\[
\msigma(\px)\equiv(x_{1},\dots,x_{n-1},-x_{n})\,,\qquad \forall \px=(x_{1},\dots,x_{n})\in\R^n\,.
\]
Then we denote by ${\G}$ the Green's function defined by
\[
{\G}(\px,\py)\equiv S_n(\px-\py)-S_n(\msigma(\px)-\py),\quad \forall (\px,\py) \in \Rn\times\Rn\text{ with }\py\neq \px\text{ and }\py\neq\msigma(\px).
\]

We observe that
\begin{equation}\label{Gsym}
 {\G}(\px,\py)={\G}(\py,\px),\quad\forall(\px,\py)\in\Rn\times\Rn\text{ with }\py\neq \px\text{ and }\py\neq\msigma(\px),
\end{equation}
and
\begin{equation}\label{Gzero}
{\G}(\px,\py)=0,\quad\forall(\px,\py)\in\partial\Rnp\times\Rn\text{ with }\py\neq \px\text{ and }\py\neq\msigma(\px).
\end{equation}
We denote by the symbols $\nabla_{\px}{\G}(\px,\py)$ and $\nabla_{\py}{\G}(\px,\py)$  the gradient of the function $\px \mapsto {\G}(\px,\py)$ and of the function $\py\mapsto {\G}(\px,\py)$, respectively. If $\domgen$ is a subset of $\Rn$, we find convenient to set $\msigma(\domgen)\equiv\{\px\in\Rn\;|\;\msigma(\px)\in\domgen\}$.
We now introduce analogs of the classical layer potentials of Definition \ref{def.laypot1} obtained by replacing $S_{n}$ by the Green's function ${\G}$. In the sequel, $\domgen_+$ denotes an open bounded connected set contained in $\Rnp$ and of class $\Ca{1}$.

\begin{defn}[Definition of layer potentials derived by $G$] \label{def.SDG+}
For any $\phi \in \Ca{0}(\partial\domgen_+)$, we define
\[
v_{{\G}}[\partial\domgen_+,\phi](\px)\equiv\int_{\partial\domgen_+}\phi(\py) {\G}(\px,\py)\,d\sigma_\py,\qquad\forall \px\in\Rn\, .
\]
The restrictions of $v_{{\G}}[\partial\domgen_+,\phi]$ to $\overline\domgen_+$ and $\overline{\Rnp\setminus\domgen_+}$ are denoted $v_{{\G}}^i[\partial\domgen_+,\phi]$ and $v_{{\G}}^e[\partial\domgen_+,\phi]$ respectively.\\
For any subset $\Gamma$ of the boundary $\partial\domgen_+$ and  for any $\psi\in \Ca{1}(\partial\domgen_+)$,  we define
\[
w_{{\G}}[\Gamma,\psi](\px)\equiv \int_{\Gamma}\psi(\py)\;\bn_{\domgen_+}(\py)\cdot\nabla_\py\, {\G}(\px,\py)\,d\sigma_\py,\qquad\forall \px\in\Rn\, .
\]
\end{defn}

By the definition of ${\G}$, we easily obtain the equalities
\[
v_{{\G}}[\partial \domgen_+,\phi](\px)=v_{S_n}[\partial \domgen_+,\phi](\px)-v_{S_n}[\partial \domgen_+,\phi](\msigma(\px))\,,\qquad\forall \px\in\Rn\\, \ \forall \phi \in \Ca{0}(\partial \domgen_+)\, ,
\]
and
\[
w_{{\G}}[\partial \domgen_+,\psi](\px)=w_{S_n}[\partial \domgen_+,\psi](\px)-w_{S_n}[\partial \domgen_+,\psi](\msigma({\px}))\,,\qquad\forall \px\in\Rn\,,\ \forall \psi \in \Ca{1}(\partial \domgen_+)\, .
\]
Thus one deduces by Propositions \ref{clreg} and \ref{cljump} the regularity properties and jump formulas for $v_{{\G}}[\partial \domgen_+,\phi]$ and $w_{{\G}}[\partial \domgen_+,\psi]$. 
\begin{prop}[Regularity and jump relations for the layer potentials derived by $G$]\label{jumps}
Let $\phi\in \Ca{0}(\partial \domgen_+)$ and $\psi\in \Ca{1}(\partial \domgen_+)$. Then
\begin{itemize}
\item the functions $v_{{\G}}[\partial \domgen_+,\phi]$ and $w_{{\G}}[\partial \domgen_+,\psi]$ are harmonic in $\domgen_+$, $\msigma({\domgen_+})$, and $\Rn\setminus\overline{\domgen_+\cup\msigma({\domgen_+})}$;
\item the function $v_{{\G}}[\partial\domgen_+,\phi]$ is continuous from $\Rn$ to $\mathbb{R}$ and the restrictions $v_{{\G}}^i[\partial\domgen_+,\phi]$ and $v_{{\G}}^e[\partial\domgen_+,\phi]$ belong to $\Ca{1}(\overline{\domgen_+})$ and to $\Ca{1}_{\mathrm{loc}}(\overline{\Rnp\setminus\domgen_+})$, respectively;
\item the restriction $w_{{\G}}[\partial\domgen_+,\psi]_{|\dom}$ extends to a function $w_{{\G}}^i[\partial\domgen_+,\psi]$ of $\Ca{1}(\overline{\domgen_+})$ and the restriction $w_{{\G}}[\partial\domgen_+,\psi]_{|\Rnp\setminus\overline{\domgen_+}}$ extends to a function $w_{{\G}}^e[\partial\domgen_+,\psi]$ of $\Ca{1}_{\mathrm{loc}}(\overline{\Rnp\setminus\domgen_+})$.
\end{itemize}
The jump formulas for the double layer potential are (with $\sharp = i,e$, $\ssi=1$, $\sse=-1$)
\begin{align*}
w_{{\G}}^\sharp [\partial\domgen_+,\psi](\px)&=\frac{\ssei}{2}\psi(\px)+w_{{\G}}[\partial\domgen_+,\psi](\px),&\forall \px\in\partial_+\domgen_+\,,\\
w_{{\G}}^i[\partial\domgen_+,\psi](\px)&=\psi(\px),&\forall \px\in\partial_0\domgen_+\,.
\end{align*}
Moreover, we have
\begin{align}
v_{{\G}}[\partial\domgen_+,\phi](\px)=0,&\qquad\forall \px\in\partial\Rnp\,,\label{eq.vG+dO}\\
w^e_{{\G}}[\partial\domgen_+,\psi](\px)=0,&\qquad\forall \px\in\partial\Rnp\setminus\partial_0\domgen_+\,.\nonumber
\end{align}
Here above, $\partial_0 \domgen_+\equiv\partial \domgen_+ \cap\partial\Rnp$ and $\partial_+\domgen_+\equiv\partial \domgen_+\cap\Rnp$.
\end{prop}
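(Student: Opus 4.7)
The strategy is to reduce everything to the classical theory of Propositions \ref{clreg} and \ref{cljump} by exploiting the two identities
\[
v_{\G}[\partial\domgen_+,\phi](\px)=v_{S_n}[\partial\domgen_+,\phi](\px)-v_{S_n}[\partial\domgen_+,\phi](\msigma(\px)),
\]
\[
w_{\G}[\partial\domgen_+,\psi](\px)=w_{S_n}[\partial\domgen_+,\psi](\px)-w_{S_n}[\partial\domgen_+,\psi](\msigma(\px)),
\]
which are stated just above the proposition, combined with the fact that $\msigma$ is a smooth involution of $\R^n$ with $\msigma(\Rnp)=\R^n_-$ and $\overline{\domgen_+}\cap\overline{\msigma(\domgen_+)}=\overline{\partial_0\domgen_+}$.

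First I would verify the harmonicity statements. Since $S_n(\cdot-\py)$ is harmonic off $\{\py\}$ and $S_n(\msigma(\cdot)-\py)$ is harmonic off $\{\msigma(\py)\}$, integration over $\py\in\partial\domgen_+$ and interchange of $\Delta$ with the integral show harmonicity of $v_{\G}[\partial\domgen_+,\phi]$ and $w_{\G}[\partial\domgen_+,\psi]$ on $\R^n\setminus(\partial\domgen_+\cup\msigma(\partial\domgen_+))$, which contains the three regions $\domgen_+$, $\msigma(\domgen_+)$, and $\R^n\setminus\overline{\domgen_+\cup\msigma(\domgen_+)}$. The identities \eqref{eq.vG+dO} and the vanishing of $w^e_{\G}$ on $\partial\Rnp\setminus\partial_0\domgen_+$ are immediate from the decomposition: on $\partial\Rnp$ we have $\msigma(\px)=\px$, hence the two terms cancel (for $v_{\G}$ always, for $w_{\G}^e$ at points where the integrand is regular, i.e., where $\px\notin\partial_0\domgen_+$).

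For the regularity and jump relations on the ``upper'' piece $\partial_+\domgen_+\subset\Rnp$, I would observe that the reflected term is \emph{smooth} near $\partial_+\domgen_+$: if $\px$ is close to $\partial_+\domgen_+$, then $\msigma(\px)\in\R^n_-$ stays at positive distance from $\partial\domgen_+$, so $\py\mapsto S_n(\msigma(\px)-\py)$ and $\py\mapsto\nabla S_n(\msigma(\px)-\py)$ are bounded and smooth on $\partial\domgen_+$ uniformly in $\px$. Consequently the reflected term contributes a $\Ca{1}$ function across $\partial_+\domgen_+$ with \emph{no jump}, and Propositions \ref{clreg} and \ref{cljump} applied to the unreflected term deliver the stated $\Ca{1}(\overline{\domgen_+})$/$\Ca{1}_{\mathrm{loc}}(\overline{\Rnp\setminus\domgen_+})$ regularity together with the formula $w_{\G}^\sharp=\frac{\ssei}{2}\psi+w_{\G}$ on $\partial_+\domgen_+$.

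The delicate point, which I expect to be the main obstacle, is the behavior on the flat piece $\partial_0\domgen_+$, where $\msigma$ fixes the points so the two terms in the decomposition interact. For $w_{\G}^i$ I would reason as follows: at a point $\px\in\partial_0\domgen_+$ approached from $\domgen_+$, the unreflected term has interior limit $\frac{1}{2}\psi(\px)+w_{S_n}[\partial\domgen_+,\psi](\px)$ by Proposition \ref{cljump}, while the reflected term equals $w_{S_n}[\partial\domgen_+,\psi]$ evaluated at $\msigma(\px)=\px$ approached from $\R^n_-\setminus\overline{\domgen_+}$, which is an exterior limit equal to $-\frac{1}{2}\psi(\px)+w_{S_n}[\partial\domgen_+,\psi](\px)$; the difference yields $\psi(\px)$. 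The existence of a $\Ca{1}$ extension up to $\partial_0\domgen_+$ has to be justified more carefully: one can split $\psi$ into a component vanishing on a neighborhood of $\partial_0\domgen_+$ (for which the reasoning of the previous paragraph applies) and a smooth piece, for which the reflection identity $w_{\G}(\px)=-w_{\G}(\msigma(\px))$ off the support, together with the $\Ca{1}$ mapping properties of classical double layer potentials on $\Ca{1}$ surfaces, transfers regularity across $\partial_0\domgen_+$. The same strategy, with single-layer jump relations in place of double-layer ones, yields continuity of $v_{\G}$ across $\partial_0\domgen_+$ and the $\Ca{1}$ regularity of $v_{\G}^i$ and $v_{\G}^e$ up to $\partial\domgen_+$.
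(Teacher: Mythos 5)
Your approach matches the paper's: the paper proves this proposition simply by stating the decomposition identities for $v_{\G}$ and $w_{\G}$ in terms of $v_{S_n}$, $w_{S_n}$ composed with $\msigma$ and citing Propositions \ref{clreg} and \ref{cljump}, and your write-up carries out exactly that deduction. You correctly isolate the one delicate point, the flat part $\partial_0\domgen_+$ where $\msigma$ has fixed points, and the interior/exterior-limit computation you do there is right and gives $w_{\G}^i=\psi$ on $\partial_0\domgen_+$. The concluding splitting of $\psi$ used to justify the $\Ca{1}$ extension across $\partial_0\domgen_+$ is, however, an unnecessary detour. The direct route is this: for $\px\in\domgen_+$ one has $\msigma(\px)\in\msigma(\domgen_+)\subseteq\Rn\setminus\overline{\domgen_+}$, so the reflected term restricted to $\domgen_+$ coincides with $\bigl(w_{S_n}^e[\partial\domgen_+,\psi]\circ\msigma\bigr)_{|\domgen_+}$; since $\msigma(\domgen_+)$ is a bounded open subset of $\Rn\setminus\domgen_+$ and $\msigma$ is smooth, Proposition \ref{clreg} gives $w_{S_n}^e[\partial\domgen_+,\psi]\circ\msigma\in\Ca{1}(\overline{\domgen_+})$ at once, which produces both the $\Ca{1}$ extension of the reflected term and, evaluating at $\px\in\partial_0\domgen_+$ where $\msigma(\px)=\px$, the formula $w_{\G}^i(\px)=w_{S_n}^i(\px)-w_{S_n}^e(\px)=\psi(\px)$. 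The same argument with $v_{S_n}^e$ in place of $w_{S_n}^e$ handles $v_{\G}$; no decomposition of the density is needed.
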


In the following lemma we show how the layer potentials with kernel $G$ introduced in Definition \ref{def.SDG+} allow to prove a corresponding Green-like representation formula.

\begin{lem}[Green-like representation formula in $\domgen_+$]\label{i-green}
Let $u^i\in \Ca{1}(\overline{\domgen_+})$ be such that $\Delta u^i=0$ in $\domgen_+$. Then we have
\begin{equation}\label{eq:i-green}
w_{{\G}}[\partial\domgen_+,u^i_{|\partial\domgen_+}]-v_{{\G}}[\partial\domgen_+,\bn_{\domgen_+}\cdot\nabla u^i_{|\partial\domgen_+}]=
\begin{cases}
u^i & \mbox{ in }\domgen_+,\\
0 
&\mbox{ in }\Rn\setminus\overline{\domgen_+\cup\msigma(\domgen_+)}\,.
\end{cases}\end{equation}
\end{lem}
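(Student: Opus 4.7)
The plan is to reduce the identity to the classical third Green identity applied to $u^i$, together with the decomposition
\[
{\G}(\px,\py)=S_n(\px-\py)-S_n(\msigma(\px)-\py),
\]
already recorded in the excerpt.

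Since $\domgen_+$ is of class $\Ca{1}$ and $u^i\in\Ca{1}(\overline{\domgen_+})$ is harmonic in $\domgen_+$, the classical representation formula with fundamental solution $S_n$ gives
\[
w_{S_n}[\partial\domgen_+,u^i_{|\partial\domgen_+}](\px)-v_{S_n}[\partial\domgen_+,\bn_{\domgen_+}\cdot\nabla u^i_{|\partial\domgen_+}](\px)=\begin{cases}u^i(\px),&\px\in\domgen_+,\\ 0,&\px\in\R^n\setminus\overline{\domgen_+}.\end{cases}
\]
Call this function $F(\px)$. I would then invoke the identities stated immediately after Definition~\ref{def.SDG+}, namely $v_{{\G}}[\partial\domgen_+,\phi](\px)=v_{S_n}[\partial\domgen_+,\phi](\px)-v_{S_n}[\partial\domgen_+,\phi](\msigma(\px))$ and the analogous one for $w_{{\G}}$, to rewrite the left-hand side of \eqref{eq:i-green} at a point $\px$ as $F(\px)-F(\msigma(\px))$.

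The claim then reduces to checking, in each of the two cases, where $\px$ and $\msigma(\px)$ sit with respect to $\overline{\domgen_+}$. If $\px\in\domgen_+$, then $x_n>0$ since $\domgen_+\subseteq\Rnp$, hence $\msigma(\px)$ has a strictly negative last coordinate and therefore $\msigma(\px)\notin\overline{\domgen_+}\subseteq\overline{\Rnp}$; consequently $F(\px)=u^i(\px)$ and $F(\msigma(\px))=0$, giving the value $u^i(\px)$. If instead $\px\in\R^n\setminus\overline{\domgen_+\cup\msigma(\domgen_+)}$, then $\px\notin\overline{\domgen_+}$ so $F(\px)=0$, and the equivalence $\px\notin\overline{\msigma(\domgen_+)}=\msigma(\overline{\domgen_+})\iff\msigma(\px)\notin\overline{\domgen_+}$ yields $F(\msigma(\px))=0$ as well, whence the difference vanishes.

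There is no real obstacle beyond the bookkeeping of how $\msigma$ acts on the relevant points; the whole argument is essentially a reflection trick packaged by the definition of ${\G}$. The only point worth emphasizing in the write-up is the inclusion $\overline{\domgen_+}\subseteq\overline{\Rnp}$, which is what excludes $\msigma(\px)$ from $\overline{\domgen_+}$ in the first case and makes the reflection argument work for a domain that may touch $\partial\Rnp$.
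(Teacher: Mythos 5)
Your proof is correct and rests on the same ingredients as the paper's: the paper's argument for $\px\in\domgen_+$ consists of the interior Green representation for $S_n(\px-\cdot)$ together with Green's second identity for the harmonic function $S_n(\msigma(\px)-\cdot)$, which is precisely your $F(\px)-F(\msigma(\px))$ with $F(\msigma(\px))=0$; and for $\px$ outside $\overline{\domgen_+\cup\msigma(\domgen_+)}$ the paper likewise uses that both pieces of $G(\px,\cdot)$ are harmonic on $\domgen_+$, matching your $F(\px)=F(\msigma(\px))=0$. Your packaging via the decomposition identities $v_{G}=v_{S_n}(\cdot)-v_{S_n}(\msigma(\cdot))$, $w_{G}=w_{S_n}(\cdot)-w_{S_n}(\msigma(\cdot))$ and the two-case classical third Green identity is a clean way to present the same reflection argument, and your remark that $\overline{\domgen_+}\subseteq\overline{\Rnp}$ is indeed the key bookkeeping point.
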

\begin{proof}  Let us first consider $\px\in\domgen_+$. 
By the Green's representation formula (see, {\it e.g.}, Folland \cite[Chap.~2]{Fo95}), we have
\begin{equation}\label{eq:i-green:3}
u^i(\px)=-\int_{\partial\domgen_+}\bn_{\domgen_+}(\py) \cdot \nabla S_n(\px-\py)\, u^i(\py) \, d\sigma_\py-\int_{\partial\domgen_+}S_n(\px-\py)\;\bn_{\domgen_+}(\py)\cdot \nabla u^i(\py)\, d\sigma_\py {\Be,\quad} \forall \px \in \domgen_+\, .
\end{equation}
On the other hand, we note that if $\px \in \domgen_+$ is fixed, then the function $\py \mapsto S_n(\msigma({\px})-\py)$ is of class $\sC^{1}(\overline{\domgen_+})$ and harmonic in $\domgen_+$. Therefore, by the Green's identity, we have
\begin{equation}\label{eq:i-green:4}
0=\int_{\partial\domgen_+}\bn_{\domgen_+}(\py) \cdot \nabla S_n(\msigma({\px})-\py)\, u^i(\py)\, d\sigma_\py+\int_{\partial\domgen_+}S_n(\msigma({\px})-\py)\;\bn_{\domgen_+}(\py)\cdot\nabla u^i(\py)\, d\sigma_\py \qquad \forall \px \in \domgen_+\, .
\end{equation}
Then, by summing equalities \eqref{eq:i-green:3} and \eqref{eq:i-green:4} we deduce the validity of \eqref{eq:i-green} in $\domgen_+$.\\
Let us now consider any fixed $\px\in\Rn\setminus\overline{\domgen_+\cup\msigma(\domgen_+)}$. We observe that the functions $\py\mapsto S_n(\px-\py)$ and $\py\mapsto S_n(\msigma(\px)-\py)$ are harmonic on $\domgen_+$. Accordingly ${\G}(\px,\cdot)$ is an harmonic function in $\domgen_+$. Then a standard argument based on the divergence theorem shows that
\[
\int_{\partial\domgen_+} u^i(\py)\; \bn_{\domgen_+}(\py)\cdot\nabla_\py {\G}(\px,\py)-{\G}(\px,\py)\; \bn_{\domgen_+}(\py)\cdot\nabla u^i(\py)\; d\sigma_\py=0\,.
\]
\end{proof}

\vspace{\baselineskip}

\subsection{Mapping properties of the single layer potential $v_{{\G}}[\dO,\cdot]$}\label{ss:potom}

In order to analyze the $\be$-dependent boundary value problem \eqref{bvpe}, we are going to exploit the layer potentials with kernel derived by $G$ in the case when $\domgen=\domeps$. Since $\dOeps = \dO \cup \partial \incleps$, we need to consider layer potentials integrated on $\dO$ and on $\partial \incleps$. In this section,  we will investigate some properties of the single layer potential supported on the boundary of the set $\dom$ which  satisfies the assumptions \eqref{Omega}, \eqref{DOmega}, and \eqref{D+Omega}.

First of all, as one can easily see, the single layer potential $v_{{\G}}[\dO,\phi]$ does not depend on the values of the density $\phi$ on $\partial_0 \dom$. In other words, it takes into account only $\phi_{|\dpO}$. For this reason, it is convenient to introduce a quotient Banach space.

\begin{defn}\label{def:C+0}
We denote by $\Ca{0}_+(\dO)$ the quotient Banach space
\[
\Ca{0}(\dO)/\{\phi\in \Ca{0}(\dO)\;|\; \phi_{|\dpO}=0\}\,.
\]
\end{defn}

Then we can prove that the  single layer potential  map
$$\begin{array}{rcl}
\Ca{0}_+(\dO) &\to& \Ca{1}(\dpO)\\ \phi &\mapsto &v_{{\G}}[\dO,\phi]_{|\dpO}\end{array}$$
is well defined and  one-to-one. Namely we have the following.

\begin{prop}[Null space of the single layer potential derived by $G$]\label{one-to-one}
Let $\phi\in \Ca{0}(\dO)$. Then $v_{{\G}}[\dO,\phi]_{|\dpO}=0$ if and only if $\phi_{|\dpO}=0$.
\end{prop}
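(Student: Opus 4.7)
The plan is to treat the two implications separately, relying on the kernel identity \eqref{Gzero}, the harmonicity of $v_{{\G}}[\dO,\phi]$ on either side of $\dpO$, and the classical jump formula for the normal derivative of the single layer.

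The forward implication is immediate: if $\phi_{|\dpO}=0$, then in the integral defining $v_{{\G}}[\dO,\phi](\px)$ the density $\phi(\py)$ is supported in $\d0O\subseteq\partial\Rnp$, where the kernel $G(\px,\py)$ vanishes by the symmetry \eqref{Gsym} and \eqref{Gzero}. Hence $v_{{\G}}[\dO,\phi]\equiv 0$ on $\Rn$, and a fortiori $v_{{\G}}[\dO,\phi]_{|\dpO}=0$.

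For the converse, I assume $v_{{\G}}[\dO,\phi]_{|\dpO}=0$ and aim to show that both the interior and the exterior normal traces of $v_{{\G}}[\dO,\phi]$ on $\dpO$ vanish, after which the jump formula will give $\phi_{|\dpO}=0$. The interior trace is controlled by the maximum principle: by Proposition \ref{jumps}, $v_{{\G}}^i[\dO,\phi]$ is harmonic in $\dom$ and continuous on $\overline\dom$; it vanishes on $\dpO$ by assumption and on $\d0O$ by \eqref{eq.vG+dO}. Therefore $v_{{\G}}^i[\dO,\phi]\equiv 0$ on $\overline\dom$ and, in particular, $\bn_\dom\cdot\nabla v_{{\G}}^i[\dO,\phi]=0$ on $\dpO$. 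For the exterior trace I consider the unbounded open set $\Rnp\setminus\overline\dom$, on which $v_{{\G}}^e[\dO,\phi]$ is harmonic, vanishes on $\dpO$ by hypothesis, and vanishes on $\partial\Rnp\setminus\overline{\d0O}$ again by \eqref{eq.vG+dO}. Invoking the decay at infinity of the Green-type single layer established in Appendix \ref{app:pot} (the cancellation between $S_n(\px-\py)$ and $S_n(\msigma(\px)-\py)$ makes $v_{{\G}}$ decay even when $n=2$), the maximum principle yields $v_{{\G}}^e[\dO,\phi]\equiv 0$ in $\Rnp\setminus\overline\dom$, so $\bn_\dom\cdot\nabla v_{{\G}}^e[\dO,\phi]=0$ on $\dpO$ as well.

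To conclude, I use the classical jump of the normal derivative of the single layer. For $\px$ in the interior of $\dpO$ the reflected point $\msigma(\px)$ lies in the open lower half space, bounded away from $\dO$; hence the summand $\py\mapsto S_n(\msigma(\cdot)-\py)$ of $v_{{\G}}[\dO,\phi]=v_{S_n}[\dO,\phi]-v_{S_n}[\dO,\phi]\circ\msigma$ is smooth across $\dpO$ and contributes no jump. Therefore the jump of the normal derivative of $v_{{\G}}[\dO,\phi]$ across $\dpO$ coincides with the classical jump of $v_{S_n}[\dO,\phi]$ given by Proposition \ref{cljump}, namely $\bn_\dom\cdot\nabla v_{{\G}}^i[\dO,\phi]-\bn_\dom\cdot\nabla v_{{\G}}^e[\dO,\phi]=-\phi$ on $\dpO$. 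Combined with the two vanishing statements above, this forces $\phi_{|\dpO}=0$. The one delicate point in the argument is justifying the maximum principle in the unbounded exterior region, particularly in dimension two; this rests on the decay properties of $v_{{\G}}$ proved in Appendix \ref{app:pot}, after which the rest of the proof is routine.
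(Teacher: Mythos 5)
Your proof is correct and follows the same overall strategy as the paper's: show $v_{{\G}}[\dO,\phi]$ vanishes identically on both sides of $\dpO$, then invoke the jump of the normal derivative of the single layer to conclude $\phi_{|\dpO}=0$. The one genuine variation is in the exterior step: the paper applies an energy identity (divergence theorem on $\Rnp\setminus\overline{\dom}\cap\Bn{R}$, letting $R\to\infty$ and using the decay of Lemma~\ref{vGatinfty} to kill the boundary term on $\partial\Bn{R}$), whereas you apply the maximum principle on the truncated exterior domain and let the radius tend to infinity. Both are sound and rely on exactly the same decay estimate from Appendix~\ref{app:pot}; the maximum-principle route is arguably a touch more elementary since it avoids differentiating under the integral to control $\nabla v_{{\G}}$ near infinity, while the energy argument generalizes more readily to settings where pointwise decay of the solution itself is not available. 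Your observation that the reflected summand $S_n(\msigma(\cdot)-\py)$ is smooth across the interior of $\dpO$, so that only the classical jump of $v_{S_n}$ survives, is precisely the content of the jump formula the paper quotes for $v_{{\G}}$ on $\dpO$ and is correctly handled.
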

\begin{proof}  Let $\phi\in \Ca{0}(\dO)$ be such that $\phi_{|\dpO}=0$. As a consequence,
\[
v_{{\G}}[\dO, \phi](\px)=\int_{\dpO}{\G}(\px,\py)\phi_{|\dpO}(\py)\, d\sigma_\py=0 \qquad\forall \px\in\dO\, .
\]
Let now assume that $v_{{\G}}[\dO,\phi]_{|\dpO}=0$. With \eqref{eq.vG+dO}, we have in particular $v_{{\G}}[\dO,\phi]_{|\d0O}=0$ and then $v_{{\G}}[\dO,\phi]_{|\dO}=0$. By the uniqueness of the solution of the Dirichlet problem we deduce that $v_{{\G}}[\dO, \phi]=0$ in $\overline{ \dom}$. By the harmonicity at infinity of $v_{{\G}}[\dO, \phi]$ (cf.~Lemma \ref{vGatinfty}), by equality  $v_{{\G}}[\dO, \phi]_{|\partial \Rnp \cup \dO}=0$,
 and by a standard energy argument based on the divergence theorem, we deduce that
$\nabla v_{{\G}}[\dO, \phi]=0$ in $\Rnp \setminus \overline{\dom}$,
and that accordingly $ v_{{\G}}[\dO, \phi]$ is constant in $\Rnp \setminus\dom$. Since
$v_{{\G}}[\dO, \phi]=0$ on $\dO$, we have
$v_{{\G}}[\dO, \phi]=0$ in $\Rnp$. Then, for the normal derivative of $ v_{{\G}}[\dO, \phi]$  on $\dpO$ we have the following jump formulas:
\[
\bn_{\dom}(\px)\cdot\nabla v_{{\G}}^\sharp[\dO,\phi](\px)=-\frac{\ssei}{2}\phi(\px)+\int_{\dpO}\phi(\py)\bn_{\dom}(\px)\cdot\nabla_\px {\G}(\px,\py)\,d\sigma_\py,\qquad\forall \px\in\dpO\,,
\]
with $\sharp = i,e$, $\ssi=1$, $\sse=-1$. It follows that
\[
\phi= \bn_\dom\cdot\nabla v^e_{{\G}}[\dO,\phi]-\bn_\dom\cdot\nabla v^i_{{\G}}[\dO,\phi]=0 \qquad \text{on $\dpO$}\, ,
\]
 and thus the proof is complete.
\end{proof}

\vspace{\baselineskip}

\medskip

By the previous Proposition \ref{one-to-one} one readily verifies the validity of the following Proposition \ref{V} where we introduce the image space $\sV^{1,\alpha}(\dpO)$ of $v_{{\G}}[\dO,\cdot]_{|\dpO}$.

\begin{prop}[Image of the single layer potential derived by $G$]\label{V}
Let $\sV^{1,\alpha}(\dpO)$ denote the vector space
\[
\sV^{1,\alpha}(\dpO)=\left\{v_{{\G}}[\dO,\phi]_{|\dpO},\ \forall\phi\in \Ca{0}_+(\dO)\right\}.
\]
Let $\|\cdot\|_{\sV^{1,\alpha}(\dpO)}$ be the norm on $\sV^{1,\alpha}(\dpO)$ defined by
\[
\|f\|_{\sV^{1,\alpha}(\dpO)}\equiv\|\phi\|_{\Ca{0}_+(\dO)}
\]
for all $(f,\phi)\in \sV^{1,\alpha}(\dpO) \times \Ca{0}_+(\dO)$ such that $f=v_{{\G}}[\dO,\phi]_{|\dpO}$. Then the following statements hold.
\begin{enumerate}
\item[(i)] $\sV^{1,\alpha}(\dpO)$ endowed with the norm $\|\cdot\|_{\sV^{1,\alpha}(\dpO)}$ is a Banach space.
\item[(ii)] The operator $v_{{\G}}[\dO,\cdot]_{|\dO}$ is an homeomorphism from $\Ca{0}_+(\dO)$ to $\sV^{1,\alpha}(\dpO)$.
\end{enumerate}
\end{prop}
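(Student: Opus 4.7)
The plan is to observe that the norm on $\sV^{1,\alpha}(\dpO)$ has been engineered precisely so that the operator $T \equiv v_{\G}[\dO,\cdot]_{|\dpO}$ becomes a linear isometry from $\Ca{0}_+(\dO)$ onto $\sV^{1,\alpha}(\dpO)$; once this is recorded, both (i) and (ii) are immediate. The only substantive input is Proposition \ref{one-to-one}, which guarantees that $T$ is well-defined on the quotient and injective on it.

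First I would check that $T$ descends to a linear map on $\Ca{0}_+(\dO)$. Using the symmetry \eqref{Gsym} together with the vanishing \eqref{Gzero}, we have $\G(\px,\py) = 0$ whenever $\py \in \d0O \subseteq \partial \Rnp$ and $\px \in \dpO$. Hence for $\phi \in \Ca{0}(\dO)$ and $\px \in \dpO$,
\[
v_{\G}[\dO,\phi](\px) = \int_{\dpO} \phi(\py)\,\G(\px,\py)\,d\sigma_\py,
\]
which depends only on $\phi_{|\dpO}$. In particular any $\phi$ with $\phi_{|\dpO}=0$ maps to $0$, so $T$ factors through $\Ca{0}_+(\dO)$. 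That the quotient $\Ca{0}_+(\dO)$ is itself a Banach space follows from the fact that the subspace $\{\phi \in \Ca{0}(\dO) : \phi_{|\dpO}=0\}$ is closed, being the kernel of the continuous restriction map from $\Ca{0}(\dO)$ to $\Ca{0}(\dpO)$.

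Next, Proposition \ref{one-to-one} says that $v_{\G}[\dO,\phi]_{|\dpO} = 0$ implies $\phi_{|\dpO} = 0$, so $T$ is injective on the quotient $\Ca{0}_+(\dO)$. By the very definition of $\sV^{1,\alpha}(\dpO)$, $T$ is also surjective onto this image. Consequently, the rule $\|f\|_{\sV^{1,\alpha}(\dpO)} \equiv \|\phi\|_{\Ca{0}_+(\dO)}$ with $f = T\phi$ unambiguously defines a norm on $\sV^{1,\alpha}(\dpO)$ (this is exactly where the injectivity is needed to avoid depending on the representative $\phi$), and makes $T$ a linear isometric isomorphism by construction.

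Conclusions (i) and (ii) now follow for free: $\sV^{1,\alpha}(\dpO)$ is isometric to the Banach space $\Ca{0}_+(\dO)$ and so is itself a Banach space, proving (i); and any isometric bijection between normed spaces is automatically a homeomorphism, proving (ii). I do not foresee any real obstacle here, since Proposition \ref{one-to-one} has already carried out the analytic work, and what remains is purely formal bookkeeping dictated by the definitions of $\Ca{0}_+(\dO)$, $\sV^{1,\alpha}(\dpO)$, and its norm.
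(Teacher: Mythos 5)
Your proof is correct and fills in exactly the formal bookkeeping the paper leaves implicit when it says Proposition \ref{V} is ``readily verified'' from Proposition \ref{one-to-one}: you use the vanishing of $G$ on $\partial\Rnp$ (via \eqref{Gsym} and \eqref{Gzero}) to factor $v_{\G}[\dO,\cdot]_{|\dpO}$ through the quotient, Proposition \ref{one-to-one} for injectivity, and then observe the norm is defined precisely so that this map is an isometric bijection, from which (i) and (ii) follow. This is the same approach; nothing further is needed.
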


\subsubsection{Characterization of the image of the single layer potential}

 We wish now to characterize the functions of $\sV^{1,\alpha}(\dpO)$,  that is the set of the elements of $\Ca{1}(\dO)$ that can be represented as  $v_{{\G}}[\dO,\phi]_{|\dpO}$  for some $\phi\in \Ca{0}_+(\dO)$. We do so in the following Proposition \ref{f=ue}.

\begin{prop}\label{f=ue}
Let $f\in \Ca{1}(\dpO)$.  Then $f$ belongs to $\sV^{1,\alpha}(\dpO)$ if and only if
$ f=u^e_{|\dpO}$,
where $u^e$ is a function of $\Ca{1}_{\mathrm{loc}}(\overline{\Rnp\setminus\dom})$ such that
\begin{equation}\label{eq:f=ue}
\begin{cases}
\Delta u^e=0\quad\mbox{ in }\Rnp\setminus\overline{\dom},\\
u^e=0 \quad\mbox{ on }\partial\Rnp\setminus\d0O,\\
\lim_{\px \to \infty} \frac{1}{|\px|}u^{e}(\px)=0,\\
\lim_{\px \to \infty} \frac{\px}{|\px|}\cdot \nabla u^{e}(\px)=0.
\end{cases}
\end{equation}
\end{prop}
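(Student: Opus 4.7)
The forward direction is short. If $f\in\sV^{1,\alpha}(\dpO)$, I would take $\phi\in\Ca{0}_+(\dO)$ with $f=v_{\G}[\dO,\phi]_{|\dpO}$ and set $u^e:=v_{\G}^e[\dO,\phi]$. Proposition \ref{jumps} yields $u^e\in\Ca{1}_{\mathrm{loc}}(\overline{\Rnp\setminus\dom})$, harmonic in $\Rnp\setminus\overline{\dom}$; identity \eqref{eq.vG+dO} forces $u^e=0$ on $\partial\Rnp$, in particular on $\partial\Rnp\setminus\d0O$; and the two limits at infinity in \eqref{eq:f=ue} are read from the decay of $v_{\G}[\dO,\phi]$ recorded in the appendix.

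For the converse, given $u^e$ satisfying \eqref{eq:f=ue} with $f=u^{e}_{|\dpO}$, my plan is to produce a density $\phi$ with the help of an auxiliary interior harmonic function $u^i$. First I would solve the interior Dirichlet problem in $\dom$ with boundary data $f$ on $\dpO$ and $0$ on $\d0O$ to obtain $u^i\in\Ca{1}(\overline{\dom})$. A preliminary observation for the solvability is that $f$ vanishes at the edge $\overline{\dpO}\cap\overline{\d0O}$, since $u^e$ is continuous on $\overline{\Rnp\setminus\dom}$ and identically zero on $\partial\Rnp\setminus\d0O$; and the tangential derivatives of $f$ at the edge are compatible with zero, because $\dpO$ is tangent to $\partial\Rnp$ there (a consequence of $\dO\in\Ca{1}$ together with $\d0O\subset\partial\Rnp$). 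Next I would set
\[
\phi_{0}:=\bn_{\dom}\cdot\nabla u^{e}_{|\dpO}-\bn_{\dom}\cdot\nabla u^{i}_{|\dpO}\in\Ca{0}(\overline{\dpO})
\]
and extend $\phi_{0}$ to $\phi\in\Ca{0}(\dO)$ via the extension operator $E^{0,\alpha}$ supplied by assumption \eqref{D+Omega}.

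The core of the argument is then to show that $v_{\G}[\dO,\phi]=u^{i}$ in $\dom$ and $v_{\G}[\dO,\phi]=u^{e}$ in $\Rnp\setminus\overline{\dom}$, so that restricting to $\dpO$ gives $f=v_{\G}[\dO,\phi]_{|\dpO}\in\sV^{1,\alpha}(\dpO)$. For this I would introduce
\[
W:=\begin{cases} v_{\G}[\dO,\phi]-u^{i} & \text{in }\dom,\\ v_{\G}[\dO,\phi]-u^{e} & \text{in }\Rnp\setminus\overline{\dom}, \end{cases}
\]
and make the following observations. Writing $v_{\G}[\dO,\phi](\px)=v_{S_n}[\dO,\phi](\px)-v_{S_n}[\dO,\phi](\msigma(\px))$ and invoking the classical jump formulas of Proposition \ref{cljump}, the jump of the normal derivative of $v_{\G}$ across $\dpO$ is exactly $\phi$; combined with the definition of $\phi$, this makes both $W$ and its normal derivative continuous across $\dpO$, so $W$ is harmonic across $\dpO$ and hence defines a harmonic function on all of $\Rnp$. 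Furthermore $W$ vanishes on $\partial\Rnp$: on $\partial\Rnp\setminus\overline{\d0O}$ one has $v_{\G}=0$ by \eqref{eq.vG+dO} and $u^{e}=0$ by \eqref{eq:f=ue}, whereas on $\d0O$ the interior trace reads $W=0-u^{i}_{|\d0O}=0$. Since $W$ is bounded on $\dom$ and decays at infinity in $\Rnp\setminus\overline{\dom}$ (by the decay of $u^{e}$ assumed in \eqref{eq:f=ue} and that of $v_{\G}[\dO,\phi]$ established in the appendix), the standard uniqueness of bounded harmonic functions on the half space vanishing on $\partial\Rnp$, shown by odd reflection and Liouville's theorem, forces $W\equiv 0$.

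The hard part will be the very first step, namely producing $u^{i}\in\Ca{1}(\overline{\dom})$ with the prescribed boundary values: the datum obtained by gluing $f$ on $\dpO$ with $0$ on $\d0O$ is continuous on $\dO$, but its Hölder regularity at the edge $\overline{\dpO}\cap\overline{\d0O}$ is genuinely delicate and is needed to ensure that $\bn_{\dom}\cdot\nabla u^{i}$ belongs to $\Ca{0}(\overline{\dpO})$. Justifying it requires a careful interplay between assumption \eqref{D+Omega}, the Schauder regularity of $u^{e}$ up to the (cusp-like) portion of $\overline{\Rnp\setminus\dom}$ near the edge, and the tangency of $\dpO$ to $\partial\Rnp$. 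All the remaining verifications --- jump computation, extension of $W$ across $\dpO$, vanishing of $W$ on $\partial\Rnp$, and the half-space Liouville argument --- are routine once this technical point is settled.
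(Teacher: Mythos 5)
Your forward direction coincides with the paper's (define $u^e:=v_{\G}^e[\dO,\phi]$ and read off the four properties from Proposition \ref{jumps}, identity \eqref{eq.vG+dO}, and the appendix decay estimates), so there is nothing to add there.

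For the converse you pick exactly the same density as the paper,
$\phi=\bn_{\dom}\cdot\nabla u^e-\bn_{\dom}\cdot\nabla u^i$ on $\dpO$,
but you justify $f=v_{\G}[\dO,\phi]_{|\dpO}$ by a genuinely different route. The paper builds an intermediate representation (Proposition \ref{representation}) by summing the interior Green identity for $u^i$ (Lemma \ref{i-green}, used on the exterior domain) with the exterior Green identity for $u^e$ (Lemma \ref{e-green}); this yields $v_{\G}[\dO,\phi]+\tfrac{2x_n}{s_n}\int_{\partial\Rnp\setminus\d0O}\tfrac{u^e(\py)}{|\px-\py|^n}\,d\sigma_\py=f$ on $\dpO$ and then uses the vanishing of $u^e$ on $\partial\Rnp\setminus\d0O$ to kill the extra term. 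You instead introduce the piecewise function $W$, check the zero jumps across $\dpO$ with the classical jump relations, glue $W$ into a single harmonic function on $\Rnp$ vanishing on $\partial\Rnp$, and conclude by odd reflection and a Liouville-type argument. This avoids Lemma \ref{e-green} and the half-space Poisson-kernel term altogether, and is arguably more elementary; on the other hand the paper's route produces Proposition \ref{representation} as a usable byproduct, which is then exploited again in Proposition \ref{extra_prop} and Remark \ref{extra_rem}. Two smaller remarks. First, you claim that $W$ ``decays at infinity'' in $\Rnp\setminus\overline{\dom}$, but the hypotheses in \eqref{eq:f=ue} only give $u^e(\px)=o(|\px|)$; this is all you need, since after odd reflection an entire harmonic function of sublinear growth is constant (hence zero here), but the phrasing should be corrected. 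Second, the edge-regularity issue you flag --- that the zero extension of $f$ to $\dO$ must be $\Ca{1}$ in order to produce $u^i$ and its normal derivative --- is not peculiar to your proof: the paper's final step invokes Proposition \ref{representation}, whose hypotheses are $f\in\Ca{1}(\dO)$ with $f_{|\d0O}=0$, and so relies on the same implicit extension. That regularity does hold, by exactly the tangency argument you sketch ($\dO\in\Ca{1}$ forces $\dpO$ to be tangent to $\partial\Rnp$ along the edge, and $u^e$ together with its tangential gradient along $\partial\Rnp$ vanishes there, so the extension by zero is $\Ca{1}$ across the edge), but neither proof spells this out.
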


\medskip

\begin{proofof}{Proposition \ref{f=ue}} We divide the proof in three steps.

\medskip

\noindent {\it $\bullet$ First step: Green-like representation formulas in $\Rnp\setminus\overline{\dom}$.} As a first step, we prove  a representation  formula for harmonic functions in the set $\Rnp\setminus\overline{\dom}$. 

\begin{lem}\label{e-green}
Let $u^e\in \Ca{1}_{\mathrm{loc}}(\overline{\Rnp\setminus\dom})$ be such that
$$\begin{cases}
\Delta u^e=0\quad\mbox{ in }\Rnp\setminus\overline{\dom},\\
\lim_{|\px| \to \infty} \frac{1}{|\px|} u^{e}(\px)=0,\\
\lim_{|\px| \to \infty} \frac{\px}{|\px|}\cdot \nabla u^{e}(\px)=0.
\end{cases}$$
Then we have
\begin{multline*}
-w_{{\G}}[\dpO,u^e_{|\dpO}](\px)+v_{{\G}}[\dO,\bn_{\dom}\cdot\nabla u^e_{|\dO}](\px)+\frac{2x_n}{s_n}\int_{\partial\Rnp \setminus\d0O} \frac{u^e(\py)}{|\px-\py|^n}\, d\sigma_\py
\\
=\begin{cases}
u^e(\px) & \forall \px\in\Rnp\setminus\overline{\dom},\\
0 &  \forall \px\in\dom.
\end{cases}
\end{multline*}
\end{lem}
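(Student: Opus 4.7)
The strategy is to apply Green's second identity to $u^e$ and ${\G}(\px,\cdot)$ on the bounded domain $D_R \equiv (\Rnp \setminus \overline{\dom}) \cap B_R$ for $R$ large (so that $\overline{\dom} \subset B_R$ and, when relevant, $\px \in B_R$), then pass to the limit $R \to \infty$. The boundary $\partial D_R$ splits into three pieces: $\dpO$, $\partial \Rnp \cap \overline{D_R}$, and $\partial B_R \cap \overline{\Rnp}$. The structural properties of ${\G}$ that I will exploit are: (i) ${\G}(\px,\py) = 0$ for $\py \in \partial \Rnp$, by \eqref{Gzero}; (ii) for $\px \in \Rnp$ the reflected point $\msigma(\px)$ lies strictly below $\partial \Rnp$, hence outside $\overline{D_R}$, so the only singularity of ${\G}(\px,\cdot)$ in $\overline{D_R}$ is at $\py = \px$; (iii) a direct computation shows that on $\partial \Rnp$ the tangential $\py$-derivatives of ${\G}(\px,\cdot)$ vanish, while $\partial_{y_n} {\G}(\px,\py) = -\frac{2x_n}{s_n |\px-\py|^n}$.

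For $\px \in \Rnp \setminus \overline{\dom}$, I would excise a small ball $B_\delta(\px)$ from $D_R$ and apply Green's second identity to $u^e$ and ${\G}(\px,\cdot)$ on $D_R \setminus \overline{B_\delta(\px)}$; since both functions are harmonic there, everything reduces to boundary integrals, and letting $\delta \to 0$ extracts $u^e(\px)$ from the small sphere by the standard asymptotics of $S_n$. On $\dpO$, where the outward unit normal to $D_R$ equals $-\bn_{\dom}$, one obtains $-w_{{\G}}[\dpO, u^e_{|\dpO}](\px) + v_{{\G}}[\dpO, \bn_{\dom} \cdot \nabla u^e_{|\dpO}](\px)$; because ${\G}$ vanishes on $\d0O$, the latter single-layer term coincides with $v_{{\G}}[\dO, \bn_{\dom} \cdot \nabla u^e_{|\dO}](\px)$. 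On $\partial \Rnp \cap \overline{D_R}$, property (i) kills the ${\G}\,\bn \cdot \nabla u^e$ contribution, while the remaining term, combined with the outward normal $-\mathbf{e}_n$ and (iii), reproduces $\frac{2x_n}{s_n} \int_{\partial \Rnp \cap \overline{D_R}} u^e(\py)\, |\px-\py|^{-n}\, d\sigma_\py$, which converges to the corresponding integral over $\partial \Rnp \setminus \d0O$ as $R \to \infty$.

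The main obstacle is controlling the contribution from $\partial B_R \cap \overline{\Rnp}$ as $R \to \infty$. The key point is that ${\G}$ decays one order faster than $S_n$, thanks to the cancellation ${\G}(\px,\py) = S_n(\px-\py) - S_n(\px-\py-2x_n \mathbf{e}_n)$: Taylor expansion in $x_n$ yields $|{\G}(\px,\py)| = O(|\py|^{1-n})$ and $|\nabla_{\py} {\G}(\px,\py)| = O(|\py|^{-n})$ uniformly for $\py \in \partial B_R \cap \overline{\Rnp}$ with $\px$ fixed. Combined with the hypotheses $u^e(\py) = o(|\py|)$ and $(\py/|\py|) \cdot \nabla u^e(\py) = o(1)$, and with the surface measure $O(R^{n-1})$ of $\partial B_R \cap \overline{\Rnp}$, both contributions on $\partial B_R$ integrate to $o(1)$; the quantitative decay of ${\G}$ is presumably isolated as Lemma \ref{vGatinfty} in the Appendix, which I would invoke directly. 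Finally, for $\px \in \dom$ I would repeat the same computation without excising any ball: since $\px \in \dom \subset \Rnp$ implies $\px \notin \overline{D_R}$ and $\msigma(\px) \notin \overline{\Rnp} \supset \overline{D_R}$, the function ${\G}(\px,\cdot)$ is harmonic on an open neighborhood of $\overline{D_R}$, so Green's identity produces $0$ on the left-hand side while the right-hand side is computed exactly as before, yielding the second case of the formula.
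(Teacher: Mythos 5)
Your proof is correct and follows essentially the same route as the paper: apply Green's second identity on the truncated exterior region $D_R$, exploit that ${\G}$ vanishes and that $\partial_{y_n}{\G}(\px,\py)=-2x_n s_n^{-1}|\px-\py|^{-n}$ on $\partial\Rnp$, and use the improved decay of ${\G}$ (together with the growth hypotheses on $u^e$ and the $O(R^{n-1})$ surface area of $\partial\Bn{R}\cap\overline{\Rnp}$) to discard the far-sphere contribution, the only cosmetic difference being that the paper handles the singularity at $\px$ by applying the representation formula of Lemma~\ref{i-green} on a fixed small ball $\cB(\px,r)$ and then adding this to the divergence-theorem identity on the complement (which cancels the small-sphere integrals), rather than excising and shrinking a $\delta$-ball. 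One small slip: the decay estimate you need for ${\G}$ and $\nabla_\py{\G}$ is Lemma~\ref{Gatinfty}, not Lemma~\ref{vGatinfty}, the latter being the corresponding statement for the single-layer potential.
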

\begin{proof}  Let $R>\max_{\px\in\overline{\dom}}|\px|$. Let $\dom^{e,+}_R\equiv \Rnp\cap\Bn{R}\setminus\overline{\dom}$. Let $\px\in \dom^{e,+}_R$. Let $r>0$ and $\cB(\px,r)\subseteq \dom^{e,+}_R$. By Lemma \ref{i-green} we have
\begin{equation}\label{eq1}
\begin{split}
u^e(\px)& =w_{{\G}}[\partial\cB(\px,r),u^e_{|\partial\cB(\px,r)}](\px)-v_{{\G}}[\partial\cB(\px,r),\bn_{\cB(\px,r)}\cdot\nabla u^e_{|\partial\cB(\px,r)}](\px)\\
& =\int_{\partial\cB(\px,r)} u^e(\py)\; \bn_{\cB(\px,r)}(\py)\cdot\nabla_\py {\G}(\px,\py)-{\G}(\px,\py)\; \bn_{\cB(\px,r)}(\py)\cdot\nabla u^e(\py)\; d\sigma_\py\,.
\end{split}
\end{equation}
Then we observe that ${\G}(\px,\cdot)$ is a harmonic function in $\dom^{e,+}_R\setminus\overline{\cB(\px,r)}$ and thus by the divergence theorem we have
\[
\begin{split}
0&=\int_{\dom^{e,+}_R\setminus\overline{\cB(\px,r)}}u^e(\py)\Delta_\py{\G}(\px,\py)-{\G}(\px,\py)\Delta u^e(\py)\, d\px\\
&=-\int_{\partial\cB(\px,r)} u^e(\py)\; \bn_{\cB(\px,r)}(\py)\cdot\nabla_\py {\G}(\px,\py) -{\G}(\px,\py)\; \bn_{\cB(\px,r)}(\py)\cdot\nabla u^e(\py)\; d\sigma_\py\\
&\quad+\int_{\dO^{e,+}_R} u^e(\py)\; \bn_{\dom^{e,+}_R}(\py)\cdot\nabla_\py {\G}(\px,\py) -{\G}(\px,\py)\; \bn_{\dom^{e,+}_R}(\py)\cdot\nabla u^e(\py)\; d\sigma_\py\\
&=-\int_{\partial\cB(\px,r)} u^e(\py)\; \bn_{\cB(\px,r)}(\py)\cdot\nabla_\py {\G}(\px,\py)-{\G}(\px,\py)\; \bn_{\cB(\px,r)}(\py)\cdot\nabla u^e(\py)\; d\sigma_\py\\
&\quad-\int_{\dpO} u^e(\py)\; \bn_{\dom}(\py)\cdot\nabla_\py {\G}(\px,\py) -{\G}(\px,\py)\; \bn_{\dom}(\py)\cdot\nabla u^e(\py)\; d\sigma_\py\\
&\quad+\int_{\partial_+\Bn{R}} u^e(\py)\; \bn_{\Bn{R}}(\py)\cdot\nabla_\py {\G}(\px,\py)-{\G}(\px,\py)\; \bn_{\Bn{R}}(\py)\cdot\nabla u^e(\py)\; d\sigma_\py\\
&\quad-\int_{\partial\Rnp\cap\Bn{R}\setminus\d0O} u^e(\py)\; \partial_{y_n} {\G}(\px,\py)-{\G}(\px,\py)\; \partial_{y_n} u^e(\py)\; d\sigma_\py.
\end{split}\]
Using Definition~\ref{def.SDG+} and the fact that ${\G}(\px,\py)=0$ and $\partial_{y_n}{\G}(\px,\py)=-2x_ns_n^{-1}|\px-\py|^{-n}$ for all $\py\in\partial\Rnp$, we deduce
\[
\begin{split}
0&=-\int_{\partial\cB(\px,r)} u^e(\py)\; \bn_{\cB(\px,r)}(\py)\cdot\nabla_\py {\G}(\px,\py)-{\G}(\px,\py)\; \bn_{\cB(\px,r)}(\py)\cdot\nabla u^e(\py)\; d\sigma_\py\\
&\quad-w_{{\G}}[\dpO,u^e_{|\dpO}](\px)+v_{{\G}}[\dpO,\bn_{\dom}\cdot\nabla u^e_{|\dpO}](\px)\\
&\quad+\int_{\partial_+\Bn{R}} u^e(\py)\; \bn_{\Bn{R}}(\py)\cdot\nabla_\py {\G}(\px,\py)-{\G}(\px,\py)\; \bn_{\Bn{R}}(\py)\cdot\nabla u^e(\py)\; d\sigma_\py\\
&\quad+\frac{2x_n}{s_n}\int_{\partial\Rnp\cap\Bn{R}\setminus\d0O} \frac{u^e(\py)}{|\px-\py|^n}\, d\sigma_\py.
\end{split}
\]
Then we observe that the maps $\py\mapsto |\py|^{n-1} {\G}(\px,\py)$ and
$\py\mapsto |\py|^n \nabla_\py {\G}(\px,\py)$ are bounded at infinity (see Lemma \ref{Gatinfty}).
 Thus, by taking the limit as $R\to\infty$ we obtain
\begin{equation}\label{eq2}
\begin{split}
0&=-\int_{\partial\cB(x,r)} u^e(\py)\; \bn_{\cB(\px,r)}(\py)\cdot\nabla_\py {\G}(\px,\py)-{\G}(\px,\py)\; \bn_{\cB(\px,r)}(\py)\cdot\nabla u^e(\py)\; d\sigma_\py\\
&\quad-w_{{\G}}[\dpO,u^e_{|\dpO}](\px)+v_{{\G}}[\dO,\bn_{\dom}\cdot\nabla u^e_{|\dO}](\px)+\frac{2x_n}{s_n}\int_{\partial\Rnp\setminus\d0O} \frac{u^e(\py)}{|\px-\py|^n}\, d\sigma_\py\,.
\end{split}
\end{equation}
Then by summing \eqref{eq1} and \eqref{eq2} we show the validity of the first equality in the statement. The proof of the second equality is similar and accordingly omitted.
\end{proof}

\vspace{\baselineskip}

\medskip

Incidentally, we observe that under the assumptions of Lemma \ref{e-green} the integral
\[
\int_{\partial\Rnp \setminus\d0O} \frac{u^e(\py)}{|\px-\py|^n}\, d\sigma_\py
\]
exists finite for all $\px \in \mathbb{R}_+^n \setminus \dO$.

\medskip

\noindent{\it $\bullet$ Second step: representation in terms of single layer potentials plus an extra term}. In the following Proposition \ref{representation}, we introduce a representation formula for a suitable family of functions of $\Ca{1}(\dO)$. More precisely, we show that the restriction to $\dpO$ of a function $f \in \Ca{1}(\dO)$ which satisfies certain assumptions can be written as the sum of a single layer potential with kernel $G$ plus an extra term.

\begin{prop}\label{representation}
Let $f\in \Ca{1}(\dO)$ with $f_{|\d0O}=0$. Assume that there exists a function $u^e\in \Ca{1}_{\mathrm{loc}}(\overline{\Rnp\setminus\dom})$ such that
$$\begin{cases}
\Delta u^e=0\quad\mbox{ in }\Rnp\setminus\overline{\dom},\\
u^e=f\quad\mbox{ on }{\dpO},\\
\lim_{\px \to \infty} \frac{1}{|\px|}u^{e}(\px)=0,\\
\lim_{\px \to \infty} \frac{\px}{|\px|}\cdot \nabla u^{e}(\px)=0.
\end{cases}
$$
Then there exists $\phi\in \Ca{0}(\dO)$ such that
\begin{equation}\label{representation.eq0}
v_{{\G}}[\dO,\phi](\px)+\frac{2x_n}{s_n}\int_{\partial\Rnp\setminus\d0O} \frac{u^e(\py)}{|\px-\py|^n}\, d\sigma_\py=f(\px)\,,\qquad\forall \px\in\dpO\,.
\end{equation}
\end{prop}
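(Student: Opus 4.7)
The plan is to combine the two Green-like representation formulas of Lemmas \ref{i-green} and \ref{e-green}, applied respectively to an auxiliary interior harmonic extension of $f$ and to the given exterior function $u^e$, and then to identify $\phi$ with the jump of the normal derivative across $\dpO$.

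As a first step, I would introduce $v \in \Ca{1}(\overline{\dom})$ as the unique solution of the interior Dirichlet problem
\[
\begin{cases}
\Delta v = 0 & \text{in } \dom,\\
v = f & \text{on } \dO,
\end{cases}
\]
whose existence and $\Ca{1,\alpha}$-regularity up to the boundary are guaranteed by standard Schauder theory, using that $\dom$ is of class $\Ca{1,\alpha}$ and $f\in \Ca{1,\alpha}(\dO)$. Applying Lemma \ref{i-green} to $v$ yields
\[
w_{{\G}}[\dO,f](\px)-v_{{\G}}[\dO,\bn_\dom\cdot\nabla v_{|\dO}](\px)=0
\]
for every $\px\in\Rn\setminus\overline{\dom\cup\msigma(\dom)}$; this set contains the portion of $\Rnp\setminus\overline{\dom}$ with $x_n>0$, because when $x_n>0$ the reflected point $\msigma(\px)$ lies in $\Rn\setminus\overline{\Rnp}$ and hence outside $\overline{\dom}$. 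On the same open set, Lemma \ref{e-green} applied to $u^e$ gives
\[
-w_{{\G}}[\dpO,f](\px)+v_{{\G}}[\dO,\bn_\dom\cdot\nabla u^e_{|\dO}](\px)+\frac{2x_n}{s_n}\int_{\partial\Rnp\setminus\d0O}\frac{u^e(\py)}{|\px-\py|^n}\,d\sigma_\py=u^e(\px).
\]

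I would then sum these two identities and exploit the hypothesis $f_{|\d0O}=0$, which yields the cancellation
\[
w_{{\G}}[\dO,f]-w_{{\G}}[\dpO,f]=w_{{\G}}[\d0O,f]=0.
\]
Setting
\[
\phi\equiv\bn_\dom\cdot\nabla u^e_{|\dO}-\bn_\dom\cdot\nabla v_{|\dO},
\]
and noting that the $\Ca{1,\alpha}$-regularity of $v$, of $u^e$, and of the outer unit normal $\bn_\dom$ ensures $\phi\in\Ca{0}(\dO)$, I obtain the desired identity on the exterior open set. Finally, letting $\px$ tend to a point $\px_0\in\dpO$ from $\Rnp\setminus\overline{\dom}$ and invoking the continuity across $\dpO$ of $v_{{\G}}[\dO,\phi]$ (Proposition \ref{jumps}), the continuity of $u^e$ up to $\dpO$ with $u^e_{|\dpO}=f$, and the continuity of the integral over $\partial\Rnp\setminus\d0O$ (which holds because $(\px_0)_n>0$ keeps the integrand uniformly bounded near $\px_0$), I arrive at \eqref{representation.eq0}.

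The main technical delicacy lies in the regularity of $\phi$ up to the edge $\overline{\dpO}\cap\d0O$, which is needed to ensure $\phi\in\Ca{0}(\dO)$. This can be handled by an odd-reflection argument: since $f_{|\d0O}=0$, the function $v$ extends by odd reflection across $\partial\Rnp$ to a harmonic function in a neighborhood of $\d0O$ inside $\dom\cup\msigma(\dom)\cup\d0O$, thereby removing the edge singularity and reducing the matter to classical boundary-regularity theory on a smooth domain; the analogous observation applies to $u^e$ after noting that the condition on $u^e$ near the exterior portion of $\partial\Rnp$ together with the regularity assumption $u^e\in\Ca{1}_{\mathrm{loc}}(\overline{\Rnp\setminus\dom})$ suffices to give the normal derivative of $u^e$ on $\dO$ the requisite $\Ca{0,\alpha}$ regularity.
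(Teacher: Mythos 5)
Your proof follows the same route as the paper: let $v$ (the paper calls it $u^i$) solve the interior Dirichlet problem with datum $f$, apply Lemma \ref{i-green} to $v$ and Lemma \ref{e-green} to $u^e$, add, use $f_{|\d0O}=0$ to cancel the double-layer contribution from $\d0O$, and set $\phi=\bn_\dom\cdot\nabla u^e_{|\dO}-\bn_\dom\cdot\nabla v_{|\dO}$. This is exactly the paper's argument, so the core of the proof is correct.

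One remark on your final paragraph. The continuity of $\phi$ on $\overline{\dpO}$ does not need an extra argument: it is already built into the hypotheses that $v=u^i\in\Ca{1}(\overline{\dom})$ (Schauder theory on the $\Ca{1,\alpha}$ domain $\dom$) and $u^e\in\Ca{1}_{\mathrm{loc}}(\overline{\Rnp\setminus\dom})$, which give continuity of both normal derivatives up to $\overline{\dpO}$, including the edge $\partial\dpO\subseteq\partial\Rnp$; Lemma \ref{ext} then produces a continuous extension to all of $\dO$ (and since $\G$ vanishes on $\partial\Rnp$ the values on $\d0O$ are immaterial anyway). Moreover, your proposed odd-reflection fix would not in fact resolve an edge problem if there were one: as the paper itself stresses in the introduction, $\dom\cup\msigma(\dom)\cup\d0O$ is not a $\Ca{1,\alpha}$ domain because reflecting $\dom$ across $\partial\Rnp$ creates a geometric edge precisely along $\partial\dpO$, so classical boundary-regularity theory on a smooth domain does not apply there. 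Fortunately none of this is needed, because the required regularity is part of the standing assumptions.
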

\begin{proof}  Let $u^i\in \Ca{1}(\overline{\dom})$ be the solution of the Dirichlet problem with boundary datum $f$. By Lemma \ref{i-green} we have
\[
0=w_{{\G}}[\dO,u^i_{|\dO}](\px)-v_{{\G}}[\dO,\bn_{\dom}\cdot\nabla u^i_{|\dO}](\px)\,,\qquad\forall \px\in\Rnp\setminus\overline{\dom}\,.
\]
Since $u^i_{|\d0O}=f_{|\d0O}=0$ we deduce that
\begin{equation}\label{representation.eq1}
0=w_{{\G}}[\dpO,f_{|\dpO}](\px)-v_{{\G}}[\dO,\bn_{\dom}\cdot\nabla u^i_{|\dO}](\px)\,,\qquad\forall \px\in\Rnp\setminus\overline{\dom}\,.
\end{equation}
By Lemma \ref{e-green} we have
\begin{equation}\label{representation.eq2}
u^e(\px)=-w_{{\G}}[\dpO,f_{|\dpO}](\px)+v_{{\G}}[\dO,\bn_{\dom}\cdot\nabla u^e_{|\dO}](\px)+\frac{2x_n}{s_n}\int_{\partial\Rnp \setminus\d0O} \frac{u^e(\py)}{|\px-\py|^n}\, d\sigma_\py
\end{equation}
 for all $\px\in\Rnp\setminus\overline{\dom}$. Then by taking the sum of \eqref{representation.eq1} and \eqref{representation.eq2} and by the continuity properties of the (Green) single layer potential one verifies that the proposition holds with
\[
\phi=\bn_{\dom}\cdot\nabla u^e_{|\dO}-\bn_{\dom}\cdot\nabla u^i_{|\dO}\,.
\]
\end{proof}

\noindent{\it $\bullet$ Last step: vanishing of the extra term in \eqref{representation.eq0}.} In order to understand what can be represented just by means of  the single layer potential, the final step is to understand when such an extra term vanishes. So let  $f\in \Ca{1}(\dpO)$ be such that
$ f=u^e_{|\dpO}$,
where $u^e$ is a function of $\Ca{1}_{\mathrm{loc}}(\overline{\Rnp\setminus\dom})$ such that \eqref{eq:f=ue} holds. Then
\[
\frac{2x_n}{s_n}\int_{\partial\Rnp\setminus\d0O} \frac{u^e(\py)}{|\px-\py|^n}\, d\sigma_\py=0\,, \qquad\forall \px\in\dpO\, ,
\]
and thus \eqref{representation.eq0} implies that $f \in \sV^{1,\alpha}(\dpO)$. Conversely, if $f \in \sV^{1,\alpha}(\dpO)$ then there exists $\phi\in \Ca{0}_+(\dO)$ such that $f=v_{{\G}}[\dO,\phi]_{|\dpO}$ and the function  $u^e \equiv v_{{\G}}[\dO,\phi]_{|\overline{\Rnp\setminus\dom}}$ satisifies \eqref{eq:f=ue}. This concludes the proof of Proposition \ref{f=ue}.
\end{proofof}

\vspace{\baselineskip}

\medskip

Now that Proposition \ref{f=ue} is proved,  we observe that if $u^e$ is as in Proposition \ref{representation}, then
\begin{equation}\label{extra_lim}
\lim_{t\to 0^+}\frac{2t}{s_n}\int_{\partial\Rnp \setminus\d0O} \frac{u^e(\py)}{|\px+t \se_n-\py|^n}\, d\sigma_\py=u^e(\px),\qquad\forall \px\in\partial\Rnp\setminus\d0O\,,
\end{equation}
where $\se_n$ denotes the vector $(0,\dots,0,1)\in\Rn$.  The limit in \eqref{extra_lim} can be computed  by exploiting  known results in potential theory (see~Cialdea \cite[Thm.~1]{Ci95}).
A consequence of \eqref{extra_lim} is that the second term in the left hand side of \eqref{representation.eq0} vanishes on $\dpO$ only if $u^e_{|\partial\Rnp\setminus\d0O}=0$. Namely, we have the following

\begin{prop}\label{extra_prop}
Let $u^e$ be as in Proposition \ref{representation}.  Then we have
\begin{equation}\label{extra_eq1}
\frac{2x_n}{s_n}\int_{\partial\Rnp\setminus\d0O} \frac{u^e(\py)}{|\px-\py|^n}\, d\sigma_\py=0,\qquad\forall \px\in\dpO
\end{equation}
 if and only if
 \begin{equation}\label{extra_eq2}
 u^e_{|\partial\Rnp\setminus\d0O}=0\,.
 \end{equation}
\end{prop}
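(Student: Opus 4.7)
The \emph{if} direction is immediate: if $u^e$ vanishes on $\partial\Rnp\setminus\d0O$, then the integrand in \eqref{extra_eq1} is identically zero, so the identity holds for every $\px\in\dpO$.

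For the converse, introduce the Poisson-type integral
\[
F(\px)\equiv \frac{2x_n}{s_n}\int_{\partial\Rnp\setminus\d0O} \frac{u^e(\py)}{|\px-\py|^n}\, d\sigma_\py,\qquad \px\in\Rnp.
\]
Since the Poisson kernel $\px\mapsto 2x_n/(s_n|\px-\py|^n)$ is harmonic in $\px$ for each fixed $\py\in\partial\Rnp$, differentiation under the integral sign shows that $F$ is harmonic on the connected open set $\Rnp$. The strategy is to prove that $F$ vanishes identically on $\Rnp$ and then to invoke \eqref{extra_lim} to recover the conclusion $u^e=0$ on $\partial\Rnp\setminus\d0O$.

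To show $F\equiv 0$ in $\Rnp$, I would first verify that $F$ extends continuously to $\overline\Omega$ with $F_{|\d0O}=0$: for $\px$ approaching a point in the relative interior of $\d0O$, the multiplicative factor $x_n$ tends to $0$ while the integral stays bounded, since $\py$ is separated from $\px$ uniformly on a neighborhood. Combining this with the hypothesis $F_{|\dpO}=0$ yields $F=0$ on the whole of $\partial\Omega=\dpO\cup\d0O$. Applying the maximum principle to the harmonic function $F$ on the bounded domain $\Omega$ then gives $F\equiv 0$ in $\Omega$. Since $F$ is harmonic on the connected open set $\Rnp$ and vanishes on the nonempty open subset $\Omega$, the identity theorem for real analytic (and hence harmonic) functions yields $F\equiv 0$ on $\Rnp$.

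It then suffices to apply \eqref{extra_lim}: for every $\px\in\partial\Rnp\setminus\d0O$,
\[
u^e(\px)=\lim_{t\to 0^+}F(\px+t\se_n)=0,
\]
which is precisely \eqref{extra_eq2}. The main technical point I anticipate is the clean justification of the continuous extension of $F$ up to $\overline\Omega$, particularly at the edge $\partial\d0O$ where $\dpO$ meets $\d0O$; the regularity assumption \eqref{D+Omega} on $\overline\dpO$, together with the integrability properties underlying Cialdea's theorem (the source of \eqref{extra_lim}), should make this step routine.
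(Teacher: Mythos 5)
Your proof is correct and follows the same overall strategy as the paper: show the Poisson-type function vanishes identically in the upper half space, then use \eqref{extra_lim} to recover the vanishing of $u^e$ on $\partial\Rnp\setminus\d0O$. The only notable difference is a matter of framing. The paper defines its auxiliary function $U^+$ on the larger open connected set $\Rn\setminus(\partial\Rnp\setminus\d0O)$, which contains $\dom$, the lower half space, and the open set $\d0O$ itself. With that choice, $U^+$ is harmonic (indeed real analytic) in a full neighborhood of every point of $\d0O$, and the vanishing $U^+_{|\d0O}=0$ is an \emph{interior} identity coming directly from the factor $x_n=0$, rather than a boundary limit. That sidesteps the continuous-extension argument you raise, and it also means the identity principle can be applied on the larger set $\Rn\setminus(\partial\Rnp\setminus\d0O)$ (the paper's formulation) rather than just on $\Rnp$; both are connected and both reach the same conclusion.

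As for the edge $\partial\d0O$ that you flag as the main technical concern: that concern is legitimate, but it is present in the paper's argument as well, since $\partial\d0O\subseteq\partial\Rnp\setminus\d0O$ is excluded from the open set on which $U^+$ is analytic, yet it lies in $\overline\dom$. Both proofs therefore implicitly invoke a slightly stronger form of uniqueness: a bounded harmonic function on $\dom$ whose continuous boundary trace vanishes on $\dO\setminus\partial\d0O$ (an exceptional set of vanishing $(n-1)$-Hausdorff measure, hence of zero capacity) is identically zero. The local boundedness of the Poisson integral near $\overline\dom\cap\partial\Rnp$ follows from the continuity of $u^e$ on $\overline{\Rnp\setminus\dom}$ and the decay of $u^e$ at infinity. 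So your proposed route is sound; you may simply note the removable-singularity/capacity argument to dispose of the edge, exactly as the paper tacitly does.
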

\begin{proof} 
One immediately verifies that \eqref{extra_eq2} implies \eqref{extra_eq1}. To prove that  \eqref{extra_eq1} implies \eqref{extra_eq2}, we denote by $U^+$ the function of $\px\in\Rn\setminus(\partial\Rnp\setminus\d0O)$ defined by the left hand side of \eqref{extra_eq1}. Then, we observe that, by the properties of integral operators with real analytic kernel and no singularity, $U^+$ is harmonic in  $\Rn\setminus(\partial\Rnp\setminus\d0O)$ and vanishes on $\d0O$. Thus, \eqref{extra_eq1} implies that $U^+=0$ on the whole of $\dO$ and by  the uniqueness of the solution of the Dirichlet problem we have that $U^+=0$ on $\dom$. By the identity principle for analytic functions it  follows that $U^+=0$ on $\Rn\setminus(\partial\Rnp\setminus\d0O)$ and thus, by \eqref{extra_lim}, we have
\[
u^e(\px)=\lim_{t\to 0^+}U^+(\px+t \se_n)=0\qquad\forall \px\in\partial\Rnp\setminus\d0O.
\]
\end{proof}

\vspace{\baselineskip}

In Remark \ref{extra_rem} here below we observe that a function $u^e$ which satisfies the conditions of Proposition \ref{representation} actually exists and that the second term in the left hand side of \eqref{representation.eq0} cannot be in general omitted.

\begin{rmk}\label{extra_rem} Let $f\in \Ca{1}(\dO)$ with $f_{|\d0O}=0$ and let $u_\#\in \Ca{1}_{\mathrm{loc}}(\Rn\setminus\dom)$ be the unique solution of the Dirichlet problem in $\Rn\setminus\overline{\dom}$ with boundary datum $f$ which satisfies the decay condition $\lim_{\px\to\infty}u_\#(\px)=0$ if $n \geq 3$ and such that $u_\#$ is bounded if $n=2$ ({\it i.e.}, $u_\#$ is harmonic at $\infty$).  Then the function $u^e_{\#}\equiv u_{\#|\overline{\Rnp\setminus\dom}}$ satisfies the conditions of Proposition \ref{representation}. In addition, $u^e_{\#|\partial\Rnp\setminus\d0O}=0$ only if $f=0$, and thus the corresponding second term in the left hand side of \eqref{representation.eq0} is $0$ only if $f=0$ (cf. Proposition \ref{extra_prop}). The latter fact can be proved by observing that if $u^e_{\#|\partial\Rnp\setminus\d0O}=0$, then $u_{\#|\partial\Rnp\setminus\d0O}=0$ and thus $u_{\#|\partial\Rnp}=0$ (because $u_{\#|\d0O}=f_{|\d0O}=0$ by our assumptions on $f$). Then, by the decay properties of $u_\#$ and by the divergence theorem we have
\[
\int_{\Rnm}|\nabla u_\#|^2\,d\px=\lim_{R\to\infty}\left(\int_{\partial\Bn{R}\cap\Rnm} u\, \bn_{\partial\Bn{R}}\cdot\nabla u\, d\sigma+\int_{\partial\Rnp\cap\Bn{R}} u\, \partial_{x_n} u\, d\sigma\right)=0\,.
\]
It follows that $u_{\#|\Rnm}=0$, which in turn implies that $u_\#=0$ by the identity principle of real analytic functions. Hence $f=u_{\#|\dO}=0$.
\end{rmk}

\subsection{Extending functions from $\Ca{k}(\overline{\dpO})$ to $\Ca{k}(\dO)$}
We will need to pass from functions defined on $\dpO$ to functions defined on $\dO$, and viceversa.
The restriction operator from $\Ca{k}(\dO)$ to $\Ca{k}(\overline{\dpO})$  is linear and continuous for  $k=0,1$.  On the other hand, we have the following extension result.
\begin{lem}
\label{ext}
There exist linear and continuous extension operators $E^{k,\alpha}$ from $\Ca{k}(\overline{\dpO})$ to $\Ca{k}(\dO)$, for $k=0,1$.
\end{lem}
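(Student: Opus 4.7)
The hypothesis $(H_3)$ ensures that $\overline{\dpO}$ is a compact $\Ca{1}$ submanifold with boundary of $\Rn$, whose (relative) boundary $\Gamma \equiv \partial\overline{\dpO} = \overline{\dpO} \cap \overline{\d0O}$ is a compact $\Ca{1}$ submanifold of codimension one inside the (boundaryless) $\Ca{1}$ manifold $\dO$. The plan is to build $E^{k,\alpha}$ by a partition-of-unity argument that reduces the problem to a flat model, in which the extension is performed by the classical Stein reflection for Schauder spaces across a hyperplane.

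By compactness, I would cover $\dO$ by finitely many open sets $\{U_j\}_{j=1}^N$ of three types: (a) $\overline{U_j}\subset\dpO$, on which no extension is needed; (b) $\overline{U_j}\cap\overline{\dpO}=\emptyset$, on which the extension is set to zero; and (c) $U_j$ meets $\Gamma$ and admits a $\Ca{1}$ chart $\varphi_j : U_j \to V_j \subseteq \mathbb{R}^{n-1}$ that straightens $U_j\cap\overline{\dpO}$ onto $V_j \cap \{y_{n-1}\geq 0\}$. On charts of type (c), a standard Stein-type reflection furnishes a linear continuous operator $\mathcal{E}_j$ from $\Ca{k}(\overline{V_j\cap\{y_{n-1}>0\}})$ into $\Ca{k}(\overline{V_j})$, and conjugation with $\varphi_j$ yields a local extension operator from $\Ca{k}(\overline{U_j\cap\dpO})$ into $\Ca{k}(\overline{U_j})$. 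Picking a partition of unity $\{\chi_j\}\subset\Ca{1}(\dO)$ subordinate to $\{U_j\}$, I would set
\[
E^{k,\alpha}(f) \equiv \sum_{j=1}^{N} \chi_j\, \tilde E_j(f),
\]
where $\tilde E_j(f)$ is the restriction of $f$ on charts of type (a), the zero function on charts of type (b), and the local Stein extension on charts of type (c), each product $\chi_j\tilde E_j(f)$ being extended by zero outside $U_j$. This finite sum lies in $\Ca{k}(\dO)$ and coincides with $f$ on $\overline{\dpO}$ since $\sum_j\chi_j=1$ there.

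The main obstacle is more a technicality than a conceptual one: one has to check that each step preserves Schauder regularity with controlled constants. This amounts to the Stein reflection extension on a half-space for Hölder classes, the invariance of $\Ca{k}$ norms under composition with $\Ca{1}$ charts for $k\in\{0,1\}$, and the continuity of multiplication by $\Ca{1}$ cut-off functions, all of which are standard facts about Schauder spaces on $\Ca{1}$ manifolds (see, for instance, Gilbarg and Trudinger \cite{GiTr83}). Combining these yields the linearity and continuity of $E^{k,\alpha}$ and completes the proof.
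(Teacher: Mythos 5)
Your proposal is correct and follows essentially the same route the paper invokes: the paper simply cites Troianiello's Lemma 1.5 together with condition $(H_3)$, and Troianiello's argument is precisely the localization-by-charts, flattening of the codimension-one boundary $\Gamma$ inside $\dO$, local reflection extension in the flattened half-space, and gluing by a $\Ca{1}$ partition of unity that you describe. Your bookkeeping of the regularity (that $\Ca{1}$ charts and $\Ca{1}$ cut-offs preserve the $\Ca{k}$ classes for $k\in\{0,1\}$) is the right technical check, so the argument goes through.
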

 A proof can be effected by arguing as in Troianiello \cite[proof of Lem.~1.5, p.~16]{Tr87} and by exploiting condition \eqref{D+Omega}.
We observe that as a consequence of Lemma \ref{ext} we can identify $\Ca{0}_+(\dO)$ and $\Ca{0}(\overline{\dpO})$.

\section{Asymptotic behavior of $\ueps$ in dimension $n\geq3$}\label{n>=3}

In this section, we investigate the asymptotic behavior of the solution of problem \eqref{bvpe} as $\be \to \bf 0$. In the whole Section \ref{n>=3}, the dimension $n$ is assumed to be greater than or equal to $3$. Namely,
\[
n\in\mathbb{N}\setminus  \{0,1,2\}\, .
\]

 Our strategy is here to reformulate the problem as an equation $\fL[\be ,\bm]=0$ where $\fL$ is a real analytic function and to use the implicit function theorem.

\subsection{Defining the operator $\fL$}

 Let $\be \in ]\b0,\bea[$.
 We start from the Green-like representation formula  of Lemma \ref{i-green}. By applying it to  the solution $\ueps$ of \eqref{bvpe}, we can write:
 \[
 \begin{split}
 \ueps=&w_{G}^i[\dOeps,u_{\be|\dOeps}]-v_G^i[\dOeps,\bn_{\domeps}\cdot\nabla u_{\be|\dOeps}]\\
 =&w_{G}^i[\dO,\go]-w_{G}^e\Big[\partial\incleps,\gi\big(\frac{\cdot-\e1\p}{\e1\e2}\big)\Big]-v_G^i[\dO,\bn_{\dom}\cdot\nabla u_{\be|\dO}]+v_G^e[\partial \incleps,\bn_{\incleps}\cdot\nabla u_{\be|\partial \incleps}]\,.
 \end{split}
 \]
By adding and subtracting $v_{G}^i[\dO,\bn_{\dom}\cdot \nabla u_{0|\dO}]$ we get
 \begin{equation}\label{eq:uepsrep}
\begin{split}
 \ueps=&w_{G}^i[\dO,\go]-v_{G}^i[\dO,\bn_{\dom}\cdot \nabla u_{0|\dO}]-w_{G}^e\Big[\partial\incleps,\gi\big(\frac{\cdot-\e1\p}{\e1\e2}\big)\Big]\\
 &-v_G^i[\dO,\bn_{\dom}\cdot\nabla u_{\be|\dO}-\bn_{\dom}\cdot \nabla u_{0|\dO}]+v_G^e[\partial \incleps,\bn_{\incleps}\cdot\nabla u_{\be|\partial \incleps}]\, .
 \end{split}
 \end{equation}
 Then we note that
 \[
 \uzero=w_{G}^i[\dO,\go]-v_{G}^i[\dO,\bn_{\dom}\cdot \nabla u_{0|\dO}]
 \]
 and we think to the functions
 \[
 \bn_{\dom}\cdot\nabla u_{\be|\dO}-\bn_{\dom}\cdot \nabla u_{0|\dO}\, , \qquad \bn_{\incleps}\cdot\nabla u_{\be|\partial \incleps}
 \]
as to unknown densities which have to be determined in order to solve problem \eqref{bvpe}.  Accordingly, inspired by \eqref{eq:uepsrep} and by the rule of change of variables in integrals, we look for a solution of problem \eqref{bvpe} in the form
\begin{equation}\label{eq:rep:sol}
\begin{split}
\uzero (\px)-\e1 ^{n-1}\e2 ^{n-1}\int_{\dincl} \bn_{\incl}(\ps)&\cdot (\nabla_\py{\G})(\px,\e1 \p+\e1 \e2 \ps) \gi(\ps)\, d\sigma_\ps-\int_{\dpO}{\G}(\px,\py)\mu_1(\py)\, d\sigma_\py\\
& +\e1 ^{n-2}\e2 ^{n-2}\int_{\dincl}{\G}(\px,\e1 \p +\e1 \e2 \ps)\mu_2(\ps)\, d\sigma_\ps\, , \qquad \px \in \domeps,
\end{split}
\end{equation}
where the pair $(\mu_1,\mu_2)\in \Ca{0}_+(\dO)\times \Ca{0}(\dincl)$ has to be determined. {We set $\bm\equiv(\mu_{1},\mu_{2})$ and $\sB_1\equiv\Ca{0}_+(\dO)\times \Ca{0}(\dincl)$.}
Since the function in \eqref{eq:rep:sol} is harmonic in $\domeps$ for all $\bm\in\sB_{1}$,
we just need to choose $\bm\in\sB_1$  such that  the boundary conditions are satisfied. By the jump properties of the layer potentials derived by $G$, this is equivalent to ask that  $\bm\in\sB_1$ solves
\begin{equation}
\label{eq:int:N0}
\fL[\be ,\bm]=0,
\end{equation}
where   $\fL[\be ,\bm]\equiv (\fL_1[\be ,\bm],\fL_2[\be ,\bm])$ is defined  by
\[
\begin{split}
\fL_1[\be ,\bm](\px)&\equiv v_{{\G}}[\dO,\mu_1](\px)\\
&\quad  -\e1 ^{n-2}\e2 ^{n-2}\int_{\dincl}{\G}(\px,\e1 \p+\e1 \e2 \ps)\, \mu_2(\ps)\, d\sigma_\ps\\
&\quad +\e1 ^{n-1}\e2 ^{n-1}\int_{\dincl}\bn_\incl(\ps)\cdot (\nabla_\py{\G})(\px,\e1 \p+\e1 \e2 \ps)\, \gi(\ps)\, d\sigma_\ps\qquad\forall \px\in\dpO\,,\\
\fL_2[\be,\bm](\pt)&\equiv v_{S_n}[\dincl,\mu_2](\pt)-\e2 ^{n-2}\int_{\dincl} S_n(-2p_n\se_n+\e2 (\msigma(\pt)-\ps))\, \mu_2(\ps)\, d\sigma_\ps\\
&\quad -\int_{\dpO} {\G}(\e1 \p+\e1 \e2 \pt, \py)\, \mu_1(\py)\, d\sigma_\py\\
&\quad  -\e2 ^{n-1}\int_{\dincl}\bn_\incl(\ps)\cdot\nabla S_n(-2p_n \se_n+\e2 (\msigma(\pt)-\ps))\, \gi(\ps)\, d\sigma_\ps\\
&\quad +U_0  (\e1 \p+\e1 \e2 \pt)-w_{S_n}[\dincl,\gi](\pt)-\frac{\gi(\pt)}2\qquad \qquad\qquad\qquad\forall \pt\in\dincl\,,
\end{split}
\]
with $p_n$ as in \eqref{p} (note that $p_n>0$ by the membership of $\p$ in $\Rnp$) and
$U_0$ as in Proposition \ref{Ub}.

\subsection{Real analyticity of the operator $\fL$}

By the equivalence of the boundary value problem \eqref{bvpe} and the functional equation \eqref{eq:int:N0}, we can deduce results for the map $\be\mapsto\ueps$ by studying the dependence of $\bm$ upon $\be$ in \eqref{eq:int:N0}. To do so, we plan to apply the implicit function theorem for real analytic maps  and, as a first step, we wish to prove that the operator $\fL$ is real analytic.

\begin{prop}[Real analyticity of $\fL$]\label{prop.N}
The map
\[
\begin{array}{rcl}
 \bJea\times \sB_1& \to& \sV^{1,\alpha}(\dpO)\times \Ca{1}(\dincl)\\
 (\be,\bm)& \mapsto & \fL[\be,\bm]
 \end{array}
 \]
 is real analytic.
\end{prop}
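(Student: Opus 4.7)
The plan is to split $\fL=(\fL_1,\fL_2)$ into its summands and verify that each is real analytic with the correct target, addressing separately two issues: that $\fL_1$ really takes values in the ``artificial'' space $\sV^{1,\alpha}(\dpO)$, and then the more routine real analyticity of integral operators.

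First, to see that the image of $\fL_1$ actually lies in $\sV^{1,\alpha}(\dpO)$, I would fix $(\be,\bm)\in\bJea\times\sB_1$ and introduce the auxiliary function
\[
u^e(\px)\equiv-\e1^{n-2}\e2^{n-2}\int_{\dincl}{\G}(\px,\e1\p+\e1\e2\ps)\mu_2(\ps)\,d\sigma_\ps+\e1^{n-1}\e2^{n-1}\int_{\dincl}\bn_\incl(\ps)\cdot\nabla_\py{\G}(\px,\e1\p+\e1\e2\ps)\gi(\ps)\,d\sigma_\ps
\]
for $\px\in\overline{\Rnp\setminus\dom}$. Since $\e1\p+\e1\e2\overline{\incl}\subseteq\dom$, $u^e$ belongs to $\Ca{1}_{\mathrm{loc}}(\overline{\Rnp\setminus\dom})$ and is harmonic in $\Rnp\setminus\overline{\dom}$. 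Moreover $u^e$ vanishes on $\partial\Rnp$: indeed, for every $\px\in\partial\Rnp$ one has $\msigma(\px)=\px$, and therefore both ${\G}(\px,\py)$ and $\nabla_\py{\G}(\px,\py)$ vanish identically in $\py$. The decay conditions at infinity required by Proposition \ref{f=ue} follow from the estimates on ${\G}$ recalled in the appendix. Proposition \ref{f=ue} then yields $u^e_{|\dpO}\in\sV^{1,\alpha}(\dpO)$; adding the first summand $v_{\G}[\dO,\mu_1]_{|\dpO}$, which lies in $\sV^{1,\alpha}(\dpO)$ by definition, one concludes that $\fL_1[\be,\bm]\in\sV^{1,\alpha}(\dpO)$.

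For the real analyticity of $\fL_1$, I would exploit Proposition \ref{V}: the map $v_{\G}[\dO,\cdot]_{|\dpO}$ is a homeomorphism from $\Ca{0}_+(\dO)$ onto $\sV^{1,\alpha}(\dpO)$, so $\sV^{1,\alpha}(\dpO)$ sits as a closed subspace of $\Ca{1}(\dpO)$ with an equivalent norm, and it suffices to establish real analyticity as a map into $\Ca{1}(\dpO)$. The summand $v_{\G}[\dO,\mu_1]_{|\dpO}$ is continuous linear in $\mu_1$ and independent of $\be$, hence analytic. The remaining two summands are integrals over $\dincl$ whose kernels ${\G}(\px,\e1\p+\e1\e2\ps)$ and $\nabla_\py{\G}(\px,\e1\p+\e1\e2\ps)$ are jointly real analytic in $(\be,\px,\ps)$ in the admissible range, because for $\px\in\overline{\dpO}$, $\ps\in\overline{\dincl}$ and $\be$ small both $\px-(\e1\p+\e1\e2\ps)$ and $\msigma(\px)-(\e1\p+\e1\e2\ps)$ stay uniformly away from $0$. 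Standard arguments on real analyticity of integral operators with real analytic kernels then give analyticity of each of these terms as a map from $\bJea\times\sB_1$ into $\Ca{1}(\dpO)$.

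For $\fL_2$ I would argue along the same lines. The term $v_{S_n}[\dincl,\mu_2]$ is continuous linear in $\mu_2$. The two terms involving $S_n(-2p_n\se_n+\e2(\msigma(\pt)-\ps))$ and its gradient are real analytic in $\e2$ because, since $p_n>0$, the argument stays bounded away from $0$ for $\e2$ small. The term $\int_{\dpO}{\G}(\e1\p+\e1\e2\pt,\py)\mu_1(\py)\,d\sigma_\py$ is real analytic in $(\be,\mu_1)$ for the same reason as in $\fL_1$. Finally, the contribution of $U_0(\e1\p+\e1\e2\pt)$ is real analytic in $\be$ into $\Ca{1}(\dincl)$ thanks to Proposition \ref{Ub} and the admissibility condition \eqref{Ubassumption}, which place $\e1\p+\e1\e2\overline{\incl}$ inside the domain of real analyticity of the harmonic extension $U_0$; the remaining pieces $w_{S_n}[\dincl,\gi]$ and $\gi/2$ are independent of $\be$ and $\bm$. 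The main obstacle in the whole argument is the first step: one has to recognise that the specific combination defining $\fL_1$ belongs to the image of $v_{\G}[\dO,\cdot]$ on $\dpO$, which is precisely the rôle of Proposition \ref{f=ue} applied to the auxiliary function $u^e$; everything else is a bookkeeping exercise on real analyticity of integral operators.
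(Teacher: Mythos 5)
Your first step — showing that $\fL_1[\be,\bm]$ belongs to $\sV^{1,\alpha}(\dpO)$ via the auxiliary function $u^e$ and Proposition~\ref{f=ue} — matches the paper, and your treatment of $\fL_2$ is also essentially the paper's. But there is a genuine gap in your treatment of the real analyticity of $\fL_1$, which is precisely the step the paper flags as delicate.

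You claim that because $v_{\G}[\dO,\cdot]_{|\dpO}$ is a homeomorphism from $\Ca{0}_+(\dO)$ onto $\sV^{1,\alpha}(\dpO)$, the space $\sV^{1,\alpha}(\dpO)$ sits inside $\Ca{1}(\dpO)$ as a closed subspace with an equivalent norm, so that analyticity into $\Ca{1}(\dpO)$ suffices. This does not follow. Proposition~\ref{V} asserts a homeomorphism only with respect to the norm $\|\cdot\|_{\sV^{1,\alpha}(\dpO)}$, which is \emph{defined} via the preimage under $v_{\G}[\dO,\cdot]_{|\dpO}$; it is a tautological isometry, and it says nothing about the relation between the $\sV^{1,\alpha}$-norm and the $\Ca{1}(\dpO)$-norm. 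In fact the whole point of introducing the ``artificial'' Banach space $\sV^{1,\alpha}(\dpO)$ with its own norm is that the range of $v_{\G}[\dO,\cdot]_{|\dpO}$ (a single layer on a manifold $\dpO$ \emph{with boundary}) is not a closed subspace of $\Ca{1}(\dpO)$, and hence the two norms are not equivalent on $\sV^{1,\alpha}(\dpO)$; were they equivalent, the authors would simply have used $\Ca{1}(\dpO)$ as the target. Since the $\sV^{1,\alpha}$-norm is strictly stronger than the induced $\Ca{1}$-norm, a map that is real analytic into $\Ca{1}(\dpO)$ and happens to take values in $\sV^{1,\alpha}(\dpO)$ need not be real analytic into $\sV^{1,\alpha}(\dpO)$: the relevant power series must converge in the stronger norm, and that convergence is not inherited.

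What the paper actually does for the terms
$\mathfrak{f}[\be,\mu_2](\px)=\int_{\dincl}{\G}(\px,\e1\p+\e1\e2\ps)\mu_2(\ps)\,d\sigma_\ps$
(and likewise for the $\gi$-term) is exhibit a real analytic map $\phi:\bJea\times\Ca{0}(\dincl)\to\Ca{0}_+(\dO)$ with $\mathfrak{f}[\be,\mu_2]=v_{\G}[\dO,\phi[\be,\mu_2]]_{|\dpO}$. The map $\phi$ is built as the difference $\bn_\dom\cdot\nabla u^e[\be,\mu_2]_{|\dpO}-\bn_\dom\cdot\nabla u^i[\be,\mu_2]_{|\dpO}$, where $u^i$ solves the interior Dirichlet problem with datum $E^{1,\alpha}\circ\mathfrak{f}[\be,\mu_2]$ and $u^e$ is the Green-potential representation of $\mathfrak{f}$ on the exterior; both normal derivatives are shown to be real analytic in $(\be,\mu_2)$ using the extension Lemma~\ref{ext} and the properties of integral operators with real analytic kernels. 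Only then does real analyticity into $\sV^{1,\alpha}(\dpO)$ — with its own norm — follow, by composing $\phi$ with the isomorphism from Proposition~\ref{V}. This construction is the missing ingredient in your proposal.
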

\begin{proof}  We split the proof component by component.

\paragraph{Study of $\fL_{1}$} Here we prove that $\fL_{1}$ is real analytic from $\bJea\times \sB_1$ to $\sV^{1,\alpha}(\dpO)$.

\noindent\emph{First step: the range of $\fL_1$ is a subset of $\sV^{1,\alpha}(\dpO)$.}  Let  $U^e[\be ,\bm]$ denote the function from $\overline{\Rnp\setminus\dom}$ to $\mathbb{R}$ defined by
\[
\begin{split}
U^e[\be ,\bm](\px)&\equiv v^e_{{\G}}[\dO,\mu_1](\px)\\
&\quad  - \e1 ^{n-2}\e2 ^{n-2}\int_{\dincl}{\G}(\px,\e1 \p+\e1 \e2 \ps)\, \mu_2(\ps)\, d\sigma_\ps\\
&\quad +\e1 ^{n-1}\e2 ^{n-1}\int_{\dincl}\bn_\incl(\ps)\cdot(\nabla_\py {\G})(\px,\e1 \p+\e1 \e2 \ps)\, \gi(\ps)\, d\sigma_\ps\,.\\
\end{split}
\]
Then, by the properties of the (Green) single layer potential and by the properties of integral operators with real analytic kernel and no singularity one verifies that $U^e[\be,{\bm}]\in \Ca{1}_{\mathrm{loc}}(\overline{\Rnp\setminus\dom})$. In addition, one has
$$
\left\{
\begin{array}{ll}
 \Delta U^e[\be,\bm]=0& \text{in }\Rnp\setminus\overline{\dom} ,\\
 U^e[\be,\bm]=0& \text{on }{\partial\Rnp\setminus\d0O}, \\
 \lim_{\px\to\infty}U^e[\be,\bm](\px)=0,\\
\lim_{\px\to\infty}\frac{\px}{|\px|}\cdot\nabla U^e[\be,\bm](\px)=0
 \end{array}
 \right.$$
Thus $U^e[\be,\bm]$ satisfies the conditions of Proposition \ref{f=ue}. Accordingly, we conclude that
$\fL_1[\be,\bm]=U^e[\be,\bm]_{|\dpO}$ belongs to $\sV^{1,\alpha}(\dpO)$.

\medskip
\noindent\emph{Second step: $\fL_1$ is real analytic.}  We decompose $\fL_{1}$ and study each part separately.
\begin{itemize}
\item By the definition of $\sV^{1,\alpha}(\dpO)$ in Proposition \ref{V}, one readily verifies that the map $\mu_1 \mapsto v_{{\G}}[\dO,\mu_1]_{|\dpO}$ is linear and continuous from $\Ca{0}_+(\dO)$ to $\sV^{1,\alpha}(\dpO)$ and therefore real analytic.
\item We now consider the map which takes $(\be ,\mu_2)$ to the function $\mathfrak{f}[\be,\mu_2](\px)$ of $\px\in\overline{\dpO}$ defined by
\[
\mathfrak{f}[\be,\mu_2](\px)\equiv\int_{\dincl}{\G}(\px,\e1 \p+\e1 \e2 \ps)\, \mu_2(\ps)\, d\sigma_\ps\qquad\forall \px\in\overline{\dpO}.
\]
We wish to prove that $\mathfrak{f}$ is real analytic from $\bJea\times \Ca{0}(\dO)$ to $\sV^{1,\alpha}(\dpO)$ by showing that there is a real analytic function
\[
\phi:\bJea\times \Ca{0}(\dO) \to \Ca{0}_+(\dO)
\]
such that
\begin{equation}\label{N1.eq02}
\mathfrak{f}[\be,\mu_2]=v_{{\G}}[\dO,\phi[\be,\mu_2]]_{|\dpO}
\end{equation}
for all $(\be,\mu_2)\in\bJea\times \Ca{0}(\dO)$. Then the real analyticity of  $\mathfrak{f}$ follows by the definition of $\sV^{1,\alpha}(\dpO)$ in Proposition \ref{V}.

We will obtain $\phi$ as the sum of two real analytic terms. To find the first one we observe that, by the properties of integral operators with real analytic kernel and no singularity, the map $(\be,\mu_2)\mapsto \mathfrak{f}[\be,\mu_2]$ is real analytic from $\bJea\times \Ca{0}(\dincl)$ to $\Ca{1}(\overline{\dpO})$ (see Lanza de Cristoforis and Musolino \cite[Prop.~4.1 (ii)]{LaMu13}). Then, by the extension Lemma \ref{ext}, we deduce that the composed map
\[
\begin{array}{rcl}
\bJea\times \Ca{0}(\dO)& \to &\Ca{1}(\dO)\\
(\be,\mu_2) &\mapsto& E^{1,\alpha}\circ \mathfrak{f}[\be,\mu_2]\end{array}
\]
is real analytic.
Let now $u^i[\be,\mu_2]$ denote the unique solution of the Dirichlet problem for the Laplace equation in $\dom$ with boundary datum $E^{1,\alpha}\circ \mathfrak{f}[\be,\mu_2]$. As is well-known, the map from $\Ca{1}(\dO)$ to $\Ca{1}(\overline{\dom})$ which takes a function $\psi$ to the unique solution of the Dirichlet problem for the Laplace equation in $\dom$ with boundary datum $\psi$ is linear and continuous. It follows that the map from $\bJea\times \Ca{0}(\dincl)$ to $\Ca{1}(\overline{\dom})$ which takes $(\be,\mu_2)$ to $u^i[\be,\mu_2]$ is real analytic. Thus the map
\begin{equation}\label{N1.eq1}
\begin{array}{rcl}
\bJea\times \Ca{0}(\dincl)& \to &\Ca{0}_+(\dO)\\
(\be,\mu_2)& \mapsto &\bn_\dom\cdot\nabla u^i[\be,\mu_2]_{|\dO}
\end{array}
\end{equation}
is real analytic.

The function in \eqref{N1.eq1} is the first term in the sum that gives $\phi$. To obtain the second term we define
\[
u^e[\be ,\mu_2](\px)\equiv \int_{\dincl}{\G}(\px,\e1 \p+\e1 \e2 \ps)\, \mu_2(\ps)\, d\sigma_\ps\,,\qquad\forall \px\in\overline{\Rnp\setminus\dom}\,.
\]
Then, by standard properties of integral operators with real analytic kernels and no singularity one verifies that the map from $\bJea\times \Ca{0}(\dincl)$ to $\Ca{0}(\overline{\dpO})$ which takes $(\be,\mu_2)$ to
\[
\bn_\dom\cdot\nabla u^e[\be,\mu_2](\px)=\bn_\dom(\px)\cdot\int_{\dincl}\nabla_\px {\G}(\px,\e1 \p+\e1 \e2 \ps)\, \mu_2(\ps)\, d\sigma_\ps\qquad\forall \px\in\overline{\dpO}
\]
 is real analytic (see Lanza de Cristoforis and Musolino \cite[Prop.~4.1 (ii)]{LaMu13}). Thus, by the extension Lemma \ref{ext}, we can show that the map
\begin{equation}\label{N1.eq2}
\begin{array}{rcl}
\bJea\times \Ca{0}(\dincl)& \to &\Ca{0}_+(\dO)\\
(\be,\mu_2)& \mapsto &\bn_\dom\cdot\nabla u^e[\be,\mu_2]_{|\dO}
\end{array}
\end{equation}
is real analytic.

We are now ready to show that  $\phi$  is given by the difference of the function in \eqref{N1.eq2} and the one in \eqref{N1.eq1}. To do so, we begin by observing that $u^i[\be,\mu_2]_{|\dpO}=\mathfrak{f}[\be,\mu_2]$. Then, by Lemma \ref{i-green} we have
\begin{equation}\label{representation.eq13D}
0=w_{{\G}}[\dpO,\mathfrak{f}[\be ,\mu_2]](\px)-v_{{\G}}[\dpO,\bn_{\dom}\cdot\nabla u^i[\be ,\mu_2]_{|\dpO}](\px)
\end{equation}
for all $ \px\in\R^n_+\setminus\overline{\dom}$. Moreover,  the function  $u^e[\be,\mu_2]$ belongs to  $\Ca{1}_{\mathrm{loc}}(\overline{\mathbb{R}^n_+\setminus\dom})$ and one verifies that
\begin{equation}\label{L1n=3.eq3}
\left\{
\begin{array}{ll}
\Delta u^e[\be,\mu_2]=0& \text{ in } \mathbb{R}^n_{+}\setminus\overline{\dom} ,\\
u^e[\be,\mu_2](\px)=0 &\text{ on }\partial\mathbb{R}^n_{+}\setminus\d0O, \\
\lim_{\px\to\infty}u^e[\be,\mu_2](\px)=0\,,&\\
\lim_{\px \to \infty} \frac{\px}{|\px|}\cdot \nabla u^{e}[\be,\mu_2](\px)=0\,&
 \end{array}
 \right.
 \end{equation}
 (see Lemma \ref{vGatinfty}). In addition, by the definitions of $\mathfrak{f}[\be,\mu_2]$ and  $u^e[\be,\mu_2]$ one sees that 
 \begin{equation}\label{L1n=3.eq4}
 u^e[\be,\mu_2]_{|\overline{\dpO}}=\mathfrak{f}[\be,\mu_2]\,.
 \end{equation}
 Then by  \eqref{L1n=3.eq3} and by  Lemma \ref{e-green}  we deduce that
 \begin{equation}\label{representation.eq23D}
u^e[\be,\mu_2](\px)=-w_{{\G}}[\dpO,\mathfrak{f}[\be,\mu_2]](\px)+v_{{\G}}[\dpO,\bn_{\dom}\cdot\nabla u^e[\be,\mu_2]_{|\dpO}](\px)
\end{equation}
for all $ \px\in\R^n_+\setminus\overline{\dom}$.  Now, taking the sum of \eqref{representation.eq13D} with \eqref{representation.eq23D} we obtain
\[
u^e[\be,\mu_2](\px)=v_{{\G}}[\dpO,\bn_{\dom}\cdot\nabla u^e[\be,\mu_2]_{|\dpO}](\px)-v_{{\G}}[\dpO,\bn_{\dom}\cdot\nabla u^i[\be ,\mu_2]_{|\dpO}](\px)
\]
for all $ \px\in\R^n_+\setminus\overline{\dom}$.  Then, by \eqref{L1n=3.eq4} and 
by the continuity properties of the (Green) single layer potential in $\Rn$ we get 
\[
\mathfrak{f}[\be,\mu_2](\px)=v_{{\G}}[\dpO,\bn_{\dom}\cdot\nabla u^e[\be,\mu_2]_{|\dpO}](\px)-v_{{\G}}[\dpO,\bn_{\dom}\cdot\nabla u^i[\be ,\mu_2]_{|\dpO}](\px)
\]
 for all $ \px\in\overline{\dpO}$. Hence, \eqref{N1.eq02} holds with
\[
\phi[\be,\mu_2]=\bn_\dom\cdot\nabla u^e[\be,\mu_2]_{|\dpO}-\bn_\dom\cdot\nabla u^i[\be,\mu_2]_{|\dpO}\,.
\]
To show that ${{\fL}_1}$ is real analytic it remains to observe that, since the maps in \eqref{N1.eq1} and \eqref{N1.eq2} are both real analytic, $\phi$ is real analytic from $\bJea\times \Ca{0}(\dO)$ to $\Ca{0}_+(\dO)$.

\item Finally, we have to consider the function which takes $\be$ to the function $\mathfrak{g}[\be]$ defined on $\overline{\dpO}$ by
\[
\mathfrak{g}[\be](\px)\equiv\int_{\dincl}\bn_\incl(\ps)\cdot(\nabla_\py {\G})(\px,\e1 \p+\e1 \e2 \ps)\, \gi(\ps)\, d\sigma_\ps\qquad\forall \px\in\overline{\dpO}\,.
\]
By arguing as we have done above for $\mathfrak{f}[\be,\mu_2]$, we can verify that the map $\be\mapsto \mathfrak{g}[\be]$ is real analytic from $\bJea$ to $\sV^{1,\alpha}(\dpO)$ .
\end{itemize}
This proves the analyticity of $\fL_{1}$.
\paragraph{Study of $\fL_{2}$} The analyticity of $\fL_{2}$ from $\bJea\times \sB_1$ to $\Ca{1}(\dincl)$
is a consequence of: 
 \begin{itemize}
 \item The real analyticity of $U_0$  (see also assumption \eqref{Ubassumption});
 \item The mapping properties of the single layer potential (see Lanza de Cristoforis and Rossi \cite[Thm.~3.1]{LaRo04} and Miranda \cite{Mi65}) and of the integral operators with real analytic kernels and no singularity (see Lanza de Cristoforis and Musolino \cite[Prop.~4.1 (ii)]{LaMu13}).
 \end{itemize}
\end{proof}

\vspace{\baselineskip}

\subsection{Functional analytic representation theorems}

To investigate problem \eqref{bvpe} for  $\be$ close to $\bf 0$, we consider in the following Proposition \ref{00} the equation in \eqref{eq:int:N0} for $\be=\bf 0$.

\begin{prop}\label{00}
There exists a unique pair of functions $\bm^*\equiv(\mu^*_{1},\mu^*_{2})\in \sB_1$ such that
\[
\fL[{\bf 0},{\bm^*}]=0,
\]
and we have
\[
\mu^*_{1}=0\qquad\mbox{ and }\qquad v_{S_n}[\dincl,\mu^*_{2}]_{|\dincl}=-\go(0) + w_{S_n}[\dincl,\gi]_{|\dincl} + \frac{\gi}2\,.
\]
\end{prop}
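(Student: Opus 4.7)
The plan is to compute $\fL[\mathbf{0},\bm]$ explicitly, observe that the coupling between $\mu_1$ and $\mu_2$ disappears at $\be=\b0$, and then invert two independent single layer potential operators by Proposition~\ref{V} and Lemma~\ref{viso}.

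First I would substitute $\be=\b0$ into the definitions of $\fL_1$ and $\fL_2$. In $\fL_1$, the second and third integrals carry the prefactors $\e1^{n-2}\e2^{n-2}$ and $\e1^{n-1}\e2^{n-1}$, respectively; since $n\geq 3$ and $\px$ ranges over $\overline{\dpO}$ (which is bounded away from $0$, so that $G(\px,\e1\p+\e1\e2\ps)$ stays bounded), both terms vanish identically at $\be=\b0$. Hence the first equation $\fL_1[\b0,\bm]=0$ reduces to
\[
v_{{\G}}[\dO,\mu_1]_{|\dpO}=0 \quad\text{in } \sV^{1,\alpha}(\dpO).
\]
By Proposition~\ref{one-to-one} (or equivalently, by Proposition~\ref{V}(ii) stating that $v_{{\G}}[\dO,\cdot]_{|\dpO}$ is a homeomorphism onto $\sV^{1,\alpha}(\dpO)$), this forces $\mu_1=0$ in $\Ca{0}_+(\dO)$.

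Next I would evaluate $\fL_2$ at $\be=\b0$. The term involving $G(\e1\p+\e1\e2\pt,\py)$ vanishes because $G(0,\py)=0$ for $\py\in\dpO\subseteq\Rnp$ by property~\eqref{Gzero} applied with $\px=0\in\partial\Rnp$. The integrals on $\dincl$ carrying prefactors $\e2^{n-2}$ and $\e2^{n-1}$ vanish at $\be=\b0$ since $n\geq 3$ and the integrands remain bounded (the kernel $S_n(-2p_n\se_n+\e2(\msigma(\pt)-\ps))$ is smooth near $\e2=0$ because $p_n>0$). Finally $U_0(\e1\p+\e1\e2\pt)\to U_0(0)=u_0(0)=\go(0)$ by Proposition~\ref{Ub} and the assumption on $\go$. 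Therefore $\fL_2[\b0,\bm]=0$ becomes
\[
v_{S_n}[\dincl,\mu_2]_{|\dincl}=-\go(0)+w_{S_n}[\dincl,\gi]_{|\dincl}+\frac{\gi}{2}.
\]
The right-hand side belongs to $\Ca{1}(\dincl)$ by Proposition~\ref{clreg} applied to $w_{S_n}[\dincl,\gi]$ and by the membership $\gi\in\Ca{1}(\dincl)$. Since $n\geq 3$, Lemma~\ref{viso} asserts that $v_{S_n}[\dincl,\cdot]_{|\dincl}:\Ca{0}(\dincl)\to\Ca{1}(\dincl)$ is an isomorphism, so this equation has a unique solution $\mu_2^*\in\Ca{0}(\dincl)$.

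The only point that requires attention is verifying that the vanishing terms really vanish: one must check that the integrals multiplied by $\e1^{n-2}\e2^{n-2}$, $\e1^{n-1}\e2^{n-1}$, $\e2^{n-2}$, and $\e2^{n-1}$ remain uniformly bounded near $\be=\b0$. This is routine: on $\overline{\dpO}$ the kernel $G(\px,\e1\p+\e1\e2\ps)$ and its $\py$-gradient have no singularity (as $0\notin\overline{\dpO}$), and the reflected kernel $S_n(-2p_n\se_n+\e2(\msigma(\pt)-\ps))$ is real analytic in a neighborhood of $\e2=0$ since $-2p_n\se_n\neq 0$. Putting together the two independent inversions yields existence and uniqueness of $\bm^*=(0,\mu_2^*)$, as claimed.
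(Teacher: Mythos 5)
Your proof is correct and follows essentially the same route as the paper's: evaluate $\fL$ at $\be=\b0$ (the paper records the resulting expressions directly, while you spell out why the terms carrying positive powers of $\e1,\e2$ disappear for $n\geq 3$ and why $G(0,\cdot)=0$ and $U_0(0)=\go(0)$), then invert the two decoupled single layer operators via Proposition~\ref{one-to-one}/Proposition~\ref{V}(ii) and Lemma~\ref{viso}. No gaps.
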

\begin{proof}  First of all, we observe that for all $\bm\in\sB_1$,
we have
\[\begin{cases}
\fL_1[{\bf 0},\bm](\px)= v_{{\G}}[\dO,\mu_1](\px),&\forall \px\in\dpO\,,\\
\fL_2[{\bf 0},\bm](\pt)= v_{S_n}[\dincl,\mu_2](\pt)  +  \go (0)-w_{S_n}[\dincl,\gi](\pt) - \frac{\gi(\pt)}2 ,&\forall \pt\in\dincl\,.
\end{cases}
\]
By Proposition \ref{one-to-one}, the unique function in $\Ca{0}_+(\dO)$ such that $v_{{\G}}[\dO,\mu_1]=0$ on $\dpO$ is $\mu_1=0$. On the other hand, by classical potential theory {and Lemma \ref{viso}}, there exists a unique function $\mu_2 \in \Ca{0}(\dincl)$ such that
\[
v_{S_n}[\dincl,\mu_2](\pt)=-\go(0)+w_{S_n}[\dincl,\gi](\pt)+\frac{\gi(\pt)}2 \qquad\forall \pt\in\dincl.
\]
The validity of the proposition is proved.
\end{proof}

\vspace{\baselineskip}

\medskip

We are now ready to study the dependence of the solution of \eqref{eq:int:N0} upon $\be$. Indeed, by exploiting the implicit function theorem for real analytic maps (see Deimling \cite[Thm.~15.3]{De85}) one proves the following.

\begin{thm}\label{thm:an}
There exist $0<\be^*<\bea$, an open neighborhood $\cU _*$ of ${\bm^*}\in\sB_1$ and a real analytic map ${\mathrm{M}\equiv({\mathrm{M}}_1,{\mathrm{M}}_2)}$ from $\bJe*$ to $\cU_*$ such that the set of zeros of $\fL$ in $\bJe*\times\cU_*$ coincides with the graph of ${\mathrm{M}}$.
\end{thm}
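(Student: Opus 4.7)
The plan is to obtain Theorem \ref{thm:an} as a direct application of the real analytic implicit function theorem (as stated in Deimling \cite[Thm.~15.3]{De85}) at the point $(\b0,\bm^*)$. Two of the three hypotheses are already in place: Proposition \ref{prop.N} furnishes the real analyticity of $\fL$ from $\bJea\times \sB_1$ into $\sV^{1,\alpha}(\dpO)\times \Ca{1}(\dincl)$, while Proposition \ref{00} provides the required base point $\fL[\b0,\bm^*]=0$. Accordingly, the only thing left to verify is that the partial Fréchet differential $\partial_{\bm}\fL[\b0,\bm^*]$ is a linear homeomorphism from $\sB_1$ onto $\sV^{1,\alpha}(\dpO)\times \Ca{1}(\dincl)$.

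The first step is to compute this differential explicitly. At $\be=\b0$ the coefficients $\e1^{n-2}\e2^{n-2}$ and $\e1^{n-1}\e2^{n-1}$ appearing in $\fL_1$ vanish because $n\ge 3$, so $\fL_1[\b0,\bm](\px)=v_{\G}[\dO,\mu_1](\px)$ and the partial differential with respect to $\mu_2$ is zero. For $\fL_2$, the term $\int_{\dpO}\G(0,\py)\mu_1(\py)\,d\sigma_\py$ vanishes identically since $\G(0,\cdot)=0$ by \eqref{Gzero} (as $0\in\partial\Rnp$), so the partial differential with respect to $\mu_1$ is also zero. Consequently the differential acts as the block-diagonal operator
\[
\partial_{\bm}\fL[\b0,\bm^*](\nu_1,\nu_2)=\bigl(v_{\G}[\dO,\nu_1]_{|\dpO},\, v_{S_n}[\dincl,\nu_2]\bigr),\qquad (\nu_1,\nu_2)\in\sB_1.
\]

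The second step is to recognize that each diagonal block is already known to be an isomorphism. The first component is a homeomorphism from $\Ca{0}_+(\dO)$ onto $\sV^{1,\alpha}(\dpO)$ by Proposition \ref{V}(ii). The second component is a homeomorphism from $\Ca{0}(\dincl)$ onto $\Ca{1}(\dincl)$ by the second half of Lemma \ref{viso}, whose hypothesis $n\ge 3$ is satisfied here. A bijective bounded linear map between Banach spaces is a homeomorphism by the open mapping theorem, so the direct sum of the two blocks is a homeomorphism from $\sB_1$ onto $\sV^{1,\alpha}(\dpO)\times \Ca{1}(\dincl)$.

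With all three hypotheses verified, the analytic implicit function theorem yields positive $\be^*<\bea$, an open neighborhood $\cU_*$ of $\bm^*$ in $\sB_1$, and a real analytic map $\mathrm{M}=(\mathrm{M}_1,\mathrm{M}_2):\bJe*\to\cU_*$ whose graph coincides with the zero set of $\fL$ in $\bJe*\times\cU_*$, which is exactly the statement of the theorem. I do not anticipate a substantive obstacle: once the differential is correctly identified as block-diagonal, the isomorphism claim is a direct citation of earlier results, and the remaining care is purely bookkeeping in checking that Proposition \ref{V}(ii) maps into the target space $\sV^{1,\alpha}(\dpO)$ appearing in the codomain of $\fL_1$.
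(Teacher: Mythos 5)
Your proposal follows the paper's own proof step for step: identify the block-diagonal partial differential $\partial_{\bm}\fL[\b0,\bm^*]=\bigl(v_{\G}[\dO,\cdot]_{|\dpO},\,v_{S_n}[\dincl,\cdot]_{|\dincl}\bigr)$, invoke Proposition \ref{V}(ii) and Lemma \ref{viso} to see it is an isomorphism onto $\sV^{1,\alpha}(\dpO)\times\Ca{1}(\dincl)$, and conclude by the analytic implicit function theorem together with Propositions \ref{prop.N} and \ref{00}. The only (welcome) difference is that you spell out why the off-diagonal blocks vanish at $\be=\b0$ (the coefficients $\e1^{n-2}\e2^{n-2}$, $\e1^{n-1}\e2^{n-1}$ vanish for $n\ge3$, and $\G(0,\cdot)=0$ on $\dpO$ by \eqref{Gzero}), which the paper leaves implicit.
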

\begin{proof} 
The partial differential of $\fL$ with respect to $\bm$ evaluated at $({\bf 0},{\bm^*})$ is delivered by
\[
\begin{split}
&\partial_{\bm}\fL_1[{\bf 0},{\bm^*}](\bar\bm)=v_{{\G}}[\dO,\bar\mu_1]_{|\dpO}\,,\\
&\partial_{\bm}\fL_2[{\bf 0},{\bm^*}](\bar\bm)=v_{S_n}[\dincl,\bar\mu_2]_{|\dincl}\,,
\end{split}
\]
for all $\bar\bm\in\sB_1$.
Then by  Proposition \ref{V} and by the properties of the (classical) single layer potential (cf.~Lemma \ref{viso}) we deduce that $\partial_{\bm}\fL[{\bf 0},{\bm^*}]$ is an isomorphism from $\sB_1$ to $\sV^{1,\alpha}(\dpO)\times \Ca{1}(\dincl)$. Then the theorem follows by the implicit function theorem (see \cite[Thm.~15.3]{De85}) and by Proposition \ref{prop.N}.
\end{proof}

\vspace{\baselineskip}

\subsubsection{Macroscopic behavior}

In the following remark, we exploit the maps ${\mathrm{M}}_1$ and ${\mathrm{M}}_2$ of Theorem \ref{thm:an}  in the representation of the solution $\ueps$.

\begin{rmk}[Representation formula in the macroscopic variable]\label{e1e2}
Let the assumptions of Theorem \ref{thm:an} hold. Then
\[
\begin{split}
\ueps (\px)=&\uzero(\px)-\e1 ^{n-1}\e2 ^{n-1}\int_{\dincl} \bn_{\incl}(\ps)\cdot (\nabla_\py{\G})(\px,\e1 \p+\e1 \e2 \ps) \gi(\ps)\, d\sigma_\ps\\
&-\int_{\dpO}{\G}(\px,\py){{\mathrm{M}}}_1[\be](\py)\, d\sigma_\py\\
& + \e1 ^{n-2}\e2 ^{n-2}\int_{\dincl}{\G}(\px,\e1 \p +\e1 \e2 \ps){{\mathrm{M}}}_2[\be](\ps)\, d\sigma_\ps\, \qquad \forall \px \in \domeps,
\end{split}
\]
for all $\be \in \bIe*$.
\end{rmk}

As a consequence of Remark \ref{e1e2} one can prove that for all fixed $\px\in\dom$ the function $\ueps (\px)$ can be written in terms of a convergent power series of $\be$ for $\e1 $ and $\e2 $ positive and small. If $\dom'$ is an open subset of $\dom$ such that $0\notin\overline{\dom'}$, then a similar result holds for the restriction ${\ueps}_{|\overline{\dom'}}$, which describes the `macroscopic' behavior of $\ueps$ far from the hole. Namely, we are now ready to prove Theorem \ref{Ue1e2}.

\medskip

\begin{proofof}{Theorem \ref{Ue1e2}} 
Let $\be^*$ be as in Theorem \ref{thm:an}. We take $\be'\in\bIe{*}$ small enough so that $\overline{\incleps }\cap\overline{\dom'}=\emptyset$ for all $\be\in \bJep$. Then we define
\[
\begin{split}
{\fU}_{\dom'}[\be](\px)&\equiv \uzero (\px)-\e1 ^{n-1}\e2 ^{n-1}\int_{\dincl} \bn_{\incl}(\ps)\cdot (\nabla_\py{\G})(\px,\e1 \p+\e1 \e2 \ps) \gi(\ps)\, d\sigma_\ps\\
&\quad -\int_{\dpO}{\G}(\px,\py){{\mathrm{M}}}_1[\be](\py)\, d\sigma_\py\\
&\quad  + \e1 ^{n-2}\e2 ^{n-2}\int_{\dincl}{\G}(\px,\e1 \p +\e1 \e2 \ps){{\mathrm{M}}}_2[\be](\ps)\, d\sigma_\ps,
\end{split}
\]
for all $\px\in\overline{\dom'}$ and for all $\be\in\bJep$. Then, by Theorem \ref{thm:an} and by a standard argument   (see the study of $\fL_{2}$ in the proof of Proposition \ref{prop.N}) one deduces that ${\fU}_{\dom'}$ is real analytic from $\bJep$ to $\Ca{1}(\overline{\dom'})$. The validity of \eqref{Ue1e2.eq1} follows by Remark \ref{e1e2} and the validity of \eqref{Ue1e2.eq2} can be deduced by Proposition \ref{00}, by Theorem \ref{thm:an}, and by a straightforward computation.
\end{proofof}

\vspace{\baselineskip}

\subsubsection{Microscopic behavior}

By Remark \ref{e1e2} and by the rule of change of variable in integrals we obtain here below a representation of the solution $\ueps$ in proximity of the perforation.

\begin{rmk}[Representation formula in the microscopic variable]\label{micro_e1e2}
Let the assumptions of Theorem \ref{thm:an} hold. Then
\[
\begin{split}
\ueps (\e1\p+\e1\e2 \pt)&=\uzero(\e1\p+\e1\e2 \pt)-w^e_{S_n}[\dincl,\gi](\pt)\\
&\quad -\e2 ^{n-1}\int_{\dincl}\bn_\incl(\ps)\cdot\nabla S_n(-2p_n \se_n+\e2 (\msigma(\pt)-\ps))\, \gi(\ps)\, d\sigma_\ps\\
&\quad  - \int_{\dpO} {\G}(\e1 \p+\e1 \e2 \pt, \py)\, {{\mathrm{M}}}_1[\be](\py)\, d\sigma_\py\\
&\quad +v_{S_n}[\dincl,{{\mathrm{M}}}_2[\be]](\pt)-\e2 ^{n-2}\int_{\dincl} S_n(-2p_n\se_n+\e2 (\msigma(\pt)-\ps))\, {{\mathrm{M}}}_2[\be](\ps)\, d\sigma_\ps
\end{split}
\]
for all $\pt\in\mathbb{R}^n\setminus\omega$ and all $\be=(\e1,\e2) \in \bIe*$ such that $\e1\p+\e1\e2 \pt\in \overline\domeps$.
\end{rmk}

Then we can prove the following theorem, where we characterize the `microscopic' behavior of $\ueps$ close to hole, {\it i.e.}  $\ueps(\e1\p+\e1\e2\,\cdot\,)$ as $\be \to 0$.

\begin{thm}\label{Ve1e2}
Let the assumptions of Theorem \ref{thm:an} hold. Let $\incl'$ be an open bounded subset of $\mathbb{R}^n\setminus\overline\incl$. Let $\be''$ be such that $0<\be''<\be^*$ and $(\e1\p+\e1\e2\overline{\incl'})\subseteq\Bn{r_1}$ for all $\be\in  \bJes$.
Then there exists  a real analytic map $\fV_{\incl'}$ from $\bJes$ to $\Ca{1}(\overline{\incl'})$ such that
\begin{equation}\label{Ve1e2.eq1}
{\ueps(\e1\p+\e1\e2\,\cdot\,)}_{ |\overline{\incl'}}={\fV}_{\incl'}[\be]\qquad\forall \be\in\bIes\,.
\end{equation}
Moreover we have
\begin{equation}\label{Ve1e2.eq2}
{\fV}_{\incl'}[\mathbf{0}]=v_{0|\overline{\incl'}}
\end{equation}
where $v_0\in \Ca{1}_{\mathrm{loc}}(\mathbb{R}^n\setminus\incl)$ is the unique solution of
\[
\left\{
\begin{array}{ll}
\Delta v_0=0&\text{in }\mathbb{R}^n\setminus\incl\,,\\
v_0=\gi&\text{on }\dincl\\
\lim_{\pt\to\infty}v_0(\pt)=\go(0)\,.&
\end{array}
\right.
\]
\end{thm}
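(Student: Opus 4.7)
\begin{proofof}{Theorem \ref{Ve1e2} (sketch of the plan)}
The plan is to define $\fV_{\incl'}[\be]$ as the expression on the right-hand side of the representation formula from Remark \ref{micro_e1e2}, restricted to $\pt\in\overline{\incl'}$, and then check term-by-term that the resulting map is real analytic from $\bJes$ to $\Ca{1}(\overline{\incl'})$. Concretely, we set
\[
\begin{split}
\fV_{\incl'}[\be](\pt)&\equiv U_0(\e1\p+\e1\e2\pt)-w_{S_n}^e[\dincl,\gi](\pt)\\
&\quad-\e2^{n-1}\int_{\dincl}\bn_\incl(\ps)\cdot\nabla S_n\bigl(-2p_n\se_n+\e2(\msigma(\pt)-\ps)\bigr)\,\gi(\ps)\,d\sigma_\ps\\
&\quad-\int_{\dpO}{\G}(\e1\p+\e1\e2\pt,\py)\,\mathrm{M}_1[\be](\py)\,d\sigma_\py+v_{S_n}[\dincl,\mathrm{M}_2[\be]](\pt)\\
&\quad-\e2^{n-2}\int_{\dincl}S_n\bigl(-2p_n\se_n+\e2(\msigma(\pt)-\ps)\bigr)\,\mathrm{M}_2[\be](\ps)\,d\sigma_\ps,
\end{split}
\]
for $\pt\in\overline{\incl'}$, where I use the harmonic extension $U_0$ of $u_0$ furnished by Proposition \ref{Ub}. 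By shrinking $\be''$ if needed, $\e1\p+\e1\e2\overline{\incl'}\subseteq\Bn{r_1}$ for $\be\in\bJes$, so $U_0(\e1\p+\e1\e2\pt)$ is well defined, and Remark \ref{micro_e1e2} gives the equality \eqref{Ve1e2.eq1}.

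The real analyticity of $\fV_{\incl'}$ will then follow term by term. The first summand is real analytic in $\be$ as a $\Ca{1}(\overline{\incl'})$-valued function by the analyticity of $U_0$. The second summand is constant in $\be$. The third and fifth summands involve kernels evaluated at arguments of the form $-2p_n\se_n+\e2(\msigma(\pt)-\ps)$, which stay in a compact set away from the origin for $\pt\in\overline{\incl'}$, $\ps\in\dincl$ and $\e2$ small (recall $p_n>0$); accordingly, one can invoke the mapping properties of integral operators with real-analytic, non-singular kernels (cf.\ the Lanza--Musolino result \cite[Prop.~4.1(ii)]{LaMu13} already used in the proof of Proposition \ref{prop.N}). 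The fourth summand has the same flavour, since $\py\in\dpO$ is bounded away from $0$ while $\e1\p+\e1\e2\pt$ is close to $0$, so $\G(\e1\p+\e1\e2\pt,\py)$ is jointly real analytic in $(\be,\pt)$; combined with the analyticity of $\mathrm{M}_1$ from Theorem \ref{thm:an}, this yields a real-analytic map into $\Ca{1}(\overline{\incl'})$. The sixth summand is real analytic thanks to the continuity of the classical single layer potential $v_{S_n}[\dincl,\cdot]$ from $\Ca{0}(\dincl)$ to $\Ca{1}(\mathbb{R}^n)$ and the real analyticity of $\mathrm{M}_2$.

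It remains to identify $\fV_{\incl'}[\b0]$ with $v_{0|\overline{\incl'}}$ and thus establish \eqref{Ve1e2.eq2}. Substituting $\be=\b0$ and using $n\ge 3$, the two integrals carrying factors $\e2^{n-1}$ and $\e2^{n-2}$ vanish. The $\G$-integral also vanishes, because $\G(0,\py)=S_n(-\py)-S_n(\msigma(0)-\py)=0$ for all $\py\in\dpO$. Since $U_0(0)=u_0(0)=\go(0)$ and $\mathrm{M}_2[\b0]=\mu_2^*$ by Theorem \ref{thm:an}, we are left with
\[
\fV_{\incl'}[\b0](\pt)=\go(0)-w_{S_n}^e[\dincl,\gi](\pt)+v_{S_n}[\dincl,\mu_2^*](\pt),\qquad\pt\in\overline{\incl'}.
\]
This is clearly harmonic in $\mathbb{R}^n\setminus\overline{\incl}$ and tends to $\go(0)$ at infinity (both layer potentials decay, since $n\ge 3$ and Proposition \ref{00} gives $\mu_2^*\in\Ca{0}(\dincl)$ with no a~priori zero-mean constraint but the single-layer still decays). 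The jump relation of Proposition \ref{cljump} and the explicit formula for $\mu_2^*$ in Proposition \ref{00} together give, for $\pt_0\in\dincl$,
\[
\fV_{\incl'}[\b0](\pt_0)=\go(0)+\tfrac{1}{2}\gi(\pt_0)-w_{S_n}[\dincl,\gi](\pt_0)+v_{S_n}[\dincl,\mu_2^*](\pt_0)=\gi(\pt_0).
\]
Therefore $\fV_{\incl'}[\b0]$ solves the exterior Dirichlet problem defining $v_0$, and uniqueness yields \eqref{Ve1e2.eq2}.

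The main obstacle is mostly bookkeeping: one must justify the real analyticity of each integral term by verifying that the relevant kernel stays away from its singularity uniformly in the parameters, and then carefully use the jump relations and the explicit form of $\mu_2^*$ at $\be=\b0$ to match $\fV_{\incl'}[\b0]$ with $v_0$.
\end{proofof}
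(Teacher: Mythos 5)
Your proof is correct and follows essentially the same route as the paper: define $\fV_{\incl'}$ by the right-hand side of Remark~\ref{micro_e1e2} with $u_0$ replaced by its extension $U_0$, deduce analyticity term by term via Proposition~\ref{Ub}, Theorem~\ref{thm:an}, and the non-singular-kernel argument used for $\fL_2$, then evaluate at $\be=\b0$ and match with $v_0$ using Proposition~\ref{00}, the jump relations, decay at infinity, and uniqueness of the exterior Dirichlet problem. The only cosmetic difference is that you justify the vanishing of the $\G$-integral at $\be=\b0$ by the identity $\G(0,\py)=0$, whereas the paper relies on $\mathrm{M}_1[\b0]=\mu_1^*=0$ from Proposition~\ref{00}; both reasons are valid (indeed they hold simultaneously).
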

\begin{proof}   We define
\[
\begin{split}
{\fV}_{\incl'}[\be](\pt)&\equiv U_0(\e1\p+\e1\e2 \pt)-w^e_{S_n}[\dincl,\gi](\pt)\\
&\quad -\e2 ^{n-1}\int_{\dincl}\bn_\incl(\ps)\cdot\nabla S_n(-2p_n \se_n+\e2 (\msigma(\pt)-\ps))\, \gi(\ps)\, d\sigma_\ps\\
&\quad  - \int_{\dpO} {\G}(\e1 \p+\e1 \e2 \pt, \py)\, {{\mathrm{M}}}_1[\be](\py)\, d\sigma_\py\\
&\quad +v_{S_n}[\dincl,{{\mathrm{M}}}_2[\be]](\pt)-\e2 ^{n-2}\int_{\dincl} S_n(-2p_n\se_n+\e2 (\msigma(\pt)-\ps))\, {{\mathrm{M}}}_2[\be](\ps)\, d\sigma_\ps
\end{split}
\]
for all $\pt\in\overline{\incl'}$ and for all $\be\in\bJes$. Then, by Proposition \ref{Ub}, by Theorem \ref{thm:an}, and by a standard argument (see the study of $\fL_2$ in the proof of Proposition  \ref{prop.N})   one deduces that ${\fV}_{\incl'}$ is real analytic from $\bJes$ to $\Ca{1}(\overline{\incl'})$. The validity of \eqref{Ve1e2.eq1} follows by Remark \ref{micro_e1e2}. By a  straightforward computation and by Proposition \ref{00} one verifies that
\begin{equation}\label{Ve1e2.eq3}
{\fV}_{\incl'}[\mathbf{0}](\pt)= \go(0)-w^e_{S_n}[\dincl,\gi](\pt)+v_{S_n}[\dincl,{{\mathrm{M}}}_2[\mathbf{0}]](\pt),
\end{equation}
for all $\pt\in\overline{\incl'}$. Then, by Proposition \ref{00} and by the jump properties of the double layer potential we deduce that the right hand side of \eqref{Ve1e2.eq3} equals $\gi$ on $\partial\incl$. Hence, by the decaying properties at $\infty$ of the single and double layer potentials and by the uniqueness of the solution of the exterior Dirichlet problem, we deduce the validity of \eqref{Ve1e2.eq2}.
\end{proof}

\vspace{\baselineskip}

\subsubsection{Energy integral}

We now turn to study the behavior of the energy integral $\int_{\domeps}\left|\nabla\ueps\right|^2\,d\px$ by representing it in terms of a real analytic function. In Theorem \ref{Enge3} here below we consider the case when $\go=0$.

\begin{thm}\label{Enge3}
Let $\go=0$. Let the assumptions of Theorem \ref{thm:an} hold.  Then there exist $\be^{\fG}\in \bIe{*}$ and a real analytic map $\fG$ from  $\bJe{\fG}$ to $\mathbb{R}$ such that
\begin{equation}\label{Enge3.eq1}
\int_{\domeps}\left|\nabla\ueps\right|^2\,d\px=\e1^{n-2}\e2^{n-2}{\fG}(\be)\qquad\forall  \be\in\bIe{\fG}
\end{equation}
and
\begin{equation}\label{Enge3.eq2}
{\fG}(\mathbf{0})=\int_{\mathbb{R}^n\setminus\incl}\left|\nabla v_0\right|^2\,d\px\,.
\end{equation}
\end{thm}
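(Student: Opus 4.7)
The plan is to reduce the energy to a boundary integral on $\partial\incleps$ via the divergence theorem, to rescale to the microscopic variable in order to expose the prefactor $\e1^{n-2}\e2^{n-2}$, and then to identify what remains with the value at $\be$ of a real analytic functional by invoking Theorem \ref{Ve1e2}.

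First, since $\ueps$ is harmonic in $\domeps$ and $\go \equiv 0$, integration by parts yields
\begin{equation*}
\int_{\domeps}|\nabla\ueps|^{2}\,d\px=-\int_{\partial\incleps}\gi\Bigl(\frac{\px-\e1\p}{\e1\e2}\Bigr)\bn_{\incleps}(\px)\cdot\nabla\ueps(\px)\,d\sigma_{\px}\,,
\end{equation*}
the sign being due to $\bn_{\domeps}=-\bn_{\incleps}$ on $\partial\incleps$. The substitution $\pt=(\px-\e1\p)/(\e1\e2)$ contributes a factor $(\e1\e2)^{n-1}$ from the surface measure and $(\e1\e2)^{-1}$ from the gradient via the chain rule, so that, setting $V_{\be}(\pt)\equiv \ueps(\e1\p+\e1\e2\pt)$, one obtains
\begin{equation*}
\int_{\domeps}|\nabla\ueps|^{2}\,d\px=-(\e1\e2)^{n-2}\int_{\dincl}\gi(\pt)\,\bn_{\incl}(\pt)\cdot\nabla V_{\be}(\pt)\,d\sigma_{\pt}\,.
\end{equation*}

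Next, to show that the integral on the right-hand side depends real analytically on $\be$, I would fix an open bounded set $\incl'\subseteq\Rn\setminus\overline{\incl}$ whose closure contains $\dincl$ (for instance $\incl'=\cB(0,R)\setminus\overline{\incl}$ for $R$ large), chosen small enough that the condition $\e1\p+\e1\e2\overline{\incl'}\subseteq\Bn{r_1}$ of Theorem \ref{Ve1e2} holds for $\be$ in a suitable rectangle $\bJes$. Theorem \ref{Ve1e2} then provides a real analytic map $\fV_{\incl'}:\bJes\to\Ca{1}(\overline{\incl'})$ with $V_{\be|\overline{\incl'}}=\fV_{\incl'}[\be]$. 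Since the trace operator $f\mapsto\bn_{\incl}\cdot\nabla f_{|\dincl}$ is linear and continuous from $\Ca{1}(\overline{\incl'})$ to $\Ca{0}(\dincl)$, and since the map $h\mapsto -\int_{\dincl}\gi\,h\,d\sigma$ is a continuous linear functional on $\Ca{0}(\dincl)$, the composition
\begin{equation*}
\fG(\be)\equiv -\int_{\dincl}\gi(\pt)\,\bn_{\incl}(\pt)\cdot\nabla\fV_{\incl'}[\be](\pt)\,d\sigma_{\pt}
\end{equation*}
is real analytic on a rectangle $\bJe{\fG}\subseteq\bJes$, and equation \eqref{Enge3.eq1} follows by the preceding computation.

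Finally, to verify \eqref{Enge3.eq2}, I note that \eqref{Ve1e2.eq2} together with $\go(0)=0$ gives $\fV_{\incl'}[\b0]=v_{0|\overline{\incl'}}$, where $v_0$ is the unique solution of the exterior Dirichlet problem in $\Rn\setminus\overline{\incl}$ with boundary datum $\gi$ and $v_0(\pt)\to 0$ at infinity. Applying the divergence theorem to $v_0\nabla v_0$ on $\cB(0,R)\setminus\overline{\incl}$ and letting $R\to\infty$, the surface integral on $\partial\cB(0,R)$ vanishes in the limit thanks to the decay $v_0(\pt)=O(|\pt|^{2-n})$ and $|\nabla v_0(\pt)|=O(|\pt|^{1-n})$, valid for $n\ge 3$. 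Hence $\fG(\b0)=-\int_{\dincl}\gi\,\bn_{\incl}\cdot\nabla v_0\,d\sigma=\int_{\Rn\setminus\incl}|\nabla v_0|^{2}\,d\px$, which proves \eqref{Enge3.eq2}. The main technical point is the verification that the exterior normal-derivative trace on $\dincl$ preserves real analyticity in $\be$; this reduces to the Schauder mapping properties of the single and double layer potentials already exploited in the proof of Proposition \ref{prop.N}, together with the joint real analyticity of the remainder integrals (with kernels of the form $S_n(-2p_n\se_n+\e2(\msigma(\pt)-\ps))$ and the $U_0$-term) appearing in the representation of $V_\be$ given by Remark \ref{micro_e1e2}.
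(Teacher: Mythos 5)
Your proof is correct and follows essentially the same route as the paper: reduce the energy to the boundary flux through $\partial\incleps$ via the divergence theorem (using $\go=0$), rescale to expose the factor $(\e1\e2)^{n-2}$, define $\fG(\be)=-\int_{\dincl}\gi\,\bn_\incl\cdot\nabla\fV_{\incl'}[\be]\,d\sigma$ using a neighborhood $\incl'$ of $\dincl$ and Theorem \ref{Ve1e2}, and evaluate at $\b0$ by the divergence theorem for $v_0$. The only difference is that you make the rescaling factors and the decay rates of $v_0$ explicit, which the paper leaves as a remark to ``the rule of change of variable in integrals'' and ``the divergence theorem.''
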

\begin{proof} 
We observe that by the divergence theorem and by \eqref{bvpe} we have
\begin{equation}\label{Emge3.eq3}
\begin{split}
\int_{\domeps}\left|\nabla \ueps\right|^2\,d\px&=\int_{\dO}\ueps\;\bn_\Omega\cdot\nabla \ueps\,d\sigma-\int_{\dincl_\be}\ueps\;\bn_{\incleps}\cdot\nabla\ueps \,d\sigma
\\
&=-\int_{\dincl_\be}\gi\Big(\frac{\px-\e1 \p}{\e1 \e2 }\Big)\bn_{\incleps}(\px)\cdot\nabla\ueps(\px) \,d\sigma_\px\,.
\end{split}
\end{equation}
Then, we take  $\incl'$ as in Theorem \ref{Ve1e2} which in addition satisfies the condition $\partial\incl\subseteq\overline{\incl'}$. We set $\be^{\fG}\equiv\be''$ with $\be''$ as in Theorem \ref{Ve1e2} and we define
\[
{\fG}(\be)\equiv-\int_{\partial\incl} \gi\; \bn_\incl\cdot \nabla {\fV}_{\incl'}[\be]\,d\sigma\qquad\forall\be\in\bJe{\fG}\,.
\]
By Theorem  \ref{Ve1e2} and by standard calculus in Banach spaces it follows that $\fG$ is real analytic from $\bJe{\fG}$ to $\mathbb{R}$. By \eqref{Emge3.eq3} and by the rule of change of variable in integrals one shows the validity of  \eqref{Enge3.eq1}. Finally, the validity of \eqref{Enge3.eq2} follows by \eqref{Ve1e2.eq2} and by the divergence theorem. 
\end{proof}

\vspace{\baselineskip}

\medskip

We now consider the case when $\go\neq 0$. To do so, we need the following technical Lemma \ref{wnear0} which can be proved by the properties of integral operators with harmonic kernel (and no singularity).

\begin{lem}\label{wnear0}
Let $\cO $ be an open subset of $\mathbb{R}^n$ such that $\overline{\cO }\cap \overline{(\dpO\cup\msigma(\dpO))}=\emptyset$. Then  $w_G[\dpO,\psi]$ is harmonic on $\cO $  for all  $\psi\in \Ca{1}(\partial\dom)$.
\end{lem}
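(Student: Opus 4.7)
The plan is to reduce the lemma to two observations about the kernel
\[
K(\px,\py)\equiv \bn_\dom(\py)\cdot\nabla_\py G(\px,\py)
= \bn_\dom(\py)\cdot\bigl(-\nabla S_n(\px-\py)+\nabla S_n(\msigma(\px)-\py)\bigr),
\]
namely joint real analyticity on $\cO\times\dpO$ and harmonicity in the variable $\px$, so that differentiation under the integral sign can be applied.

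First I would note that the hypothesis $\overline{\cO}\cap \overline{(\dpO\cup\msigma(\dpO))}=\emptyset$ ensures that for every $(\px,\py)\in \cO\times\dpO$ we have both $\py\neq \px$ and $\py\neq\msigma(\px)$, and moreover that there is a uniform distance bound between these sets (since $\overline{\dpO}$ is compact). Consequently $G(\px,\py)$ is jointly real analytic on an open neighbourhood of $\overline{\cO}\times\overline{\dpO}$, and so is $K(\px,\py)$ together with all of its $\px$\nobreakdash-derivatives.

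Next, for each fixed $\py\in\dpO$, I would show that $\px\mapsto G(\px,\py)$ is harmonic on $\cO$. The first summand $S_n(\px-\py)$ is harmonic on $\R^n\setminus\{\py\}\supset \cO$ because $\py\notin\overline{\cO}$. The second summand $S_n(\msigma(\px)-\py)$ is the composition of the harmonic function $S_n(\cdot-\py)$ with the isometric reflection $\msigma$, hence harmonic on $\{\px\in\R^n:\msigma(\px)\neq \py\}=\R^n\setminus\{\msigma(\py)\}\supset \cO$, because $\msigma(\py)\in\msigma(\dpO)$ and $\msigma(\dpO)$ does not meet $\overline{\cO}$. Since $\nabla_\py$ commutes with $\Delta_\px$ whenever $G$ is smooth, we have $\Delta_\px K(\px,\py)=0$ on $\cO\times\dpO$.

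Finally I would justify interchanging $\Delta_\px$ with the integral. Given that $\dpO$ (more precisely $\overline{\dpO}$) is compact and that $K$ together with its derivatives up to order two in $\px$ is continuous on a neighbourhood of $\overline{\cO}\times\overline{\dpO}$, the standard dominated-convergence argument yields
\[
\Delta_\px w_G[\dpO,\psi](\px)=\int_{\dpO}\psi(\py)\,\Delta_\px K(\px,\py)\,d\sigma_\py = 0,
\qquad \forall \px\in\cO.
\]
There is no real obstacle here; the only point requiring care is the reflection-invariance argument in the second paragraph, which rests on $\msigma$ being an orthogonal linear map and on the hypothesis explicitly excluding both $\dpO$ and $\msigma(\dpO)$ from $\overline{\cO}$.
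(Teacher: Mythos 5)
Your proof is correct and is exactly the argument the paper has in mind: the paper states only that the lemma ``can be proved by the properties of integral operators with harmonic kernel (and no singularity),'' which is precisely what you make explicit (the hypothesis on $\cO$ rules out both singular sets of $G$, the kernel $\px\mapsto\bn_\dom(\py)\cdot\nabla_\py G(\px,\py)$ is harmonic in $\px$ for each fixed $\py\in\dpO$, and compactness of $\overline{\dpO}$ plus local smoothness justify differentiating under the integral sign).

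One small remark: the ``uniform distance bound'' you invoke would require $\overline{\cO}$ to be compact, which is not assumed; but this is harmless, since the interchange of $\Delta_\px$ and $\int_{\dpO}$ is a local statement and for each $\px_0\in\cO$ you can work on a small closed ball $\overline{B(\px_0,r)}\subseteq\cO$, whose product with $\overline{\dpO}$ is compact and avoids both $\{\px=\py\}$ and $\{\py=\msigma(\px)\}$.
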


\begin{thm}\label{Enge3g}
Let the assumptions of Theorem \ref{thm:an} hold. Then there exist $\be^{\fE}\in \bIe{*}$ and a real analytic map $\fE$ from  $\bJe{\fE}$ to $\mathbb{R}$ such that
\begin{equation}\label{Enge3g.eq1}
\int_{\domeps}\left|\nabla\ueps\right|^2\,d\px={\fE}(\be)\qquad\forall  \be\in\bIe{\fE}
\end{equation}
and
\begin{equation}\label{Enge3g.eq2}
{\fE}(\mathbf{0})=\int_{\dom}\left|\nabla u_0\right|^2\,d\px\,.
\end{equation}
\end{thm}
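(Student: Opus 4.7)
The plan is to use the splitting $u_\be=u_0+v_\be$, where $v_\be\equiv u_\be-u_0\in\Ca{1}(\overline{\dome})$ solves
\[
\Delta v_\be=0\text{ in }\dome,\qquad v_\be=0\text{ on }\dO,\qquad v_\be=\gi\Bigl(\tfrac{\cdot-\e1\p}{\e1\e2}\Bigr)-u_0\text{ on }\partial\incleps.
\]
Expanding $|\nabla u_\be|^2=|\nabla u_0|^2+2\nabla u_0\cdot\nabla v_\be+|\nabla v_\be|^2$ and applying Green's identity in $\dome$ (both $u_0$ and $v_\be$ are harmonic there, and $v_\be$ vanishes on $\dO$), one obtains
\[
\int_\dome|\nabla u_\be|^2\,d\px=\int_\dom|\nabla u_0|^2\,d\px-\int_{\incleps}|\nabla u_0|^2\,d\px-2\int_{\partial\incleps}v_\be\,\bn_\incleps\cdot\nabla u_0\,d\sigma-\int_{\partial\incleps}v_\be\,\bn_\incleps\cdot\nabla v_\be\,d\sigma.
\]
This reduction onto $\partial\incleps$ is the whole point of the splitting: it avoids having to track $\bn_\dom\cdot\nabla u_\be$ on the flat part of $\dO$ where the perforation collapses, and instead brings into play the microscopic machinery of Theorem \ref{Ve1e2} together with the harmonic extension $U_0$ of Proposition \ref{Ub}.

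Next I would rescale each boundary and volume integral via $\px=\e1\p+\e1\e2\pt$. Fixing $\incl'$ as in Theorem \ref{Ve1e2} with $\partial\incl\subseteq\overline{\incl'}$, the identities $u_\be(\e1\p+\e1\e2\pt)=\fV_{\incl'}[\be](\pt)$ and $u_0(\e1\p+\e1\e2\pt)=U_0(\e1\p+\e1\e2\pt)$ on $\overline{\incl'}$ give on $\partial\incl$ the rescaled expression $v_\be(\e1\p+\e1\e2\pt)=\gi(\pt)-U_0(\e1\p+\e1\e2\pt)$, while the chain rule yields
\[
\bn_\incleps(\px)\cdot\nabla v_\be(\px)=\frac{1}{\e1\e2}\,\bn_\incl(\pt)\cdot\bigl[\nabla\fV_{\incl'}[\be](\pt)-\e1\e2\,\nabla U_0(\e1\p+\e1\e2\pt)\bigr].
\]
Combined with the surface Jacobian $(\e1\e2)^{n-1}$ and, for $\int_\incleps|\nabla u_0|^2$, the volume Jacobian $(\e1\e2)^n$, each correction term becomes $(\e1\e2)^k$ times an integral over $\partial\incl$ (or $\incl$) whose integrand depends real analytically on $\be$ in a neighborhood of $\b0$, with $k\in\{n-2,n-1,n\}$. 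Analyticity of the integrands follows from Proposition \ref{Ub}, Theorem \ref{Ve1e2}, and the standard mapping properties of integral operators with real analytic kernels.

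Defining $\fE(\be)$ as the sum of these four contributions (after shrinking $\be^{\fE}\in\bIe{*}$ so as to stay within the ranges of validity of Proposition \ref{Ub} and Theorem \ref{Ve1e2}) yields \eqref{Enge3g.eq1}. Since $n\ge 3$ forces $k\ge n-2\ge 1$ in every correction term, all the factors $(\e1\e2)^k$ vanish at $\be=\b0$, leaving $\fE(\b0)=\int_\dom|\nabla u_0|^2\,d\px$, which is \eqref{Enge3g.eq2}. The main subtlety is the chain-rule factor $(\e1\e2)^{-1}$ in $\bn_\incleps\cdot\nabla v_\be$: it is absorbed by the surface Jacobian only because $n\ge 3$, and for exactly that reason the argument stays inside the regime of Section \ref{n>=3}.
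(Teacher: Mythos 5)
Your proof is correct, and it takes a genuinely different route from the paper. The paper's proof starts from the divergence theorem applied directly to $\ueps$, writing $\int_{\domeps}|\nabla\ueps|^2\,d\px=\int_{\dO}\go\,\bn_\dom\cdot\nabla\ueps\,d\sigma-\int_{\partial\incleps}\gi(\tfrac{\cdot-\e1\p}{\e1\e2})\bn_{\incleps}\cdot\nabla\ueps\,d\sigma$, then substitutes the explicit macroscopic layer-potential representation of $u_\be$ from Remark \ref{e1e2} into the first integral, and the microscopic representation from Theorem \ref{Ve1e2} into the second. That requires Fubini's theorem, the symmetry \eqref{Gsym}, the vanishing property \eqref{Gzero}, and Lemma \ref{wnear0} to recast the $\dO$ integral as integrals over $\partial\incl$ involving $w_G[\dpO,\go]$ evaluated at the rescaled points, and it tracks explicitly the densities $\mathrm{M}_1[\be],\mathrm{M}_2[\be]$. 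Your approach instead decomposes $u_\be=u_0+v_\be$ with $v_\be$ vanishing on $\dO$, which lets Green's identity eliminate the outer boundary integrals entirely and leaves only integrals supported on $\partial\incleps$ (plus a volume integral over $\incleps$). The analyticity then falls out of Theorem \ref{Ve1e2} and Proposition \ref{Ub} alone, without having to manipulate $\mathrm{M}_1,\mathrm{M}_2$ or $w_G[\dpO,\go]$ directly and without Lemma \ref{wnear0}. Your version is structurally cleaner and makes the exponent bookkeeping (the factors $(\e1\e2)^k$ with $k\in\{n-2,n-1,n\}$) transparent, including the role of the constraint $n\geq3$; the paper's version has the advantage of producing a fully explicit formula for $\fE$ in terms of the layer-potential densities, which is closer to what one would use in practice to compute coefficients of the power series. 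Both establish \eqref{Enge3g.eq1} and \eqref{Enge3g.eq2}.
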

\begin{proof}  As in the proof of Theorem \ref{Enge3} we begin by noting that, by the divergence theorem and by \eqref{bvpe}, we have
\begin{equation}\label{Enge3g.eq3}
\int_{\domeps}\left|\nabla \ueps\right|^2\,d\px=\int_{\dO}\go\;\bn_\Omega\cdot\nabla \ueps\,d\sigma-\int_{\dincl_\be}\gi\Big(\frac{\px-\e1 \p}{\e1 \e2 }\Big)\bn_{\incleps}(\px)\cdot\nabla\ueps(\px) \,d\sigma_\px
\end{equation}
for all $\be\in\bIea$. Then, by Remark \ref{e1e2} we have
\begin{equation}\label{Enge3g.eq4}
\int_{\partial\dom}\go\;\bn_\Omega\cdot\nabla u_{\be}\,d\sigma=I_{1,\be}+\e1^{n-1}\e2^{n-1}I_{2,\be}+\e1^{n-2}\e2^{n-2}I_{3,\be}
\end{equation}
with
\begin{align}
\label{Enge3g.eq5}
&I_{1,\be}=\int_{\dO}\go\;\bn_\Omega\cdot\nabla\uzero \,d\sigma- \int_{\dO}\go\;\bn_\Omega\cdot\nabla v_{{\G}}\bigl[\dO,\mathrm{M}_1[\be]\bigr]\,d\sigma_\px\,,\\
\nonumber
&I_{2,\be}= -\int_{\dO}\go(\px)\;\bn_\Omega(\px)\cdot\nabla_\px\int_{\dincl}\bn_\incl(\ps)\cdot(\nabla_\py {\G})(\px,\e1 \p+ \e1\e2 \ps)\, \gi(\ps)\, d\sigma_{\ps}\,d\sigma_\px\,,\\
\nonumber
&I_{3,\be}=  \int_{\dO}\go(\px)\;\bn_\Omega(\px)\cdot\nabla_\px\int_{\dincl}{\G}(\px,\e1 \p+ \e1\e2 \ps)\,\mathrm{M}_2[\be](\ps)\,d\sigma_{\ps}\,d\sigma_\px
\end{align}
for all $\be \in \bIe*$.
By the Fubini's theorem and by \eqref{Gsym}  it follows that
\[
\begin{split}
&I_{2,\be}= -\int_{\partial\incl}\gi(\ps)\, \bn_\incl(\ps)\cdot\nabla_\py\left(\int_{\dO}\go(\px)\;\bn_\dom(\px)\cdot\nabla_\px \left({\G}(\py,\px)\right)\,d\sigma_\px\right)_{\py=\e1 \p+ \e1\e2 \ps} d\sigma_{\ps}\,,\\
&I_{3,\be}= \int_{\partial\incl}\mathrm{M}_2[\be](\ps)\int_{\dO}\go(\px)\bn_\Omega(\px)\;\cdot\nabla_\px\left({\G}(\e1 \p+ \e1\e2 \ps,\px)\right)\,d\sigma_\px\,d\sigma_{\ps}\,.
\end{split}
\]
Then, by the definition of the double layer potential derived by $G$ (cf.~Definition \ref{def.SDG+}) and by \eqref{Gzero}, it follows that
\begin{equation}\label{Enge3g.eq6}
\begin{split}
&I_{2,\be}= -\int_{\partial\incl}\gi(\ps)\, \bn_\incl(\ps)\cdot\nabla w_G[\dpO,\go](\e1 \p+ \e1\e2\ps)\,d\sigma_\ps\,,\\
&I_{3,\be}= \int_{\partial\incl}\mathrm{M}_2[\be](\ps)\, w_G[\dpO,\go](\e1 \p+ \e1\e2 \ps)\,d\sigma_{\ps}
\end{split}
\end{equation}
for all $\be \in \bIe*$.

Now we choose a specific domain   $\incl'$ which satisfies the conditions  in Theorem \ref{Ve1e2} and which in addition contains the boundary of $\incl$ in its closure, namely such that $\partial\incl\subseteq\overline{\incl'}$. Then, for such $\incl'$,  we take $\be^{\fE}\equiv\be''$ with $\be''$ as in Theorem \ref{Ve1e2}. By \eqref{Ve1e2.eq1} and by a change of variable in the integral, we have
\begin{equation}\label{Enge3g.eq7}
\int_{\dincl_\be}\gi\Big(\frac{\px-\e1 \p}{\e1 \e2 }\Big)\bn_{\incleps}(\px)\cdot\nabla\ueps(\px) \,d\sigma_\px=\e1^{n-2}\e2^{n-2}\int_{\partial\incl} \gi\; \bn_\incl\cdot \nabla \fV_{\incl'}[\be]\,d\sigma
\end{equation}
for all $\be\in \bIe{\fE}$.

Then we define
\begin{align*}
&\fE_1(\be)\equiv\int_{\partial\Omega} \go\; \bn_\dom\cdot \nabla (u_0-v_G[\partial_+\dom,\mathrm{M}_1[\be]])\,d\sigma\,,\\
&\fE_2(\be)\equiv-\int_{\partial\incl} \gi(\ps)\,\bn_{\incl}(\ps)\cdot\nabla w_G[\partial_+\Omega,\go](\eps_1\p+\eps_1\eps_2 \ps)\, d\sigma_{\ps}\,,\\
&\fE_3(\be)\equiv\int_{\partial\incl}\mathrm{M}_2[\be](\ps)\, w_G[\partial_+\Omega,\go](\e1\p+\e1\e2 \ps)\, d\sigma_{\ps}\,,\\
&\fE_4(\be)\equiv-\int_{\partial\incl} \gi\; \bn_\incl\cdot \nabla \fV_{\incl'}[\be]\,d\sigma
\end{align*}
and
\begin{equation}\label{Enge3g.eq8}
\fE(\be)\equiv \fE_1(\be)+\e1^{n-1}\e2^{n-1} \fE_2(\be)+\e1^{n-2}\e2^{n-2}\fE_3(\be)+\e1^{n-2}\e2^{n-2} \fE_4(\be)\,
\end{equation}
for all $\be \in]-\be^\fE,\be^\fE[$. Now the validity \eqref{Enge3g.eq1} follows by \eqref{Enge3g.eq3}--\eqref{Enge3g.eq8}. In addition, by Theorems \ref{thm:an}  and  \ref{Ve1e2}, by Lemma \ref{wnear0}, and by a standard argument (see in the proof of  Proposition \ref{prop.N} the study of $\fL_2$),  we can prove that the $\fE_i$'s are real analytic from $]-\be^\fE,\be^\fE[$ to $\mathbb{R}$. Hence $\fE$ is real analytic from  $]-\be^\fE,\be^\fE[$ to $\mathbb{R}$.

To prove \eqref{Enge3g.eq2} we observe that by Proposition \ref{00} and Theorem \ref{thm:an}, we have ${{\mathrm{M}}}_1[\mathbf{0}]=0$. Thus \eqref{Enge3g.eq8} implies that ${\fE}(\mathbf{0})=\int_{\partial\dom}\go \bn_\dom\cdot \nabla u_0\, d\sigma$ and \eqref{Enge3g.eq2} follows by the divergence theorem.
\end{proof}

\vspace{\baselineskip}

\section{Asymptotic behavior of $\ueps$ in dimension $n=2$ for $\be$ close to $\b0$}\label{n=2}

When studying singular perturbation problems in perforated domains in the plane, it  is expected to see some logarithmic terms  in the description of the  perturbation. Such logarithmic terms  do not appear in dimension higher than or equal to three  and are generated by the specific behavior of the fundamental solution upon rescaling. Indeed,
\[
S_2(\varepsilon \pt)=S_2(\pt)+\frac{1}{2\pi}\log \varepsilon
\]
for all $\varepsilon>0$, and for the Green's function $G$ we have
\begin{multline}\label{n=2.eq1}
{\G}(\e1 \p+\e1 \e2 \pt,\e1 \p+\e1 \e2 \ps)\\
={S_2}(\pt-\ps)+\frac{1}{2\pi}\log \e1 \e2 -{S_2}(-2p_2\se_2+\e2(\msigma(\pt)-\ps))-\frac{1}{2\pi}\log \e1\, ,
\end{multline}
 for all $\be=(\e{1},\e{2}) \in \R^2_+$.
To handle the logarithimic terms, we need a representation formula for harmonic functions in $\domeps$ which is different from the one  that we have exploited in the case of dimension $\geq 3$.

First of all we note that, if  $\be \in \bIea$, then the sets $\domeps$ and $\incleps$ satisfy the same assumption \eqref{Omega}, \eqref{DOmega}, and \eqref{D+Omega} as $\dom$. Accordingly, we can apply the results of Subsection \ref{ss:potom} with $\dom$ replaced by $\domeps$ or $\incleps$.

In that spirit, we denote by $v_{{\G}}[\partial \incleps,1]$ the single layer potential with density function identically equal to $1$ on $\partial \incleps$:
\[
v_{{\G}}[\partial \incleps,1](\px)\equiv\int_{\partial \incleps}{\G}(\px,\py)\,d\sigma_\py,\qquad\forall \px\in \mathbb{R}^2\,.
\]
 We set $\sB_\be\equiv\Ca{0}_+(\dO)\times \Ca{0}_{\#}(\partial \incleps)$ (cf.~Definitions \ref{def.laypot1} and \ref{def:C+0}). Then we have the following proposition.

\begin{prop}\label{Vg}
Let $\be \in \bIea$ and $\rho\in\mathbb{R}\setminus\{0\}$. Then  the map
$$\begin{array}{rcl}
\sB_{\be}\times\mathbb{R}& \to &\sV^{1,\alpha}(\partial_+ \domeps)\\
(\bfi,\xi)& \mapsto &v_{{\G}}[\dO,\phi_1]_{|\partial_+ \domeps}+v_{{\G}}[\partial \incleps,\phi_2]_{|\partial_+ \domeps}+ \ \xi\ (\rho v_{{\G}}[\partial \incleps,1]_{|\partial_+ \domeps})
\end{array}$$
is an isomorphism.
\end{prop}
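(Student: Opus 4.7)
The plan is to reduce the assertion to Proposition \ref{V} applied to the perforated domain $\domeps$ (which satisfies the same hypotheses \eqref{Omega}--\eqref{D+Omega} as $\dom$, as noted at the start of the section), combined with the standard direct-sum decomposition $\Ca{0}(\partial\incleps)=\Ca{0}_\#(\partial\incleps)\oplus\mathbb{R}\cdot 1$. The map in the statement is linear and continuous between Banach spaces by the continuity of the single layer potentials involved, so by the open mapping theorem it suffices to establish bijectivity.

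To prove surjectivity I would take $f\in\sV^{1,\alpha}(\partial_+\domeps)$ and invoke Proposition \ref{V} applied to $\domeps$ to obtain $\psi\in\Ca{0}_+(\partial\domeps)$ with $f=v_{{\G}}[\partial\domeps,\psi]_{|\partial_+\domeps}$. Using Lemma \ref{ext} I would identify $\Ca{0}_+(\partial\domeps)$ with $\Ca{0}_+(\dO)\times\Ca{0}(\partial\incleps)$ (note that $\partial\incleps\subset\Rnp$ does not touch $\partial\Rnp$, so no quotient is needed on that factor) and write $\psi=(\psi_1,\psi_2)$. Setting $c\equiv|\partial\incleps|^{-1}\int_{\partial\incleps}\psi_2\,d\sigma$ and $\psi_2^{\#}\equiv\psi_2-c$, one has $\psi_2^{\#}\in\Ca{0}_\#(\partial\incleps)$ and
\[
v_{{\G}}[\partial\incleps,\psi_2]=v_{{\G}}[\partial\incleps,\psi_2^{\#}]+c\,v_{{\G}}[\partial\incleps,1].
\]
Since $\rho\neq 0$, one can set $\xi\equiv c/\rho$, and then the triple $((\psi_1,\psi_2^{\#}),\xi)\in\sB_\be\times\mathbb{R}$ is a preimage of $f$.

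For injectivity, suppose $(\bfi,\xi)=((\phi_1,\phi_2),\xi)$ is sent to $0$. By linearity the vanishing relation becomes $v_{{\G}}[\partial\domeps,\tilde\phi]_{|\partial_+\domeps}=0$, where $\tilde\phi$ equals $\phi_1$ on $\dO$ and $\phi_2+\xi\rho$ on $\partial\incleps$. Proposition \ref{one-to-one} applied to $\domeps$ then forces $\tilde\phi_{|\partial_+\domeps}=0$, which means $\phi_1=0$ in $\Ca{0}_+(\dO)$ and $\phi_2=-\xi\rho$ on $\partial\incleps$. However, $\phi_2\in\Ca{0}_\#(\partial\incleps)$ has zero mean while $-\xi\rho$ is constant; integrating over $\partial\incleps$ gives $\xi\rho|\partial\incleps|=0$, so $\xi=0$ (again using $\rho\neq 0$) and consequently $\phi_2=0$.

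The main difficulty is not analytical but notational: one must carefully carry out the identification of $\Ca{0}_+(\partial\domeps)$ with $\Ca{0}_+(\dO)\times\Ca{0}(\partial\incleps)$ via Lemma \ref{ext}, and verify that Propositions \ref{V} and \ref{one-to-one} legitimately apply to $\domeps$ for every $\be\in\bIea$. Once that bookkeeping is in place, the argument collapses to the trivial observation that a zero-mean function equal to a constant must vanish, and the open mapping theorem concludes the proof.
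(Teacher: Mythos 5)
Your proof is correct and takes essentially the same route as the paper: both reduce to Proposition \ref{V} applied to $\domeps$ by observing that the map in question equals $v_{{\G}}[\partial\domeps,\phi]_{|\partial_+\domeps}$ with $\phi=\phi_1$ on $\dO$ and $\phi=\phi_2+\rho\xi$ on $\partial\incleps$. The paper simply records this identity and invokes Proposition \ref{V}, leaving to the reader the verification (which you carry out explicitly via the decomposition $\Ca{0}(\partial\incleps)=\Ca{0}_\#(\partial\incleps)\oplus\mathbb{R}$ and $\rho\neq 0$) that $(\bfi,\xi)\mapsto\phi$ is itself an isomorphism onto $\Ca{0}_+(\partial\domeps)$.
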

\begin{proof} 
We have
\[
v_{{\G}}[\dO,\phi_1]_{|\partial_+ \domeps}+v_{{\G}}[\partial \incleps,\phi_2]_{|\partial_+ \domeps}+(\rho v_{{\G}}[\partial \incleps,1]_{|\partial_+ \domeps}) \xi=v_{{\G}}[\partial \domeps,\phi]_{|\partial_+ \domeps}
\]
with
\[
\phi(\px)\equiv\left\{
\begin{array}{ll}
\phi_1(\px)&\forall \px\in\dO\,,\\
\phi_2(\px)+\rho\xi&\forall \px\in\partial \incleps\,.
\end{array}
\right.
\]
Then the statement follows by the definition of $\sV^{1,\alpha}$ as the image of the single layer potential derived by $G$ (cf.~Proposition \ref{V}).
\end{proof}

\vspace{\baselineskip}

\noindent Now, by Proposition \ref{Vg} and by the representation formula stated in Lemma \ref{i-green}  we have the  following Proposition \ref{f=WVg} where we show a suitable way to write a function of $\Ca{1}(\partial \domeps)$ as a sum of layer potentials derived by $G$.

\begin{prop}\label{f=WVg}
Let $\be \in \bIea$. Let $f\in \Ca{1}(\partial \domeps)$. Let $\rho\in\mathbb{R}\setminus\{0\}$. Then there exists a unique pair $(\bfi,\xi)=((\phi_1,\phi_2),\xi)\in
\sB_{\eps}\times\mathbb{R}$ such that
\[
f=w^i_{{\G}}[\partial \domeps,f]_{|\partial \domeps}+v_{{\G}}[\dO,\phi_1]_{|\partial \domeps}+v_{{\G}}[\partial \incleps,\phi_2]_{|\partial \domeps}+(\rho v_{{\G}}[\partial \incleps,1]_{|\partial \domeps}) \xi\,.
\]
\end{prop}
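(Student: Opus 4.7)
The plan is to extract the triple $((\phi_1,\phi_2),\xi)$ from the Neumann data on $\partial\domeps$ of the harmonic extension of $f$. As a first step I would let $u\in\Ca{1}(\overline{\domeps})$ be the unique solution of the Dirichlet problem for the Laplace equation in $\domeps$ with boundary datum $f$, apply the Green-like representation formula of Lemma \ref{i-green} with $\domgen_+=\domeps$, pass to the trace on $\partial\domeps$ from inside, and invoke the jump formulas of Proposition \ref{jumps}. The key observation is that on $\partial_0\domeps\subseteq\partial\Rnp$ the identity $w_{\G}^i[\partial\domeps,f]=f$ is tautological, while $v_{\G}[\partial\domeps,\cdot]$ vanishes identically by \eqref{eq.vG+dO}; so the nontrivial content of the resulting identity lives on $\partial_+\domeps$, and it reads
\[
f(\px)-w_{\G}^i[\partial\domeps,f](\px)=-v_{\G}[\partial\domeps,\bn_{\domeps}\cdot\nabla u_{|\partial\domeps}](\px),\qquad \forall \px\in \partial\domeps.
\]

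Next, since $\partial\domeps=\dO\cup\partial\incleps$ is a disjoint union, and $\bn_{\domeps|\dO}=\bn_{\dom}$ while $\bn_{\domeps|\partial\incleps}=-\bn_{\incleps}$, I would split the integral on the right-hand side into two pieces and set $\eta_1\equiv-\bn_{\dom}\cdot\nabla u_{|\dO}$ (viewed as an element of $\Ca{0}_+(\dO)$) and $\eta_2\equiv\bn_{\incleps}\cdot\nabla u_{|\partial\incleps}\in\Ca{0}(\partial\incleps)$, obtaining
\[
f-w_{\G}^i[\partial\domeps,f]=v_{\G}[\dO,\eta_1]+v_{\G}[\partial\incleps,\eta_2] \qquad \text{on }\partial\domeps.
\]
To comply with the mean-zero constraint built into $\Ca{0}_\#(\partial\incleps)$, I would then define
\[
\xi\equiv \frac{1}{\rho\,|\partial\incleps|}\int_{\partial\incleps}\eta_2\,d\sigma,\qquad \phi_2\equiv \eta_2-\rho\,\xi,\qquad \phi_1\equiv \eta_1,
\]
so that $\phi_2\in\Ca{0}_\#(\partial\incleps)$ by construction and $v_{\G}[\partial\incleps,\phi_2]+\rho\,\xi\,v_{\G}[\partial\incleps,1]=v_{\G}[\partial\incleps,\eta_2]$ by linearity of the single layer potential. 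This settles the existence part.

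For uniqueness, I would take the difference $((\overline{\phi}_1,\overline{\phi}_2),\overline{\xi})$ of two such decompositions, obtain the homogeneous identity
\[
v_{\G}[\dO,\overline{\phi}_1]+v_{\G}[\partial\incleps,\overline{\phi}_2]+\overline{\xi}\,\rho\,v_{\G}[\partial\incleps,1]=0 \qquad \text{on }\partial\domeps,
\]
and restrict it to $\partial_+\domeps$ (on $\partial_0\domeps$ each single layer potential already vanishes by \eqref{eq.vG+dO}). Then Proposition \ref{Vg} applied with $\dom$ replaced by $\domeps$ forces $\overline{\phi}_1=0$ in $\Ca{0}_+(\dO)$, $\overline{\phi}_2=0$ in $\Ca{0}_\#(\partial\incleps)$, and $\overline{\xi}=0$. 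The main point requiring care is the bookkeeping of the two normal orientations and of the two quotient spaces $\Ca{0}_+(\dO)$ and $\Ca{0}_\#(\partial\incleps)$, and verifying that the single scalar degree of freedom carried by the extra term $\rho\,v_{\G}[\partial\incleps,1]$ is precisely what absorbs the possibly nonzero mean of $\eta_2$ on $\partial\incleps$; this is exactly what the choice of $\xi$ above does.
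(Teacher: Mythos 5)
Your proof is correct and follows essentially the same route the paper intends: Lemma \ref{i-green}, applied to the harmonic extension of $f$ in $\domeps$ and traced from the interior, shows that $f-w_{\G}^i[\partial\domeps,f]_{|\partial_+\domeps}$ lies in $\sV^{1,\alpha}(\partial_+\domeps)$, and Proposition \ref{Vg} (an isomorphism onto $\sV^{1,\alpha}(\partial_+\domeps)$) then yields existence and uniqueness of the triple, while on $\partial_0\domeps$ the identity reduces to $f=f$ by the jump formula and \eqref{eq.vG+dO}. Your explicit extraction of $\xi$ from the mean of $\eta_2$ simply unrolls the surjectivity half of Proposition \ref{Vg}, and the phrase ``Proposition \ref{Vg} applied with $\dom$ replaced by $\domeps$'' is a slight misstatement, since Proposition \ref{Vg} is already formulated for $\domeps$; neither point affects the validity of the argument.
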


\subsection{Defining the operator $\fM$}\label{sub:L}
Let $\be \in \bIea$. By the previous  Proposition \ref{f=WVg}, we can look for solutions of problem \eqref{bvpe} in the form
\begin{equation}\label{rep}
w^i_{{\G}}[\partial \domeps,u_{\be|\partial \domeps}]+v_{{\G}}[\dO,\phi_1]+v_{{\G}}[\partial \incleps,\phi_2]+(\rho v_{{\G}}[\partial \incleps,1]) \xi
\end{equation}
for a suitable $(\bfi,\xi)\in \sB_{\eps}\times\mathbb{R}$. We split the integral on $\partial \domeps$ as the sum of integrals on $\dO$ and on $\partial \incleps$, we add and subtract $v_{G}^i[\dO,\bn_{\dom}\cdot \nabla u_{0|\dO}]$, and we obtain
\[
\begin{split}
w^i_{{\G}}[\dO,\go]-v_{G}^i[\dO,\bn_{\dom}\cdot \nabla &u_{0|\dO}]-w^e_{{\G}}\Big[\partial\incleps,\gi\big(\frac{\cdot-\e1\p}{\e1\e2}\big)\Big]\\&+v_{{\G}}[\dO,\phi_1+\bn_{\dom}\cdot \nabla u_{0|\dO}]+v_{{\G}}[\partial \incleps,\phi_2]+(\rho v_{{\G}}[\partial \incleps,1]) \xi\, .
\end{split}
\]
Then we note that
\[
 \uzero=w_{G}^i[\dO,\go]-v_{G}^i[\dO,\bn_{\dom}\cdot \nabla u_{0|\dO}]\,.
\]
 By taking $\rho=(\e1 \e2 \log(\e1 \e2 ))^{-1}$ and by performing a change of variable in the integrals over $\partial\incleps$, we deduce that the solutions of \eqref{bvpe} can be written  in the form
\begin{equation}\label{soln2}
\begin{split}
&\uzero (\px)-\e1 \e2 \int_{\dincl}\bn_\incl(\ps)\cdot(\nabla_\py {\G})(\px,\e1 \p+\e1 \e2 \ps)\, \gi(\ps)\, d\sigma_{\ps} + v_{{\G}}[\dO,\mu_1](\px)\\
&\quad + \int_{\dincl}{\G}(\px,\e1 \p+\e1 \e2 \ps)\,\mu_2(\ps)\,d\sigma_{\ps}
 + \frac{\xi}{\log(\e1 \e2 )}\int_{\dincl}{\G}(\px,\e1 \p+\e1 \e2 \ps)\,d\sigma_{\ps},\qquad\forall \px\in\domeps
\end{split}
\end{equation}
provided that  $(\mu_1,\mu_2,\xi)\in \Ca{0}_+(\dO)\times \Ca{0}_\#(\dincl)\times\R$ is chosen in such a way that the boundary conditions of \eqref{bvpe} are satisfied.

Now define $\sB\equiv \Ca{0}_+(\dO)\times \Ca{0}_\#(\dincl)$. We can verify that the (extension to $\overline{\domeps }$ of the) harmonic function in \eqref{soln2} solves problem \eqref{bvpe} if and only if the pair $(\bm,\xi)\in\sB\times\R$ solves
\begin{equation}\label{Lmu1mu2xi=0}
{\fM}[\be ,\tfrac1{\log(\e1 \e2)},\tfrac{\log\e1}{\log(\e1 \e2 )},\bm,\xi]=\b0\, ,
\end{equation}
 where $\fM[\be,\bde,\bm,\xi]\equiv (\fM_1[\be,\bde,\bm,\xi],\fM_2[\be,\bde,\bm,\xi])$ is defined for all $(\be,\bde,\bm,\xi)\in \bJea\times\mathbb{R}^2\times \sB\times\mathbb{R}$ by
\[\begin{split}
{{\fM}_1}[\be,\bde,\bm,\xi](\px)&\equiv v_{{\G}}[\dO,\mu_1](\px)\\
&\quad +\int_{\dincl}{\G}(\px,\e1 \p+\e1 \e2 \ps)\, \mu_2(\ps)\, d\sigma_{\ps}\\
&\quad +\delta_1\, \xi\int_{\dincl}{\G}(\px,\e1 \p+\e1 \e2 \ps)\, d\sigma_{\ps}\\
&\quad  - \e1 \e2 \int_{\dincl}\bn_\incl(\ps)\cdot(\nabla_\py {\G})(\px,\e1 \p+\e1 \e2 \ps)\, \gi(\ps)\, d\sigma_{\ps},\qquad\forall \px\in\dpO\,,
\end{split}\]
\[\begin{split}
{{\fM}_2}[\be,\bde,\bm,\xi](\pt)&\equiv v_{{S_2}}[\dincl,\mu_2](\pt)+\rho_\incl(1-\delta_2)\,\xi\\
&\quad  - \int_{\dincl} {S_2}(-2p_2\se_2+\e2 (\msigma(\pt)-\ps))\, \mu_2(\ps)\, d\sigma_{\ps}\\
&\quad +\delta_1\xi\int_{\dincl}\left({S_2}(\pt-\ps)-{S_2}(-2p_2\se_2+\e2 (\msigma(\pt)-\ps))\right)d\sigma_{\ps}\\
&\quad +\int_{\dpO} {\G}(\e1 \p+\e1 \e2 \pt, \py)\, \mu_1(\py)\, d\sigma_\py\\
&\quad  - \e2 \int_{\dincl}\bn_\incl(\ps)\cdot\nabla {S_2}(-2p_2\se_2+\e2 (\msigma(\pt)-\ps))\, \gi(\ps)\, d\sigma_{\ps}\\
&\quad  + U_0  (\e1 \p+\e1 \e2 \pt)-w_{{S_2}}[\dincl,\gi](\pt) - \frac{\gi(\pt)}2,\qquad \qquad\qquad\qquad\forall \pt\in\dincl\,,
\end{split}
\]
with
\[
\rho_\incl\equiv \frac{1}{2\pi}\int_{\dincl}d\sigma\,.
\]

Thus, to find the solution $\ueps$ of problem \eqref{bvpe} it suffices to find a solution of the system of integral equations  \eqref{Lmu1mu2xi=0} and, to study the asymptotic behavior of $\ueps$, we are now reduced to analyze the behavior of the solutions of \eqref{Lmu1mu2xi=0}.

 We incidentally observe that the dependence of equations \eqref{Lmu1mu2xi=0} upon the quotient  \eqref{eq:gam0} is generated by the presence of the term $(\rho v_{{\G}}[\partial \incleps,1]) \xi$ in the representation \eqref{rep}. Other geometric settings may lead to different integral equations which may not depend on  \eqref{rep}. For example,  in the   toy problem of Subsection \ref{toy} we don't have the exterior boundary $\dpO$ and, by  Lemma \ref{e-green}, we can write the solution as the sum of a double and a single layer potential supported on $\partial \incleps$. As we have mentioned at the end of Subsection \ref{beto0sec}, the expression \eqref{utoy} of such solution does not display a dependence on the quotient \eqref{eq:gam0}.

\subsection{Real analyticity of the operator ${\fM}$}

We are going to apply the implicit function theorem for real analytic maps to equation \eqref{Lmu1mu2xi=0} (see Deimling \cite[Thm.~15.3]{De85}).  As a first step, we prove that  ${\fM}$ defines a real analytic nonlinear operator between suitable Banach spaces.

\begin{prop}[Real analyticity of ${\fM}$]\label{Ln=2}
The map ${\fM}$ defined by
$$
 \begin{array}{rcl}
 \bJea\times \R^2 \times \sB\times \R& \to& \sV^{1,\alpha}(\dpO)\times \Ca{1}(\dincl)\\
(\be,\bde,\bm,\xi)& \mapsto & {\fM}[\be,\bde,{\bm},\xi]
 \end{array}
 $$
 is real analytic.
\end{prop}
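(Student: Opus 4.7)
The plan is to adapt the structure of the proof of Proposition \ref{prop.N}, splitting $\fM=(\fM_1,\fM_2)$ and analyzing each component separately. The extra arguments $\bde=(\delta_1,\delta_2)\in\R^2$ and $\xi\in\R$ enter either polynomially or as scalar multipliers of previously handled terms, so they do not create qualitatively new difficulties; the one genuinely new ingredient is the term $\delta_1\xi\int_{\dincl}{\G}(\px,\e1\p+\e1\e2\ps)\,d\sigma_\ps$ appearing in $\fM_1$, which has constant density on $\dincl$ and must be incorporated into the $\sV^{1,\alpha}(\dpO)$-range analysis.

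For $\fM_2$, real analyticity into $\Ca{1}(\dincl)$ follows by routine arguments. The single layer $\mu_2\mapsto v_{S_2}[\dincl,\mu_2]$ is linear and continuous into $\Ca{1}(\dincl)$ by classical potential theory (Lanza de Cristoforis and Rossi \cite{LaRo04}, Miranda \cite{Mi65}). The expression $\rho_\incl(1-\delta_2)\xi$ is polynomial. The remaining terms are integral operators whose kernels, such as $(\pt,\ps)\mapsto S_2(-2p_2\se_2+\e2(\msigma(\pt)-\ps))$ and $(\pt,\py)\mapsto{\G}(\e1\p+\e1\e2\pt,\py)$, are jointly real analytic and nonsingular for $\be\in\bJea$ because $p_2>0$ and $\e1\p+\e1\e2\pt$ stays away from $\dpO$; hence \cite[Prop.~4.1(ii)]{LaMu13} applies. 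The composition $\pt\mapsto U_0(\e1\p+\e1\e2\pt)$ is real analytic by Proposition \ref{Ub} together with assumption \eqref{Ubassumption}, while $w_{S_2}[\dincl,\gi]+\gi/2$ is a fixed element of $\Ca{1}(\dincl)$.

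For $\fM_1$, the delicate step is to show that the image is contained in $\sV^{1,\alpha}(\dpO)$. Following the first step of the proof of Proposition \ref{prop.N}, I would introduce the function $U^e[\be,\bde,\bm,\xi]$ defined on $\overline{\mathbb{R}^2_+\setminus\dom}$ as the natural extension of the four terms defining $\fM_1[\be,\bde,\bm,\xi]$ (with $v^e_{{\G}}[\dO,\mu_1]$ replacing $v_{{\G}}[\dO,\mu_1]_{|\dpO}$), and verify the hypotheses of Proposition \ref{f=ue}. Harmonicity in $\mathbb{R}^2_+\setminus\overline{\dom}$ and vanishing on $\partial\mathbb{R}^2_+\setminus\d0O$ hold by the properties of ${\G}$ recorded in \eqref{Gzero} and Proposition \ref{jumps}. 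The decay conditions at infinity require care in dimension two: the first summand is controlled by Lemma \ref{vGatinfty}, while for the three layer-type terms supported on $\e1\p+\e1\e2\dincl$ the image-charge cancellation in the definition of ${\G}$ yields enough decay of $(\px,\py)\mapsto{\G}(\px,\py)$ and of its $\px$-derivatives uniformly for $\py$ in any fixed bounded subset of $\mathbb{R}^2_+$; this is the point where the constant-density term arising from $\delta_1\xi$ must be handled. Proposition \ref{f=ue} then yields $\fM_1[\be,\bde,\bm,\xi]\in\sV^{1,\alpha}(\dpO)$.

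Real analyticity of $\fM_1$ with values in $\sV^{1,\alpha}(\dpO)$ is then obtained by the device used in the second step of the proof of Proposition \ref{prop.N}: for each integral term in $\fM_1$ involving ${\G}$, let $u^e$ be its exterior extension and $u^i$ the solution of the interior Dirichlet problem with boundary datum $E^{1,\alpha}(u^e_{|\dpO})$, and express the term as $v_{{\G}}[\dO,\phi]_{|\dpO}$ where $\phi\equiv\bn_\dom\cdot\nabla u^e_{|\dO}-\bn_\dom\cdot\nabla u^i_{|\dO}\in\Ca{0}_+(\dO)$. Real analyticity of $\phi$ in $(\be,\bde,\bm,\xi)$ follows from the extension Lemma \ref{ext}, continuous dependence of the Dirichlet problem on its data, and the real analyticity of integral operators with real analytic kernels \cite[Prop.~4.1(ii)]{LaMu13}. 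The new $\delta_1\xi$-term is handled identically, with $\delta_1$ and $\xi$ acting as scalar multipliers that preserve analyticity. The main obstacle is thus the preceding step: the careful verification of the decay conditions at infinity for the new constant-density term so that Proposition \ref{f=ue} remains applicable to the full $\fM_1$.
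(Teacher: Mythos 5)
Your proposal is correct and follows essentially the same route as the paper: split into $\fM_1$ and $\fM_2$, verify that $\fM_1$ takes values in $\sV^{1,\alpha}(\dpO)$ by checking the hypotheses of Proposition~\ref{f=ue} for the extension $U^e$ (using the decay of $\G$), then obtain analyticity by rewriting the integral terms as $v_{\G}[\dO,\phi]_{|\dpO}$ with $\phi=\bn_\dom\cdot\nabla u^e_{|\dO}-\bn_\dom\cdot\nabla u^i_{|\dO}$ built from exterior extensions and interior Dirichlet solutions via Lemma~\ref{ext}, and treat $\fM_2$ by the standard mapping properties of the single layer and of integral operators with analytic nonsingular kernels. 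Two small remarks: the paper bundles the three $\dincl$-supported integrals into one function $\mathfrak{f}[\be,\delta_1,\mu_2,\xi]$ rather than treating each term separately as you suggest, but by linearity of $v_{\G}[\dO,\cdot]$ the two ways are equivalent; and the $\delta_1\xi$ constant-density term, which you single out as the main new obstacle, actually requires no extra work beyond the $\mu_2$-term — Lemma~\ref{Gatinfty} already gives decay of $\G(\px,\py)$ and $\nabla_\px\G(\px,\py)$ uniformly for $\py$ in a fixed bounded set, and the factor $\delta_1\xi$ is just a scalar coefficient.
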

\begin{proof} 
 We split the proof component by component.

\paragraph{Study of ${{\fM}_1}$} First we prove that ${{\fM}_1}$ is real analytic. \\
\noindent\emph{First step: the range of ${{\fM}_1}$ is a subset of $\sV^{1,\alpha}(\dpO)$.} 
Let $(\be,\bde,\bm,\xi)\in\bJea\times\mathbb{R}^2\times \sB\times\mathbb{R}$. Let  $U^e[\be,\bde,\bm,\xi]$ denote the function defined by
\[
\begin{split}
U^e[\be,\bde,\bm,\xi](\px)&\equiv v^e_{{\G}}[\dO,\mu_1](\px)\\
&\quad +\int_{\dincl}{\G}(\px,\e1 \p+\e1 \e2 \ps)\, \mu_2(\ps)\, d\sigma_{\ps}\\
&\quad +\delta_1\, \xi\int_{\dincl}{\G}(\px,\e1 \p+\e1 \e2 \ps)\, d\sigma_{\ps}\\
&\quad  - \e1 \e2 \int_{\dincl}\bn_\incl(\ps)\cdot(\nabla_\py {\G})(\px,\e1 \p+\e1 \e2 \ps)\, \gi(\ps)\, d\sigma_{\ps},
\quad\forall \px\in\overline{\mathbb{R}^2_+\setminus\dom}\,.\\
\end{split}
\]
The function $U^e[\be,\bde,\bm,\xi]$ belongs to $\in \Ca{1}_{\mathrm{loc}}(\overline{\mathbb{R}^2_+\setminus\dom})$ by the properties of the (Green) single layer potential and by the properties of integral operators with real analytic kernel and no singularity. In addition, one verifies that
\[
\begin{cases}
\Delta U^e[\be,\bde,\bm,\xi]=0& \text{ in }\mathbb{R}^2_+\setminus\overline{\dom} , \\
U^e[\be,\bde,\bm,\xi]=0& \text{ on }\partial\mathbb{R}^2_+\setminus\d0O ,\\
\lim_{\px\to\infty}U^e[\be,\bde,\bm,\xi](\px)=0,\\
\lim_{\px\to\infty}\frac{\px}{|\px|}\cdot\nabla U^e[\be,\bde,\bm,\xi](\px)=0
 \end{cases}
\]
(see also Lemma \ref{vGatinfty}). Then, by the characterization of $\sV^{1,\alpha}$ in Proposition \ref{f=ue},  we conclude that
${\fM}_1[\be,\bde,\bm,\xi]=U^e[\be,\bde,\bm,\xi]_{|\partial_+\Omega}\in \sV^{1,\alpha}(\dpO)$.

\noindent\emph{Second step: ${\fM}_1$ is real analytic.}  We observe that
\[
{\fM}_1[\be,\bde,\bm,\xi]=v_{{\G}}[\dO,\mu_1]_{|\dpO}+\mathfrak{f}[\be,\delta_1,\mu_2,\xi]
\]
where
\[
\begin{split}
\mathfrak{f}[\be,\delta_1,\mu_2,\xi](\px)
&\equiv\int_{\dincl}{\G}(\px,\e1 \p+\e1 \e2 \ps)\, \mu_2(\ps)\, d\sigma_{\ps}\\
&\quad+\delta_1\, \xi\int_{\dincl}{\G}(\px,\e1 \p+\e1 \e2 \ps)\, d\sigma_{\ps}\\
&\quad  - \e1 \e2 \int_{\dincl}\bn_\incl(\ps)\cdot(\nabla_\py {\G})(\px,\e1 \p+\e1 \e2 \ps)\, \gi(\ps)\, d\sigma_{\ps},\qquad\forall \px\in\overline{\dpO}\,.
\end{split}
\]
Since that the map which takes $\mu_1$ to $v_{{\G}}[\dO,\mu_1]_{|\dpO}$ is linear and continuous from $\Ca{0}_+(\dO)$ to $\sV^{1,\alpha}(\dpO)$, it is real analytic. Then, to prove that ${\fM}_1$ is real analytic we have to show that the map which takes $(\be,\delta_1,\mu_2,\xi)$ to $\mathfrak{f}[\be,\delta_1,\mu_2,\xi](\px)$ is real analytic from $ \bJea\times \R \times \Ca0_\#(\partial\incl)\times \R$ to $\sV^{1,\alpha}(\dpO)$. To that end, we  will show that there is a real analytic map
\[
\phi:\bJea\times\mathbb{R}\times \Ca{0}_{\#}(\dincl)\times\mathbb{R} \to \Ca{0}_+(\dO)
\]
such that
\begin{equation}\label{Fphi}
\mathfrak{f}[\be ,\delta_1,\mu_2,\xi]=v_{{\G}}[\dO,\phi[\be ,\delta_1,\mu_2,\xi]]_{|\dpO}
\end{equation}
for all $(\be ,\delta_1,\mu_2,\xi)\in\bJea\times\mathbb{R}\times \Ca{0}_{\#}(\dincl)\times\mathbb{R}$. Then the real analyticity of $\mathfrak{f}$ will follow from the definition of  the Banach space $\sV^{1,\alpha}(\dpO)$
in Proposition \ref{V}.

We will obtain such map $\phi$ as the sum of two real analytic terms. To construct the first one, we begin by observing that  $\mathfrak{f}$ is real analytic from $\bJea\times\mathbb{R}\times \Ca{0}_{\#}(\dincl)\times\mathbb{R}$ to $\Ca{1}(\overline{\dpO})$ by the properties of integral operators with real analytic kernel and no singularities (see Lanza de Cristoforis and Musolino \cite[Prop.~4.1 (ii)]{LaMu13}). Then, by the extension Lemma \ref{ext}, the composed map
\[
\begin{array}{rcl}
\bJea\times\mathbb{R}\times \Ca{0}_{\#}(\dincl)\times\mathbb{R}& \to &\Ca{1}(\dO)\\
(\be ,\delta_1,\mu_2,\xi)& \mapsto & E^{1,\alpha}\circ\mathfrak{f}[\be ,\delta_1,\mu_2,\xi]
\end{array}
\]
is real analytic.
Then we denote by $u^i[\be ,\delta_1,\mu_2,\xi]$ the unique solution of the Dirichlet problem in $\dom$ with boundary datum $E^{1,\alpha}\circ\mathfrak{f}[\be ,\delta_1,\mu_2,\xi]$. Since the map from $\Ca{1}(\dO)$ to $\Ca{1}(\overline{\dom})$ which takes a function $\psi$ to the unique solution of the Dirichlet problem in $\dom$ with boundary datum $\psi$ is linear and continuous, the map from $\bJea\times\mathbb{R}\times \Ca{0}_{\#}(\dincl)\times\mathbb{R}$ to $\Ca{1}(\overline{\dom})$ which takes $(\be ,\delta_1,\mu_2,\xi)$ to $u^i[\be ,\delta_1,\mu_2,\xi]$ is real analytic. In particular we have that
\begin{equation}\label{L1n=2.eq1}
\mbox{the map }
\begin{array}{rcl}\\
\bJea\times\mathbb{R}\times \Ca{0}_{\#}(\dincl)\times\mathbb{R}& \to &\Ca{0}_+(\dO)\\
(\be ,\delta_1,\mu_2,\xi)& \mapsto &\bn_\dom\cdot\nabla u^i[\be ,\delta_1,\mu_2,\xi]_{|\dO}
\end{array}
\mbox{is real analytic.}
\end{equation}

The map in \eqref{L1n=2.eq1} will be the first term in the sum which gives $\phi$. To obtain the second term,  we begin by taking
\[
\begin{split}
u^e[\be ,\delta_1,\mu_2,\xi](\px)&\equiv \int_{\dincl}{\G}(\px,\e1 \p+\e1 \e2 \ps)\, \mu_2(\ps)\, d\sigma_{\ps}\\
&\quad +\delta_1\, \xi\int_{\dincl}{\G}(\px,\e1 \p+\e1 \e2 \ps)\, d\sigma_{\ps}\\
&\quad  - \e1 \e2 \int_{\dincl}\bn_\incl(\ps)\cdot(\nabla_\py {\G})(\px,\e1 \p+\e1 \e2 \ps)\, \gi(\ps)\, d\sigma_{\ps},\qquad\forall \px\in\overline{\mathbb{R}^2_+\setminus\dom}\,.
\end{split}
\]
By standard properties of integral operators with real analytic kernels and no singularity  (see Lanza de Cristoforis and Musolino \cite[Prop.~4.1 (ii)]{LaMu13}), we have that the map from $\bJea\times\mathbb{R}\times \Ca{0}_{\#}(\dincl)\times\mathbb{R}$ to $\Ca{0}(\overline{\dpO})$ which takes $(\be ,\delta_1,\mu_2,\xi)$ to
\[
\begin{split}
&\bn_\dom\cdot\nabla u^e[\be ,\delta_1,\mu_2,\xi](\px)\\
&\qquad =\bn_\dom(\px)\cdot\int_{\dincl}\nabla_\px {\G}(\px,\e1 \p+\e1 \e2 \ps)\, \mu_2(\ps)\, d\sigma_{\ps}\\
&\qquad\quad +\delta_1\, \xi\, \bn_\dom(\px)\cdot\int_{\dincl}\nabla_\px {\G}(\px,\e1 \p+\e1 \e2 \ps)\, d\sigma_{\ps}\\
&\qquad\quad  - \e1 \e2 \sum_{j,k=1}^2(\bn_\dom(\px))_j\int_{\dincl}(\bn_\incl(\ps))_k(\partial_{x_j}\partial_{y_k} {\G})(\px,\e1 \p+\e1 \e2 \ps)\, \gi(\ps)\, d\sigma_{\ps},\qquad\forall \px\in\overline{\dpO}
\end{split}
\]
 is real analytic. Since by the extension Lemma \ref{ext} we can identify $\Ca{0}_+(\dO)$ with $\Ca{0}(\overline{\dpO})$, we deduce  that
\begin{equation}\label{L1n=2.eq2}
\mbox{the map} \begin{array}{rcl}\\
\bJea\times\mathbb{R}\times \Ca{0}_{\#}(\dincl)\times\mathbb{R}& \to& \Ca{0}_+(\dO)\\
(\be ,\delta_1,\mu_2,\xi)& \mapsto &\bn_\dom\cdot\nabla u^e[\be ,\delta_1,\mu_2,\xi]_{|\dO}
 \end{array}
 \mbox{is real analytic.}
\end{equation}

We now show that the maps in \eqref{L1n=2.eq1} and  \eqref{L1n=2.eq2} provide the two terms for the construction of $\phi$. First, we observe that $u^i[\be ,\delta_1,\mu_2,\xi]_{|\dpO}=\mathfrak{f}[\be ,\delta_1,\mu_2,\xi]$,  and thus by the representation formula in Lemma \ref{i-green} we have
\begin{equation}\label{representation.eq12D}
0=w_{{\G}}[\dpO,\mathfrak{f}[\be ,\delta_1,\mu_2,\xi]](\px)-v_{{\G}}[\dpO,\bn_{\dom}\cdot\nabla u^i[\be ,\delta_1,\mu_2,\xi]_{|\dpO}](\px)
\end{equation}
for all $ \px\in\R^2_+\setminus\overline{\dom}$. In addition, one verifies that $u^e[\be ,\delta_1,\mu_2,\xi]\in \Ca{1}_{\mathrm{loc}}(\overline{\mathbb{R}^2_+\setminus\dom})$ and that
\begin{equation}\label{L1n=2.eq3}
\left\{
\begin{array}{ll}
\Delta u^e[\be ,\delta_1,\mu_2,\xi]=0& \text{ in } \mathbb{R}^2_{+}\setminus\overline{\dom} ,\\
u^e[\be ,\delta_1,\mu_2,\xi](\px)=0 &\text{ on }\partial\mathbb{R}^2_{+}\setminus\d0O, \\
\lim_{\px\to\infty}u^e[\be ,\delta_1,\mu_2,\xi](\px)=0\,,&\\
\lim_{\px \to \infty} \frac{\px}{|\px|}\cdot \nabla u^{e}[\be ,\delta_1,\mu_2,\xi](\px)=0&
 \end{array}
 \right.
 \end{equation}
 (see also  Lemma \ref{vGatinfty}).  Then, by  \eqref{L1n=2.eq3}, by equality $u^e[\be ,\delta_1,\mu_2,\xi]_{|\dpO}=\mathfrak{f}[\be ,\delta_1,\mu_2,\xi]$, and by the exterior representation formula in Lemma \ref{e-green} we have
\begin{equation}\label{representation.eq22D}
u^e[\be ,\delta_1,\mu_2,\xi](\px)=-w_{{\G}}[\dpO,\mathfrak{f}[\be ,\delta_1,\mu_2,\xi]](\px)+v_{{\G}}[\dpO,\bn_{\dom}\cdot\nabla u^e[\be ,\delta_1,\mu_2,\xi]_{|\dpO}](\px)
\end{equation}
 for all $\px\in\R^2_+\setminus\overline{\dom}$. Then, by taking the sum of \eqref{representation.eq12D} and \eqref{representation.eq22D} and by the continuity properties of the (Green) single layer potential  we obtain that \eqref{Fphi} holds with
\[
\phi[\be ,\delta_1,\mu_2,\xi]=\bn_\dom\cdot\nabla u^e[\be ,\delta_1,\mu_2,\xi]_{|\dpO}-\bn_\dom\cdot\nabla u^i[\be ,\delta_1,\mu_2,\xi]_{|\dpO}\,.
\]
In addition,  by \eqref{L1n=2.eq1} and \eqref{L1n=2.eq2}, $\phi$ is real analytic from $\bJea\times\mathbb{R}\times \Ca{0}_{\#}(\dincl)\times\mathbb{R}$ to $\Ca{0}_+(\dO)$. The analyticity of  ${{\fM}_1}$ is now proved.

 \paragraph{Study of ${{\fM}_2}$} The analyticity of  the map ${{\fM}_2}$ from $\bJea\times\mathbb{R}^2\times {\sB}\times\mathbb{R}$ to $\Ca{1}(\dincl)$ can be proved by arguing as for ${\fL_2}$ in the proof of Proposition \ref{prop.N}.
\end{proof}

\vspace{\baselineskip}

\subsection{Functional analytic representation theorems}

\subsubsection{Analysis of \eqref{Lmu1mu2xi=0} via the implicit function theorem}

In this subsection, we study equation  \eqref{Lmu1mu2xi=0} around a  singular pair $(\be,\bde)=(\b0,(0,\lambda))$, with $\lambda\in[0,1[$. As a first step,
we investigate equation \eqref{Lmu1mu2xi=0} for $(\be,\bde)=(\b0,(0,\lambda))$.

\begin{prop}\label{000l}
Let $\lambda\in[0,1[$. There exists a unique $({\bm^*},\xi^*)\in {\sB}\times\mathbb{R}$ such that
\[
{\fM}[{\b0},(0,\lambda),{\bm^*},\xi^*]=\b0
\]
and we have
\[
\mu_1^*=0
\]
and
\[
v_{{S_2}}[\dO,\mu_2^*]_{|\dincl}+\rho_\incl(1-\lambda)\,\xi^*= - \go(0)+w_{{S_2}}[\dincl,\gi]_{|\dincl} + \frac{\gi}2\,.
\]
\end{prop}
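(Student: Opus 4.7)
The plan is to evaluate the two components of $\fM[\b0,(0,\lambda),\bm,\xi]$ explicitly and then invoke the standard isomorphism results of Section \ref{prel}. First I would analyze $\fM_1[\b0,(0,\lambda),\bm,\xi](\px)$ for $\px\in\dpO$ by killing off the $\bm$- and $\xi$-dependent terms that carry a vanishing prefactor. The term $-\e1\e2\int_{\dincl}\bn_\incl\cdot(\nabla_\py {\G})(\px,\e1\p+\e1\e2\ps)\gi(\ps)\,d\sigma_\ps$ is zero because $\e1\e2=0$; the term $\delta_1\xi\int_{\dincl}{\G}(\px,\e1\p+\e1\e2\ps)\,d\sigma_\ps$ is zero because $\delta_1=0$; and the term $\int_{\dincl}{\G}(\px,\e1\p+\e1\e2\ps)\mu_2(\ps)\,d\sigma_\ps$ reduces to $\mu_2$ integrated against $\G(\px,0)$, which vanishes because $0\in\partial\R^2_+$ and $\G$ vanishes when either of its arguments lies on $\partial\R^2_+$ (see \eqref{Gsym} and \eqref{Gzero}). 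Hence $\fM_1[\b0,(0,\lambda),\bm,\xi]=v_{\G}[\dO,\mu_1]_{|\dpO}$, and Proposition \ref{one-to-one} immediately forces $\mu_1=0$ in $\Ca{0}_+(\dO)$.

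Next I would substitute $\mu_1=0$, $\be=\b0$, $\bde=(0,\lambda)$ into $\fM_2$. The boundary integral $\int_{\dpO}\G(\e1\p+\e1\e2\pt,\py)\mu_1(\py)\,d\sigma_\py$ clearly drops out. The term $-\int_{\dincl}S_2(-2p_2\se_2+\e2(\msigma(\pt)-\ps))\mu_2(\ps)\,d\sigma_\ps$ evaluated at $\e2=0$ becomes $-S_2(-2p_2\se_2)\int_{\dincl}\mu_2\,d\sigma$, which is zero because $\mu_2\in\Ca{0}_\#(\dincl)$. The $\delta_1\xi$-correction and the $\e2$-prefactored $\gi$-term also vanish. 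Finally $U_0(\e1\p+\e1\e2\pt)$ collapses to $U_0(0)=u_0(0)=\go(0)$ by Proposition \ref{Ub} and \eqref{go_analytic}. Therefore the equation $\fM_2[\b0,(0,\lambda),\bm,\xi]=0$ on $\dincl$ is exactly
\[
v_{S_2}[\dincl,\mu_2]_{|\dincl}+\rho_\incl(1-\lambda)\,\xi=-\go(0)+w_{S_2}[\dincl,\gi]_{|\dincl}+\tfrac12\gi,
\]
which is the equation asserted in the statement.

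It remains to show that this equation admits a unique solution $(\mu_2^*,\xi^*)\in\Ca{0}_\#(\dincl)\times\R$. The right-hand side belongs to $\Ca{1}(\dincl)$ by the regularity of the classical double layer potential (Proposition \ref{clreg}). Since $\lambda\in[0,1[$ and $\rho_\incl=(2\pi)^{-1}\int_{\dincl}d\sigma>0$, the constant $\rho_\incl(1-\lambda)$ is nonzero, so the linear change of parameter $\tilde\xi=\rho_\incl(1-\lambda)\xi$ reduces the equation to the one treated in Lemma \ref{viso}. That lemma asserts that $(\phi,\tilde\xi)\mapsto v_{S_2}[\dincl,\phi]_{|\dincl}+\tilde\xi$ is a Banach-space isomorphism from $\Ca{0}_\#(\dincl)\times\R$ onto $\Ca{1}(\dincl)$, yielding the desired existence and uniqueness.

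I do not anticipate any real obstacle in this proof: everything reduces to reading off the definition of $\fM$ at the singular pair, using that $\G$ vanishes on $\partial\R^2_+$, that elements of $\Ca{0}_\#$ have zero integral, and that $\rho_\incl(1-\lambda)\neq 0$. The only point that requires a moment of care is the rescaling $\tilde\xi=\rho_\incl(1-\lambda)\xi$ which, crucially, would fail if $\lambda=1$; this is the precise reason for the hypothesis $\lambda\in[0,1[$ in Theorem \ref{thm:Ue1gamma-introd}.
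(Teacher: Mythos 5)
Your proof is correct and follows essentially the same route as the paper: evaluate both components of $\fM$ at the singular value $(\b0,(0,\lambda))$, determine $\mu_1^*=0$ via the injectivity of $v_{\G}[\dO,\cdot]_{|\dpO}$ (Proposition~\ref{one-to-one}, equivalently Proposition~\ref{V}(ii)), and solve the decoupled equation for $(\mu_2^*,\xi^*)$ via Lemma~\ref{viso}, using that $\rho_\incl(1-\lambda)\neq0$ since $\lambda<1$. The only (minor) divergence is expository: the paper keeps the term $v_{\G}[\dO,\mu_1](0)$ explicit in its formula for $\fM_2[\b0,(0,\lambda),\cdot,\cdot]$ and lets it drop once $\mu_1^*=0$ is known, whereas you observe (correctly, via \eqref{Gsym}--\eqref{Gzero}) that the relevant $\G$-values at the singular value all vanish outright; you also tacitly read $v_{S_2}[\dincl,\mu_2^*]$ where the statement has the evident typo $v_{S_2}[\dO,\mu_2^*]$.
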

\begin{proof}  First of all, we observe that for all $({\bm},\xi)\in {\sB}\times\mathbb{R}$, we have
\[\begin{cases}
{{\fM}_1}[{\b0}, (0,\lambda),{\bm},\xi](\px)=& v_{{\G}}[\dO,\mu_1](\px),\qquad\forall \px\in\dpO\,,\\
{{\fM}_2}[{\b0}, (0,\lambda),{\bm},\xi](\pt)=& v_{{S_2}}[\dincl,\mu_2](\pt)+\rho_\incl(1-\lambda)\, \xi\\ & \quad +v_{{\G}}[\dO,\mu_1](0) +\go  (0)-w_{S_2}[\dincl,\gi](\pt) - \frac{\gi(\pt)}2 ,\qquad\forall \pt\in\dincl\,.
\end{cases}
\]
 By Proposition \ref{V} (ii), the unique function in $\Ca{0}_+(\dO)$ such that $v_{{\G}}[\dO,\mu_1]=0$ on $\dpO$ is $\mu_1=0$. On the other hand, by classical potential theory, there exists a unique pair $(\mu_2,\xi) \in \Ca{0}_{\#}(\dincl)\times\mathbb{R}$ such that (cf.~Lemma \ref{viso})
\[
v_{{S_2}}[\dincl,\mu_2](\pt) +\rho_\incl(1-\lambda)\, \xi=  -\go  (0)+w_{S_2}[\dincl,\gi](\pt) + \frac{\gi(\pt)}2 ,\qquad\forall \pt\in\dincl.
\]
Now the validity of the proposition is proved.
\end{proof}

\vspace{\baselineskip}

Then, by the implicit function theorem for real analytic maps (see Deimling \cite[Thm.~15.3]{De85}) we deduce the following theorem.

\begin{thm}\label{analytic}
Let $\lambda\in[0,1[$. Let $(\bm^*,\xi^*)$ be as in Proposition \ref{000l}. Then there exist $\be^*\in\bIea$, an open neighborhood $\cV_{\lambda}$ of $(0,\lambda)$ in $\mathbb{R}^2$, an open neighborhood $\cU^*$ of $(\bm^*,\xi^*)$ in $\sB\times\mathbb{R}$, and a real analytic map $\Phi\equiv(\Phi_1,\Phi_2,\Phi_3)$ from $\bJe*\times \cV_\lambda$ to $\cU^*$ such that the set of zeros of ${\fM}$ in $\bJe*\times \cV_\lambda\times\cU^*$ coincides with the graph of $\Phi$.
\end{thm}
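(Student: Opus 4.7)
The plan is to apply the implicit function theorem for real analytic maps between Banach spaces (Deimling \cite[Thm.~15.3]{De85}) to the equation $\fM[\be,\bde,\bm,\xi] = \b0$ near the point $(\b0,(0,\lambda),\bm^*,\xi^*)$. Three ingredients are required: real analyticity of $\fM$, vanishing at the chosen point, and invertibility of the partial differential with respect to $(\bm,\xi)$. The first is already at hand by Proposition \ref{Ln=2}, and the second is exactly Proposition \ref{000l}. Therefore the substantive part of the proof reduces to verifying that
\[
\partial_{(\bm,\xi)}\fM[\b0,(0,\lambda),\bm^*,\xi^*]
\;:\; \sB\times\mathbb{R} \;\longrightarrow\; \sV^{1,\alpha}(\dpO)\times \Ca{1}(\dincl)
\]
is a linear homeomorphism.

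To compute this differential, I would first specialize the defining formulas of $\fM_1$ and $\fM_2$ at $(\be,\bde)=(\b0,(0,\lambda))$, observing that every term carrying a factor of $\e1$, $\e2$, or $\delta_1$ drops out. Moreover, in $\fM_2$ the integral $\int_{\dpO}\G(\e1\p+\e1\e2\pt,\py)\mu_1(\py)\,d\sigma_\py$ vanishes because $\G(0,\py)=0$ for $\py\in\dpO$ (as $0\in\partial\mathbb{R}^2_+$), and $U_0(0)=\go(0)$. What remains is
\[
\begin{aligned}
\fM_1[\b0,(0,\lambda),\bm,\xi](\px) &= v_{\G}[\dO,\mu_1](\px)\,,\\
\fM_2[\b0,(0,\lambda),\bm,\xi](\pt) &= v_{S_2}[\dincl,\mu_2](\pt)+\rho_\incl(1-\lambda)\,\xi+\go(0)-w_{S_2}[\dincl,\gi](\pt)-\tfrac{\gi(\pt)}{2}\,.
\end{aligned}
\]
Differentiating in $(\bm,\xi)$ at $(\bm^*,\xi^*)$, and evaluating on an increment $(\bar\bm,\bar\xi)=((\bar\mu_1,\bar\mu_2),\bar\xi)$, yields
\[
\partial_{(\bm,\xi)}\fM_1[\b0,(0,\lambda),\bm^*,\xi^*](\bar\bm,\bar\xi) = v_{\G}[\dO,\bar\mu_1]_{|\dpO}\,,
\]
\[
\partial_{(\bm,\xi)}\fM_2[\b0,(0,\lambda),\bm^*,\xi^*](\bar\bm,\bar\xi) = v_{S_2}[\dincl,\bar\mu_2]_{|\dincl}+\rho_\incl(1-\lambda)\,\bar\xi\,.
\]
The linear map is therefore block-diagonal in the splittings $\bm=(\mu_1,\mu_2)$ and $\bm\times\xi$.

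The key point is that each block is already known to be an isomorphism. The first block is the map $\bar\mu_1 \mapsto v_{\G}[\dO,\bar\mu_1]_{|\dpO}$, which is a homeomorphism from $\Ca{0}_+(\dO)$ onto $\sV^{1,\alpha}(\dpO)$ by Proposition \ref{V}(ii). The second block is $(\bar\mu_2,\bar\xi)\mapsto v_{S_2}[\dincl,\bar\mu_2]_{|\dincl}+\rho_\incl(1-\lambda)\bar\xi$ acting from $\Ca{0}_\#(\dincl)\times\mathbb{R}$ into $\Ca{1}(\dincl)$; since $\lambda\in[0,1[$ we have $\rho_\incl(1-\lambda)\neq 0$, so after the trivial rescaling $\bar\xi\mapsto \rho_\incl(1-\lambda)\bar\xi$ this reduces precisely to the classical isomorphism of Lemma \ref{viso} (with $\domgen=\incl$). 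Consequently $\partial_{(\bm,\xi)}\fM[\b0,(0,\lambda),\bm^*,\xi^*]$ is an isomorphism onto $\sV^{1,\alpha}(\dpO)\times \Ca{1}(\dincl)$.

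The conclusion is then immediate from the implicit function theorem: there exist $\be^*\in\bIea$, an open neighborhood $\cV_\lambda$ of $(0,\lambda)$, an open neighborhood $\cU^*$ of $(\bm^*,\xi^*)$, and a unique real analytic map $\Phi=(\Phi_1,\Phi_2,\Phi_3)\colon \bJe*\times\cV_\lambda \to \cU^*$ whose graph coincides with the zero set of $\fM$ in $\bJe*\times\cV_\lambda\times\cU^*$. The only delicate point I anticipate is bookkeeping in the differential computation, specifically confirming that all the $\mu_1$-dependent coupling into $\fM_2$ disappears at $\be=\b0$ thanks to $\G$ vanishing on $\partial\mathbb{R}^2_+$; once this is checked, the block-diagonal structure makes the invertibility transparent, and the hypothesis $\lambda\neq 1$ is used exactly once, to ensure that the scalar $\rho_\incl(1-\lambda)$ coupling $\bar\xi$ to the Schauder space does not degenerate.
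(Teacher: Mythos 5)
Your proposal is correct and follows essentially the same route as the paper: real analyticity from Proposition \ref{Ln=2}, the root from Proposition \ref{000l}, block-diagonal invertibility of $\partial_{(\bm,\xi)}\fM$ at the base point via Proposition \ref{V}(ii) and Lemma \ref{viso}, then the implicit function theorem. Two tiny bookkeeping remarks: the $\mu_2$-integral in $\fM_1$ does not literally carry a factor of $\e1$, $\e2$, or $\delta_1$, but it still vanishes at $\be=\b0$ because $G(\px,0)=G(0,\px)=0$ for $\px\in\dpO$ (the same reason you invoke for the $\mu_1$-term in $\fM_2$), or alternatively because $\mu_2\in\Ca{0}_\#(\dincl)$; similarly the term $-\int_{\dincl}S_2(-2p_2\se_2+\e2(\msigma(\pt)-\ps))\mu_2\,d\sigma$ in $\fM_2$ drops out at $\e2=0$ because $\mu_2$ has zero mean. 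Your explicit observation that $\lambda\in[0,1[$ is used precisely to ensure $\rho_\incl(1-\lambda)\neq0$ is a welcome clarification that the paper leaves implicit.
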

\begin{proof}  The partial differential of ${\fM}$ with respect to $(\bm,\xi)$ evaluated at $({\b0},(0,\lambda),\bm^*,\xi^*)$ is delivered by
\[
\begin{split}
&\partial_{(\bm,\xi)}{\fM}_1[{\b0},(0,\lambda),\bm^*,\xi^*]({\boldsymbol\phi},\zeta)=v_{{\G}}[\dO,\phi_1]_{|\dpO}\,,\\
&\partial_{(\bm,\xi)}{\fM}_2[{\b0},(0,\lambda),\bm^*,\xi^*]({\boldsymbol\phi},\zeta)=v_{S_2}[\dincl,\phi_2]_{|\dincl}+\rho_\incl(1-\lambda)\,\zeta\,,
\end{split}
\]
for all $({\boldsymbol\phi},\zeta)\in \sB\times\mathbb{R}$. Then by Proposition \ref{V} and by the properties of the single layer potential we deduce that $\partial_{(\bm,\xi)}{\fM}[{\b0},(0,\lambda),\bm^*,\xi^*]$ is an isomorphism from $\sB\times\mathbb{R}$ to $\sV^{1,\alpha}(\dpO)\times \Ca{1}(\dincl)$. Then the theorem follows by the implicit function theorem (see Deimling \cite[Thm.~15.3]{De85}) and by Proposition \ref{Ln=2}.
\end{proof}

\vspace{\baselineskip}

\subsubsection{Macroscopic behavior}

Since  $\log\e1 /\log(\e1\e2 )$ has no limit when $\be\in\bIea$ tends to $\b0$, we have to introduce a specific curve of parameters $\be$. Then, we take a function $\eta\mapsto\be(\eta)$  from $]0,1[$ to  $\bIea$ such that assumptions \eqref{eq:gam1} and \eqref{eq:gam2} hold (cf. Theorem \ref{thm:Ue1gamma-introd}). In the following Remark \ref{macro_e1}, we provide a convenient representation for the solution $u_{\be(\eta)}$.

\begin{rmk}[Representation formula in the macroscopic variable]
\label{macro_e1}
Let the assumptions of Theorem \ref{analytic} hold. Let  $\eta\mapsto\be(\eta)$ be a  function from $]0,1[$ to  $\bIea$ such that assumptions \eqref{eq:gam1} and \eqref{eq:gam2} hold. Let $\eta\mapsto\bde(\eta)$ be as in \eqref{eq:bfdelta}.  Then
\[
\begin{split}
&u_{\be(\eta)}(\px)=\uzero (\px)-\e1(\eta)\e2(\eta)\int_{\dincl}\bn_\incl(\ps)\cdot(\nabla_\py {\G})(\px,\e1(\eta) \p+ \e1(\eta)\e2(\eta) \ps)\, \gi(\ps)\, d\sigma_{\ps}\\
& + v_{{\G}}\bigl[\dO,\Phi_1\bigl[\be(\eta),\bde(\eta)\bigr]\bigr](\px)\\
& + \int_{\dincl}{\G}(\px,\e1(\eta) \p+ \e1(\eta)\e2(\eta) \ps)\,\Phi_2\bigl[\be(\eta),\bde(\eta)\bigr](\ps)\,d\sigma_{\ps}\\
& +\delta_1(\eta)\Phi_3\bigl[\be(\eta),\bde(\eta)\bigr]\int_{\dincl}{\G}(\px,\e1(\eta) \p+  \e1(\eta)\e2(\eta) \ps)\,d\sigma_{\ps}
\end{split}
\]
for all $\px\in\dom_{\be(\eta)}$ and for all $\eta \in ]0,1[$ such that $(\be(\eta),\bde(\eta))\in \bIe*\times \cV_\lambda$.
\end{rmk}

As a consequence of this representation formula, $u_{\be(\eta)}(\px)$ can be written as a converging power series of four real variables evaluated at $\bigl(\be(\eta),\bde(\eta)\bigr)$ for $\eta$ positive and small.  A similar result holds for the restrictions $u_{\be(\eta)|\overline{\dom'}}$ to any open subset  $\dom'$ of $\dom$ such that $0\notin\overline{\dom'}$. Namely, we are now in the position to prove Theorem \ref{thm:Ue1gamma-introd}.

\medskip

\begin{proofof}{Theorem \ref{thm:Ue1gamma-introd}} 
Let $\be^*$ and  $\cV_\lambda$ be as in Theorem \ref{analytic}. We take $\be'\in]\b0,\be^*[$ such that \eqref{Ue1gamma.eq0} holds true. Then, we define
\[
\begin{split}
\fU_{\dom'}[\be,\bde](\px)
&\equiv \uzero (\px)-\e1 \e2 \int_{\dincl}\bn_\incl(\ps)\cdot(\nabla_\py {\G})(\px,\e1 \p+\e1 \e2 \ps)\, \gi(\ps)\, d\sigma_{\ps}\\
&\quad + v_{{\G}}[\dO,\Phi_1[\be,\bde]](\px)\\
&\quad + \int_{\dincl}{\G}(\px,\e1 \p+\e1 \e2 \ps)\,\Phi_2[\be,\bde](\ps)\,d\sigma_{\ps}\\
&\quad + \delta_1 \Phi_3[\be,\bde]\int_{\dincl}{\G}(\px,\e1 \p+\e1 \e2 \ps)\,d\sigma_{\ps}
\end{split}
\]
for all $\px\in\overline{\dom'}$ and for all $(\be,\bde)\in ]-\be',\be'[\times \cV_{\lambda}$. By Theorem \ref{analytic} and by a standard argument (see in the proof of Proposition { \ref{prop.N}} the argument used to study $\fL_{2}$), we can show that  $\fU_{\dom'}$ is real analytic from $ ]-\be',\be'[\times \cV_{\lambda}$ to $\Ca{1}(\overline{\dom'})$.   The validity of \eqref{Ue1gamma.eq1} follows by Remark \ref{macro_e1} and the validity of \eqref{Ue1gamma.eq2} is deduced by Proposition \ref{000l}, by Theorem \ref{analytic}, and by a straightforward computation. 
\end{proofof}

\subsubsection{Microscopic behavior}

We now  present  a representation formula of the rescaled function $u_{\be(\eta)}(\e1(\eta)\p+\e1(\eta)\e2(\eta) \cdot)$.

\begin{rmk}[Representation formula in the microscopic variable]\label{micro_e1}
Let the assumptions of Theorem \ref{analytic} hold. Let  $\eta\mapsto\be(\eta)$ be a  function from $]0,1[$ to  $\bIea$ such that assumptions \eqref{eq:gam1} and \eqref{eq:gam2} hold. Let $\eta\mapsto\bde(\eta)$ be as in \eqref{eq:bfdelta}. Then
\[
\begin{split}
u_{\be(\eta)}(\e1(\eta)\p&+\e1(\eta)\e2(\eta) \pt)=\uzero(\e1(\eta)\p+\e1(\eta)\e2(\eta) \pt)-w^e_ {S_2}[\dincl,\gi](\pt)\\
& -\e2(\eta) \int_{\dincl}\bn_\incl(\ps)\cdot\nabla {S_2}\biggl(-2p_2 \se_2+\e2(\eta) (\msigma(\pt)-\ps)\biggr)\, \gi(\ps)\, d\sigma_{\ps}\\
& +\int_{\dpO} {\G}(\e1(\eta) \p+\e1(\eta)\e2(\eta) \pt, \py)\, \Phi_1\bigl[\be(\eta),\bde(\eta)\bigr](\py)\, d\sigma_\py\\
& +v_{S_2}\bigl[\dincl,\Phi_2\bigl[\be(\eta),\bde(\eta)\bigr]\bigr](\pt)\\
& -\int_{\dincl} {S_2}\biggl(-2p_2\se_2+\e2(\eta) (\msigma(\pt)-\ps)\biggr)\, \Phi_2\bigl[\be(\eta),\bde(\eta)\bigr](\ps)\, d\sigma_{\ps}\\
& +\rho_\incl\bigl(1-\delta_2(\eta)\bigr)\Phi_3\bigl[\be(\eta),\bde(\eta)\bigr]\\
& +\delta_1(\eta)\int_{\dincl}\left({S_2}(\pt-\ps)-{S_2}\biggl(-2p_2\se_2+\e2(\eta) (\msigma(\pt)-\ps)\biggr)\right)\,d\sigma_{\ps} \Phi_3\bigl[\be(\eta),\bde(\eta)\bigr],
\end{split}
\]
for all $\pt\in\mathbb{R}^2\setminus\omega$ and  for all $\eta \in ]0,1[$ such that $(\be(\eta),\bde(\eta))\in \bIe*\times \cV_\lambda$ and such that $\e1(\eta)\p+\e1(\eta)\e2(\eta) \pt\in \overline{\dom_{\be(\eta)}}$.
\end{rmk}

 In the following Theorem \ref{Ve1gamma}, we show that $u_{\be(\eta)}(\e1(\eta)\p+\e1(\eta)\e2(\eta)\, \cdot\,)$ for $\eta$ close to $0$ can be expressed as a real analytic map evaluated at $\bigl(\be(\eta),\bde(\eta)\bigr)$.

\begin{thm}\label{Ve1gamma}
Let the assumptions of Theorem \ref{analytic} hold.
Let $\incl'$ be an open bounded subset of $\mathbb{R}^2\setminus\overline\incl$ and let  $\be{''}\in]\b0,\be^*[$ be such that
\[
(\e1\p+\e1\e2\overline{\incl'})\subseteq\Bn{r_1}\qquad\forall \be\in ]-\be{''},\be{''}[\,.
\]
Then there is  a real analytic map
\[
\fV_{\incl'}: ]-\be'',\be''[\times \cV_{\lambda}\to\Ca{1}(\overline{\dom'})\,
\]
such that
 \begin{equation}\label{Ve1gamma.eq1}
u_{\be(\eta)}(\e1(\eta)\p+\e1(\eta)\e2(\eta)\, \cdot\,)_{|\overline{\incl'}}=\fV_{\incl'}\bigl[\be(\eta),\bde(\eta)\bigr]\,,\qquad\forall\eta \in]0,\eta''[\,.
\end{equation}
The equality in \eqref{Ve1gamma.eq1} holds for all parametrizations $\eta\mapsto\be(\eta)$ from $]0,1[$ to  $\bIea$  which satisfy  \eqref{eq:gam1} and \eqref{eq:gam2}. The function $\eta\mapsto\bde(\eta)$ is defined as in \eqref{eq:bfdelta}.\\
At the singular point $(\b0, (0,\lambda))$ we have
\begin{equation}\label{Ve1gamma.eq2}
\fV_{\incl'}[\b0,(0,\lambda)]=v_{0|\overline{\incl'}}
\end{equation}
where $v_0\in \Ca{1}_{\mathrm{loc}}(\mathbb{R}^2\setminus\incl)$ is the unique solution of \eqref{v0}.
\end{thm}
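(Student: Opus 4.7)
The plan is to follow the strategy of the proof of Theorem \ref{Ve1e2}, but using Theorem \ref{analytic} and the microscopic representation of Remark \ref{micro_e1} in place of Theorem \ref{thm:an} and Remark \ref{micro_e1e2}. Concretely, for every $(\be,\bde)\in ]-\be'',\be''[\times \cV_\lambda$ and $\pt\in\overline{\incl'}$, I would define $\fV_{\incl'}[\be,\bde](\pt)$ to be the right-hand side of the formula in Remark \ref{micro_e1}, with $u_0(\e1\p+\e1\e2\pt)$ replaced by $U_0(\e1\p+\e1\e2\pt)$ (which makes sense thanks to Proposition \ref{Ub} and assumption \eqref{Ubassumption}) and with $\Phi_j[\be(\eta),\bde(\eta)]$ replaced by $\Phi_j[\be,\bde]$ for $j=1,2,3$ from Theorem \ref{analytic}.

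Next I would verify that $\fV_{\incl'}$ is real analytic from $]-\be'',\be''[\times \cV_\lambda$ to $\Ca{1}(\overline{\incl'})$. The term $(\be,\bde)\mapsto U_0(\e1\p+\e1\e2\,\cdot\,)$ is analytic because $U_0$ is harmonic on a neighborhood of the origin and composition with affine analytic maps preserves analyticity in $\Ca{1}(\overline{\incl'})$. The terms involving single or double layer potentials supported on $\dincl$ and evaluated on $\overline{\incl'}\subset\overline{\R^2\setminus\overline{\incl}}$ are analytic in the density by the classical mapping properties of the exterior layer potentials (cf.~Proposition \ref{clreg} and Miranda \cite{Mi65}), and hence analytic in $(\be,\bde)$ by composition with $\Phi_2, \Phi_3$. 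All remaining integrals have kernels that depend analytically on $\be$ and are jointly real analytic in their variables without singularity on the relevant domains (the kernel ${\G}(\e1\p+\e1\e2\pt, \py)$ with $\py\in\dpO$ is smooth for small $\be$, and the kernel $S_2(-2p_2\se_2+\e2(\msigma(\pt)-\ps))$ is smooth because $p_2>0$); their analyticity follows from Lanza de Cristoforis and Musolino \cite[Prop.~4.1~(ii)]{LaMu13}, combined with the analyticity of $\Phi_1$ and $\Phi_2$. Identity \eqref{Ve1gamma.eq1} is then an immediate consequence of Remark \ref{micro_e1}.

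The substantive step is the identification \eqref{Ve1gamma.eq2}. At the point $(\b0,(0,\lambda))$, every summand containing a factor $\e1$, $\e2$ or $\delta_1$ collapses; moreover Proposition \ref{000l} gives $\Phi_1[\b0,(0,\lambda)]=\mu_1^*=0$ (killing the $\dpO$-integral) and $\Phi_2[\b0,(0,\lambda)]=\mu_2^*\in \Ca{0}_\#(\dincl)$ (killing the constant-times-$\int_{\dincl}\mu_2^*\,d\sigma$ contribution). Using $U_0(0)=u_0(0)=\go(0)$, one is left with
\[
\fV_{\incl'}[\b0,(0,\lambda)](\pt)=\go(0)-w^e_{S_2}[\dincl,\gi](\pt)+v_{S_2}[\dincl,\mu_2^*](\pt)+\rho_\incl(1-\lambda)\xi^*.
\]
This function is harmonic in $\R^2\setminus\overline{\incl}$ and, since $\mu_2^*$ has zero mean on $\dincl$, the single-layer term decays at infinity, so $\fV_{\incl'}[\b0,(0,\lambda)]$ extends to a bounded harmonic function on $\R^2\setminus\overline{\incl}$. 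To compute its trace on $\dincl$, I would apply the jump relation $w^e_{S_2}[\dincl,\gi]=-\gi/2+w_{S_2}[\dincl,\gi]$ on $\dincl$ together with the identity
\[
v_{S_2}[\dincl,\mu_2^*]_{|\dincl}+\rho_\incl(1-\lambda)\xi^*=-\go(0)+w_{S_2}[\dincl,\gi]_{|\dincl}+\tfrac{\gi}{2}
\]
from Proposition \ref{000l}; a direct substitution then gives $\fV_{\incl'}[\b0,(0,\lambda)]_{|\dincl}=\gi$. Invoking the uniqueness of the bounded solution of \eqref{v0}, this yields $\fV_{\incl'}[\b0,(0,\lambda)]=v_{0|\overline{\incl'}}$. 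The point that requires most care, and therefore the main obstacle, is the last one: it hinges crucially on the mean-zero condition $\mu_2^*\in \Ca{0}_\#(\dincl)$, which is what ensures boundedness of the two-dimensional single-layer potential at infinity and, hence, the applicability of the uniqueness statement for \eqref{v0}.
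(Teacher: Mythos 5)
Your proof is correct and follows essentially the same route as the paper's: you define $\fV_{\incl'}$ by the microscopic representation of Remark \ref{micro_e1} with $u_0$ replaced by $U_0$, verify analyticity by the same standard arguments, obtain \eqref{Ve1gamma.eq1} from Remark \ref{micro_e1}, and establish \eqref{Ve1gamma.eq2} by computing $\fV_{\incl'}[\b0,(0,\lambda)]$, checking its trace on $\dincl$ via the jump relation and Proposition \ref{000l}, and invoking uniqueness of the bounded exterior solution. The additional care you take in tracking the mean-zero condition $\mu_2^*\in\Ca{0}_\#(\dincl)$ (which kills the $\e2=0$ residual integral and guarantees boundedness at infinity) is exactly what the paper's ``straightforward computation'' and ``decaying properties'' clauses compress.
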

\begin{proof}   We define
\[
\begin{split}
\fV_{\incl'}[\be,\bde](\pt)&\equiv U_0(\e1\p+\e1\e2 \pt)-w^e_{S_2}[\dincl,\gi](\pt)\\
&\quad -\e2 \int_{\dincl}\bn_\incl(\ps)\cdot\nabla {S_2}(-2p_2 \se_2+\e2 (\msigma(\pt)-\ps))\, \gi(\ps)\, d\sigma_{\ps}\\
&\quad +\int_{\dpO} {\G}(\e1 \p+\e1\e2 \pt, \py)\, \Phi_1[\be,\bde](\py)\, d\sigma_\py\\
&\quad +v_{S_2}[\dincl,\Phi_2[\be,\bde]](\pt)\\
&\quad -\int_{\dincl} {S_2}(-2p_2\se_2+\e2 (\msigma(\pt)-\ps))\, \Phi_2[\be,\bde](\ps)\, d\sigma_{\ps}\\
&\quad +\rho_\incl(1-\delta_2)\Phi_3[\be,\bde]\\
&\quad +\delta_1\int_{\dincl}\left({S_2}(\pt-\ps)-{S_2}(-2p_2\se_2+\e2 (\msigma(\pt)-\ps))\right)\,d\sigma_{\ps}\; \Phi_3[\be,\bde]
\end{split}
\]
for all $\pt\in\overline{\incl'}$ and for all $(\be, \bde)\in]-\be'',\be''[\times  \cV_{\lambda}$. Then,  by Proposition \ref{Ub} and by a standard argument (see the study of $\fL_{2}$ in the proof of Proposition  \ref{prop.N}) we verify that $\fV_{\incl'}$ is real analytic from $]-\be'',\be''[\times  \cV_{\lambda}$ to $\Ca{1}(\overline{\incl'})$. The validity of \eqref{Ve1gamma.eq1} follows by Remark \ref{micro_e1}. By a  straightforward computation and by Proposition \ref{000l} one verifies that
\begin{equation}\label{Ve1gamma.eq3}
\begin{split}
\fV_{\incl'}[\mathbf{0},(0,\lambda)](\pt)&= \go(0)-w^e_{S_2}[\dincl,\gi](\pt)+v_{S_2}[\dincl,\Phi_2[\mathbf{0},(0,\lambda)]](\pt)+\rho_\incl(1-\lambda)\,\Phi_3[\mathbf{0},(0,\lambda)]
\end{split}
\end{equation}
for all $\pt\in\overline{\incl'}$. Then, we deduce that the right hand side of \eqref{Ve1gamma.eq3} equals $\gi$ on $\partial\incl$ by Proposition \ref{000l} and by the jump properties of the double layer potential. Hence, by the decaying properties at $\infty$ of the single and double layer potentials and by the uniqueness of the solution of the exterior Dirichlet problem, we deduce the validity of \eqref{Ve1gamma.eq2}.\end{proof}

\vspace{\baselineskip}

\subsubsection{Energy integral}

We turn to consider the behavior of the energy integral $\displaystyle\int_{\dom_{\be(\eta)}}\left|\nabla u_{\be(\eta)}\right|^2\,d\px$ for $\eta$ close to $0$.

\begin{thm}\label{Enge2}
Let the assumptions of Theorem \ref{analytic} hold.  Then there exist $\be^{\fE}\in \bIe{*}$ and a real analytic function
\[
\fE:\bJe{\fE}\times\cV_{\lambda}\to\mathbb{R}
\]
such that
\begin{equation}\label{Enge2.eq1}
\int_{\dom_{\be(\eta)}}\left|\nabla u_{\be(\eta)}\right|^2\,d\px=\fE\bigl(\be(\eta), \bde(\eta)\bigr)\,,\qquad\forall\eta\in]0,\eta^\fE[\,,
\end{equation}
where the latter equality holds for all functions $\eta\mapsto\be(\eta)$ from $]0,1[$ to  $\bIea$  which satisfy  \eqref{eq:gam1}  and \eqref{eq:gam2} and with $\eta\mapsto\bde(\eta)$ as in \eqref{eq:bfdelta}, and for all $\eta^\fE \in ]0,1[$ such that
\[
\bigl(\be(\eta),\bde(\eta)\bigr)\in \bIe{\fE}\times\cV_\lambda\,,\qquad\forall \eta\in]0,\eta^\fE[\,.
\]
In addition,
\begin{equation}\label{Enge2.eq2}
\fE(\b0,(0,\lambda))=\int_{\Omega}|\nabla u_0|^2\,d\px+\int_{\mathbb{R}^2\setminus\incl}\left|\nabla v_0\right|^2\,d\px\,.
\end{equation}
\end{thm}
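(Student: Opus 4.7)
The strategy is to mirror the proof of Theorem \ref{Enge3g}, adapted to the two-dimensional regime with its additional parameter $\bde$ and the extra term $\delta_1\Phi_3$ appearing in the representation from Remark \ref{macro_e1}. First, by the divergence theorem applied to \eqref{bvpe}, I would split the energy into a macroscopic and a microscopic boundary flux:
\begin{equation*}
\int_{\dom_{\be(\eta)}}|\nabla u_{\be(\eta)}|^2\,d\px=\int_{\dO}\go\,\bn_\dom\cdot\nabla u_{\be(\eta)}\,d\sigma-\int_{\dincl_{\be(\eta)}}\gi\Bigl(\tfrac{\px-\e1\p}{\e1\e2}\Bigr)\bn_{\incleps}(\px)\cdot\nabla u_{\be(\eta)}(\px)\,d\sigma_\px.
\end{equation*}

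For the microscopic piece, the change of variable $\px=\e1\p+\e1\e2\pt$ produces an arc-length factor $\e1\e2$ that cancels the $1/(\e1\e2)$ arising from the chain rule applied to $\nabla u_{\be(\eta)}$; choosing $\incl'$ as in Theorem \ref{Ve1gamma} with $\partial\incl\subseteq\overline{\incl'}$, this contribution becomes $-\int_{\partial\incl}\gi\,\bn_\incl\cdot\nabla\fV_{\incl'}[\be,\bde]\,d\sigma$, which is real analytic in $(\be,\bde)$ by Theorem \ref{Ve1gamma}. For the macroscopic piece, I would insert the five-term representation of Remark \ref{macro_e1} for $\nabla u_{\be(\eta)}$ on $\dO$. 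The three contributions coming from integrals over $\partial\incleps$ (namely the double layer of $\gi$, the single layer of $\Phi_2[\be,\bde]$, and the $\delta_1\Phi_3[\be,\bde]$ term) can be reorganized by Fubini together with the symmetry \eqref{Gsym} and the boundary vanishing \eqref{Gzero}, exactly as in the proof of Theorem \ref{Enge3g}, so that each becomes an integral over $\partial\incl$ pairing $w_G[\dpO,\go](\e1\p+\e1\e2\ps)$ with the appropriate density. By Lemma \ref{wnear0}, $w_G[\dpO,\go]$ is harmonic, hence real analytic, in a neighborhood of $0$; combined with the analyticity of $\Phi_1,\Phi_2,\Phi_3$ from Theorem \ref{analytic} and the classical continuity of the normal derivative of $v_G[\dO,\cdot]$ (used for the $\Phi_1$ contribution), this delivers a real analytic dependence on $(\be,\bde)$ for every piece. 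Summing the macroscopic and microscopic parts defines the required map $\fE$ on $\bJe{\fE}\times\cV_\lambda$, and \eqref{Enge2.eq1} follows.

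To prove \eqref{Enge2.eq2}, I would evaluate at the singular point $(\b0,(0,\lambda))$: Proposition \ref{000l} gives $\Phi_1[\b0,(0,\lambda)]=0$, while the prefactors $\e1\e2$ and $\delta_1$ vanish there, and the identities $G(\cdot,0)=0$ and $w_G[\dpO,\go](0)=0$ (which follow at once from $\msigma(0)=0$) annihilate the remaining macroscopic terms, so that the macroscopic flux collapses to $\int_{\dO}\go\,\bn_\dom\cdot\nabla u_0\,d\sigma=\int_\dom|\nabla u_0|^2\,d\px$ by the divergence theorem. The microscopic flux reduces via \eqref{Ve1gamma.eq2} to $-\int_{\partial\incl}\gi\,\bn_\incl\cdot\nabla v_0\,d\sigma$. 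The main obstacle is to identify this last integral with $\int_{\mathbb{R}^2\setminus\incl}|\nabla v_0|^2\,d\px$: unlike the higher-dimensional case, in $\mathbb{R}^2$ the function $v_0$ is merely bounded at infinity, so a direct divergence-theorem argument leaves a residual boundary term at infinity. I would apply the divergence theorem on $\Bn{R}\setminus\incl$ and dominate the contribution on $\partial\Bn{R}$ by means of the classical asymptotic expansion $v_0=c_0+O(|\px|^{-1})$ with $|\nabla v_0|=O(|\px|^{-2})$, valid for bounded harmonic functions exterior to a compact planar set; the resulting $O(R^{-1})$ term vanishes as $R\to\infty$, yielding \eqref{Enge2.eq2}.
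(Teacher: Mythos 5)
Your proof is correct and follows essentially the same decomposition as the paper: split the energy by the divergence theorem into fluxes on $\dO$ and $\partial\incleps$, insert the representation of Remark \ref{macro_e1} into the macroscopic flux, use Fubini, \eqref{Gsym}, \eqref{Gzero} to rewrite the contributions from $\partial\incleps$ as pairings against $w_G[\dpO,\go]$, invoke Lemma \ref{wnear0} and Theorem \ref{analytic} for real analyticity, and use Theorem \ref{Ve1gamma} for the microscopic flux after the rescaling that cancels the $\e1\e2$ arc-length factor against the chain-rule factor.

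Where you deviate from the paper is in how the $\Phi_2$ contribution (the piece $\fE_3$, in the paper's notation, $\int_{\partial\incl}\Phi_2[\be,\bde](\ps)\,w_G[\dpO,\go](\e1\p+\e1\e2\ps)\,d\sigma_\ps$) is killed at $(\b0,(0,\lambda))$. You argue directly: since $\msigma(0)=0$, one has $G(0,\cdot)\equiv 0$ and hence $\nabla_\py G(0,\cdot)\equiv 0$, so $w_G[\dpO,\go](0)=0$ and the whole term vanishes. This computation is sound. The paper instead invokes the jump formula of Proposition \ref{jumps} to write $w_G[\dpO,\go](0)=\go(0)$, and then uses that $\Phi_2[\b0,(0,\lambda)]$ has zero mean to conclude $\fE_3(\b0,(0,\lambda))=\go(0)\int_{\partial\incl}\Phi_2[\b0,(0,\lambda)]\,d\sigma=0$. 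Note that the jump relation in Proposition \ref{jumps} concerns the interior limit $w^i_G[\partial\domgen_+,\psi]$ on $\partial_0\domgen_+$, which differs from the direct value of $w_G[\dpO,\go]$ at the interior point $0\in\d0O$ (the integral over $\d0O$ carries the jump, while the integral over $\dpO$, which is what appears here, is continuous across the flat part and equals $0$ there). Your direct computation is therefore the cleaner route, and both lead to the same conclusion. A final point in your favor: you explicitly justify the divergence-theorem step $-\int_{\partial\incl}\gi\,\bn_\incl\cdot\nabla v_0\,d\sigma=\int_{\mathbb{R}^2\setminus\incl}|\nabla v_0|^2\,d\px$ by controlling the boundary integral on $\partial\Bn{R}$ using the classical expansion of a bounded exterior harmonic function in the plane ($v_0=c_0+O(|\px|^{-1})$, $|\nabla v_0|=O(|\px|^{-2})$), a subtlety specific to $n=2$ which the paper passes over in silence.
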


\begin{proof}  By the divergence theorem and by \eqref{bvpe} we have
\begin{equation}\label{Enge2.eq7}
\begin{split}
\int_{\domeps}\left|\nabla \ueps\right|^2\,d\px&=\int_{\dO}\ueps\;\bn_\Omega\cdot\nabla \ueps\,d\sigma-\int_{\dincl_\be}\ueps\;\bn_{\incleps}\cdot\nabla\ueps \,d\sigma\\
&=\int_{\dO}\go\;\bn_\Omega\cdot\nabla \ueps\,d\sigma-\int_{\dincl_\be}\gi\Big(\frac{\px-\e1 \p}{\e1 \e2 }\Big)\bn_{\incleps}(\px)\cdot\nabla\ueps(\px) \,d\sigma_\px
\end{split}
\end{equation}
for all $\be\in\bIea$. Then we take a function $\eta\mapsto\be(\eta)$  from $]0,1[$ to  $\bIea$ and a function $\eta\mapsto\bde(\eta)$ from $]0,1[$ to  $\R^2$ which satisfy \eqref{eq:gam1} -- \eqref{eq:bfdelta}. By Remark \ref{macro_e1} we have
\begin{equation}\label{Enge2.eq8}
\int_{\partial\dom}\go\;\bn_\Omega\cdot\nabla u_{\be(\eta)}\,d\sigma=I_{1,\eta}+\e1(\eta)\e2(\eta)I_{2,\eta}+I_{3,\eta}+\delta_1(\eta)I_{4,\eta}
\end{equation}
 for all $\eta \in ]0,1[$ such that $(\be(\eta),\bde(\eta))\in \bIe*\times \cV_\lambda$, where
\begin{align*}
&I_{1,\eta}=\int_{\dO}\go\;\bn_\Omega\cdot\nabla\uzero \,d\sigma+ \int_{\dO}\go\;\bn_\Omega\cdot\nabla v_{{\G}}\bigl[\dO,\Phi_1\bigl[\be(\eta),\bde(\eta)\bigr]\bigr]\,d\sigma\,,\\
\nonumber
&I_{2,\eta}= -\int_{\dO}\go(\px)\;\bn_\Omega(\px)\cdot\nabla_\px\int_{\dincl}\bn_\incl(\ps)\cdot(\nabla_\py {\G})(\px,\e1(\eta) \p+ \e1(\eta)\e2(\eta) \ps)\, \gi(\ps)\, d\sigma_{\ps}\,d\sigma_\px\,,\\
\nonumber
&I_{3,\eta}=  \int_{\dO}\go(\px)\;\bn_\Omega(\px)\cdot\nabla_\px\int_{\dincl}{\G}(\px,\e1(\eta) \p+ \e1(\eta)\e2(\eta) \ps)\,\Phi_2\bigl[\be(\eta),\bde(\eta)\bigr](\ps)\,d\sigma_{\ps}\,d\sigma_\px\,,\\
\nonumber
&I_{4,\eta}=  \Phi_3\bigl[\be(\eta),\bde(\eta)\bigr]\int_{\partial\dom}\go(\px)\;\bn_\Omega(\px)\cdot\nabla_\px\int_{\dincl}{\G}(\px,\e1(\eta) \p+  \e1(\eta)\e2(\eta) \ps)\,d\sigma_{\ps}\,d\sigma_\px\,.
\end{align*}
By the Fubini's theorem and by \eqref{Gsym}  it follows that
\[
\begin{split}
&I_{2,\eta}= -\int_{\partial\incl}\gi(\ps)\, \bn_\incl(\ps)\cdot\nabla_\py\left(\int_{\dO}\go(\px)\;\bn_\dom(\px)\cdot\nabla_\px\left({\G}(\py,\px)\right)\,d\sigma_\px\right)_{\py=\e1(\eta) \p+ \e1(\eta)\e2(\eta) \ps} d\sigma_{\ps}\,,\\
&I_{3,\eta}= \int_{\partial\incl}\Phi_2\bigl[\be(\eta),\bde(\eta)\bigr](\ps)\int_{\dO}\go(\px)\bn_\Omega(\px)\;\cdot\nabla_\px\left({\G}(\e1(\eta) \p+ \e1(\eta)\e2(\eta) \ps,\px)\right)\,d\sigma_\px\,d\sigma_{\ps}\,,\\
&I_{4,\eta}= \delta_1(\eta)\Phi_3\bigl[\be(\eta),\bde(\eta)\bigr]\int_{\partial\incl}\int_{\dO}\go(\px)\;\bn_\Omega(\px)\cdot(\nabla_\px\,{\G})(\px,\e1(\eta) \p+  \e1(\eta)\e2(\eta) \ps)\,d\sigma_\px\,d\sigma_{\ps}\,,
\end{split}
\]
and, by the definition of the double layer potential derived by $G$ (cf.~Definition \ref{def.SDG+}) and by \eqref{Gzero}, we deduce that
\begin{equation}\label{Enge2.eq10}
\begin{split}
&I_{2,\eta}= -\int_{\partial\incl}\gi(\ps)\, \bn_\incl(\ps)\cdot\nabla w_G[\dpO,\go](\e1(\eta) \p+ \e1(\eta)\e2(\eta) \ps)\,d\sigma_\ps\,,\\
&I_{3,\eta}= \int_{\partial\incl}\Phi_2\bigl[\be(\eta),\bde(\eta)\bigr](\ps)\, w_G[\dpO,\go](\e1(\eta) \p+ \e1(\eta)\e2(\eta) \ps)\,d\sigma_{\ps}\,,\\
&I_{4,\eta}=\Phi_3\bigl[\be(\eta),\bde(\eta)\bigr]\int_{\partial\incl}w_G[\dpO,\go](\e1(\eta) \p+ \e1(\eta)\e2(\eta) \ps)\,d\sigma_{\ps}\,,
\end{split}
\end{equation}
 for all $\eta \in ]0,1[$ such that $(\be(\eta),\bde(\eta))\in \bIe*\times \cV_\lambda$.

Now we choose a specific domain   $\incl'$ which satisfies the conditions  in Theorem \ref{Ve1gamma} and which in addition contains the boundary of $\incl$ in its closure, namely such that $\partial\incl\subseteq\overline{\incl'}$. Then, for such $\incl'$,  we take $\be^{\fE}\equiv\be''$ with $\be''$ as in Theorem \ref{Ve1gamma}. By \eqref{Ve.eq1} and by a change of variable in the integral, we have
\begin{equation}\label{Enge2.eq11}
\int_{\dincl_\be}\gi\Big(\frac{\px-\e1(\eta) \p}{\e1(\eta) \e2(\eta) }\Big)\bn_{\omega_{\be(\eta)}}(\px)\cdot\nabla u_{\be(\eta)}(\px) \,d\sigma_\px=\int_{\partial\incl} \gi\; \bn_\incl\cdot \nabla \fV_{\incl'}[\be(\eta),\bde(\eta)]\,d\sigma,
\end{equation}
for all $\eta \in ]0,1[$ such that $(\be(\eta),\bde(\eta))\in \bIe{\fE}\times \cV_\lambda$.

Then we define
\begin{align*}
&\fE_1(\be,\bde)\equiv\int_{\partial\Omega} \go\; \bn_\dom\cdot \nabla (u_0+v_G[\partial_+\dom,\Phi_1[\be,\bde]])\,d\sigma\,,\\
&\fE_2(\be,\bde)\equiv-\int_{\partial\incl} \gi(\ps)\,\bn_{\incl}(\ps)\cdot\nabla w_G[\partial_+\Omega,\go](\eps_1\p+\eps_1\eps_2 \ps)\, d\sigma_{\ps}\,,\\
&\fE_3(\be,\bde)\equiv\int_{\partial\incl}\Phi_2[\be,\bde](\ps)\, w_G[\partial_+\Omega,\go](\e1\p+\e1\e2 \ps)\, d\sigma_{\ps}\,,\\
&\fE_4(\be,\bde)\equiv\Phi_3[\be,\bde]\int_{\partial\incl}  w_G[\partial_+\Omega,\go](\e1\p+\e1\e2 \ps)\, d\sigma_{\ps}\,,\\
&\fE_5(\be,\bde)\equiv-\int_{\partial\incl} \gi\; \bn_\incl\cdot \nabla \fV_{\incl'}[\be,\bde]\,d\sigma
\end{align*}
and
\begin{equation}\label{Enge2.eq3}
\fE(\be,{\bde})\equiv \fE_1(\be,\bde)+\eps_1\eps_2 \fE_2(\be,\bde)+\fE_3(\be,\bde)+\delta_1 \fE_4(\be,\bde)+\fE_5(\be,\bde)\,
\end{equation}
for all $(\be,{\bde}) \in]-\be^\fE,\be^\fE[\times \cV_{\lambda}$.
Now the validity \eqref{Enge2.eq1} follows by \eqref{Enge2.eq7}--\eqref{Enge2.eq11}. In addition, by Theorems \ref{analytic}  and  \ref{Ve1gamma}, by Lemma \ref{wnear0}  (which holds also for $n=2$), and by a standard argument (see in the proof of  Proposition \ref{prop.N} the study of $\fL_2$),  we can prove that the $\fE_i$'s are real analytic from $]-\be^\fE,\be^\fE[\times \cV_{\lambda}$ to $\mathbb{R}$. Hence $\fE$ is real analytic from  $]-\be^\fE,\be^\fE[\times \cV_{\lambda}$ to $\mathbb{R}$.

To complete the proof we have to verify \eqref{Enge2.eq2}. We begin by observing  that $\Phi_1[\b0,(0,\lambda)]=0$ by Proposition \ref{000l} and Theorem \ref{analytic}. Thus
\begin{equation}\label{Enge2.eq4}
\fE_1(\b0,(0,\lambda))=\int_{\partial\dom}\go\; \bn_\dom\cdot \nabla u_0\, d\sigma=\int_{\Omega}|\nabla u_0|^2\,d\px\,.
\end{equation}
By Lemma \ref{jumps}, we have $w_G[\partial_+\Omega,\go](0)=\go(0)$. Since $\Phi_2[\b0,(0,\lambda)]$ belongs to $\Ca{1}_{\#}(\partial\omega)$, we compute
\begin{equation}\label{Enge2.eq5}
 \fE_3(\b0,(0,\lambda))= \go(0)\,  \int_{\partial\incl}\Phi_2[\b0,(0,\lambda)]\, d\sigma=0\,.
\end{equation}
Then, by \eqref{Ve1gamma.eq2} and by the divergence theorem,   we have
\begin{equation}\label{Enge2.eq6}
\fE_4(\b0,(0,\lambda))=-\int_{\partial\incl} \gi\; \bn_\incl\cdot \nabla v_0\,d\sigma=\int_{\mathbb{R}^2\setminus\incl}\left|\nabla v_0\right|^2\,d\px\,.
\end{equation}
We conclude by \eqref{Enge2.eq3} -- \eqref{Enge2.eq6}.
\end{proof}

\vspace{\baselineskip}

Finally, in the following Theorem \ref{Fluxe2} we consider the total flux on $\partial\Omega$.

\begin{thm}\label{Fluxe2}
Let the assumptions of Theorem \ref{analytic} hold.   Then there exist $\be^{\fF}\in \bIe{*}$ and a real analytic function
\[
\fF:\bJe{{\fF}}\times\cV_{\lambda}\to\mathbb{R}
\]
such that
\[
\int_{\partial\dom}\bn_\Omega\cdot\nabla u_{\be(\eta)}\,d\sigma={\fF}\bigl(\be(\eta), \bde(\eta)\bigr)\,,\qquad\forall \eta\in]0,\eta^\fF[
\]
where the latter equality holds for all functions $\eta\mapsto\be(\eta)$ from $]0,1[$ to  $\bIea$  which satisfy  \eqref{eq:gam1}  and \eqref{eq:gam2} and with $\eta\mapsto\bde(\eta)$ as in \eqref{eq:bfdelta}, and for all $\eta^\fF\in ]0,1[$ such that
\begin{equation}\label{Fluxe2.eq1}
\bigl(\be(\eta),\bde(\eta)\bigr)\in \bIe{\fF}\times\cV_\lambda\,,\qquad\forall \eta\in]0,\eta^\fF[\,.
\end{equation}
 Furthermore,
\[
{\fF}(\b0,(0,\lambda))=0\,.
\]
\end{thm}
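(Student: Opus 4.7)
The plan is to mimic the strategy used in Theorem \ref{Enge2}, but to avoid the boundary integral on $\partial\Omega$ entirely by exploiting harmonicity. By the divergence theorem applied in $\dom_{\be(\eta)}$, for every $\eta$ in the admissible range,
\[
\int_{\partial\dom}\bn_\Omega\cdot\nabla u_{\be(\eta)}\,d\sigma
=\int_{\partial\incl_{\be(\eta)}}\bn_{\incl_{\be(\eta)}}\cdot\nabla u_{\be(\eta)}\,d\sigma.
\]
Thus I would rewrite the flux on the \emph{outer} boundary as a flux on the \emph{inner} boundary, which fits naturally with the microscopic representation formula of Theorem \ref{Ve1gamma}.

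Next I would perform the change of variable $\py=\e{1}(\eta)\p+\e{1}(\eta)\e{2}(\eta)\pt$. Writing $\tilde{u}(\pt)\equiv u_{\be(\eta)}(\e{1}(\eta)\p+\e{1}(\eta)\e{2}(\eta)\pt)$, one has $\nabla \tilde u(\pt)=\e{1}(\eta)\e{2}(\eta)\,\nabla u_{\be(\eta)}(\py)$ and $d\sigma_\py=\e{1}(\eta)\e{2}(\eta)\,d\sigma_\pt$, while the normals are unchanged by the homothety. Hence the two factors $\e{1}(\eta)\e{2}(\eta)$ cancel and we obtain
\[
\int_{\partial\incl_{\be(\eta)}}\bn_{\incl_{\be(\eta)}}\cdot\nabla u_{\be(\eta)}\,d\sigma
=\int_{\partial\incl}\bn_\incl\cdot\nabla\tilde u\,d\sigma
=\int_{\partial\incl}\bn_\incl\cdot\nabla \fV_{\incl'}\bigl[\be(\eta),\bde(\eta)\bigr]\,d\sigma,
\]
where in the last step I choose $\incl'$ as in Theorem \ref{Ve1gamma} containing $\partial\incl$ in its closure, and take $\be^{\fF}\equiv\be''$ with $\be''$ from that theorem. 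I then define
\[
\fF(\be,\bde)\equiv\int_{\partial\incl}\bn_\incl\cdot\nabla \fV_{\incl'}[\be,\bde]\,d\sigma,\qquad\forall(\be,\bde)\in\,]-\be^\fF,\be^\fF[\,\times\cV_\lambda.
\]
Since $\fV_{\incl'}$ is real analytic from $]-\be^\fF,\be^\fF[\,\times\cV_\lambda$ to $\Ca{1}(\overline{\incl'})$ (Theorem \ref{Ve1gamma}) and the functional $f\mapsto\int_{\partial\incl}\bn_\incl\cdot\nabla f\,d\sigma$ is linear and continuous from $\Ca{1}(\overline{\incl'})$ to $\R$, the map $\fF$ is real analytic, as required.

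Finally, for the value at the singular point I use $\fV_{\incl'}[\b0,(0,\lambda)]=v_{0|\overline{\incl'}}$ from \eqref{Ve1gamma.eq2}, so that
\[
\fF(\b0,(0,\lambda))=\int_{\partial\incl}\bn_\incl\cdot\nabla v_0\,d\sigma.
\]
The main (mild) obstacle is to show that this integral vanishes. The key observation is that $v_0$ is bounded at infinity in $\R^2$ and harmonic on $\R^2\setminus\overline{\incl}$, so its expansion at infinity cannot contain a $\log|\py|$ term; it is therefore of the form $v_0(\py)=c_\infty+O(|\py|^{-1})$ with $\nabla v_0(\py)=O(|\py|^{-2})$. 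Applying the divergence theorem on $\Bn R\setminus\overline{\incl}$ and letting $R\to\infty$ yields $\int_{\partial\incl}\bn_\incl\cdot\nabla v_0\,d\sigma=0$, which gives $\fF(\b0,(0,\lambda))=0$ and completes the proof.
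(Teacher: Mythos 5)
Your proposal is correct and follows essentially the same path as the paper: divergence theorem to move the flux from $\partial\Omega$ to $\partial\incleps$, rescaling to land on $\partial\incl$ (with the factors $\e{1}\e{2}$ cancelling), definition of $\fF$ via $\fV_{\incl'}$ from Theorem~\ref{Ve1gamma}, and use of \eqref{Ve1gamma.eq2} at the singular point. The only difference is that the paper dispatches the vanishing of $\int_{\partial\incl}\bn_\incl\cdot\nabla v_0\,d\sigma$ by invoking that $v_0$ is harmonic at infinity, whereas you spell out the underlying expansion $v_0=c_\infty+O(|\py|^{-1})$, $\nabla v_0=O(|\py|^{-2})$ and pass to the limit in the annulus — a correct and slightly more self-contained justification of the same fact.
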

\begin{proof} 
Let $\eta\mapsto\be(\eta)$  from $]0,1[$ to  $\bIea$ and $\eta\mapsto\bde(\eta)$ from $]0,1[$ to  $\R^2$ which satisfy \eqref{eq:gam1} -- \eqref{eq:bfdelta}. Then by the divergence theorem we have
\[
\int_{\partial\dom}\bn_\Omega\cdot\nabla u_{\be(\eta)}\,d\sigma=\int_{\partial\incleps}\bn_{\incleps}\cdot\nabla u_{\be(\eta)}\,d\sigma
\]
 for all $\eta \in ]0,1[$ such that $(\be(\eta),\bde(\eta))\in \bIe*\times \cV_\lambda$. Then we take $\incl'$ which satisfies the conditions  in Theorem \ref{Ve1gamma} and such that $\partial\incl\subseteq\overline{\incl'}$. Then, for such $\incl'$,  we take $\be^{\fF}\equiv\be''$ with $\be''$ as in Theorem \ref{Ve1gamma} and we deduce that
\[
\int_{\partial\dom}\bn_\Omega\cdot\nabla u_{\be(\eta)}\,d\sigma=\int_{\partial\incl}\bn_{\incl}\cdot\nabla \fV_{\incl'}[\be(\eta),\bde(\eta)]\,d\sigma
\]
for all $\eta \in ]0,1[$ such that $(\be(\eta),\bde(\eta))\in \bIe{\fF}\times \cV_\lambda$. Accordingly, we define
\[
\fF(\be,\bde)\equiv\int_{\partial\incl}\bn_{\incl}\cdot\nabla \fV_{\incl'}[\be,\bde]\,d\sigma\,,\qquad\forall(\be,\bde)\in\bJe{{\fF}}\times\cV_{\lambda}\,.
\]
Then the equality \eqref{Fluxe2.eq1} holds true. By Theorem \ref{Ve1gamma}, one deduces  that $\fF$ is real analytic from $\bJe{{\fF}}\times\cV_{\lambda}$ to $\R$. Finally, by \eqref{Ve1gamma.eq2} we have
\[
{\fF}(\b0,(0,\lambda))=\int_{\partial\incl}\bn_{\incl}\cdot\nabla \fV_{\incl'}[\b0,(0,\lambda)]\,d\sigma=\int_{\partial\incl}\bn_{\incl}\cdot\nabla v_0\,d\sigma
\]
and the latter integral vanishes because $v_0$ is harmonic at infinity (see \eqref{v0}).\end{proof}

\vspace{\baselineskip}

\section{Asymptotic behavior of $\ueps$ in dimension $n=2$ for $\e1$ close to $0$ and $\e2=1$}\label{e2=1}

 As noticed in the beginning of Section \ref{n=2}, when studying singular perturbation problems in perforated domains in the two-dimensional plane one would expect to have some logarithmic terms in the asymptotic formulas. Such logarithmic terms  are generated by the specific behavior of the fundamental solution upon rescaling (cf.~equality \eqref{n=2.eq1}). However, for our problem there will be no logarithmic term   when $\e2=1$ is fixed and we just consider the dependence upon $\e1$. Indeed, for $\e2=1$, we have
 \[
 {S_2}(\e1 \p+\e1\e2 \pt)={S_2}(\p+\pt)+\frac{\log\e1}{2\pi}
 \]
and thus
\[
{\G}(\e1 \p+\e1\e2 \pt,\e1 \p+\e1\e2 \ps)={S_2}(\pt-\ps) -{S_2}(-2p_2\se_2+(\msigma(\pt)-\ps))
\]
for all $\e1>0$. Accordingly the rescaling of $G$ gives rise to no logarithmic term.

Since we are dealing here with a one parameter problem, we find convenient to take $\varepsilon\equiv\e1$, $\ea\equiv\eps^{\rm ad}_1$, $\Omega_\varepsilon\equiv\Omega_{\e1,1}$, $\omega_\varepsilon\equiv\omega_{\e1,1}$, and $u_\varepsilon\equiv u_{\e1,1}$ for all $\varepsilon\in ]-\ea,\ea[$.

\subsection{Defining the operator $\fN$}
Let $\varepsilon \in ]-\ea,\ea[$. By Proposition \ref{f=WVg} we can look for solutions of problem \eqref{bvpe} under the form
\[
w^i_{{\G}}[\partial \dome,u_{\varepsilon|\partial\dome}]+v_{{\G}}[\dO,\phi_1]+v_{{\G}}[\partial \incle,\phi_2]+v_{{\G}}[\partial \incle,1]\, \xi
\]
for suitable $(\phi_1,\phi_2,\xi)\in \Ca{0}_+(\dO)\times \Ca{0}_\#(\partial \incle)\times\mathbb{R}$.
We  split the integral on $\partial \dome$ as the sum of integrals on $\dO$ and on $\partial \incle$, we add and subtract $v_{G}^i[\dO,\bn_{\dom}\cdot \nabla u_{0|\dO}]$  to obtain the new form
\[
\begin{split}
w^i_{{\G}}[\dO,\go]-v_{G}^i[\dO,\bn_{\dom}\cdot \nabla &u_{0|\dO}]-w^e_{{\G}}\Big[\partial\incle,\gi\big(\frac{\cdot-\eps\p}{\eps}\big)\Big]\\&+v_{{\G}}[\dO,\phi_1+\bn_{\dom}\cdot \nabla u_{0|\dO}]+v_{{\G}}[\partial \incle,\phi_2]+v_{{\G}}[\partial \incle,1]\, \xi\, .
\end{split}
\]
Since
\[
 \uzero=w_{G}^i[\dO,\go]-v_{G}^i[\dO,\bn_{\dom}\cdot \nabla u_{0|\dO}]\, ,
\]
we finally look for solutions of \eqref{bvpe} in the form
\begin{equation}\label{soln2e2=1}
\begin{split}
&\uzero (\px)-\eps \int_{\dincl}\bn_\incl(\ps)\cdot(\nabla_\py {\G})(\px,\eps \p+\eps \ps)\, \gi(\ps)\, d\sigma_{\ps} + v_{{\G}}[\dO,\mu_1](\px)\\
&\quad + \int_{\dincl}{\G}(\px,\eps \p+\eps \ps)\,\mu_2(\ps)\,d\sigma_{\ps}
 + \xi \int_{\dincl}{\G}(\px,\eps \p+\eps \ps)\,d\sigma_{\ps},\qquad\forall \px\in\dome
\end{split}
\end{equation}
for suitable $({\bm},\xi)\in \sB\times\mathbb{R}$ ensuring that the boundary conditions of \eqref{bvpe} are satisfied (here as in Section \ref{n=2} we take $\sB\equiv\Ca{0}_+(\dO)\times \Ca{0}_{\#}(\dincl)$).

The (extension to $\overline{\dome}$ of the) harmonic function in \eqref{soln2e2=1} solves problem \eqref{bvpe} if and only if the pair $({\bm},\xi)$ solves
\begin{equation}\label{Mmu1mu2xi=0}
{\fN}[\eps,{\bm},\xi]=\b0\, ,
\end{equation}
with  ${\fN}[\eps,{\bm},\xi]\equiv ({\fN}_1[\eps,{\bm},\xi],{\fN}_2[\eps,{\bm},\xi])$ defined by
\[\begin{split}
{\fN}_1[\eps,{\bm},\xi](\px)&\equiv v_{{\G}}[\dO,\mu_1](\px)\\
&\quad +\int_{\dincl}{\G}(\px,\eps \p+\eps \ps)\, \mu_2(s)\, d\sigma_{\ps}\\
&\quad +\xi\int_{\dincl}{\G}(\px,\eps \p+\eps \ps)\, d\sigma_{\ps}\\
&\quad  - \eps \int_{\dincl}\bn_\incl(\ps)\cdot(\nabla_\py {\G})(\px,\eps \p+\eps \ps)\, \gi(\ps)\, d\sigma_{\ps},\qquad\forall \px\in\dpO\,,\\
{\fN}_2[\eps,{\bm},\xi](\pt)&\equiv v_{S_2}[\dincl,\mu_2](\pt)\\
&\quad  - \int_{\dincl} {S_2}(-2p_2\se_2+\msigma(\pt)-\ps)\, \mu_2(\ps)\, d\sigma_{\ps}\\
&\quad +\xi\int_{\dincl}\left({S_2}(\pt-\ps)-{S_2}(-2p_2\se_2+\msigma(\pt)-\ps)\right)d\sigma_{\ps}\\
&\quad +\int_{\dpO} {\G}(\eps \p+\eps \pt, \py)\, \mu_1(\py)\, d\sigma_\py\\
&\quad  -  \int_{\dincl}\bn_\incl(\ps)\cdot\nabla {S_2}(-2p_2\se_2+\msigma(\pt)-\ps)\, \gi(\ps)\, d\sigma_{\ps}\\
&\quad  + U_0  (\eps \p+\eps \pt)-w_{S_2}[\dincl,\gi](\pt) - \frac{\gi(\pt)}2,\qquad \qquad\qquad\qquad\forall \pt\in\dincl\,.
\end{split}
\]
Thus, it suffices to find a solution of \eqref{Mmu1mu2xi=0} to solve problem \eqref{bvpe}.
Therefore, we now analyze the behavior of the solutions of the system of integral equations \eqref{Mmu1mu2xi=0}.

\subsection{Real analyticity of ${\fN}$}

In the following Proposition \ref{M} we state the real analyticity of ${\fN}$. We omit the proof, which is a straightforward modification of the proof of Propositions  \ref{prop.N} and \ref{Ln=2}.

\begin{prop}[Real analyticity of ${\fN}$]\label{M}
The map
$$
 \begin{array}{rcl}
]-\e0,\e0[ \times \sB\times \R& \to& \sV^{1,\alpha}(\dpO)\times \Ca{1}(\dincl)\\
(\eps,{\bm},\xi)& \mapsto & {\fN}[\eps,{\bm},\xi]
 \end{array}
 $$
 is real analytic.
\end{prop}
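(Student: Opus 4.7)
The plan is to split the map into its two components $\fN_1$ and $\fN_2$ and establish real analyticity of each separately, by directly transcribing the arguments already developed for $\fL$ in Proposition \ref{prop.N} and for $\fM$ in Proposition \ref{Ln=2}. Since the present case corresponds to freezing $\e2=1$ in the setting of Section~\ref{n=2} and dropping the $\bde$-variable (the logarithmic term $1/\log(\e1\e2)$ does not enter because \eqref{n=2.eq1} reduces to a term without $\log\eps$ when $\e2=1$), essentially all analytic structure carries over with minor simplifications; the whole proof amounts to verifying the range condition for $\fN_1$ and checking real analyticity of each summand.

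For $\fN_1$, the key issue is the same one addressed in the proofs of Proposition~\ref{prop.N} and Proposition~\ref{Ln=2}: verify that the range is contained in $\sV^{1,\alpha}(\dpO)$. First I would define the exterior function $U^e[\eps,\bm,\xi]$ on $\overline{\Rnp\setminus\dom}$ by the same formula as $\fN_1$ but with $v_G[\dO,\mu_1]$ replaced by $v_G^e[\dO,\mu_1]$. By the properties of the Green single layer potential, by the harmonicity of $G(\cdot,\py)$ away from $\py$ and $\msigma(\py)$, by the vanishing of $G$ on $\partial\Rnp$, and by Lemma \ref{vGatinfty}, this function belongs to $\Ca{1}_{\mathrm{loc}}(\overline{\Rnp\setminus\dom})$, is harmonic in $\Rnp\setminus\overline{\dom}$, vanishes on $\partial\Rnp\setminus\d0O$, and has the required decay at infinity. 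Hence by Proposition \ref{f=ue} its restriction $\fN_1[\eps,\bm,\xi]=U^e[\eps,\bm,\xi]_{|\dpO}$ lies in $\sV^{1,\alpha}(\dpO)$. Real analyticity of $\fN_1$ is then obtained term by term: the piece $\mu_1\mapsto v_G[\dO,\mu_1]_{|\dpO}$ is linear and continuous by the definition of $\sV^{1,\alpha}(\dpO)$ in Proposition \ref{V}; for the remaining piece
\[
\mathfrak{f}[\eps,\mu_2,\xi](\px)\equiv\int_{\dincl}G(\px,\eps\p+\eps\ps)\mu_2(\ps)\,d\sigma_\ps+\xi\int_{\dincl}G(\px,\eps\p+\eps\ps)\,d\sigma_\ps-\eps\int_{\dincl}\bn_\incl(\ps)\cdot(\nabla_\py G)(\px,\eps\p+\eps\ps)\gi(\ps)\,d\sigma_\ps,
\]
one repeats the device used for $\mathfrak{f}$ in Proposition~\ref{Ln=2}: write $\mathfrak{f}=v_G[\dO,\phi[\eps,\mu_2,\xi]]_{|\dpO}$ where $\phi[\eps,\mu_2,\xi]\equiv \bn_\dom\cdot\nabla u^e_{|\dO}-\bn_\dom\cdot\nabla u^i_{|\dO}$, $u^i$ being the interior Dirichlet solution in $\dom$ with boundary datum $E^{1,\alpha}\circ\mathfrak{f}[\eps,\mu_2,\xi]$ and $u^e$ being the exterior function on $\overline{\Rnp\setminus\dom}$ defined by the same integral formula as $\mathfrak{f}$. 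Real analyticity of $\phi$ from $]-\e0,\e0[\times\Ca{0}_\#(\dincl)\times\R$ into $\Ca{0}_+(\dO)$ follows from the analyticity of integral operators with real analytic kernel and no singularity (Lanza de Cristoforis and Musolino \cite[Prop.~4.1 (ii)]{LaMu13}), from the extension Lemma \ref{ext}, and from the continuity of the Dirichlet solution operator.

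For $\fN_2$, the analyticity into $\Ca{1}(\dincl)$ is a direct consequence of: the real analyticity of $U_0$ composed with $\pt\mapsto \eps\p+\eps\pt$ (Proposition \ref{Ub} and assumption \eqref{Ubassumption}); the classical mapping properties of the single and double layer potentials with kernel $S_2$ (cf.\ Lanza de Cristoforis and Rossi \cite[Thm.~3.1]{LaRo04} and Miranda \cite{Mi65}); and the real analyticity of integral operators with real analytic kernels and no singularity applied to the remaining terms involving $G(\eps\p+\eps\pt,\py)$ and $S_2(-2p_2\se_2+\msigma(\pt)-\ps)$ (note that $p_2>0$ keeps these kernels away from their singularities when $\pt,\ps\in\dincl$).

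The main step where one has to be attentive is the verification that $\fN_1$ actually lands in $\sV^{1,\alpha}(\dpO)$ and the explicit construction of the real analytic density $\phi[\eps,\mu_2,\xi]$; everything else is a routine specialization of Proposition~\ref{Ln=2}. In particular, since $\e2$ no longer varies and no $1/\log(\e1\e2)$ factor appears, there is no $\bde$-dependence to track and no singular limiting procedure to analyze, which makes this proposition strictly simpler than its two-parameter counterpart.
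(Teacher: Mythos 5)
Your proposal is correct and follows exactly the route the paper indicates: the paper's own "proof" of Proposition~\ref{M} is just the remark that it is a straightforward modification of the proofs of Propositions~\ref{prop.N} and~\ref{Ln=2}, and you have spelled out precisely that modification (verifying the range condition for $\fN_1$ via Proposition~\ref{f=ue}, constructing the density $\phi[\eps,\mu_2,\xi]$ as the jump of normal derivatives of interior and exterior harmonic extensions, and handling $\fN_2$ with the mapping properties of classical layer potentials together with Proposition~\ref{Ub} and \eqref{Ubassumption}). Nothing is missing.
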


 In the sequel we set
 \[
\tilde\incl\equiv \incl\cup(\msigma(\incl)-2p_2\se_2)\,.
\]
Then $\tilde\incl$ is an open subset of $\R^2$ of class $\Ca{1}$ with two connected components, $\incl$ and $\msigma(\incl)-2p_2\se_2$, and with boundary $\partial\tilde\incl$ consisting of two connected components,   $\dincl$ and $\partial\msigma(\incl)-2p_2\se_2$. One can also observe that $\tilde\incl$ is symmetric with respect to the horizontal axis $\R\times\{-p_2\}$. Then, for all functions $\phi$ from $\partial\omega$ to $\mathbb{R}$, we denote by $\tilde\phi$ the extension of $\phi$ to $\partial\tilde\omega$  defined by
\[
\tilde\phi(\pt)\equiv
\left\{
\begin{array}{ll}
\phi(\pt)&\text{if }\pt\in\partial\omega\,,\\
-\phi(\msigma(\pt)-2p_2\se_2)&\text{if }\pt\in\partial(\msigma(\incl)-2p_2\se_2)\,.
\end{array}
\right.
\]
In particular, the symbol $\tilde 1$ will denote the function  from $\partial\tilde\incl$ to $\mathbb{R}$ defined by
\[
\tilde 1(\pt)\equiv
\left\{
\begin{array}{ll}
1&\text{if }\pt\in\partial\omega\,,\\
-1&\text{if }\pt\in\partial(\msigma(\incl)-2p_2\se_2)\,.
\end{array}
\right.
\]
If $k\in\mathbb{N}$, then  we denote by $\Ca{k}_{\mathrm{odd}}(\partial\tilde\incl)$ the subspace of $\Ca{k}(\partial\tilde\incl)$ consisting of the functions $\psi$ such that $\psi(\pt)=-\psi(\msigma(\pt)-2p_2\se_2)$ for all $\pt\in\partial\tilde\incl$. The extensions $\tilde\phi$ belongs to $\Ca{k}_{\mathrm{odd}}(\partial\tilde\incl)$ for all $\phi\in \Ca{k}(\dincl)$, in particular $\tilde 1\in \Ca{k}_{\mathrm{odd}}(\partial\tilde\incl)$.  One can also prove that $v_{S_2}[\partial\tilde\incl,\psi]_{|\partial\tilde\incl}$ and $w_{S_2}[\partial\tilde\incl,\theta]_{|\partial\tilde\incl}$ belong to $\Ca{1}_{\mathrm{odd}}(\partial\tilde\incl)$ for all $\psi\in \Ca{0}_{\mathrm{odd}}(\partial\tilde\incl)$ and $\theta\in \Ca{1}_{\mathrm{odd}}(\partial\tilde\incl)$.

Then, by classical potential theory we have the following Lemma \ref{vSodd}.

\begin{lem}\label{vSodd}
The map from $\Ca{0}_{\#}(\dincl)\times\R$ to $\Ca{1}_{\mathrm{odd}}(\partial\tilde\incl)$ which takes $(\mu,\xi)$ to
\[
v_{S_2}[\partial\tilde\incl,\tilde\mu]_{|\partial\tilde\incl}  +\xi\, v_{S_2}[\partial\tilde\incl,\tilde 1]_{|\partial\tilde\incl}
\]
is an isomorphism.
\end{lem}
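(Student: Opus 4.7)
The plan is to factor the map of the lemma as the composition of two isomorphisms and then verify each factor. Define
\[
\Psi:\Ca{0}_\#(\dincl)\times\R\to \Ca{0}_{\mathrm{odd}}(\partial\tilde\incl),\qquad (\mu,\xi)\mapsto \tilde\mu+\xi\tilde 1.
\]
I would first check that $\Psi$ is a topological isomorphism with inverse
\[
\psi\mapsto \bigl(\psi_{|\dincl}-\xi(\psi),\,\xi(\psi)\bigr),\qquad \xi(\psi)\equiv\frac{1}{\int_\dincl d\sigma}\int_\dincl\psi\,d\sigma,
\]
the continuity of both directions being immediate from these explicit formulas. It then suffices to prove that
\[
\Lambda:\Ca{0}_{\mathrm{odd}}(\partial\tilde\incl)\to \Ca{1}_{\mathrm{odd}}(\partial\tilde\incl),\qquad \phi\mapsto v_{S_2}[\partial\tilde\incl,\phi]_{|\partial\tilde\incl},
\]
is an isomorphism.

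Continuity of $\Lambda$ is the classical boundedness of the single layer on a $\Ca{1}$ boundary, and $\Lambda$ takes values in $\Ca{1}_{\mathrm{odd}}(\partial\tilde\incl)$ as observed in the excerpt. For injectivity, suppose $\Lambda(\phi)=0$ with $\phi$ odd. Since $\phi$ is odd, a change of variable under the reflection $R(\pt)\equiv\msigma(\pt)-2p_2\se_2$ gives $\int_{\partial\tilde\incl}\phi\,d\sigma=0$, so $u\equiv v_{S_2}[\partial\tilde\incl,\phi]$ satisfies $u(\px)=O(|\px|^{-1})$ as $|\px|\to\infty$. The function $u$ is continuous on $\R^2$, harmonic on each component of $\tilde\incl$ and on $\R^2\setminus\overline{\tilde\incl}$, and vanishes on $\partial\tilde\incl$; uniqueness of the interior Dirichlet problem on each component of $\tilde\incl$ combined with uniqueness of the exterior Dirichlet problem with decay then yields $u\equiv 0$ on $\R^2$, and the jump relation of Proposition \ref{cljump} for the normal derivative forces $\phi=0$.

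The main effort lies in surjectivity. Given $f\in\Ca{1}_{\mathrm{odd}}(\partial\tilde\incl)$, I would solve the interior Dirichlet problem in $\tilde\incl$ with datum $f$; by symmetry of $\tilde\incl$ under $R$ and oddness of $f$, uniqueness gives $u^i\circ R=-u^i$. To build $u^e$ I would solve the mixed problem in the shifted half-plane $\R\times(-p_2,+\infty)\setminus\overline{\incl}$ with datum $f$ on $\dincl$, homogeneous Dirichlet condition on $\R\times\{-p_2\}$, and vanishing at infinity (a standard problem in an exterior half-plane domain, solvable via the Green's function of that half-plane or via conformal methods analogous to those of Subsection \ref{toy}); odd reflection across $\R\times\{-p_2\}$ then extends $u^e$ to a function harmonic in $\R^2\setminus\overline{\tilde\incl}$ with $u^e_{|\partial\tilde\incl}=f$ and $u^e\to 0$ at infinity. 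Setting
\[
\phi\equiv \bn_{\tilde\incl}\cdot\nabla u^e_{|\partial\tilde\incl}-\bn_{\tilde\incl}\cdot\nabla u^i_{|\partial\tilde\incl},
\]
the transformation of normal derivatives under $R$, combined with $u^i\circ R=-u^i$ and $u^e\circ R=-u^e$, gives $\phi\circ R=-\phi$; summing the interior and exterior Green representation formulas on $\partial\tilde\incl$, the double-layer contributions cancel because $u^i=u^e=f$ there, and one obtains $\Lambda(\phi)=f$.

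The main obstacle is the construction of $u^e$: in two dimensions the exterior Dirichlet problem on a bounded domain does not in general admit solutions vanishing at infinity, and it is exactly the oddness of the boundary datum together with the symmetry of $\tilde\incl$ that reduces the problem to the well-posed mixed problem in the shifted half-plane. Continuity of $\Lambda^{-1}$, and hence the conclusion of the lemma, then follows from the open mapping theorem.
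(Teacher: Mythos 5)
Your proposal is correct, and it uses the same overall factorization as the paper: the map of the lemma is written as the composition of the extension map $\Psi:(\mu,\xi)\mapsto\tilde\mu+\xi\tilde 1$ from $\Ca{0}_\#(\dincl)\times\R$ onto $\Ca{0}_{\mathrm{odd}}(\partial\tilde\incl)$, followed by the single layer $\Lambda:\phi\mapsto v_{S_2}[\partial\tilde\incl,\phi]_{|\partial\tilde\incl}$ on $\Ca{0}_{\mathrm{odd}}(\partial\tilde\incl)$. Where you differ is in the proof that $\Lambda$ is an isomorphism. You re-derive it from scratch by potential-theoretic PDE arguments: injectivity from the decay of a zero-mean single layer plus interior/exterior uniqueness and the jump relations, and surjectivity by constructing $u^i$ and $u^e$, exploiting the oddness of $f$ and the $R$-symmetry of $\tilde\incl$ to get a decaying exterior solution, and then summing the two Green representation formulas so the double-layer contributions cancel. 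The paper instead invokes the already-available Lemma~\ref{viso} (invertibility of $(\mu,\xi)\mapsto v_{S_2}[\partial\tilde\incl,\mu]_{|\partial\tilde\incl}+\xi$ on $\Ca{0}_\#(\partial\tilde\incl)\times\R$) and then restricts to the odd subspace by a short symmetry argument: odd densities have zero mean, so they lie in the domain of that isomorphism, $v_{S_2}$ intertwines with $R$, and for an odd target $f$ the even part of the density and the constant $\xi$ must both vanish by uniqueness in Lemma~\ref{viso}. The paper's route is shorter because it reuses an existing tool; your route is more self-contained and makes explicit the role of the oddness and $R$-symmetry in producing a decaying exterior solution in two dimensions — which is exactly the point you highlight, and which is worth seeing spelled out. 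One small simplification available to you: instead of solving the mixed problem in the shifted half-plane and then reflecting, you could directly solve the exterior Dirichlet problem in $\R^2\setminus\overline{\tilde\incl}$ with bounded solution; uniqueness together with the oddness of $f$ forces $u^e\circ R=-u^e$, which then forces the value at infinity to be its own negative, hence zero.
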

\begin{proof} 
By Lemma \ref{viso} the map  which takes $(\mu,\xi)$ to $v_{S_2}[\partial\tilde\incl,\mu]_{|\partial\tilde\incl}  +\xi$ is an isomorphism from $\Ca{0}_{\#}(\partial\tilde\incl)\times\R$ to $\Ca{1}(\partial\tilde\incl)$. Then the map from $\Ca{0}_{\mathrm{odd}}(\partial\tilde\incl)$ to $\Ca{1}_{\mathrm{odd}}(\partial\tilde\incl)$ which takes $\mu$ to $v_{S_2}[\partial\tilde\incl,\tilde\mu]_{|\partial\tilde\incl}$ is an isomorphism. One concludes by observing that the map from $\Ca{0}_{\#}(\dincl)\times\R$ to $\Ca{0}_{\mathrm{odd}}(\partial\tilde\incl)$ which takes $(\mu,\xi)$ to $\tilde\mu+\xi\,\tilde 1$ is an isomorphism.
\end{proof}

\vspace{\baselineskip}

\subsection{Functional analytic representation theorems}

As an intermediate step in the study of \eqref{Mmu1mu2xi=0} around  $\eps=0$, we now analyze equation \eqref{Mmu1mu2xi=0}  at the singular value $\eps=0$.
\begin{prop}\label{01}
There exists a unique  $({\bm^*},\xi^*)\in \sB\times\mathbb{R}$ such that
\[
{\fN}[0,{\bm^*},\xi^*]=\b0
\]
and we have
\[
\mu_1^*=0
\]
and
\[
v_{S_2}[\partial\tilde\incl,\tilde\mu_2^*](\pt)  +\xi^*\, v_{S_2}[\partial\tilde\incl,\tilde 1](\pt)=- \go  (0) \tilde 1(\pt)+w_{S_2}[\partial\tilde\incl,\tilde g^{\mathrm i}](\pt) + \frac{\tilde g^{\mathrm i}(\pt)}2,\quad \forall \pt\in\partial\tilde\incl\,.
\]
\end{prop}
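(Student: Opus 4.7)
The plan is to mimic the proof of Proposition \ref{000l}: explicitly evaluate the operator $\fN$ at $\eps=0$, read off the equation for $\mu_1^*$ from the first component using injectivity of $v_{\G}[\dO,\cdot]$, and then recognize the resulting equation for $(\mu_2^*,\xi^*)$ as an equation on $\partial\tilde\incl$ that can be solved by Lemma~\ref{vSodd}.

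First I compute $\fN[0,\bm,\xi]$. The key simplifications are that $G(\px,0)=S_2(\px)-S_2(\msigma(\px))=0$ since $|\px|=|\msigma(\px)|$, and that $U_0(0)=u_0(0)=\go(0)$ by Proposition~\ref{Ub} and continuity. Plugging $\eps=0$ into the formulas defining $\fN_1$ and $\fN_2$, every term carrying a factor $\eps$ or an evaluation of $G$ at a point of $\partial\Rnp$ drops out, so
\[
\fN_1[0,\bm,\xi](\px)=v_{{\G}}[\dO,\mu_1](\px)\qquad\forall\px\in\dpO,
\]
\[
\fN_2[0,\bm,\xi](\pt)=v_{S_2}[\dincl,\mu_2](\pt)-\int_{\dincl}S_2(-2p_2\se_2+\msigma(\pt)-\ps)\mu_2(\ps)\,d\sigma_\ps + \xi\!\int_{\dincl}\!\!\bigl[S_2(\pt-\ps)-S_2(-2p_2\se_2+\msigma(\pt)-\ps)\bigr]d\sigma_\ps +\go(0) -w_{S_2}[\dincl,\gi](\pt)-\tfrac{\gi(\pt)}{2}-\int_{\dincl}\bn_\incl(\ps)\cdot\nabla S_2(-2p_2\se_2+\msigma(\pt)-\ps)\gi(\ps)\,d\sigma_\ps.
\]
The equation $\fN_1[0,\bm,\xi]=0$ together with Proposition~\ref{V}(ii) forces $\mu_1^*=0$, as in Proposition~\ref{000l}.

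Next I rewrite the equation $\fN_2[0,\bm^*,\xi]=0$ as an equation on $\partial\tilde\incl$ using the reflection $\rho(\pt)\equiv\msigma(\pt)-2p_2\se_2$. The crucial identity is $|\pt-\rho(\ps)|=|-2p_2\se_2+\msigma(\pt)-\ps|$, which follows directly from $\rho$ being an affine isometry fixing $\{x_2=-p_2\}$. This gives, for $\pt\in\dincl$,
\[
S_2(\pt-\rho(\ps))=S_2(-2p_2\se_2+\msigma(\pt)-\ps),
\]
so splitting the integrals defining $v_{S_2}[\partial\tilde\incl,\tilde\mu_2](\pt)$ and $v_{S_2}[\partial\tilde\incl,\tilde 1](\pt)$ over $\dincl\cup\rho(\dincl)$ and changing variables $\py=\rho(\ps)$ on the reflected piece identifies the left-hand side of the sought equation with the first three terms of $\fN_2[0,\bm^*,\xi](\pt)$. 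Using $\bn_{\rho(\incl)}(\rho(\ps))=\msigma\bn_\incl(\ps)$ and the orthogonality of $\msigma$, the same change of variables gives
\[
w_{S_2}[\partial\tilde\incl,\tilde\gi](\pt)=w_{S_2}[\dincl,\gi](\pt)+\int_{\dincl}\bn_\incl(\ps)\cdot\nabla S_2(-2p_2\se_2+\msigma(\pt)-\ps)\gi(\ps)\,d\sigma_\ps,
\]
which collects the remaining single-variable terms coming from $\gi$ into $w_{S_2}[\partial\tilde\incl,\tilde\gi]+\tfrac12\tilde\gi$ on $\dincl$. Hence, on $\dincl$, $\fN_2[0,(0,\mu_2),\xi]=0$ is equivalent to
\[
v_{S_2}[\partial\tilde\incl,\tilde\mu_2](\pt)+\xi v_{S_2}[\partial\tilde\incl,\tilde 1](\pt)=-\go(0)\tilde 1(\pt)+w_{S_2}[\partial\tilde\incl,\tilde\gi](\pt)+\tfrac{\tilde\gi(\pt)}{2},
\]
where I used $\tilde 1\equiv 1$ on $\dincl$. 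Since both sides belong to $\Ca{1}_{\mathrm{odd}}(\partial\tilde\incl)$ by the symmetry of $\tilde\incl$ under $\rho$ and the oddness of $\tilde\mu_2,\tilde 1,\tilde\gi$, the identity automatically extends from $\dincl$ to the full $\partial\tilde\incl$ by odd reflection.

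Finally, Lemma~\ref{vSodd} says the left-hand side is the image of $(\mu_2,\xi)\in\Ca{0}_\#(\dincl)\times\R$ under an isomorphism onto $\Ca{1}_{\mathrm{odd}}(\partial\tilde\incl)$; the right-hand side lies in this target space, so there is a unique pair $(\mu_2^*,\xi^*)$ realizing the equality. Combined with $\mu_1^*=0$, this yields existence and uniqueness of $(\bm^*,\xi^*)\in\sB\times\R$. The main technical point is the careful bookkeeping of the reflection in the second step (signs, normals, kernels), but no new analytic ingredient beyond Lemma~\ref{vSodd} and Proposition~\ref{V}(ii) is required.
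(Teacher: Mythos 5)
Your proof is correct and follows essentially the same route as the paper: evaluate $\fN$ at $\eps=0$ using $\G(\cdot,0)=0$ and $U_0(0)=\go(0)$, force $\mu_1^*=0$ via Proposition~\ref{V}(ii), rewrite $\fN_2[0,\bm,\xi]=0$ as an equation on $\partial\tilde\incl$ by the reflection change of variables, and invoke Lemma~\ref{vSodd}. The only difference is that you spell out the reflection bookkeeping (the kernel identity $S_2(\pt-\rho(\ps))=S_2(-2p_2\se_2+\msigma(\pt)-\ps)$, the normal transformation $\bn_{\rho(\incl)}\circ\rho=\msigma\bn_\incl$, and the odd-extension argument from $\dincl$ to $\partial\tilde\incl$), which the paper compresses into ``by a change of variable in integrals, one verifies that''; your expanded version is a sound justification of that step.
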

\begin{proof}  First of all, we observe that for all $({\bm},\xi)\in \sB\times\mathbb{R}$, we have
\[\begin{cases}
{\fN}_1[0,{\bm},\xi](\px)&= v_{{\G}}[\dO,\mu_1](\px),\qquad\forall \px\in\dpO\,,\\
{\fN}_2[0,{\bm},\xi](\pt)&= v_{S_2}[\dincl,\mu_2](\pt)\\[5pt]
&\quad  - \displaystyle\int_{\dincl} {S_2}(-2p_2\se_2+\msigma(\pt)- \ps)\, \mu_2(\ps)\, d\sigma_{\ps}\\[5pt]
&\quad \quad+\xi\displaystyle\int_{\dincl}\left({S_2}(\pt-\ps)-{S_2}(-2p_2\se_2+\msigma(\pt)-\ps)\right)d\sigma_{\ps}\\[5pt]
&\quad  -  \displaystyle\int_{\dincl}\bn_\incl(\ps)\cdot\nabla {S_2}(-2p_2\se_2+\msigma(\pt)-\ps)\, \gi(\ps)\, d\sigma_{\ps}\\[5pt]
&\quad  + \go  (0)-w_{S_2}[\dincl,\gi](\pt) - \cfrac{\gi(\pt)}2 ,\qquad\qquad\qquad\forall \pt\in\dincl\,.
\end{cases}
\]
 By Theorem \ref{V} (ii), the unique function in $\Ca{0}_+(\dO)$ such that $v_{{\G}}[\dO,\mu_1]=0$ on $\dpO$ is $\mu_1=0$. On the other hand, by a change of variable in integrals, one verifies that
\[
{\fN}_2[0,{\bm},\xi](\pt)=v_{S_2}[\partial\tilde\incl,\tilde\mu_2](\pt)  +\xi\, v_{S_2}[\partial\tilde\incl,\tilde 1](\pt)+ \go  (0) \tilde 1(\pt)-w_{S_2}[\partial\tilde\incl,\tilde g^{\mathrm i}](\pt) - \frac{\gi(\pt)}2,\quad \forall \pt\in\dincl\,.
\]
 Then,  by Lemma \ref{vSodd}, there exists a unique pair $(\mu_2,\xi) \in \Ca{0}_{\#}(\dincl)\times\mathbb{R}$ such that
\[
v_{S_2}[\partial\tilde\incl,\tilde\mu_2](\pt)  +\xi\, v_{S_2}[\partial\tilde\incl,\tilde 1](\pt)=- \go  (0) \tilde 1(\pt)+w_{S_2}[\partial\tilde\incl,\tilde g^{\mathrm i}](\pt) + \frac{\tilde g^{\mathrm i}(\pt)}2,\quad\forall \pt\in\partial\tilde\incl\,.
\]
Now the statement is proved.
\end{proof}

\vspace{\baselineskip}

\noindent The main result of this section is obtained by exploiting the implicit function theorem for real analytic maps (see Deimling \cite[Thm.~15.3]{De85}).

\begin{thm}\label{analytic01}
 Let $({\bm^*},\xi^*)$ be as in Proposition \ref{01}. Then there exist $0<\ear*<\ea$, an open neighborhood $\cU^*$ of $({\bm^*},\xi^*)$ in $\sB\times\mathbb{R}$, and a real analytic map $\Psi\equiv(\Psi_1,\Psi_2,\Psi_3)$ from $\Je*$ to $\cU^*$ such that the set of zeros of ${\fN}$ in $\Je*\times\cU^*$ coincides with the graph of $\Psi$.
\end{thm}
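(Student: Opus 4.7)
The plan is to apply the implicit function theorem for real analytic maps at $(0,\bm^*,\xi^*)$, following the template already used for Theorems \ref{thm:an} and \ref{analytic}. The real analyticity of $\fN$ is supplied by Proposition \ref{M} and the equation $\fN[0,\bm^*,\xi^*]=\b0$ by Proposition \ref{01}. The entire content of the argument is therefore to verify that the partial differential $\partial_{(\bm,\xi)}\fN[0,\bm^*,\xi^*]$ is an isomorphism from $\sB\times\mathbb{R}$ onto $\sV^{1,\alpha}(\dpO)\times \Ca{1}(\dincl)$; the conclusion will then follow directly from Deimling \cite[Thm.~15.3]{De85}.

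Since $\fN$ is affine in $(\bm,\xi)$, the differential is read off by inspection. At $\eps=0$, $\fN_1$ reduces to $v_{{\G}}[\dO,\mu_1]_{|\dpO}$ and does not depend on $(\mu_2,\xi)$, while in $\fN_2$ the coupling term $\int_{\dpO}{\G}(\eps\p+\eps\pt,\py)\,\mu_1(\py)\,d\sigma_\py$ vanishes because ${\G}(\px,\cdot)=0$ whenever $\px\in\partial\mathbb{R}^2_+$ and $0\in\partial\mathbb{R}^2_+$. Thus the differential is block diagonal:
\[
\partial_{(\bm,\xi)}\fN_1[0,\bm^*,\xi^*](\bar\bm,\bar\xi)=v_{{\G}}[\dO,\bar\mu_1]_{|\dpO}\,,
\]
and, exploiting the isometric identity $|\msigma(\pt)-\ps-2p_2\se_2|=|\pt-(\msigma(\ps)-2p_2\se_2)|$ together with the change of variables $\ps\mapsto\msigma(\ps)-2p_2\se_2$ in the two reflected $S_2$-integrals appearing in $\fN_2$,
\[
\partial_{(\bm,\xi)}\fN_2[0,\bm^*,\xi^*](\bar\bm,\bar\xi)=v_{S_2}[\partial\tilde\incl,\widetilde{\bar\mu}_2]_{|\dincl}+\bar\xi\,v_{S_2}[\partial\tilde\incl,\tilde 1]_{|\dincl}\,.
\]

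The $(1,1)$-block is an isomorphism by Proposition \ref{V}(ii). For the $(2,2)$-block, Lemma \ref{vSodd} yields the isomorphism $(\bar\mu_2,\bar\xi)\mapsto v_{S_2}[\partial\tilde\incl,\widetilde{\bar\mu}_2]_{|\partial\tilde\incl}+\bar\xi\,v_{S_2}[\partial\tilde\incl,\tilde 1]_{|\partial\tilde\incl}$ onto $\Ca{1}_{\mathrm{odd}}(\partial\tilde\incl)$; composing with the restriction $\Ca{1}_{\mathrm{odd}}(\partial\tilde\incl)\to \Ca{1}(\dincl)$, which is a linear homeomorphism because an odd $\Ca{1}$ function on $\partial\tilde\incl$ is determined by its trace on $\dincl$ and, conversely, any element of $\Ca{1}(\dincl)$ admits a unique odd $\Ca{1}$ extension via the smooth diffeomorphism $\pt\mapsto\msigma(\pt)-2p_2\se_2$, one obtains the isomorphism from $\Ca{0}_\#(\dincl)\times\mathbb{R}$ onto $\Ca{1}(\dincl)$. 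The block-diagonal structure then gives that the whole differential is an isomorphism, and \cite[Thm.~15.3]{De85} supplies $\ear*\in]0,\ea[$, an open neighborhood $\cU^*$ of $(\bm^*,\xi^*)$, and a real analytic $\Psi:\Je*\to\cU^*$ whose graph coincides with the zero set of $\fN$ in $\Je*\times\cU^*$.

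The only substantive new step is the rewriting of the two reflected integrals in $\fN_2$ as a single $S_2$-integral on $\partial\tilde\incl$ against the odd extension, which is precisely the identification the framework of odd-extended layer potentials preceding Lemma \ref{vSodd} was designed to accommodate; once it is in place the remainder is a direct transcription of the implicit function argument deployed in Theorems \ref{thm:an} and \ref{analytic}.
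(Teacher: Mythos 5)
Your argument is correct and is exactly the paper's proof: identify the differential $\partial_{(\bm,\xi)}\fN[0,\bm^*,\xi^*]$ as block-diagonal via the vanishing of $G(0,\cdot)$, rewrite the reflected $S_2$-integrals as single-layer potentials on $\partial\tilde\incl$ against the odd extensions, invoke Proposition~\ref{V}(ii) and Lemma~\ref{vSodd} for the two blocks, and conclude with the real-analytic implicit function theorem together with Propositions~\ref{M} and~\ref{01}. You merely spell out a few computations (the odd-extension identity, the homeomorphism between $\Ca{1}_{\mathrm{odd}}(\partial\tilde\incl)$ and $\Ca{1}(\dincl)$) that the paper leaves implicit.
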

\begin{proof}  The partial differential of $\fN$ with respect to $({\bm},\xi)$ evaluated at $(0,{\bm^*},\xi^*)$ is delivered by
\[
\begin{split}
\partial_{({\bm},\xi)}{\fN}_1[0,{\bm^*},\xi^*]({\boldsymbol\phi},\zeta)&=v_{{\G}}[\dO,\phi_1]_{|\dpO}\,,\\
\partial_{({\bm},\xi)}{\fN}_2[0,{\bm^*},\xi^*]({\boldsymbol\phi},\zeta)&=v_{S_2}[\partial\tilde\incl,\tilde\phi_2]_{|\dincl}  +\zeta\, v_{S_2}[\partial\tilde\incl,\tilde 1]_{|\dincl},
\end{split}
\]
for all $({\boldsymbol\phi},\zeta)\in \sB\times\mathbb{R}$. Then $\partial_{({\bm},\xi)}{\fN}[0,{\bm^*},\xi^*]$ is an isomorphism from $\sB\times\mathbb{R}$ to $\sV^{1,\alpha}(\dpO)\times \Ca{1}(\dincl)$ thanks to Proposition \ref{V} and Lemma \ref{vSodd}. The conclusion is reached  by the implicit function theorem (see Deimling \cite[Thm.~15.3]{De85}) and by Proposition \ref{M}.
\end{proof}

\vspace{\baselineskip}

\subsubsection{Macroscopic behavior}

We first provide a representation of the solution $u_\eps$.

\begin{rmk}[Representation formula in the macroscopic variable]\label{macro_e}
Let the assumptions of Theorem \ref{analytic01} hold. Then
\[
\begin{split}
u_{\eps}(\px)&=\uzero (\px)-\eps\int_{\dincl}\bn_\incl(\ps)\cdot(\nabla_\py {\G})(\px,\eps \p+ \eps \ps)\, \gi(\ps)\, d\sigma_{\ps} + v_{{\G}}\bigl[\dO,\Psi_1[\eps]\bigr](\px)\\
&\quad + \int_{\dincl}{\G}(\px,\eps \p+ \eps \ps)\,\Psi_2[\eps]\,d\sigma_{\ps} +\Psi_3[\eps]\int_{\dincl}{\G}(\px,\eps \p+  \eps \ps)\,d\sigma_{\ps},
\end{split}
\]
for all $\px\in\dome$ and for all $\eps \in \Ie*$.
\end{rmk}

As a consequence of Remark \ref{macro_e}, $u_{\eps}(\px)$ can be written in terms of a converging power series of $\eps$ for $\eps$ positive and small. A similar result holds for the restrictions $u_{\eps|\overline{\dom'}}$ where $\dom'$ is an open subset of $\dom$ such that $0\notin\overline{\dom'}$. Namely, we are now in the position to prove Theorem \ref{thm:Ue}.

\vspace{\baselineskip}

\begin{proofof}{Theorem \ref{thm:Ue}} Let $\e*$ be as in Theorem \ref{analytic01}. Let $\eps'\in]0,\e*]$ be such that \eqref{Ue.eq0} holds true. We define
\[
\begin{split}
\fU_{\dom'}[\eps](\px)
&\equiv U_0 (\px)-\eps\int_{\dincl}\bn_\incl(\ps)\cdot(\nabla_\py {\G})(\px,\eps \p+ \eps \ps)\, \gi(\ps)\, d\sigma_{\ps} + v_{{\G}}\bigl[\dO,\Psi_1[\eps]\bigr](\px)\\
&\quad + \int_{\dincl}{\G}(\px,\eps \p+ \eps \ps)\,\Psi_2[\eps]\,d\sigma_{\ps} +\Psi_3[\eps]\int_{\dincl}{\G}(\px,\eps \p+  \eps \ps)\,d\sigma_{\ps},
\end{split}
\]
for all $\px\in\overline{\dom'}$ and for all $\eps\in ]-\eps',\eps'[$. Then, by Theorem \ref{analytic} and  by  a standard argument (see the study of $\fL_2$ in the proof of Proposition  \ref{prop.N}) one verifies that $\fU_{\dom'}$ is real analytic from $]-\eps',\eps'[$ to $\Ca{1}(\overline{\dom'})$. The validity of \eqref{Ue.eq1} follows by Remark \ref{macro_e} and the validity of \eqref{Ue.eq2} can be deduced by Proposition \ref{01}, by Theorem \ref{analytic01}, and by a straightforward computation. 
\end{proofof}

\subsubsection{Microscopic behavior}

As we have done in  Remarks \ref{micro_e1e2} and \ref{micro_e1} for $\be$ small, we now present  a representation formula of $u_{\eps}(\eps\p+\eps\, \cdot)$.

\begin{rmk}[Representation formula in the microscopic variable]\label{micro_e}
Let the assumptions of Theorem \ref{analytic01} hold.  Then
\[
\begin{split}
u_{\eps}(\eps\p+\eps \pt)&=u_0  (\eps \p+\eps \pt) -w^e_{S_2}[\dincl,\gi](\pt) -  \int_{\dincl}\bn_\incl(\ps)\cdot\nabla {S_2}(-2p_2\se_2+\msigma(\pt)-\ps)\, \gi(\ps)\, d\sigma_{\ps}\\
&\quad  +\int_{\dpO} {\G}(\eps \p+\eps \pt, \py)\, \Psi_1[\eps](\py)\, d\sigma_\py\\
&\quad +v_{S_2}[\dincl,\Psi_2[\eps]](\pt) - \int_{\dincl} {S_2}(-2p_2\se_2+\msigma(\pt)-\ps)\, \Psi_2[\eps]\, d\sigma_{\ps}\\
&\quad +\Psi_3[\eps]\int_{\dincl}\left({S_2}(\pt-\ps)-{S_2}(-2p_2\se_2+\msigma(\pt)-\ps)\right)d\sigma_{\ps},
\end{split}
\]
for all $\pt\in\mathbb{R}^2\setminus\omega$ and  for all $\eps \in \Ie\ast$ such that  $\eps\p+\eps \pt\in \overline{\dome}$.
\end{rmk}

We now show that $u_{\eps}(\eps\p+\eps\, \cdot\,)$  can be expressed as a real analytic map of $\eps$ for $\eps$ small.

\begin{thm}\label{Ve}
Let the assumptions of Theorem \ref{analytic01} hold.  Let $\incl'$ be an open bounded subset of $\mathbb{R}^2\setminus\overline\incl$.  Let $\eps'\in]0,\ear{*}[$ be such that
\[
(\eps\p+\eps\overline{\incl'})\subseteq \Bn{r_1}\,,\qquad\forall\eps \in]-\eps',\eps'[\,.
\]
Then there exists a real analytic map $\fV_{\incl'}$ from $]-\eps',\eps'[$ to $\Ca{1}(\overline{\dom'})$ such that
\begin{equation}\label{Ve.eq1}
u_{\eps}(\eps\p+\eps\, \cdot\,)_{|\overline{\incl'}}=\fV_{\incl'}[\eps],\qquad\forall\eps\in]0,\eps'[\,.
\end{equation}
Moreover we have
\begin{equation}\label{Ve.eq2}
\fV_{\incl'}[0]=v_{\ast|\overline{\incl'}}+\go(0),
\end{equation}
where $v_\ast\in \Ca{1}_{\mathrm{loc}}(\mathbb{R}^2\setminus\tilde\incl)$ is the unique solution of
\[
\left\{
\begin{array}{ll}
\Delta v_\ast=0&\text{in }\mathbb{R}^2\setminus\tilde\incl\,,\\
v_\ast=\tilde g^{\mathrm i}-\tilde g^{\mathrm o}(0)&\text{on }\partial\tilde\incl\,,\\
\lim_{\pt\to\infty}v_\ast(\pt)=0\,.&
\end{array}
\right.
\]
\end{thm}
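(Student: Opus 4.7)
The plan is to mimic the proof of Theorem \ref{Ve1e2}, now adapted to the representation formula of Remark \ref{micro_e}. The starting point will be to define $\fV_{\incl'}[\eps]$ by the very expression appearing in Remark \ref{micro_e}, with the factor $u_0(\eps\p+\eps\pt)$ replaced by its harmonic extension $U_0(\eps\p+\eps\pt)$ supplied by Proposition \ref{Ub}. Thanks to \eqref{Ubassumption} and to the choice of $\eps'$, the expression makes sense for every $\eps\in]-\eps',\eps'[$ and every $\pt\in\overline{\incl'}$.

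Real analyticity of $\fV_{\incl'}$ from $]-\eps',\eps'[$ to $\Ca{1}(\overline{\incl'})$ will then follow by combining the analyticity of $\Psi=(\Psi_1,\Psi_2,\Psi_3)$ provided by Theorem \ref{analytic01}, the analyticity of $U_0$ on $\overline{\Bn{r_1}}$, the continuity of the classical single layer potential, and the standard real-analytic dependence of integral operators with real analytic kernel and no singularity (exactly as in the analysis of $\fL_2$ in the proof of Proposition \ref{prop.N}); no jump issue arises because $\overline{\incl'}\subseteq\R^2\setminus\overline{\incl}$. The identity \eqref{Ve.eq1} is then immediate from Remark \ref{micro_e}, using $U_0=u_0$ on $\overline{\Bnp{r_1}}$.

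For the limit \eqref{Ve.eq2}, I would set $\eps=0$, invoke $\Psi_1[0]=\mu_1^*=0$ and $U_0(0)=\uzero(0)=\go(0)$ via Proposition \ref{01}, and then exploit the symmetry of $\tilde\incl$. The key algebraic observation is that the radial identity $S_2(-2p_2\se_2+\msigma(\pt)-\ps)=S_2(\pt-(\msigma(\ps)-2p_2\se_2))$, together with the change of variable $\ps\mapsto\msigma(\ps)-2p_2\se_2$, turns the integrals on $\dincl$ involving the reflected kernel into integrals on the reflected component $\partial(\msigma(\incl)-2p_2\se_2)$ against the odd extensions $\tilde\mu_2^*$, $\xi^*\tilde 1$ and $\tilde g^i$. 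A parallel computation, tracking how $\bn_{\tilde\incl}$ transforms under $\msigma$, shows that the direct and reflected double-layer pieces combine into $-w_{S_2}[\partial\tilde\incl,\tilde g^i]$ on $\overline{\incl'}$. This reassembly yields
\[
\fV_{\incl'}[0](\pt)=\go(0)+v_{S_2}[\partial\tilde\incl,\tilde\mu_2^*](\pt)+\xi^{*}v_{S_2}[\partial\tilde\incl,\tilde 1](\pt)-w_{S_2}[\partial\tilde\incl,\tilde g^i](\pt),\qquad\forall\pt\in\overline{\incl'},
\]
and the same formula extends harmonically to every $\pt\in\R^2\setminus\overline{\tilde\incl}$. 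The oddness of $\tilde 1$ and $\tilde\mu_2^*$ forces zero mean on $\partial\tilde\incl$, so both single layer potentials are harmonic at infinity (no logarithmic growth), and the double layer potential decays as usual; combined with the exterior jump formulas of Proposition \ref{cljump} and the characterization of $(\mu_2^*,\xi^*)$ in Proposition \ref{01}, the exterior trace on $\partial\tilde\incl$ simplifies to $\tilde g^i-\go(0)\tilde 1=\tilde g^i-\tilde g^o(0)$. Uniqueness of the exterior Dirichlet problem for $\tilde\incl$ then identifies $\fV_{\incl'}[0]-\go(0)$ with $v_\ast$, which is exactly \eqref{Ve.eq2}.

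I expect the main obstacle to be the symmetrization bookkeeping in the third paragraph: matching the signs and orientations of $\bn_{\tilde\incl}$ under the reflection $\msigma$, verifying that the reflected kernels combine with the direct ones into genuine layer potentials on $\partial\tilde\incl$ with the correct odd densities, and in particular confirming the vanishing-mean property that neutralises the two-dimensional logarithmic growth and unlocks the uniqueness argument. The analyticity step and the validity of \eqref{Ve.eq1} are essentially a routine adaptation of the tools already deployed for Theorems \ref{Ve1e2} and \ref{Ve1gamma}.
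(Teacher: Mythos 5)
Your proof is correct and follows the paper's route step by step: define $\fV_{\incl'}[\eps]$ from the representation formula of Remark~\ref{micro_e} with $u_0$ replaced by $U_0$, establish analyticity by the same standard arguments, obtain \eqref{Ve.eq1} from Remark~\ref{micro_e}, and for \eqref{Ve.eq2} reassemble the $\eps=0$ expression via the reflection change of variable into single and double layer potentials on $\partial\tilde\incl$ with odd densities $\tilde\Psi_2[0]=\tilde\mu_2^*$, $\xi^*\tilde 1$, $\tilde g^i$, compute the exterior trace by the jump formulas and Proposition~\ref{01}, and conclude by uniqueness of the exterior Dirichlet problem. The only (immaterial) variant is how the behaviour at infinity is controlled: you invoke the zero mean of the odd densities to get the single layer potentials to vanish at infinity, whereas the paper first notes the limit exists and is finite and then uses the odd symmetry $v_*(\pt)=-v_*(\msigma(\pt)-2p_2\se_2)$ to force it to be zero; both are valid, and your phrase ``harmonic at infinity'' should really read ``tends to zero at infinity'' for the uniqueness step to apply verbatim.
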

\begin{proof}   We define
\[
\begin{split}
\fV_{\incl'}[\eps](\pt)&=U_0  (\eps \p+\eps \pt) -w^e_{S_2}[\dincl,\gi](\pt) -  \int_{\dincl}\bn_\incl(\ps)\cdot\nabla {S_2}(-2p_2\se_2+\msigma(\pt)-\ps)\, \gi(\ps)\, d\sigma_{\ps}\\
&\quad  +\int_{\dpO} {\G}(\eps \p+\eps \pt, \py)\, \Psi_1[\eps](\py)\, d\sigma_\py\\
&\quad +v_{S_2}[\dincl,\Psi_2[\eps]](\pt) - \int_{\dincl} {S_2}(-2p_2\se_2+\msigma(\pt)-\ps)\, \Psi_2[\eps]\, d\sigma_{\ps}\\
&\quad +\Psi_3[\eps]\int_{\dincl}\left({S_2}(\pt-\ps)-{S_2}(-2p_2\se_2+\msigma(\pt)-\ps)\right)d\sigma_{\ps}
\end{split}
\]
for all $\pt\in\overline{\incl'}$ and for all $\eps \in]-\eps',\eps'[$. Then, one verifies that $\fV_{\incl'}$ is real analytic from $]-\eps',\eps'[$ to $\Ca{1}(\overline{\incl'})$ by Proposition \ref{Ub}, by Theorem \ref{analytic01}, and by a standard argument (see in the proof of Proposition \ref{prop.N} the argument used to study $\fL_{2}$).  Relation \eqref{Ve.eq1} follows by Remark \ref{micro_e}.

Now, by a  change of variables in the integrals and by Proposition \ref{01}, one verifies that
\begin{equation}\label{Ve.eq3}
\fV_{\incl'}[0](\pt)\equiv \go(0)-w^e_{S_2}[\partial\tilde\incl,\tilde g^{\mathrm i}](\pt)+v_{S_2}[\partial\tilde\incl,\tilde\Psi_2[0]](\pt)+\Psi_3[0]v_{S_2}[\partial\tilde\incl,\tilde 1](\pt),
\end{equation}
for all $\pt\in\overline{\incl'}$. The right hand side of \eqref{Ve.eq3} equals $\gi$ on $\partial\incl$ by Proposition \ref{01} and by the jump properties of the double layer potential. Then, the (harmonic) function
\begin{equation}\label{Ve.eq4}
v_*(\pt)\equiv-w^e_{S_2}[\partial\tilde\incl,\tilde g^{\mathrm i}](\pt)+v_{S_2}[\partial\tilde\incl,\tilde\Psi_2[0]](\pt)+\Psi_3[0]v_{S_2}[\partial\tilde\incl,\tilde 1](\pt),\qquad\forall \pt\in\mathbb{R}^2\setminus\tilde\omega
\end{equation}
equals $\tilde g^{\mathrm i}-\tilde g^{\mathrm o}(0)$  on $\partial\tilde\incl$. By the decaying properties at $\infty$ of the single and double layer potentials, $\lim_{\pt\to\infty}v_*(\pt)$ exists and is finite. Since  $v_*(\pt)=-v_*(\msigma(\pt)-2p_2\se_2)$,  $\lim_{\pt\to\infty}v_*(\pt)=0$. Now,  \eqref{Ve.eq2} holds  by the uniqueness of the solution of the exterior Dirichlet problem.\end{proof}

\vspace{\baselineskip}

\begin{rmk}\label{w*}
If we take $w_*(\pt)\equiv v_{\ast}(\pt-\p)+\go(0)$ for all $\pt\in\R^2_+\setminus(\p+\incl)$, then
\[
\fV_{\incl'}[0](\pt-\p)=w_{\ast}(\pt),\qquad\forall \pt\in\p+\overline\incl'
\]
and $w_\ast$ is the unique solution in $\Ca{1}_{\mathrm{loc}}(\overline{\mathbb{R}^2_+}\setminus(\p+\incl))$ of \eqref{w_*}.
\end{rmk}

\subsubsection{Energy integral}

In Theorem \ref{Enge} here below we turn to consider  the energy integral $\displaystyle\int_{\dome}\left|\nabla u_\eps\right|^2\,d\px$ for $\eps$ close to $0$.
\begin{thm}\label{Enge}
Let the assumptions of Theorem \ref{analytic01} hold.    Then there exist $0<\ear{\fE}<\ear\ast$ and a real analytic map
\[
\fE:\Je{\fE}\to\mathbb{R}
\]
such that
\begin{equation}\label{Enge.eq1}
\int_{\dom_{\eps}}\left|\nabla u_{\eps}\right|^2\,d\px=\fE(\eps),\qquad\forall\eps\in \Ie{\fE}\,.
\end{equation}
 Furthermore,
\begin{equation}\label{Enge.eq2}
\fE(0)=\int_{\Omega}|\nabla u_0|^2\,d\px+\frac{1}{2}\int_{\mathbb{R}^2\setminus\tilde\incl}\left|\nabla v_\ast\right|^2\,d\px\,.
\end{equation}
\end{thm}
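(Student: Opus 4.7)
The plan is to mirror the strategy used in the proof of Theorem \ref{Enge2}, exploiting the fact that no logarithmic parameter of the form $1/\log(\eps_1\eps_2)$ is needed when $\e2=1$ is held fixed.  The starting point is the identity
\begin{equation*}
\int_{\dome}|\nabla u_\eps|^2\,d\px
=\int_{\dO}\go\,\bn_\Omega\cdot\nabla u_\eps\,d\sigma
-\int_{\partial\incle}\gi\Big(\tfrac{\px-\eps\p}{\eps}\Big)\bn_{\incle}(\px)\cdot\nabla u_\eps(\px)\,d\sigma_\px,
\end{equation*}
obtained from \eqref{bvpe} by the divergence theorem exactly as in Theorem \ref{Enge2}.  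Into the first integral I would substitute the macroscopic representation of Remark \ref{macro_e}, and into the second (after the change of variable $\px=\eps\p+\eps\pt$) the microscopic representation $\fV_{\incl'}[\eps]$ provided by Theorem \ref{Ve}.  Fubini's theorem, the symmetry relation \eqref{Gsym}, and the vanishing \eqref{Gzero} of $G$ on $\partial\mathbb{R}^2_+$ then decompose the flux across $\dO$ into $I_1(\eps)+\eps\,I_2(\eps)+I_3(\eps)+I_4(\eps)$: here $I_1$ collects the contributions of $u_0$ and $v_{\G}[\dO,\Psi_1[\eps]]$, while $I_2,I_3,I_4$ are integrals over $\dincl$ of $w_{\G}[\dpO,\go]$ evaluated at the collapsing point $\eps\p+\eps\ps$, paired respectively with $\gi(\ps)$, $\Psi_2[\eps](\ps)$, and $\Psi_3[\eps]$.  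The hole integral contributes a fifth piece $I_5(\eps)\equiv -\int_{\dincl}\gi\,\bn_\incl\cdot\nabla\fV_{\incl'}[\eps]\,d\sigma$.

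I would then define $\fE_1,\dots,\fE_5$ as the natural extensions of $I_1,\dots,I_5$ to a sufficiently small interval $]-\ear{\fE},\ear{\fE}[\,\subseteq\,]-\ear{\ast},\ear{\ast}[$, and set $\fE(\eps)\equiv\fE_1(\eps)+\eps\,\fE_2(\eps)+\fE_3(\eps)+\fE_4(\eps)+\fE_5(\eps)$.  Real analyticity of each $\fE_i$ on $]-\ear{\fE},\ear{\fE}[$ follows from Theorem \ref{analytic01} (real analyticity of $\Psi_1,\Psi_2,\Psi_3$), Theorem \ref{Ve} (real analyticity of $\fV_{\incl'}$), Lemma \ref{wnear0} (harmonicity, hence real analyticity, of $w_{\G}[\dpO,\go]$ in a neighborhood of $0$, since $0\notin\overline{\dpO\cup\msigma(\dpO)}$), and the standard mapping properties of the classical single layer together with integral operators with real analytic kernels, as already invoked in Propositions \ref{prop.N} and \ref{Ln=2}.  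This proves \eqref{Enge.eq1}.

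To establish \eqref{Enge.eq2} I would compute $\fE(0)$ term by term.  Proposition \ref{01} gives $\Psi_1[0]=0$, so by Green's identity $\fE_1(0)=\int_{\dO}\go\,\bn_\Omega\cdot\nabla u_0\,d\sigma=\int_{\Omega}|\nabla u_0|^2\,d\px$; the prefactor $\eps$ forces $\eps\,\fE_2(\eps)\to 0$; and $\fE_3(0)=0$ since $\Psi_2[0]\in\Ca{0}_\#(\dincl)$ has vanishing mean.  Using the identity $w_{\G}[\dpO,\go](0)=\go(0)$ (as obtained from Proposition \ref{jumps} and used in Theorem \ref{Enge2}) produces $\fE_4(0)=\Psi_3[0]\,\go(0)\,|\dincl|$.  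For $\fE_5(0)$ I would use $\fV_{\incl'}[0]=v_\ast+\go(0)$ from Theorem \ref{Ve} together with $\gi=v_\ast+\go(0)$ on $\dincl$ to rewrite
\[
\fE_5(0)=-\int_{\dincl}v_\ast\,\bn_\incl\cdot\nabla v_\ast\,d\sigma-\go(0)\int_{\dincl}\bn_\incl\cdot\nabla v_\ast\,d\sigma.
\]
The hard part will be evaluating these last two integrals and seeing how the final cancellations take place.  For the first, the divergence theorem applied to $v_\ast\,\nabla v_\ast$ in the half-region $\{s_2>-p_2\}\setminus\incl$, combined with the oddness $v_\ast\circ\sigma=-v_\ast$ under the reflection $\sigma$ about the line $\{s_2=-p_2\}$ (so that $v_\ast$ vanishes on that line) and the $O(|\ps|^{-1})$ decay at infinity, yields $-\int_{\dincl}v_\ast\,\bn_\incl\cdot\nabla v_\ast\,d\sigma=\tfrac12\int_{\mathbb{R}^2\setminus\tilde\incl}|\nabla v_\ast|^2\,d\px$.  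For the second, I would use the explicit form of $v_\ast$ from the proof of Theorem \ref{Ve} as a combination of a double layer and two single layers on $\partial\tilde\incl$ with densities $-\tilde\gi$, $\tilde\Psi_2[0]$, and $\Psi_3[0]\tilde 1$: the double-layer flux across $\dincl$ vanishes (continuous normal derivative, harmonic inside $\incl$), and the single-layer jump relations in Proposition \ref{cljump} together with $\int_{\dincl}\tilde\Psi_2[0]\,d\sigma=0$ and $\int_{\dincl}\tilde 1\,d\sigma=|\dincl|$ give $\int_{\dincl}\bn_\incl\cdot\nabla v_\ast\,d\sigma=\Psi_3[0]\,|\dincl|$.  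Summing the contributions, the term $-\go(0)\Psi_3[0]|\dincl|$ coming from $\fE_5(0)$ exactly cancels $\fE_4(0)$, and one is left with \eqref{Enge.eq2}.
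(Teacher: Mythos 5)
Your proposal is correct and follows essentially the same route as the paper's proof: same divergence-theorem identity to start, same macroscopic and microscopic representations plugged in, same use of Fubini, \eqref{Gsym}, \eqref{Gzero} and $w_G[\dpO,\go](0)=\go(0)$, and the same final computation of the hole term via $\fV_{\incl'}[0]=v_\ast+\go(0)$, the oddness of $v_\ast$, and the single/double-layer jump relations (the paper merely combines your $\fE_3$ and $\fE_4$ into a single term, and applies the divergence theorem on the full exterior $\mathbb{R}^2\setminus\tilde\incl$ rather than on the half-plane $\{s_2>-p_2\}\setminus\incl$ — an equivalent computation by the same symmetry).
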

\begin{proof}   We take  $\incl'$ as in Theorem \ref{Ve} which in addition satisfies the condition $\partial\incl\subseteq\overline{\incl'}$. Then we set $\ear{\fE}\equiv\eps''$ with $\eps''$ as in Theorem  \ref{Ve} and we define
\begin{align*}
\fE_1(\eps)\equiv&\int_{\partial\Omega} \go\; \bn_\dom\cdot \nabla (u_0+v_G[\partial_+\dom,\Psi_1[\eps]])\,d\sigma\,,\\
\fE_2(\eps)\equiv&-\int_{\partial\incl} \gi(\ps)\,\bn_{\incl}(\ps)\cdot\nabla w_G[\partial_+\Omega,\go](\eps\p+\eps \ps)\, d\sigma_{\ps}\,,\\
\fE_3(\eps)\equiv&\int_{\partial\incl} \left(\Psi_2[\eps](\ps)+\Psi_3[\eps]\right)\, w_G[\partial_+\Omega,\go](\eps\p+\eps \ps)\, d\sigma_{\ps}\,,\\
\fE_4(\eps)\equiv&-\int_{\partial\incl} \gi\; \bn_\incl\cdot \nabla \fV_{\incl'}[\eps]\,d\sigma
\end{align*}
and
\begin{equation}\label{Enge.eq3}
\fE(\eps)\equiv \fE_1(\eps)+\eps \fE_2(\eps)+\fE_3(\eps)+\fE_4(\eps),\qquad\forall \eps \in\Je{\fE}\,.
\end{equation}
By Theorems \ref{analytic01}  and  \ref{Ve}, by Lemma \ref{wnear0}, and by a standard argument (see in the proof of Proposition  \ref{prop.N} the study of $\fL_2$), one verifies that the functions $\fE_i$'s and $\fE$ are real analytic from $\Je{\fE}$ to $\mathbb{R}$. Using the definition of $w_G[\partial_+\Omega,\go]$ and by the Fubini's theorem, one gets
\[
\fE_2(\eps)=-\int_{\partial\dom}\go(\px)\,\bn_\dom(\px)\cdot\nabla_\px\left(\int_{\dincl} \bn_{\incl}(\ps)\cdot (\nabla_\py{\G})(\px,\eps \p+\eps \ps) \gi(\ps)\, d\sigma_{\ps}\right)d\sigma_\px
\]
and
\[
\begin{split}
\fE_3(\eps)&=\int_{\partial\dom}\go(\px)\,\bn_\dom(\px)\cdot\nabla_\px\left(\int_{\dincl}{\G}(\px,\eps \p +\eps \ps)\Psi_2[\eps](\ps)\, d\sigma_{\ps}\right)d\sigma_\px\\
&\quad +\int_{\partial\dom}\go(\px)\,\bn_\dom(\px)\cdot\nabla_\px\left(\Psi_3[\eps]\int_{\dincl}{\G}(\px,\eps \p +\eps \ps)\, d\sigma_{\ps}\right)d\sigma_\px
\end{split}
\]
for all $\eps\in]0,\ear{\fE}[$. Then, \eqref{Enge.eq1} follows by the divergence theorem, by Remark \ref{macro_e}, and by  Theorem \ref{Ve} (see also the  proofs of   Theorems \ref{Enge3g} and \ref{Enge2}, where an analog argument is presented in full details).

To prove \eqref{Enge.eq2},  we observe that $\Psi_1[0]=0$ by Proposition \ref{01} and Theorem \ref{analytic01}. Thus
\begin{equation}\label{Enge.eq4}
\fE_1(0)=\int_{\partial\dom}\go\; \bn_\dom\cdot \nabla u_0\, d\sigma=\int_{\Omega}|\nabla u_0|^2\,d\px\,.
\end{equation}
By Lemma \ref{jumps}, $w_G[\partial_+\Omega,\go](0)=\go(0)$. Since $\Psi_2[0] \in \Ca{1}_{\#}(\partial\omega)$, we compute
\begin{equation}\label{Enge.eq5}
 \fE_3(0)= \go(0)\, \Psi_3[0]\, \int_{\partial\incl}\, d\sigma\,.
\end{equation}
Then, we have
\[
\begin{split}
\int_{\partial\incl}  \bn_\incl\cdot \nabla v_\ast\,d\sigma&=-\int_{\partial\incl}  \bn_\incl\cdot \nabla w^e_{S_2}[\partial\tilde\incl,\tilde g^{\mathrm i}]\,d\sigma+\int_{\partial\incl}  \bn_\incl\cdot \nabla v_{S_2}^e[\partial\tilde\incl,\tilde\Psi_2[0]+\tilde\Psi_3[0]]\,d\sigma\\
&=-\int_{\partial\incl}  \bn_\incl\cdot \nabla w^i_{S_2}[\partial\tilde\incl,\tilde g^{\mathrm i}]\,d\sigma+\int_{\partial\incl}\left(\tilde\Psi_2[0]+\tilde\Psi_3[0]\right) d\sigma=\Psi_3[0]\, \int_{\partial\incl}\, d\sigma\,.
\end{split}
\]
where we have used successively \eqref{Ve.eq4},  the jump properties of the (classical) single and double layer potentials, the divergence theorem, and  $\Psi_2[0] \in \Ca{1}_{\#}(\partial\omega)$.
Using \eqref{Ve.eq2} and the equality $v_\ast(\pt)=-v_\ast(\msigma(\pt)-2p_2\se_2)$ which holds for all $\pt\in\R^2\setminus\tilde\incl$,  we have
\begin{equation}\label{Enge.eq6}
\begin{split}
\fE_4(0)&=-\int_{\partial\incl} (\gi-\go(0))\; \bn_\incl\cdot \nabla v_\ast\,d\sigma-\go(0)\int_{\partial\incl}  \bn_\incl\cdot \nabla v_\ast\,d\sigma\\
&=-\frac{1}{2}\int_{\partial\tilde\incl} v_\ast\; \bn_{\tilde\incl}\cdot \nabla v_\ast\,d\sigma-\go(0)\int_{\partial\incl}  \bn_\incl\cdot \nabla v_\ast\,d\sigma\\
&=\frac{1}{2}\int_{\mathbb{R}^2\setminus\tilde\incl}\left|\nabla v_\ast\right|^2\,d\px-\go(0)\, \Psi_3[0]\, \int_{\partial\incl}\, d\sigma\,.
\end{split}
\end{equation}
thanks to the divergence theorem. Relation \eqref{Enge.eq2} follows by \eqref{Enge.eq3} -- \eqref{Enge.eq6}.\end{proof}

\vspace{\baselineskip}

\begin{rmk}\label{Fw*}
If we take $w_*$ as in Remark \ref{w*}, then
\[
\fE(0)=\int_{\Omega}|\nabla u_0|^2\,d\px+\int_{{\mathbb{R}^2_+}\setminus(\p+\incl)}\left|\nabla w_\ast\right|^2\,d\px\,.
\]
\end{rmk}

Finally, we consider in the following Theorem \ref{Fluxe} the total flux on $\dO$. The proof of Theorem \ref{Fluxe} can be deduced by a straightforward modification of the proof of Theorem \ref{Fluxe2} and it is accordingly omitted.

\begin{thm}\label{Fluxe}
Let the assumptions of Theorem \ref{analytic01} hold.    Then there exist $\ear{\fF}\in]0,\ear\ast[$ and a real analytic function
\[
\fF:\Je{\fF}\to\mathbb{R}
\]
such that
\[
\int_{\dO}\bn_\Omega\cdot\nabla u_{\eps}\,d\sigma={\fF}(\eps),\qquad\forall\eps\in\Ie{\fF}\,.
\]
 Furthermore,
\[
{\fF}(0)=\int_{\partial\incl}  \bn_\incl\cdot \nabla v_\ast\,d\sigma=\int_{\p+\partial\incl}  \bn_{\p+\incl}\cdot \nabla w_\ast\,d\sigma\,.
\]
\end{thm}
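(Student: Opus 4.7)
The plan is to follow the same strategy as in the proof of Theorem \ref{Fluxe2}, adapted to the one-parameter setting where $\e2=1$ is fixed. First I would apply the divergence theorem to $u_\eps$, which is harmonic on $\dome$, and exploit the fact that $\dome$ has two boundary components, $\dO$ and $\partial\incle$. Since $\Delta u_\eps = 0$ in $\dome$, integration by parts yields
\[
\int_{\dO}\bn_\Omega\cdot\nabla u_\eps\,d\sigma=\int_{\partial\incle}\bn_{\incle}\cdot\nabla u_\eps\,d\sigma
\qquad \forall \eps\in]0,\ear{*}[.
\]

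Next I would rescale the right-hand side using the change of variable $\px=\eps\p+\eps\pt$ with $\pt\in\partial\incl$. Under this rescaling, $\bn_{\incle}(\eps\p+\eps\pt)=\bn_{\incl}(\pt)$, the chain rule gives $\nabla u_\eps(\eps\p+\eps\pt)=\eps^{-1}\nabla_{\pt}\bigl[u_\eps(\eps\p+\eps\pt)\bigr]$, and the surface element scales as $d\sigma_{\px}=\eps\,d\sigma_{\pt}$ (since $n=2$), so the two factors of $\eps$ cancel. Choosing an open bounded set $\incl'\subseteq\R^2\setminus\overline{\incl}$ as in Theorem \ref{Ve} with the additional property $\partial\incl\subseteq\overline{\incl'}$ and setting $\ear{\fF}\equiv\eps'$ for the corresponding $\eps'$, I would use \eqref{Ve.eq1} to identify $u_\eps(\eps\p+\eps\,\cdot\,)_{|\overline{\incl'}}$ with $\fV_{\incl'}[\eps]$ and define
\[
\fF(\eps)\equiv\int_{\partial\incl}\bn_{\incl}\cdot\nabla\fV_{\incl'}[\eps]\,d\sigma,
\qquad \forall\eps\in\Je{\fF}.
\]
The real analyticity of $\fF$ from $\Je{\fF}$ to $\R$ then follows by composing $\fV_{\incl'}$ (real analytic by Theorem \ref{Ve}) with the linear continuous operator $\psi\mapsto\int_{\partial\incl}\bn_{\incl}\cdot\nabla\psi\,d\sigma$ from $\Ca{1}(\overline{\incl'})$ to $\R$.

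To compute the value at $\eps=0$, I would invoke \eqref{Ve.eq2}, which gives $\fV_{\incl'}[0]=v_{\ast|\overline{\incl'}}+\go(0)$. Since the additive constant $\go(0)$ contributes nothing to the gradient, this yields directly
\[
\fF(0)=\int_{\partial\incl}\bn_{\incl}\cdot\nabla v_\ast\,d\sigma.
\]
For the final equality, I would use Remark \ref{w*}: the relation $w_\ast(\pt)=v_\ast(\pt-\p)+\go(0)$ shows that $\nabla w_\ast(\pt)=\nabla v_\ast(\pt-\p)$, and the translation $\pt\mapsto\pt-\p$ maps $\p+\partial\incl$ onto $\partial\incl$ while preserving the outer unit normals, so the change of variables gives the second identity.

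I do not anticipate any serious obstacle: the argument is essentially that of Theorem \ref{Fluxe2}, but simplified by the absence of logarithmic terms and of the auxiliary parameter $\bde$. The only points requiring a little care are the orientation conventions in the divergence theorem (the outward normal to $\dome$ on $\partial\incle$ is $-\bn_{\incle}$, which accounts for the sign when one rewrites the boundary flux) and verifying that the translation $\pt\mapsto\pt-\p$ is compatible with the normal $\bn_{\p+\incl}$ used in the statement.
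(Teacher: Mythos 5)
Your proposal is correct and follows exactly the route the paper intends: the paper explicitly says the proof of this theorem ``can be deduced by a straightforward modification of the proof of Theorem \ref{Fluxe2},'' and your argument is precisely that modification, using the divergence theorem on $\dome$, rescaling via $\px=\eps\p+\eps\pt$, identifying $u_\eps(\eps\p+\eps\,\cdot\,)_{|\overline{\incl'}}$ with $\fV_{\incl'}[\eps]$ from Theorem \ref{Ve}, and then evaluating at $\eps=0$ with \eqref{Ve.eq2} and Remark \ref{w*}. You also correctly avoid the final step of Theorem \ref{Fluxe2} (where $\fF(\b0,(0,\lambda))=0$ followed from $v_0$ being harmonic at infinity), since here the flux of $v_\ast$ through $\partial\incl$, one of the two components of $\partial\tilde\incl$, need not vanish.
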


\section{Conclusions}\label{conclusions}

In this paper, we have studied the asymptotic behavior of the solution to the Dirichlet problem
in a bounded domain in $\mathbb{R}^n$ with a small hole that approaches the boundary. We have shown
that this behavior depends on the space dimension $n$: if $n\geq 3$, the solution exhibits
real-analytic dependency on the perturbation parameters; if $n=2$, logarithmic behavior may occur.
Additionally, in the two-dimensional case we highlight two different regimes. In one, the hole approaches
the outer boundary while shrinking at a faster rate; in the other, the shrinking rate
and the rate of approach to the boundary are comparable. For these two different regimes, the energy
integral and the total flux on the outer boundary have different limiting values. Intuitively,
we may say that when the hole shrinks sufficiently fast in two-dimensional space,
the shrinking effect dominates the effect of its vicinity to the outer boundary.

The method used for our analysis is based on potential theory constructed with the Dirichlet Green's function
in the upper half space. Our results allow us to justify the representation of the solutions and related
functionals as convergent power series, which is usually difficult to achieve with standard asymptotic analysis.
We intend to compute such power series expansions in future publications.  To that end, we can exploit the integral representation of the solution and deduce the coefficients of the series by solving recursive systems of boundary integral equations  (as in Dalla Riva, Musolino, and Rogosin \cite{DaMuRo15}) or we can resort to an approximation method of asymptotic analysis, such  as the multiple scale  expansions method (cf. Bonnaillie-No\"el, Dambrine, Tordeux,  and Vial \cite{BoDaToVi09}), with the advantage that now we  just need to identify the terms of the asymptotic expansion, the convergence being a consequence of the results of the present paper. We also
plan to extend the analysis of perturbation problems in domains with a small hole
close to the boundary to other differential operators and boundary conditions. We remark
that the functional analytic approach developed in this paper within the framework of Schauder spaces
can be extended to a Sobolev space setting under Lipschitz regularity assumptions on the domains.
A first step in this direction has already been completed in Costabel, Dalla Riva, Dauge, and Musolino \cite{CDDM16}.\\

\appendix

\section{Decay properties of the Green's function and the associated single-layer potential} \label{app:pot}

\def\appendixname{}
In the following Lemma \ref{Gatinfty} we present a result concerning the Green's function ${\G}$ which allows us to  study the behavior of $v_{{\G}}[\dO,\phi]$ at infinity.
\begin{lem}\label{Gatinfty}
Let $n \in \mathbb{N}\setminus \{0,1\}$. Let $d\equiv 2\sup_{\py\in\dom}|\py|$. Then the function
\[\begin{array}{rcl}
(\Rn\setminus\Bn{d})\times\overline{\dom} & \to & \R\\
(\px,\py) & \mapsto & |\px|^{n-1}\, {\G}(\px,\py)
\end{array} \]
is bounded.
\end{lem}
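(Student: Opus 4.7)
My plan is to exploit the fact that while the individual terms $S_n(\px-\py)$ and $S_n(\msigma(\px)-\py)$ each decay only like $|\px|^{2-n}$ (or grow logarithmically when $n=2$), their difference decays one order faster thanks to the reflection symmetry. The cornerstone is the algebraic identity
\[
|\msigma(\px)-\py|^2-|\px-\py|^2=(x_n+y_n)^2-(x_n-y_n)^2=4x_ny_n,
\]
valid for all $\px\in\R^n$ and $\py\in\overline{\Rnp}$. Since $\py\in\overline{\dom}$ satisfies $0\le y_n\le|\py|\le d/2$ and $|x_n|\le|\px|$, this difference is bounded by $2d|\px|$, which is small compared to $|\px|^2$ when $|\px|\geq d$.

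First I would observe that for $|\px|\geq d$ and $\py\in\overline{\dom}$, the triangle inequality together with $|\py|\le d/2\le|\px|/2$ yields the two-sided comparison
\[
\tfrac{1}{2}|\px|\le|\px-\py|\le\tfrac{3}{2}|\px|,\qquad \tfrac{1}{2}|\px|\le|\msigma(\px)-\py|\le\tfrac{3}{2}|\px|,
\]
so both arguments of $S_n$ are comparable to $|\px|$. Next I would split into the two cases dictated by the definition of $S_n$:
\begin{itemize}
\item For $n\ge 3$, write
\[
{\G}(\px,\py)=\frac{1}{(2-n)s_n}\Bigl(|\px-\py|^{2-n}-|\msigma(\px)-\py|^{2-n}\Bigr)
\]
and apply the mean value theorem to $t\mapsto t^{(2-n)/2}$ on the interval between $|\px-\py|^2$ and $|\msigma(\px)-\py|^2$. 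The derivative is controlled by $c^{-n/2}$ with $c$ comparable to $|\px|^2$, so combined with $\bigl||\msigma(\px)-\py|^2-|\px-\py|^2\bigr|=4|x_ny_n|\le 2d|\px|$, we obtain $|{\G}(\px,\py)|\le C|\px|^{1-n}$.
\item For $n=2$, use
\[
{\G}(\px,\py)=\frac{1}{4\pi}\log\frac{|\px-\py|^2}{|\msigma(\px)-\py|^2}=\frac{1}{4\pi}\log\!\Bigl(1-\frac{4x_ny_n}{|\msigma(\px)-\py|^2}\Bigr),
\]
noting that the second argument of the logarithm is bounded away from $0$ and differs from $1$ by an $O(|\px|^{-1})$ term; the elementary estimate $|\log(1+s)|\le 2|s|$ for $|s|\le 1/2$ then gives $|{\G}(\px,\py)|\le C|\px|^{-1}=C|\px|^{1-n}$.
\end{itemize}
In both cases, multiplying by $|\px|^{n-1}$ produces a uniform bound, which completes the proof.

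The main conceptual obstacle — and really the entire content of the lemma — is recognizing that one cannot estimate $S_n(\px-\py)$ and $S_n(\msigma(\px)-\py)$ separately, since each is of order $|\px|^{2-n}$ and would only give $|\px|^{n-1}|{\G}|=O(|\px|)$, which is not bounded. The gain of one order of decay must come from their cancellation, and the clean identity $|\msigma(\px)-\py|^2-|\px-\py|^2=4x_ny_n$ is precisely the quantitative form of that cancellation. All remaining work is routine: a mean-value estimate for $n\ge 3$ and a logarithmic Taylor expansion for $n=2$.
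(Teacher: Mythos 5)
Your approach is essentially the same as the paper's: you isolate the identity $|\msigma(\px)-\py|^2-|\px-\py|^2=4x_ny_n$, observe that this algebraic cancellation buys one extra order of decay, and then finish with a routine estimate in each case. For $n\ge 3$ the paper uses the factorization $a^{2-n}-b^{2-n}=\frac{a^2-b^2}{ab(a+b)}\sum_{j=0}^{n-3}a^{j+3-n}b^{-j}$ where you use the mean value theorem on $t\mapsto t^{(2-n)/2}$; these are interchangeable and both give the $|\px|^{1-n}$ decay cleanly.

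There is a small technical slip in your $n=2$ case. You invoke $|\log(1+s)|\le 2|s|$ \emph{for $|s|\le 1/2$}, with $s=-4x_2y_2/|\msigma(\px)-\py|^2$. But under your own comparability bounds, all you get for $|\px|\ge d$ is $|s|\le 4\cdot|\px|\cdot(d/2)/(|\px|/2)^2=8d/|\px|$, which at $|\px|=d$ is as large as $8$, so the hypothesis $|s|\le 1/2$ fails near the inner boundary of the annulus. The conclusion is still correct — either restrict to $|\px|\ge 16d$ and absorb the compact ring $d\le|\px|\le 16d$ by continuity, or, more directly, replace the Taylor bound with the mean value estimate $|\log a-\log b|\le |a-b|/\min\{a,b\}$ (valid for all $a,b>0$), which is exactly what the paper does and removes the restriction on $|s|$ entirely. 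Alternatively, your comparability estimate already confines $1+s$ to $[1/9,9]$, on which $|\log(1+s)|/|s|$ is bounded, so a larger constant in place of $2$ would also close the gap. As written, though, the cited inequality does not apply on the full domain $|\px|\ge d$.
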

\begin{proof} 
We observe that, for all $(\px,\py)\in (\Rn\setminus\Bn{d})\times\overline{\dom}$, we have
\[
|\px-\py|^2-|\msigma(\px)-\py|^2=\sum_{j=1}^{n-1}(x_j-y_j)^2+(x_n-y_n)^2-\sum_{j=1}^{n-1}(x_j-y_j)^2-(x_n+y_n)^2=-4x_ny_n.
\]
Let us first consider $n=2$. By exploiting the inequality $|\px|>2|\py|$, we calculate that for any $(\px,\py)\in (\Rn\setminus\Bn{d})\times\overline{\dom}$,
\[
\begin{split}
|{\G}(\px,\py)|&=\frac{1}{2\pi}\left|\log|\px-\py|-\log|\msigma(\px)-\py|\right|=\frac{1}{4\pi}\left|\log|\px-\py|^2-\log|\msigma(\px)-\py|^2\right|\\
&\le \frac{1}{4\pi}\frac{1}{\min\{|\px-\py|^2\,,\,|\msigma(\px)-\py|^2\}}\left||\px-\py|^2-|\msigma(\px)-\py|^2\right|\le \frac{1}{\pi}\frac{|x_2y_2|}{\min\{|\px-\py|^2\,,\,|\msigma(\px)-\py|^2\}}\\
& \le \frac{1}{\pi}\frac{|\px|\,|\py|}{(|\px|-|\py|)^2}=\frac{1}{\pi}\frac{|\py|}{(1-|\py|/|\px|)^2}\frac{1}{|\px|}\le \frac{4|\py|}{\pi}\frac{1}{|\px|}\le \frac{2d}{\pi}\frac{1}{|\px|}.
\end{split}
\]
To prove the statement for $n\ge 3$ we observe that
\[
\begin{split}
&|{\G}(\px,\py)|=\frac{1}{(n-2)s_n}\left||\px-\py|^{2-n}-|\msigma(\px)-\py|^{2-n}\right|\\
&\ =\frac{1}{(n-2)s_n}\frac{\left||\px-\py|^{2}-|\msigma(\px)-\py|^2\right|}{|\px-\py|\,|\msigma(\px)-\py|(|\px-\py|+|\msigma(\px)-\py|)}\sum_{j=0}^{n-3}|\px-\py|^{j+3-n}|\msigma(\px)-\py|^{-j}\le\frac{2^nd}{s_n}\frac{1}{|\px|^{n-1}}\end{split}
\]
for all $(\px,\py)\in (\Rn\setminus\Bn{d})\times\overline{\dom}$. Hence $|\px|^{n-1}\,|{\G}(\px,\py)|\le 2^nd/s_n$ for all $(\px,\py)\in (\Rn\setminus\Bn{d})\times\overline{\dom}$ and for all $n\in\mathbb{N}\setminus\{0,1\}$.
\end{proof}

\vspace{\baselineskip}

\medskip

Then, by Lemma \ref{Gatinfty} one readily deduces the validity of the following.

\begin{lem}\label{vGatinfty} Let $n \in \mathbb{N}\setminus \{0,1\}$.  Let $\phi\in \Ca{0}(\dO)$. Then the function which takes $\px\in\Rn\setminus(\dom\cup\msigma(\dom))$ to $|\px|^{n-1}\, v_{{\G}}[\dO,\phi](\px)$ is bounded. In particular, $v_{{\G}}[\dO,\phi]$ is harmonic at infinity.
\end{lem}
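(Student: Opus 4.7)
\textbf{Proof proposal for Lemma \ref{vGatinfty}.} The plan is to split the analysis into a far-field region and a near-field region, applying Lemma \ref{Gatinfty} in the former and continuity of the single-layer potential in the latter. With $d = 2\sup_{\py\in\dom}|\py|$ as in Lemma \ref{Gatinfty}, I would first treat $\px\in\R^n\setminus \Bn{d}$. Writing
\[
v_{{\G}}[\dO,\phi](\px) \;=\; \int_{\dO}\phi(\py)\,{\G}(\px,\py)\,d\sigma_\py,
\]
and applying Lemma \ref{Gatinfty} together with the trivial bound $|\phi(\py)|\le \|\phi\|_{\Ca{0}(\dO)}$, one obtains a constant $C>0$ (depending only on $\dom$) such that
\[
|\px|^{n-1}\,|v_{{\G}}[\dO,\phi](\px)| \;\le\; C\,\|\phi\|_{\Ca{0}(\dO)}\,\sigma(\dO) \qquad \forall\,\px\in\R^n\setminus \Bn{d}.
\]

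Next I would handle the complementary set $K \equiv \overline{\Bn{d}}\cap \bigl(\R^n\setminus(\dom\cup\msigma(\dom))\bigr)$, which is closed and bounded, hence compact. Since $v_{{\G}}[\dO,\phi]$ equals $v_{S_n}[\dO,\phi](\px)-v_{S_n}[\dO,\phi](\msigma(\px))$ and the classical single-layer potential is continuous on $\R^n$ (cf.\ Proposition \ref{clreg}), the function $v_{{\G}}[\dO,\phi]$ is continuous on $\R^n$. Consequently $\px\mapsto |\px|^{n-1}\,v_{{\G}}[\dO,\phi](\px)$ is continuous and thus bounded on the compact set $K$. Combining the two estimates yields the boundedness of $|\px|^{n-1}\,v_{{\G}}[\dO,\phi](\px)$ on the whole of $\R^n\setminus(\dom\cup\msigma(\dom))$.

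For the second assertion, recall that $v_{{\G}}[\dO,\phi]$ is harmonic in $\R^n\setminus\bigl(\dom\cup\msigma(\dom)\bigr)$ (Proposition \ref{jumps}). A function $u$ harmonic in a neighborhood of infinity is said to be harmonic at infinity provided its Kelvin transform extends harmonically to the origin; by the standard removable-singularity criterion this is equivalent to $u(\px)=O(|\px|^{2-n})$ if $n\ge 3$ and to $u$ being bounded near infinity if $n=2$. The just-established decay $v_{{\G}}[\dO,\phi](\px)=O(|\px|^{1-n})$ is strictly stronger than both conditions, so harmonicity at infinity follows.

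The only non-routine point is the far-field decay estimate, but that is already done in Lemma \ref{Gatinfty}; the present lemma simply transports that pointwise bound through the integral over $\dO$. I therefore expect no serious obstacle—the argument is essentially the juxtaposition of Lemma \ref{Gatinfty}, the continuity of $v_{{\G}}[\dO,\phi]$ on $\R^n$, and the classical characterization of harmonicity at infinity.
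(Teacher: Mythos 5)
Your proof is correct and follows the same route the paper implicitly intends: the paper simply states that the lemma is ``readily deduced'' from Lemma \ref{Gatinfty}, and your argument supplies precisely the expected details — pushing the pointwise bound on $|\px|^{n-1}{\G}(\px,\py)$ through the integral over $\dO$ in the far-field region $\R^n\setminus\Bn{d}$, handling the bounded annular region $\overline{\Bn{d}}\cap\bigl(\R^n\setminus(\dom\cup\msigma(\dom))\bigr)$ by compactness and the continuity of $v_{{\G}}[\dO,\phi]$ (Proposition \ref{jumps}), and then invoking the standard characterization of harmonicity at infinity via the $O(|\px|^{1-n})$ decay.
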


\def\appendixname{Appendix }
\section{An extension result}\label{app:CK}
\def\appendixname{}

In this Appendix we prove Proposition \ref{Ub}. We find convenient to set $\Bnp{r}\equiv\Bn{r}\cap\Rnp$ and $\Bnm{r}\equiv\Bn{r}\setminus\overline{\Bnp{r}}$ for all $r>0$. Then, possibly shrinking $r_0$ we can assume that $\Bnp{r_0}\subseteq\dom$. By assumption \eqref{go_analytic} and by a standard argument based on the Cauchy-Kovalevskaya Theorem we shows the validity of the following
\begin{lem}\label{A.F}
Let $n \in \mathbb{N}\setminus \{0,1\}$.  There exist $r_1\in]0,r_0]$ and a function $H$ from $\overline{\Bn{r_1}}$ to $\mathbb{R}$ such that $\Delta H=0$ in $\Bn{r_1}$ and $H_{|\overline{\Bn{r_1}}\cap\d0O}=\go_{|\overline{\Bn{r_1}}\cap\d0O}$.
\end{lem}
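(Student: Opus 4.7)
The plan is a direct application of the Cauchy-Kovalevskaya theorem to Laplace's equation with analytic Cauchy data on the flat piece of boundary $\partial\mathbb{R}^n_+$. Since $\partial\mathbb{R}^n_+ = \{x_n=0\}$ is non-characteristic for $\Delta$, one can prescribe both the trace and the normal derivative on this hyperplane and obtain a unique real analytic solution in a full (two-sided) neighborhood of each point where the data are real analytic.

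First I would use assumption \eqref{go_analytic} together with the fact that $\Bnp{r_0}\subseteq\dom$ (which is achievable after possibly shrinking $r_0$) to reduce to a Cauchy problem on the piece $\Bn{r_0}\cap\partial\mathbb{R}^n_+$. Writing $\px=(\px',x_n)$ with $\px'\in\mathbb{R}^{n-1}$, I would consider the Cauchy problem
\begin{equation*}
\begin{cases}
\Delta H(\px',x_n)=0,\\
H(\px',0)=\go(\px',0),\\
\partial_{x_n}H(\px',0)=0,
\end{cases}
\end{equation*}
posed in a neighborhood of the origin in $\mathbb{R}^n$. Both Cauchy data are real analytic on $\Bn{r_0}\cap\partial\mathbb{R}^n_+$: the first by \eqref{go_analytic}, the second trivially. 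The Cauchy-Kovalevskaya theorem (see, e.g., Folland, \emph{Introduction to Partial Differential Equations}, Chap.~1) then produces a real analytic function $H$ solving this problem in an open neighborhood $\mathcal{N}$ of $0$ in $\mathbb{R}^n$. Choosing $r_1\in\,]0,r_0]$ small enough so that $\overline{\Bn{r_1}}\subseteq\mathcal{N}$ yields the required $H$.

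The agreement $H_{|\overline{\Bn{r_1}}\cap\d0O}=\go_{|\overline{\Bn{r_1}}\cap\d0O}$ is built into the Cauchy data, and harmonicity $\Delta H=0$ on $\Bn{r_1}$ holds by construction. No uniqueness claim is needed for the statement, so the freedom in the choice of normal derivative (here set to zero for simplicity) is irrelevant. The only potential subtlety is to verify that the neighborhood of existence provided by Cauchy-Kovalevskaya contains a full ball $\Bn{r_1}$ centered at $0$: this follows because $0$ is an interior point (in the topology of $\partial\mathbb{R}^n_+$) of the set on which the Cauchy data are analytic, so the power series obtained from Cauchy-Kovalevskaya converges in a genuine $n$-dimensional ball around $0$, not merely in a one-sided neighborhood.

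I do not anticipate a hard step: Cauchy-Kovalevskaya does all the work. The only thing worth being explicit about in the write-up is that Proposition \ref{Ub} then follows from Lemma \ref{A.F} by setting $U_0\equiv H$; indeed, $U_0$ and $u_0$ are both harmonic in $\Bnp{r_1}$ and share the trace $\go$ on $\Bn{r_1}\cap\d0O$, so the equality $U_0=u_0$ on $\overline{\Bnp{r_1}}$ is a consequence of the uniqueness statement for the Dirichlet problem with continuous boundary data (the boundary of $\Bnp{r_1}$ being $\Ca{0,1}$, hence regular for the Dirichlet problem). This last identification is where Lemma \ref{A.F} is actually used in the paper.
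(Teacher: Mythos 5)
Your proof of Lemma \ref{A.F} itself is correct and takes a marginally more direct route than the paper. The paper invokes Cauchy--Kovalevskaya twice, once on each half-ball $\Bnp{r_1}$ and $\Bnm{r_1}$ with the same Cauchy data $(\go,0)$ on the flat piece, then glues the two one-sided solutions $H^\pm$ and explicitly verifies weak harmonicity of the glued function across the interface by integrating by parts against test functions $\varphi\in C^\infty_c(\Bn{r_1})$. You bypass the gluing and verification by appealing to the fact that Cauchy--Kovalevskaya with analytic data on the non-characteristic hyperplane $\{x_n=0\}$ already produces an analytic solution in a full two-sided neighborhood of the origin, so harmonicity across the interface comes for free. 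Both arguments are valid; the paper's is simply more explicit about where the two-sidedness comes from, while yours is more economical.

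Your closing paragraph about Proposition \ref{Ub}, however, contains a genuine error. You claim Proposition \ref{Ub} follows by taking $U_0\equiv H$ and invoking uniqueness of the Dirichlet problem in $\Bnp{r_1}$. Uniqueness would require $H$ and $u_0$ to agree on the \emph{entire} boundary $\partial\Bnp{r_1}$, including the spherical cap $\partial\Bn{r_1}\cap\mathbb{R}^n_+$; but your $H$ is determined only by the Cauchy data $(\go,0)$ on the flat piece, and its values on the cap bear no a priori relation to those of $u_0$. Indeed, $H=u_0$ on $\Bnp{r_1}$ would force $\partial_{x_n}u_0=0$ on $\d0O$, which is not assumed. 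What the paper actually does in the proof of Proposition \ref{Ub} is form $V^+\equiv u_0-H$ on $\overline{\Bnp{r_1}}$ (harmonic, with zero trace on the flat piece), extend it by \emph{odd} reflection to $V^-$ on $\overline{\Bnm{r_1}}$, glue to a harmonic $V$ on $\Bn{r_1}$, and set $U_0\equiv V+H$. The odd reflection is exactly the step that converts the vanishing of $u_0-H$ on $\d0O$ into a harmonic extension across that set, and it is the ingredient missing from your sketch.
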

\begin{proof} 
By the Cauchy-Kovalevskaya Theorem there exists $r_1\in]0,r_0]$, a function $H^+$ from $\overline{\Bnp{r_1}}$ to $\mathbb{R}$, and a function $H^-$ from $\overline{\Bnm{r_1}}$ to $\mathbb{R}$, such that
\[
\begin{split}
&\text{$\Delta H^+=0$ in $\Bnp{r_1}$, $H^+_{|\overline{\Bn{r_1}}\cap\d0O}=\go_{|\overline{\Bn{r_1}}\cap\d0O}$, and $\partial_{x_n}H^+_{|\overline{\Bn{r_1}}\cap\d0O}=0$,}\\
&\text{$\Delta H^-=0$ in $\Bnm{r_1}$, $H^-_{|\overline{\Bn{r_1}}\cap\d0O}=\go_{|\overline{\Bn{r_1}}\cap\d0O}$, and $\partial_{x_n}H^-_{|\overline{\Bn{r_1}}\cap\d0O}=0$.}
\end{split}
\]
We now define
\[
H(\px)\equiv
\left\{
\begin{array}{ll}
H^+(\px)&\text{if }\px\in\overline{\Bnp{r_1}}\,,\\
H^-(\px)&\text{if }\px\in\overline{\Bnm{r_1}}\,,
\end{array}
\right.
\]
for all $\px\in\overline{\Bn{r_1}}$. Note that $H$ is well defined and $H(\px)=\go(\px)$ for $\px\in\Bn{r_1}\cap\d0O$. Then one observes that
\[
\begin{split}
\int_{\Bn{r_1}}H\,\Delta \varphi \,d\px&=\int_{\Bnp{r_1}}H^+\,\Delta \varphi \,d\px+\int_{\Bnm{r_1}}H^-\,\Delta \varphi \,d\px\\
&=-\int_{\Bn{r_1}\cap\d0O} H^+\,\partial_{x_n}\varphi\,d\sigma+\int_{\Bn{r_1}\cap\d0O} H^-\,\partial_{x_n}\varphi\,d\sigma\\
&\quad + \int_{\Bn{r_1}\cap\d0O} (\partial_{x_n}H^+)\,\varphi\,d\sigma - \int_{\Bn{r_1}\cap\d0O} (\partial_{x_n}H^-)\,\varphi\,d\sigma=0
\end{split}
\]
for all test functions $\varphi\in C^\infty_c(\Bn{r_1})$. Hence the lemma is proved.
\end{proof}

\vspace{\baselineskip}

\medskip

We are now ready to prove Proposition \ref{Ub}.

\begin{proofof}{Proposition \ref{Ub}}
Let $H$ be as in Lemma \ref{A.F}. Let $V^+\equiv u _{0|\overline{\Bnp{r_1}}}-H_{|\overline{\Bnp{r_1}}}$. Then we have $\Delta V^+=0$ in $\Bnp{r_1}$ and $V^+_{|\overline{\Bn{r_1}}\cap\d0O}=0$. Then we define $V^-(\px)\equiv-V^+(\msigma(\px))$ for all $\px\in \overline{\Bnm{r_1}}$. Then one verifies that $\Delta V^-=0$ in $\Bnm{r_1}$ and $V^-_{|\overline{\Bn{r_1}}\cap\d0O}=0$. In addition we have
$\partial_{x_n}V^+_{|\overline{\Bn{r_1}}\cap\d0O}=\partial_{x_n}V^-_{|\overline{\Bn{r_1}}\cap\d0O}$. Then we set
\[
V(\px)\equiv
\left\{
\begin{array}{ll}
V^+(\px)&\text{if }\px\in\overline{\Bnp{r_1}}\,,\\
V^-(\px)&\text{if }\px\in\overline{\Bnm{r_1}}\,,
\end{array}
\right.
\]
for all $\px\in\overline{\Bn{r_1}}$. Hence we compute
\[
\begin{split}
\int_{\Bn{r_1}}V\,\Delta \varphi \,d\px&=\int_{\Bnp{r_1}}V^+\,\Delta \varphi \,d\px+\int_{\Bnm{r_1}}V^-\,\Delta \varphi \,d\px\\
&=-\int_{\Bn{r_1}\cap\d0O} V^+\,\partial_{x_n}\varphi\,d\sigma+\int_{\Bn{r_1}\cap\d0O} V^-\,\partial_{x_n}\varphi\,d\sigma\\
&\quad + \int_{\Bn{r_1}\cap\d0O} (\partial_{x_n}V^+)\,\varphi\,d\sigma - \int_{\Bn{r_1}\cap\d0O} (\partial_{x_n}V^-)\,\varphi\,d\sigma=0
\end{split}
\]
for all test functions $\varphi\in C^\infty_c(\Bn{r_1})$. So that $\Delta V=0$ in $\Bn{r_1}$. Finally we take $ U_0 \equiv V+H$ and we readily verify that the statement of Proposition \ref{Ub} is verified (see also Lemma \ref{A.F}).
\end{proofof}


\section*{Acknowledgement}

The authors wish to thank M. Costabel and M. Dauge for several useful discussions and in particular for suggesting a Green's function method to investigate singular perturbation problems. The authors also thank C. Constanda for comments that have improved the presentation of the results. The authors are partially supported by the ANR (Agence Nationale de la Recherche), projects ARAMIS n$^{\rm o}$ ANR-12-BS01-0021.
M. Dalla Riva and P. Musolino acknowledge the support of `Progetto di Ateneo: Singular perturbation problems for differential operators -- CPDA120171/12' - University of Padova. M. Dalla Riva also acknowledges the support of HORIZON 2020 MSC EF project FAANon (grant agreement MSCA-IF-2014-EF - 654795) at the University of Aberystwyth, UK. P.~Musolino also acknowledges the support of `INdAM GNAMPA Project 2015 - Un approccio funzionale analitico per problemi di perturbazione singolare e di omogeneizzazione'  and of an `assegno di ricerca INdAM'.  P.~Musolino has received funding  from the European Union's Horizon
2020 research and innovation programme under the Marie
Sk\l odowska-Curie grant agreement No 663830,  and from the Welsh Government and Higher Education Funding Council for Wales through the S\^er Cymru National Research Network for Low Carbon, Energy and Environment. Part of the work has been carried out while P.~Musolino was visiting the `D\'{e}partement de math\'{e}matiques et applications' of the `\'{E}cole normale sup\'{e}rieure, Paris'. P.~Musolino wishes to thank the `D\'{e}partement de math\'ematiques et applications' and in particular V.~Bonnaillie-No\"el for the kind hospitality and the `INdAM' for the financial support.

\small

\end{document}